\numberwithin{equation}{section}
\newtheorem{lemma}[equation]{Lemma}
\newtheorem{proposition}[equation]{Proposition}
\newtheorem{theorem}[equation]{Theorem}
\newtheorem{corollary}[equation]{Corollary}
\newtheorem*{mThm*}{Main Theorem}
\theoremstyle{definition}
\newtheorem{definition}[equation]{Definition}
\newtheorem*{definition*}{Definition}
\theoremstyle{remark}
\newtheorem{remark}[equation]{Remark}
\newtheorem*{remark*}{Remark}
\newcommand{\Vol}{\mathrm{Vol}}
\newcommand{\im}{\mathrm{im}}
\newcommand{\ind}{\mathrm{ind}}
\newcommand{\Sp}{\mathrm{Sp}}
\newcommand{\SU}{\mathrm{SU}}
\newcommand{\Id}{\mathrm{Id}}
\newcommand{\SO}{\mathrm{SO}}
\newcommand{\CP}[1]{\mathbb{C}P^{#1}}
\newcommand{\Spin}{\mathrm{Spin}}
\newcommand{\coker}{\mathrm{Coker}}
\newcommand{\ig}{\scriptscriptstyle}
\title{Deformations of asymptotically conical Spin(7)-manifolds}
\author{Fabian Lehmann \vspace{-3mm}}
\affil{Simons Center for Geometry and Physics, Stony Brook University
\\ \vspace*{1mm}
\textup{flehmann@scgp.stonybrook.edu}}
\date{\today}
\begin{document}

\maketitle

\begin{abstract}
We consider the moduli space $\mathcal{M}_{\nu}$ of torsion-free, asymptotically conical (AC) Spin(7)-structures which are defined on the same manifold and asymptotic to the same Spin(7)-cone with decay rate $\nu<0$. We show that $\mathcal{M}_{\nu}$ is an orbifold if $\nu$ is a generic rate in the non-$L^2$ regime $(-4,0)$. 
Infinitesimal deformations are given by topological data and solutions to a non-elliptic first-order PDE system on the compact link of the asymptotic cone. As an application, we show that the classical Bryant--Salamon metric on the bundle of positive spinors on $S^4$ has no continuous deformations as an AC Spin(7)-metric.
\end{abstract}

\section{Introduction}

Spin(7) is one of the exceptional holonomy groups on Berger's list. Riemannian manifolds with holonomy group contained in Spin(7) are Ricci flat and, therefore, of great interest in geometry and physics.
In this article we study deformations of \textit{asymptotically conical} (AC) Spin(7)-manifolds. These are non-compact Spin(7)-manifolds which at infinity converge to a Spin(7)-cone. Thus they are the Spin(7) analogue of
\textit{asymptotically locally Euclidean} (ALE) hyperk\"ahler manifolds in dimension 4 such as the well-known Eguchi--Hanson metric on $T^*S^2$. 
The first complete metric with holonomy Spin(7), which was constructed by Bryant--Salamon \cite{BS} in 1989, is of this asymptotic type. It lives on $\mathbf{S}_+(S^4)$, the bundle of positive spinors on the 4-sphere. Recently, the author has proved the existence of two further examples, 
one on the unique non-trivial rank 3 vector bundle over $S^5$ and one on the universal quotient bundle of $\CP{2}$ \cite{myarticle1}.  

An important feature of the asymptotic geometry of AC Spin(7)-manifolds is the rate of convergence to the asymptotic cone. In the setting of AC manifolds, we are interested in polynomial decay, i.e. where the Spin(7)-structure and the associated metric decay to the Spin(7)-cone like $r^{\nu}$, $\nu<0$, where $r$ is the radial function of the cone. $\nu$ is called the \textit{decay rate}. Tensors which decay with rate $\nu < -4$ are square-integrable. 
Our main result is that the moduli space is an orbifold in the non-$L^2$ regime ($-4 < \nu < 0$), and we derive a formula for its dimension. 

\begin{mThm*}
\label{main-thm}
Let $(M,\psi)$ be an AC Spin(7)-manifold of rate $\nu$. For generic $\nu \in (-4,0)$ the moduli space $\mathcal{M}_{\nu}$ of torsion-free AC Spin(7)-structures
on $M$ asymptotic to the same Spin(7)-cone at the same rate modulo an appropriate notion of equivalence is an orbifold. The dimension of $\mathcal{M}_{\nu}$ is determined by topological data of $M$, and solutions to systems of differential equations on the link of the asymptotic cone.
\end{mThm*}

For a more precise formulation see Theorem \ref{Thm-precise-formulation}.
The deformation theory of compact Spin(7)-manifolds has been developed by Joyce \cite{big-joyce}. The moduli space is always smooth and infinitesimal deformations can be expressed in terms of harmonic forms. Several standard techniques in the compact setting do not carry over to the non-compact setting. For example, we frequently are in situations where integration by parts is not available.
Furthermore, the mapping properties of differential operators behave differently in the non-compact setting as compared to the compact setting. Nordstr\"om \cite{Nordstrom-PhD} developed the deformation theory of exponentially asymptotically cylindrical (EAC) Spin(7)-manifolds using analysis on non-compact manifolds. In contrast to the AC setting, in the EAC setting all decay rates lie in the $L^2$-regime because of the exponential decay, and the dimension of the moduli space only depends on topological data. Our work is most closely related to the deformation theory of $\mathrm{G}_2$-conifolds developed by Karigiannis--Lotay \cite{KL}. In our set-up we consider the moduli space $\mathcal{M}_{\nu}$ of AC manifolds of a particular rate $\nu$. To prove smoothness of an orbifold chart, our strategy is to compute infinitesimal deformations and then use the implicit function theorem adapted to appropriate Banach spaces. The computation of the infinitesimal deformations is carried out in several steps. Analogously to the use of Hodge theory in the compact setting yet more intricate, deformations which lie in $L^2$ can be related to the topology of the manifold $M$. As we vary the rate and enter the non-$L^2$ regime, we use the analysis on weighted Sobolev and H\"older spaces as described by Lockhart--McOwen  \cite{Lockhart-McOwen}. Outside a discrete subset of so-called \textit{critical} rates the relevant differential operators are Fredholm and the space of deformations remains constant. As we cross a critical rate the added deformations can be related to particular solutions of differential equations on the asymptotic cone. In the range of rates considered by us we can formulate these equations purely on the compact link of the cone.
If $\nu < -4$, the above program cannot be carried out: the operator describing the infinitesimal deformations may not be surjective and hence the implicit function theorem cannot be applied. Therefore, deformations of AC Spin(7)-metrics with $\nu < -4$ may be obstructed. This resembles the $\mathrm{G}_2$-setting \cite{KL}.
There are several similar articles concerned with the deformation theory of calibrated submanifolds: Marshall \cite{Marshall} and Pacini \cite{ACSlag} studied deformations of AC Special Lagrangian submanifolds in $\mathbb{C}^m$ and Lotay studied 
deformations of AC coassociative and associative submanifolds of $\mathrm{G}_2$-manifolds, cf. \cite{ACcoasso} and \cite{ACasso}, respectively.

In our moduli problem we only consider torsion-free Spin(7)-structures which are asymptotic to a fixed Spin(7)-cone. Denoting the link of the cone by $\Sigma$, the space of torsion-free, conical Spin(7)-structures on $(0,\infty)\times \Sigma$ corresponds to the space of nearly parallel $\mathrm{G}_2$-structures on $\Sigma$. We do not expect this space to have a ``nice'' structure in general, e.g. that of a manifold. Alexandrov--Semmelmann \cite{alexandrov2012} showed that the homogeneous, nearly parallel $\mathrm{G}_2$-structure on the Aloff--Wallach space $N(1,1)$ has an 8-dimensional space of infinitesimal deformations, but by a recent result of
Dwivedi--Singhal \cite{dwivedi2020deformation} these are all obstructed. 
This picture is similar to the case of $\mathrm{G}_2$-cones,
where Foscolo \cite{NK-defo} showed that deformations of nearly  K\"ahler manifolds in general are obstructed.
This aspect is another difference to the EAC case \cite{Nordstrom-PhD}. The link of a $\mathrm{G}_2$-cylinder is a compact Calabi--Yau manifold, and the link of a Spin(7)-cylinder is a compact $\mathrm{G}_2$-manifold. Deformations of these are well understood, which allows a more inclusive set-up for the moduli space.

That the moduli space is in general an orbifold rather than a manifold is owed to the fact that the stabiliser of a torsion-free AC Spin(7)-structure in the group of diffeomorphisms decaying to the identity
can be non-trivial. While we can exclude any continuous such symmetries,
the stabiliser can still be a non-trivial finite group. If this group does not act trivially on a slice for the diffeomorphism action, we only obtain an orbifold rather than a manifold chart. We present one criterion to check if the stabiliser acts trivially on the orbifold chart. The tangent space of the orbifold chart at a torsion-free AC Spin(7)-structure $\psi$ of rate $\nu$ is related to closed 4-forms which are anti-self-dual with respect to $\psi$ and decay with rate $\nu$. If the projection of these forms to the fourth cohomology group of $M$ is injective, we can conclude that each element in the orbifold chart represents a different point in the  moduli space, i.e. that the quotient by the stabiliser is trivial.

As an application of the main theorem we show that the Bryant--Salamon Spin(7) holonomy metric on $\mathbf{S}_{+}(S^4)$ is locally rigid,
 modulo scaling, as a torsion-free AC Spin(7)-structure on $\mathbf{S}_{+}(S^4)$ asymptotic to the same Spin(7)-cone. We solve the differential equations on the link by following Alexandrov--Semmelmann \cite{alexandrov2012}, who compute infinitesimal Einstein deformations of
 normal homogeneous nearly parallel $\mathrm{G}_2$-manifolds with standard
 invariant metrics. Under these constraints the differential equations can be solved with purely representation theoretic methods.
This example strongly relies on the condition that the homogeneous metric on the link is normal and standard. The recent examples of AC Spin(7)-manifolds from \cite{myarticle1} are asymptotic to a more sophisticated Spin(7)-cone, which does not allow us to carry out similar computations.

An important subclass of Spin(7)-manifolds are Calabi--Yau 4-folds. For a given AC Calabi--Yau 4-fold we can use the main theorem to obtain deformations as a Spin(7)-structure. 
Given that a large number of AC Calabi--Yau 4-folds are known while only very few AC Spin(7) holonomy metrics are known to exist, this leads to the interesting question: can an AC Calabi--Yau metric
on a manifold of real dimension 8 be deformed to an AC metric with holonomy Spin(7)? It is known that infinitesimal deformations of Calabi--Yau structures on compact manifolds are unobstructed (see \cite{goto,tian1987smoothness,todorov1989weil}). By adjusting for example the approach in \cite{goto} to the analytic framework of AC manifolds, it is reasonable to believe that infinitesimal deformations of AC Calabi--Yau structures are unobstructed in an interesting range of decay rates. Therefore, 
if each infinitesimal Spin(7)-deformation is induced by an infinitesimal  $\SU(4)$-deformation, this would be strong evidence to provide a negative answer to the above question. The author has partially persued the question of comparing infinitesimal $\SU(4)$- and Spin(7)-deformations in his PhD thesis \cite{myPhD}, however without any conclusion.

\paragraph*{Acknowledgements} This work is a result of the author's PhD thesis and 
was supported by the Engineering and Physical Sciences Research Council [EP/L015234/1], 
the EPSRC Centre for Doctoral Training in Geometry and Number Theory (The London School of Geometry and Number Theory), University College London.
I want to thank my PhD supervisors Mark Haskins, Jason Lotay and Lorenzo Foscolo for their support, and my PhD examiners Johannes Nordstr\"om and Simon Salamon for their helpful comments.

\section{Preliminaries}

\subsection{Spin(7)-geometry}

\label{Spin(7)-intro}

We give a brief review of Spin(7)-geometry. For more details we refer to
\cite{Salamon-book}, \cite{big-joyce} and \cite{Nordstrom-PhD}.
We first discuss the linear algebraic picture.
The spin representation of Spin(7) has a real form which can be identified with $\mathbb{R}^8$. Under this action $\Spin(7)$ can be characterised as the stabiliser in $\mathrm{GL}(8,\mathbb{R})$ of the 4-form 
\begin{align*}
\psi_0
=&
dx_{1234}+dx_{1256}+dx_{1278}+dx_{1357}-dx_{1368}
-dx_{1458}-dx_{1467}
\\
&-dx_{2358}-dx_{2367}-dx_{2457}
+dx_{2468}+dx_{3456}+dx_{3478}+dx_{5678},
\end{align*}
where $(x_1,\dots,x_8)$ are coordinates on $\mathbb{R}^8$.
The action of Spin(7) on $\mathbb{R}^8$ induces an action on the exterior algebra. We get the following decomposition into irreducible components:
\begin{align*}
\Lambda^2 (\mathbb{R}^8)^*
=
\Lambda^2_{7} \oplus \Lambda^2_{21},
\quad
\Lambda^3 (\mathbb{R}^8)^*
=
\Lambda^3_8 \oplus \Lambda^3_{48},
\quad
\Lambda^4 (\mathbb{R}^8)^*
=
\Lambda^4_1 \oplus \Lambda^4_7 \oplus \Lambda^4_{27} \oplus \Lambda^{4}_{35}.
\end{align*}
Here $\Lambda^k_{q}$ denotes a q-dimensional irreducible subspace.
For higher degree forms we get an analogous decomposition by applying the Hodge star operator. Under the identification $\Lambda^2 (\mathbb{R}^8)^* = \mathfrak{so}(8,\mathbb{R})$ the component $\Lambda^2_{21}$ corresponds to the Lie algebra of Spin(7). $\mathrm{GL}(8,\mathbb{R})$ acts on $\Lambda^4 (\mathbb{R}^8)^*$
by pulling back $\psi_0$. The derivative at the identity gives a map
$\mathfrak{gl}(8,\mathbb{R}) \rightarrow \Lambda^4 (\mathbb{R}^8)^*$.
Under the decomposition 
\begin{align*}
\mathfrak{gl}(8,\mathbb{R})
=
\Lambda^2 (\mathbb{R}^8)^* \oplus S^2 (\mathbb{R}^8)^*
=
\Lambda^2_{7} \oplus \Lambda^2_{21} \oplus \mathbb{R}\, \mathrm{Id} \oplus S^2_0 (\mathbb{R}^8)^*,
\end{align*}
where $S^2_0 (\mathbb{R}^8)^*$ denote the trace-less symmetric bilinear forms on $\mathbb{R}^8$, the kernel corresponds to $\mathrm{Lie}(\mathrm{Spin}(7))=\Lambda^2_{21}$, and $\Lambda^4_1$, $\Lambda^4_7$ and $\Lambda^4_{35}$
are the images of $ \mathbb{R}$, $\Lambda^2_7$ and $S^2_0 (\mathbb{R}^8)^*$, respectively. In particular, the orbit of $\psi_0$ under the action of 
$\mathrm{GL}(8,\mathbb{R})$ has co-dimension 27 
and its tangent space at $\psi_0$ is given by
\begin{align}
\label{tangent-space-orbit}
T_{\psi_0} (\mathrm{GL}(8,\mathbb{R})\cdot\psi_0) = \Lambda^4_1 \oplus \Lambda^4_7 \oplus \Lambda^4_{35}.
\end{align}
$\Lambda^4_{27}$ can be identified with the normal directions at $\psi_0$.

We have the identity $*^2 = \mathrm{Id}$ for the Hodge star operator acting on 4-forms. The induced decomposition in spaces of self-dual and anti-self-dual 4-forms is given by
\begin{align*}
\Lambda^4_+ =\Lambda^4_1 \oplus \Lambda^4_7 \oplus \Lambda^4_{27},
\quad 
\Lambda^4_{-} = \Lambda^{4}_{35}.
\end{align*} 
A further space which will be important to us is $\Lambda^3_8$, which has the description
\begin{align}
\label{3-forms-of-type-8}
\Lambda^3_8 
=
\{
X \lrcorner \psi_0\, |\, X \in \mathbb{R}^8
\}.
\end{align}
The above discussion implies that as Spin(7) representations we have
\begin{align*}
\Lambda^3_8 \cong \mathbb{R}^8, \quad \Lambda^4_7 \cong \Lambda^2_7. 
\end{align*}

Because Spin(7) is simply connected, the inclusion $\mathrm{Spin}(7) \subset \SO(8)$ factors through Spin(8). If we denote the real positive and negative spin representations of Spin(8) by $\sigma_8^+$ and $\sigma_8^-$, respectively, as representations of Spin(7) there are isomorphisms
\begin{align}
\label{LA-spin-reps}
\sigma_8^+ \cong  \Lambda^4_1 \oplus \Lambda^4_7 \cong \mathbb{R} \oplus \Lambda^2_7,
\quad
\sigma_8^- \cong \Lambda^3_8 \cong \mathbb{R}^8.
\end{align}

Now we turn to the global differential geometric picture.
Let $M$ be an oriented 8-manifold. We say that a 4-form $\psi$ is \textit{admissible} if at each point $p\in M$ there is an orientation-preserving isomorphism between $T_pM$ and $\mathbb{R}^8$ which identifies $\psi|_p$ with $\psi_0$. 
We refer to $\psi$ as a \textit{Spin(7)-structure}. $\psi$ reduces the structure group of the frame bundle of $M$ to Spin(7) by considering the subbundle 
$\{u\colon \mathbb{R}^8 \xrightarrow{\sim} T_p M\  |\
u^*\psi_p = \psi_0 \}$. Because Spin(7) is a subgroup of $\SO(8)$, $\psi$ induces in a purely algebraic way a Riemannian metric $g$. 
The condition that the holonomy group of the induced metric is contained in $\Spin(7)$ is equivalent to 
\begin{align}
d\psi = 0.
\label{cond-tf}
\end{align}
In this case we say that $\psi$ is \textit{torsion-free} and $(M,\psi$) a \textit{Spin(7)-manifold}. A key point is that $g$ is Ricci-flat if $\psi$ is torsion-free.

The above decomposition of the exterior algebra into irreducible components gives a global decomposition of the corresponding vector bundles. By abuse of notation, we will denote these subbundles by the same symbols as in the linear picture. This decomposition is preserved by the Hodge Laplacian if the Spin(7)-structure is torsion-free.
We denote the space of all smooth admissible 4-forms on $M$
by $\mathcal{A}(M)$. Just as the orbit of $\psi_0$ is a non-linear subspace of $\Lambda^4 (\mathbb{R}^8)^*$, the space $\mathcal{A}(M)$ is a non-linear subspace of $\Omega^4(M)$. Therefore, the condition \eqref{cond-tf} is non-linear. \eqref{tangent-space-orbit} gives 
\begin{align}
\label{tangent-space-admissible-forms}
T_{\psi} \mathcal{A}(M) = \Gamma(\Lambda^4_1 \oplus \Lambda^4_7 \oplus \Lambda^4_{35}).
\end{align}
An 8-manifold equipped with a Spin(7)-structure is spin because of the inclusion $\mathrm{Spin}(7) \subset \mathrm{Spin}(8)$. If we denote the real positive and negative spin bundle by $\mathbf{S}_+$ and $\mathbf{S}_-$, respectively, by \eqref{LA-spin-reps} there
are isomorphisms of vector bundles 
\begin{align}
\label{spin-bundle-iso}
\mathbf{S}_+ \cong \Lambda^4_1 \oplus \Lambda^4_7, 
\quad
\mathbf{S}_- \cong \Lambda^3_8 \cong TM \cong T^*M.
\end{align}
In particular, if the Spin(7)-structure is torsion-free, the Dirac Laplacian can be identified with the Hodge Laplacian on the respective bundles. 
The identifications \ref{spin-bundle-iso} can be chosen such that the positive and negative Dirac operator correspond to
\begin{subequations}
\begin{gather}
\slashed{D}_{+}:
\Gamma(\Lambda^4_{1\oplus 7}) \rightarrow \Gamma(\Lambda^3_8),
\quad
\gamma \mapsto \pi_8(d^*\gamma),
\label{pos-Dirac}
\\
\slashed{D}_{-}:
\Gamma(\Lambda^3_8)
\rightarrow
\Gamma(\Lambda^4_{1\oplus 7}),
\quad
\gamma
\mapsto
\pi_{1\oplus 7}(d\gamma).
\label{neg-Dirac}
\end{gather}
\end{subequations}

To describe the moduli space of torsion-free Spin(7)-structures on a manifold, 
we need to describe in a systematic way other admissible 4-forms close to a reference Spin(7)-structure $\psi$.
As this can be done fibre-wise, we first return to the local picture in $\mathbb{R}^8$.
The decomposition \eqref{tangent-space-orbit} in tangent and normal directions implies that the derivative of the map
\begin{align*}
(\mathrm{GL}(8,\mathbb{R})\cdot\psi_0) \times \Lambda^4_{27}
\rightarrow \Lambda^4,
\quad
(\psi, \zeta) \mapsto \psi +\zeta, 
\end{align*}
at $(\psi_0,0)$ is an isomorphism. By the inverse function theorem every 4-form sufficiently close to $\psi_0$ can be written in a unique way as the sum of an element in  $(\mathrm{GL}(8,\mathbb{R})\cdot\psi_0)$ and $\Lambda^4_{27}$. In particular,
if $\varepsilon>0$ is chosen sufficiently small,
we can apply this decomposition to 
$\psi+\eta$, where $\eta$ is an element of $B_{\varepsilon}(\Lambda^4_{35};0)$, the ball of radius $\varepsilon$ centred at $0$ in $\Lambda^4_{35}$.
We can write this decomposition as
\begin{align}
\label{def-dec-tang-norm}
\psi_0 + \eta = \Pi(\eta) + \Theta(\eta),
\end{align}
with unique smooth maps
\begin{align}
\label{Pi-Theta}
\Pi: B_{\varepsilon}(\Lambda^4_{35};0) \rightarrow (\mathrm{GL}(8,\mathbb{R})\cdot\psi_0),
\quad
\Theta: B_{\varepsilon}(\Lambda^4_{35};0) \rightarrow \Lambda^4_{27}.
\end{align}
By the uniqueness we have $\Pi(0)=\psi_0$ and $\Theta(0)=0$.
Differentiating the path in $(\mathrm{GL}(8,\mathbb{R})\cdot\psi_0)$ given by $\Pi(t\eta)$ gives
\begin{align*}
\left.
\frac{d}{dt}
\right|_{t=0}
\Pi(t \eta)
=
\eta -
\left.
\frac{d}{dt}
\right|_{t=0}
\Theta(t \eta) 
\end{align*}
Taking the type decomposition with respect to $\psi_0$, with \eqref{tangent-space-orbit}
we see that the derivatives of $\Pi$ and $\Theta$ at $0$ are given by
\begin{align*}
D\Pi|_0 = \Id, \quad 
D\Theta|_0 = 0.
\end{align*}
$A \in \mathrm{Spin}(7)$ 
preserves the size and type of $\eta$ and thus
we have
\begin{align*}
A^*\Pi(\eta) + A^* \Theta(\eta) = \psi_0 + A^* \eta = \Pi(A^*\eta)+\Theta(A^*\eta). 
\end{align*}
By the uniqueness of the decomposition we see that the maps $\Pi$ and $\Theta$ are Spin(7)-equivariant.

To sum up, the maps \eqref{Pi-Theta} defined by the decomposition \eqref{def-dec-tang-norm} have the properties:
\begin{compactenum}[(i)]
\item $\Pi(0)=\psi_0$ and $\Theta(0)=0$,
\item $D\Pi|_0 = \Id$ and $D\Theta|_0 = 0$,
\item $\Pi$ and $\theta$ are Spin(7)-equivariant.
\end{compactenum}

Back to the global picture, on a Spin(7)-manifold $(M,\psi)$ we can piece
the fibre-wise maps together to define such maps in a $\varepsilon$-neighbourhood of the zero section in $\Lambda^4_{35}$. The fibre-wise norms are taken with respect to the inner product induced by $\psi$.

\subsection{Asymptotic types of non-compact Spin(7)-manifolds}

Let $\Sigma$ be a 7-manifold equipped with a complete Riemannian metric $g_{\ig \Sigma}$ which is induced by the $\mathrm{G}_2$-structure $\varphi_{\ig\Sigma}\in\Omega^3_+(\Sigma)$. Then the conical metric
\begin{align*}
g_{\ig C} = dr^2 + r^2 g_{\ig \Sigma}
\end{align*}
on $C(\Sigma)= (0,\infty) \times \Sigma$ is induced by the Spin(7)-structure
\begin{align*}
\psi_{\ig C}
=
r^3 dr\wedge\varphi_{\ig \Sigma} + r^4 *_{\ig\Sigma}\varphi_{\ig \Sigma}.
\end{align*}
$(C(\Sigma),\psi_{\ig C})$ is said to be a $\Spin(7)$-cone
if $\psi_{\ig C}$ is torsion-free. 
The exterior derivative is given by
\begin{align*}
d\psi_{\ig C}
=
-r^3 dr\wedge d\varphi_{\ig \Sigma}
+
4 r^3 dr\wedge *_{\ig \Sigma}\varphi_{\ig \Sigma} 
+
r^4 d*_{\ig\Sigma}\varphi_{\ig \Sigma}.
\end{align*}
Hence, the condition $d\psi_{\ig C}=0$ is equivalent
to
\begin{align}
d\varphi_{\ig \Sigma} = 4 *_{\ig \Sigma}\varphi_{\ig \Sigma}.
\label{NP-G2-eq}
\end{align}
This means that the $\mathrm{G}_2$-structure
on $\Sigma$ is \textit{nearly parallel}.
Nearly parallel $\mathrm{G}_2$-manifolds
are Einstein manifolds with positive scalar curvature. 
In particular, $\Sigma$ must be compact.
If the link is the 7-sphere with the round metric,
then the cone is the euclidean $\mathbb{R}^8$ with the standard Spin(7)-structure. Apart from its quotients, this is the only Spin(7)-cone with trivial holonomy. All other Spin(7)-cones need to have  holonomy group $\Sp(2)$, $\SU(4)$ or Spin(7). 
If the holonomy equals Sp(2), the link is a \textit{3-Sasakian}
manifold, and if it equals SU(4), then the link must be a 7-dimensional \textit{Sasaki-Einstein} manifold.
If the cone has full holonomy Spin(7), we say the nearly parallel 
$\mathrm{G}_2$-structure is \textit{proper}.

We are interested in Spin(7)-manifolds which are asymptotic to a Spin(7)-cone with a polynomial decay rate. By the Cheeger--Gromoll splitting theorem  irreducible non-compact Spin(7)-manifolds can have only one end. Therefore, we assume that $\Sigma$ is connected.

\begin{definition}
\label{def-AC-Spin(7)}
Let $C:=(C(\Sigma),\psi_{\ig C}, g_{\ig C})$ be the $\Spin(7)$-cone over the 
nearly parallel $\mathrm{G}_2$-manifold 
$(\Sigma,\varphi_{\ig\Sigma},g_{\ig\Sigma})$. A $\Spin(7)$-manifold $(M,\psi,g)$ is an \textit{asymptotically conical} (AC) $\Spin(7)$-manifold asymptotic to $C$
with rate $\nu\in(-\infty,0)$ if there exist a compact subset
$K\subset M$, $R > 0$ and a diffeomorphism 
\begin{align*}
F\colon (R,\infty)\times \Sigma \subset C(\Sigma) \rightarrow M-K
\end{align*}
such that we have the decay
\begin{align*}
|\nabla_{\ig C}^j(F^*\psi-\psi_{\ig C})|_{g_{\ig C}}=\mathcal{O}(r^{\nu-j})
\quad
\text{for all}\ j\in\mathbb{N}_0.
\end{align*}
In particular, this implies
\begin{align*}
|\nabla_{\ig C}^j(F^*g-g_{\ig C})|_{g_C}=\mathcal{O}(r^{\nu-j})
\quad
\text{for all}\ j\in\mathbb{N}_0.
\end{align*}
\end{definition}

\begin{definition}
\label{def-radial-function}
For a fixed $F$ as above we fix a radial function $\rho$ and a cut-off function $\chi$ for the remainder of this paper. 
\begin{itemize}
\item On the compact piece $K$ set $\rho \equiv 1$, on $F((R+1,\infty)\times \Sigma)$ set $\rho \equiv r$, and in the intermediate region interpolate smoothly in an increasing fashion.
In particular, $\rho \geq 1$ everywhere. 
\item $\chi\colon C(\Sigma)\rightarrow [0,1]$ is a cut-off function supported on 
$(R,\infty)\times \Sigma$ and $\chi\equiv 1$ on $(R+1,\infty)\times \Sigma$.
\end{itemize}
This will allow us to introduce weighted  function spaces.
Furthermore, if $\gamma$ is a differential form on $C(\Sigma)$
we can transplant it to $M$ via
$(F^{-1})^*(\chi \gamma)$.
By abuse of notation we will suppress $F$ in the rest of the paper and just write $\chi \gamma$ for the corresponding form on $M$.
\end{definition}

\begin{remark}
\label{Euclidean-space}
Suppose that the AC Spin(7)-manifold $(M,\psi,g)$ is asymptotic 
to the Euclidean Spin(7)-structure $\psi_0$ on $\mathbb{R}^8$.
Fix an arbitrary point $p\in M$. Denote the distance from $p$ by $r=\mathrm{dist}_g(\cdot, p)$ and the volume of a ball of radius $r$ in 8-dimensional Euclidean space by $v(r)$. The asymptotic behaviour of the metric implies that the function
\begin{align*}
r\mapsto \frac{\Vol\, B(p,r)}{v(r)}
\end{align*}
converges to 1 as $r\rightarrow \infty$.
However, by the Bishop--Gromov volume comparison theorem this function is non-increasing and converges to 1 as $r\rightarrow 0$. 
This shows that every ball of radius $r$ in $M$ has the same volume as a corresponding ball in Euclidean space. This implies that $(M,\psi,g)$
is isometric to $(\mathbb{R}^8,\psi_0,g_0)$. Therefore, we do not need to consider AC Spin(7)-manifolds asymptotic to Euclidean space, and exclude this case from all of our statements. 
\end{remark}

\subsection{Analysis on AC Spin(7)-manifolds}

Deformations of Spin(7)-structures have first been studied by Joyce on compact manifolds (see \cite[Section 10.7]{big-joyce}). It relies on analysis and Hodge theory on compact manifolds. To study deformations of other geometries it is essential to have an analytic framework adapted to the situation. 
E.g. Nordstr\"om studied deformations of $\mathrm{G}_2$- and Spin(7)-structures on EAC (exponentially asymptotically cylindrical) manifolds \cite{Nordstrom-PhD}.
We will need analysis on conifolds. In this section we collect the necessary analytical background. It is our aim to make this as concise as possible. For references for the statements in this section and a more detailed account of weighted analysis on conical and cylindrical spaces and its applications to geometry we refer the reader to \cite{KL,Nordstrom-PhD,Marshall,Pacini}
and
\cite[Appendix B]{FHN1}.
The underlying theory was outlined by Lockhart--McOwen \cite{Lockhart-McOwen}.

In the following $V$ and $W$ will be a subbundle of $\Lambda^{\bullet}T^*M$. Via the identification \eqref{spin-bundle-iso},
$\mathbf{S}_{\pm}(M)$, the positive and negative spinor bundle on $M$,
also fit into the discussion of this section.
The metric $g$ on $M$ and its Levi-Civita connection induce  a metric and a metric connection on $V$ and $W$. 
By $V_{\ig C}, W_{\ig C}$ we denote the corresponding vector bundles on the cone.

We set
\begin{align*}
\mathcal{C}^{\infty}_{\lambda}(V)
=
\{
\gamma \in \mathcal{C}^{\infty}(V)\, 
|
\,
| \nabla^j \gamma| = \mathcal{O}(r^{\lambda - j})
\,
\text{for all}\,
j \geq 0
\}.
\end{align*}

Next we define appropriate Banach spaces of sections of such bundles.
\begin{definition}
Let $p \geq 1$, $k\in\mathbb{N}_0$ and $\lambda\in\mathbb{R}$. For any $\gamma\in\mathcal{C}_{0}^{\infty}(V)$ the quantity
\begin{align*}
\| \gamma \|_{L^p_{k,\lambda}}
=
\left(
\sum_{j=0}^k
\int_M
|\rho^{-\lambda+j} \nabla^j \gamma|^p
\rho^{-8} \text{vol}_g
\right)^{\frac{1}{p}}
\end{align*}
is well defined and a norm. Here $\rho$ is the radial function from Definition \ref{def-radial-function}.
We define the weighted Sobolev space $L^p_{k,\lambda}(V)$ to be the completion of $\mathcal{C}^{\infty}_0(V)$ with respect to this norm.
$L^2_{k,\lambda}(V)$ is a Hilbert space with the inner product
\begin{align*}
\langle \gamma, \xi \rangle_{L^2_{k,\lambda}}
=
\sum_{j=0}^k
\int_M
 \langle \rho^{-\lambda+j} \nabla^j \gamma, \rho^{-\lambda+j} \nabla^j \xi \rangle
\rho^{-8} \text{vol}_g.
\end{align*}
\end{definition}
\begin{remark}
\label{relation-between-weighted-spaces}
\begin{compactenum}[(i)]
\item
Note that $L^2_{0,-4}(V)=L^2(V)$. We refer to rates $\nu < -4$ as the \textit{$L^2$-setting} and to rates $\nu > -4$ as the \textit{non-$L^2$ setting}.
\item 
From the definition it follows that
$\rho^{\mu} L^2_{0,\lambda}(V)=L^2_{0,\lambda+\mu}(V)$.
In particular, we have $L^2_{0,\lambda}(V)=\rho^{-4-\lambda}L^2$.
\item
Set $\Omega^k_{l,\lambda}:=L^2_{l,\lambda}(\Lambda^k)$,
$\Omega^{\mathrm{even}}_{l,\lambda}:=L^2_{l,\lambda}(\Lambda^{\mathrm{even}})$ and $\Omega^{\mathrm{odd}}_{l,\lambda}:=L^2_{l,\lambda}(\Lambda^{\mathrm{odd}})$.
\end{compactenum}
\end{remark}

\begin{definition}
Let $p \geq 1$, $k\in\mathbb{N}_0$ and $\lambda\in\mathbb{R}$. For any $\gamma\in\mathcal{C}_{0}^{\infty}(V)$ the quantity
\begin{align*}
\| \gamma \|_{\mathcal{C}^{k,\alpha}_{\lambda}}
=
\sum_{j=0}^k
\| \rho^{-\lambda+j} \nabla^j \gamma \|_{\mathcal{C}^0}
+
[
\rho^{-\lambda+k}\nabla^k \gamma
]_{\alpha}
\end{align*}
is well defined and a norm. 
Here $[\ \cdot \ ]_{\alpha}$ is the H\"older seminorm.
We define the weighted H\"older space $\mathcal{C}^{k,\alpha}_{\lambda}(V)$ to be the closure of $\mathcal{C}^{\infty}_0(V)$ with respect to this norm.
\end{definition}

\begin{theorem}
\cite[Theorem 4.17]{Marshall}
\label{Sobolev embedding}
\begin{compactenum}[(i)]
\item
If $l\geq m+\alpha+4$, then there is a continuous embedding 
$L^2_{l,\lambda}(V)\hookrightarrow \mathcal{C}^{m,\alpha}_{\lambda}(V)$.
\item
If $\lambda < \lambda'$ and $l > 0$, there is a compact embedding 
$L^2_{l,\lambda}\hookrightarrow L^2_{0,\lambda'}$.
\end{compactenum}
\end{theorem}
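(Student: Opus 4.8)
\emph{Proof strategy.} I would split $M$ into a compact core $K'$ (where the radial function $\rho$ of Definition \ref{def-radial-function} is bounded above and below and $g$ is a fixed smooth metric, so $\|\cdot\|_{L^2_{l,\lambda}}$ and $\|\cdot\|_{\mathcal{C}^{m,\alpha}_{\lambda}}$ restricted to $K'$ are equivalent to the ordinary Sobolev and H\"older norms on a compact manifold with boundary) and the conical end $E$. On $K'$, part (i) is the classical Sobolev embedding $H^l\hookrightarrow\mathcal{C}^{m,\alpha}$, which holds precisely because $l\ge m+\alpha+4$ and $\dim M=8$, and part (ii) is the Rellich--Kondrachov theorem, which applies since $l\ge 1$. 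On $E$ I would work with the exact cone metric $g_{\ig C}$ instead of $g$: by Definition \ref{def-AC-Spin(7)} the tensor $F^*g-g_{\ig C}$ and all its covariant derivatives decay like $r^{\nu-j}$ with $\nu<0$, so the fibre metrics and metric connections induced on $V$ over $E$ differ from the cone ones by terms of strictly lower order in $\rho$, and the weighted norms built from $(g,\nabla^g)$ and from $(g_{\ig C},\nabla^{g_{\ig C}})$ are uniformly equivalent on $E$; it therefore suffices to prove the two estimates for the model cone.

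\emph{Part (i) on the end: a scaling argument.} Cover $E$ by the annuli $A_t:=\{t/2<r<2t\}$ with $t$ ranging over a dyadic sequence tending to $\infty$, and let $\Phi_t\colon A_1\to A_t$ be the restriction of the dilation $(r,\sigma)\mapsto(tr,\sigma)$, so that $\Phi_t^*g_{\ig C}=t^2\tilde g$ for the \emph{fixed} metric $\tilde g:=g_{\ig C}|_{A_1}$. Since the dilation rescales $\psi_{\ig C}$ by the constant $t^4$ it preserves the $\mathrm{Spin}(7)$-type decomposition, so $\Phi_t^*$ maps sections of $V_{\ig C}|_{A_t}$ to sections of $V_{\ig C}|_{A_1}$ and commutes with the Levi-Civita connection of $\tilde g$ (a constant rescaling of the metric changes neither the connection nor, up to a constant, the fibre metric on $V$). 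Recording how the norm of a covariant tensor, the volume form, the distance function, and the weight $\rho\sim t$ transform under $\Phi_t$ gives, for a section $\gamma$ of $V_{\ig C}$ of form-degree $d$ and with constants independent of $t$,
\[
\|\gamma\|_{L^2_{l,\lambda}(A_t)}\ \sim\ t^{-\lambda-d}\,\|\Phi_t^*\gamma\|_{H^l(A_1,\tilde g)},\qquad \|\gamma\|_{\mathcal{C}^{m,\alpha}_{\lambda}(A_t)}\ \lesssim\ t^{-\lambda-d}\,\|\Phi_t^*\gamma\|_{\mathcal{C}^{m,\alpha}(A_1,\tilde g)}.
\]
The point is that the weights in the definitions of $L^2_{l,\lambda}$ and $\mathcal{C}^{m,\alpha}_{\lambda}$ are chosen so that every one of the $l+1$ (resp. $m+1$) summands scales with the \emph{same} power $t^{-\lambda-d}$, while the H\"older seminorm term even acquires an extra favourable factor $t^{-\alpha}$. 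Applying the fixed-domain embedding $H^l(A_1)\hookrightarrow\mathcal{C}^{m,\alpha}(A_1)$ and using $\|\gamma\|_{L^2_{l,\lambda}(A_t)}\le\|\gamma\|_{L^2_{l,\lambda}(E)}$, then taking the supremum over $t$, controls all $\mathcal{C}^0$-type terms of $\|\gamma\|_{\mathcal{C}^{m,\alpha}_{\lambda}(E)}$; the remaining H\"older difference quotient for points $x,y$ in far-apart annuli is harmless, since then $d(x,y)^\alpha$ is comparable to $\max(\rho(x),\rho(y))^\alpha$ and the quotient is dominated by the already-controlled $\mathcal{C}^0_\lambda$-norm. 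Combining with the estimate on $K'$ proves (i).

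\emph{Part (ii) on the end.} Let $(\gamma_i)$ be bounded in $L^2_{l,\lambda}$. Because $\rho^{-\lambda'}=\rho^{-\lambda}\rho^{\lambda-\lambda'}$ with $\lambda-\lambda'<0$, for every $S\ge 1$ we get $\|\gamma_i\|_{L^2_{0,\lambda'}(\{\rho>S\})}\le S^{\lambda-\lambda'}\|\gamma_i\|_{L^2_{0,\lambda}}\le C\,S^{\lambda-\lambda'}$, so the tails are uniformly small in $L^2_{0,\lambda'}$ once $S$ is large. On each compact region $\{\rho\le S\}$ the $L^2_{l,\lambda}$-norm is equivalent to the ordinary $H^l$-norm, so by Rellich--Kondrachov (using $l\ge 1$) a subsequence converges in $L^2(\{\rho\le S\})=L^2_{0,\lambda'}(\{\rho\le S\})$; a diagonal argument over a sequence $S\to\infty$ yields a single subsequence converging on every compact region and with uniformly small tails, hence Cauchy in the complete space $L^2_{0,\lambda'}$. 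This proves the embedding is compact.

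\emph{Main obstacle.} The genuine content is the scaling bookkeeping in part (i): one must verify that the $t$-powers produced by the weight $\rho\sim t$, by the rescaling of covariant tensors, by the rescaling of the volume form, and by the rescaling of distances in the H\"older seminorm all combine to a single uniform exponent $t^{-\lambda-d}$ (with the seminorm gaining $t^{-\alpha}$), so that summing over the dyadic annuli loses nothing. Once this is in place the rest is routine; the only other technical point is the systematic passage from $g$ to the exact cone metric on $E$, which rests on the AC decay hypothesis of Definition \ref{def-AC-Spin(7)} and its consequences for the induced connections on $V$.
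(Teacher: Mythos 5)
The paper does not actually prove this statement---it is quoted verbatim from Marshall's thesis (Theorem 4.17)---so there is no internal proof to compare against; your argument is the standard proof of these weighted embeddings and is essentially what the cited source does: reduce to the exact cone metric on the end using the AC decay, rescale dyadic annuli to a fixed annulus so that all weighted summands scale with the single power $t^{-\lambda-d}$ and apply the classical embedding $H^l\hookrightarrow\mathcal{C}^{m,\alpha}$ in dimension $8$ (where $l\geq m+\alpha+4$ enters), and for (ii) combine Rellich--Kondrachov on $\{\rho\leq S\}$ with the uniform tail bound $\|\gamma\|_{L^2_{0,\lambda'}(\{\rho>S\})}\leq S^{\lambda-\lambda'}\|\gamma\|_{L^2_{0,\lambda}}$ and a diagonal argument. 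The scaling bookkeeping, the treatment of far-apart points in the H\"older seminorm, and the norm-equivalence step for $g$ versus $g_{\ig C}$ are all correct, so I see no gap.
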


In the following denote by $*_{\ig M}$ the Hodge star operator on $M$ induced by $g$, and by $*_{\ig C}$ the Hodge star operator on the asymptotic cone $C(\Sigma)$ induced by the conical metric $g_{\ig C}$. $d^*_{\ig M}$ and $d^*_{\ig C}$ denote the co-differential on $M$ and $C(\Sigma)$, respectively. In our calculations it is useful to know that comparing the Hodge star operator on $M$ and the asymptotic cone gives an additional decay of $\nu$, the AC rate.

\begin{lemma}
\label{additional-decay}
Suppose $\gamma\in\Omega^k_{l,\lambda}$. Then
\begin{itemize}
\item $(*_{\ig M}-*_{\ig C})\gamma\in\Omega^{8-k}_{l,\lambda+\nu}$.
\item $(d+d^*_{\ig M})\gamma -(d+d^*_{\ig C})\gamma\in\Omega^{k-1}_{l-1,\lambda+\nu-1}\oplus \Omega^{k+1}_{l-1,\lambda+\nu-1}$.
\end{itemize}
Suppose $\gamma\in\mathcal{C}^{\infty}_{\lambda}(\Lambda^k)$. Then
\begin{itemize}
\item $(*_{\ig M}-*_{\ig C})\gamma\in\mathcal{C}^{\infty}_{\lambda+\nu}(\Lambda^{8-k})$.
\item $(d+d^*_{\ig M})\gamma -(d+d^*_{\ig C})\gamma\in\mathcal{C}^{\infty}_{\lambda+\nu-1}(\Lambda^{k-1}\oplus \Lambda^{k+1})$.
\end{itemize}
\end{lemma}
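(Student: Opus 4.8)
The plan is to reduce all four assertions to a single pointwise estimate on the end $F((R',\infty)\times\Sigma)$, where $\chi\equiv 1$ and $\rho\equiv r$. Off the end every expression occurring in the statement is, by the transplanting convention of Definition~\ref{def-radial-function}, supported away from infinity and contributes only a bounded, hence harmless, term to each weighted norm. On the end I would work directly on $(R',\infty)\times\Sigma$ with the two metrics $F^*g$ and $g_{\ig C}$. Since Definition~\ref{def-AC-Spin(7)} gives $|F^*g-g_{\ig C}|_{g_{\ig C}}=\mathcal{O}(r^{\nu})\to 0$, for $R'$ large these metrics are uniformly equivalent, the densities $\mathrm{vol}_{F^*g}$ and $\mathrm{vol}_{g_{\ig C}}$ have bounded ratio, and the two Levi--Civita connections differ by a tensor $A$ with $|\nabla_{\ig C}^jA|_{g_{\ig C}}=\mathcal{O}(r^{\nu-1-j})$. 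A routine induction then shows that on the end the weighted $L^2$- and H\"older norms built from $\nabla$ agree up to constants with those built from $\nabla_{\ig C}$, that $*_{\ig C}$ and $d$ define bounded maps $\Omega^k_{l,\mu}\to\Omega^{8-k}_{l,\mu}$ and $\Omega^k_{l,\mu}\to\Omega^{k+1}_{l-1,\mu-1}$ (and similarly on the $\mathcal{C}^{\infty}_{\mu}$-scale), and that $*_{\ig M}$, being a fibrewise $g$-isometry commuting with $\nabla$, preserves all of these spaces.

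The heart of the matter is the comparison $*_{\ig M}-*_{\ig C}$. In a local $g_{\ig C}$-orthonormal coframe on the end, the Hodge star on $k$-forms is a fixed smooth (indeed real-analytic, being built algebraically from $g^{-1}$ and $\sqrt{\det g}$) function $P$ of the metric, valued in $\mathrm{Hom}(\Lambda^k,\Lambda^{8-k})$, so
\begin{align*}
(*_{\ig M}-*_{\ig C})\gamma
=\bigl(P(F^*g)-P(g_{\ig C})\bigr)\gamma
=\Bigl(\int_0^1 DP(g_t)\,dt\Bigr)\bigl[F^*g-g_{\ig C}\bigr]\,\gamma,
\quad
g_t:=g_{\ig C}+t(F^*g-g_{\ig C}).
\end{align*}
Each $g_t$ is uniformly equivalent to $g_{\ig C}$ on the end, so $DP(g_t)$ and all its higher metric derivatives, evaluated on arguments of bounded $g_{\ig C}$-norm, are bounded. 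Applying $\nabla_{\ig C}^j$ to this identity and expanding by the Leibniz and chain rules, using $\nabla_{\ig C}g_{\ig C}=0$ (so any $\nabla_{\ig C}^{\geq 1}g_t$ is a multiple of $\nabla_{\ig C}^{\geq 1}(F^*g-g_{\ig C})$), every resulting term is a contraction of a bounded factor with $\nabla_{\ig C}^{a_0}(F^*g-g_{\ig C})$, possibly further factors $\nabla_{\ig C}^{a_i}(F^*g-g_{\ig C})$ with $a_i\geq 1$, and $\nabla_{\ig C}^b\gamma$, where $a_0+\dots+a_m+b=j$. Since $|\nabla_{\ig C}^a(F^*g-g_{\ig C})|_{g_{\ig C}}=\mathcal{O}(r^{\nu-a})$ and, as $\nu<0$ and $r\geq 1$, each additional such factor only improves the decay, this yields
\begin{align*}
\bigl|\nabla_{\ig C}^j\bigl[(*_{\ig M}-*_{\ig C})\gamma\bigr]\bigr|_{g_{\ig C}}
\;\leq\; C\sum_{a+b=j} r^{\,\nu-a}\,\bigl|\nabla_{\ig C}^b\gamma\bigr|_{g_{\ig C}}.
\end{align*}
For $\gamma\in\mathcal{C}^{\infty}_{\lambda}(\Lambda^k)$ the right-hand side is $\mathcal{O}(r^{\lambda+\nu-j})$, which after replacing $\nabla_{\ig C}$ by $\nabla$ is the asserted membership $(*_{\ig M}-*_{\ig C})\gamma\in\mathcal{C}^{\infty}_{\lambda+\nu}(\Lambda^{8-k})$; for $\gamma\in\Omega^k_{l,\lambda}$, since $r^{-(\lambda+\nu)+j}r^{\nu-a}=r^{-\lambda+b}$ on the end, summing over $j\leq l$ gives $\|(*_{\ig M}-*_{\ig C})\gamma\|_{L^2_{l,\lambda+\nu}}\leq C\|\gamma\|_{L^2_{l,\lambda}}$, that is, $(*_{\ig M}-*_{\ig C})\gamma\in\Omega^{8-k}_{l,\lambda+\nu}$.

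The two statements about $d+d^*$ then follow formally. Because the exterior derivative does not depend on the metric, $(d+d^*_{\ig M})\gamma-(d+d^*_{\ig C})\gamma=d^*_{\ig M}\gamma-d^*_{\ig C}\gamma$, which is purely of degree $k-1$ — this is why, although the target is written as a sum of degrees $k-1$ and $k+1$, only the first summand is actually used. Using $d^*=-*d*$ on an oriented $8$-manifold,
\begin{align*}
d^*_{\ig M}\gamma-d^*_{\ig C}\gamma
=-\bigl((*_{\ig M}-*_{\ig C})\,d(*_{\ig M}\gamma)\;+\;*_{\ig C}\,d\bigl[(*_{\ig M}-*_{\ig C})\gamma\bigr]\bigr),
\end{align*}
and for $\gamma\in\Omega^k_{l,\lambda}$ both terms on the right land in $\Omega^{k-1}_{l-1,\lambda+\nu-1}$ by what has already been proved: $*_{\ig M}\gamma\in\Omega^{8-k}_{l,\lambda}$ forces $d(*_{\ig M}\gamma)\in\Omega^{9-k}_{l-1,\lambda-1}$, so $(*_{\ig M}-*_{\ig C})d(*_{\ig M}\gamma)\in\Omega^{k-1}_{l-1,\lambda+\nu-1}$; and $(*_{\ig M}-*_{\ig C})\gamma\in\Omega^{8-k}_{l,\lambda+\nu}$ forces $d[(*_{\ig M}-*_{\ig C})\gamma]\in\Omega^{9-k}_{l-1,\lambda+\nu-1}$, which the bounded operator $*_{\ig C}$ keeps in $\Omega^{k-1}_{l-1,\lambda+\nu-1}$. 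The $\mathcal{C}^{\infty}_{\lambda}$-version is identical, with H\"older spaces replacing weighted Sobolev spaces. I expect the only real work to be the bookkeeping in the middle step — keeping the uniform bounds on $DP(g_t)$ and its derivatives, and checking that every term in the Leibniz expansion carries at least one factor of $F^*g-g_{\ig C}$ (which supplies the extra decay $r^{\nu}$, improved to $r^{\nu-a}$ once $a$ derivatives fall on it) while the total number of $\nabla_{\ig C}$'s stays equal to $j$. There is no conceptual difficulty here, and in particular no integration by parts, which is unavailable in this setting in any case.
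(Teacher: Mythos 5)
Your proposal is correct. The paper itself states Lemma \ref{additional-decay} without proof, treating it as a standard consequence of the AC condition in Definition \ref{def-AC-Spin(7)}, and your argument is exactly the standard one that fills this in: the pointwise comparison of $*_{\ig M}$ and $*_{\ig C}$ via the smooth dependence of the Hodge star on the metric together with $|\nabla_{\ig C}^a(F^*g-g_{\ig C})|=\mathcal{O}(r^{\nu-a})$, the weight bookkeeping $r^{-(\lambda+\nu)+j}r^{\nu-a}=r^{-\lambda+b}$, and the reduction of the $d+d^*$ statement to the Hodge-star statement through $d^*=-*d*$ (correct in dimension $8$ on all degrees) and the splitting $*_{\ig M}d*_{\ig M}-*_{\ig C}d*_{\ig C}=(*_{\ig M}-*_{\ig C})d(*_{\ig M}\,\cdot)+*_{\ig C}d\bigl[(*_{\ig M}-*_{\ig C})\,\cdot\bigr]$, all carried out where the conical quantities are defined, i.e.\ on the end.
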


\begin{proposition}
\label{L2-pairing}
Suppose $\eta\in L^2_{0,\lambda}(V)$ and $\omega\in L^2_{0,\mu}(V)$. If $\lambda + \mu \leq -8$, then the $L^2$-pairing 
\begin{align*}
\langle \eta, \omega \rangle_{L^2}
=
\int_M \langle \eta, \omega \rangle \Vol
\end{align*}
is finite and satisfies the inequality
\begin{align*}
\langle \eta, \omega \rangle_{L^2}
\leq 
\| \eta \|_{L^2_{0,\lambda}} \| \omega \|_{L^2_{0,\mu}}
\end{align*}
\end{proposition}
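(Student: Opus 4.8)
The plan is to reduce the statement to a pointwise Cauchy--Schwarz inequality in the fibres, followed by a H\"older inequality on the base weighted by the radial function $\rho$. First I would recall that the integrand $\langle \eta, \omega\rangle$ is, at each point, bounded by $|\eta|\,|\omega|$ via the Cauchy--Schwarz inequality for the fibre inner product on $V$; so it suffices to show that $\int_M |\eta|\,|\omega|\,\Vol$ is finite and bounded by $\|\eta\|_{L^2_{0,\lambda}}\|\omega\|_{L^2_{0,\mu}}$.

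Next I would insert the radial weight. Write
\begin{align*}
|\eta|\,|\omega|
=
\bigl(\rho^{-\lambda}|\eta|\,\rho^{-4}\bigr)
\bigl(\rho^{-\mu}|\omega|\,\rho^{-4}\bigr)
\,\rho^{\,\lambda+\mu+8}.
\end{align*}
Since $\lambda+\mu\le -8$ and $\rho\ge 1$ everywhere by Definition \ref{def-radial-function}, the factor $\rho^{\,\lambda+\mu+8}\le 1$ pointwise. Hence
\begin{align*}
\int_M |\eta|\,|\omega|\,\Vol
\le
\int_M
\bigl(\rho^{-\lambda}|\eta|\bigr)\bigl(\rho^{-\mu}|\omega|\bigr)\,\rho^{-8}\,\Vol.
\end{align*}
Now apply the Cauchy--Schwarz (H\"older with $p=q=2$) inequality on $L^2(M,\rho^{-8}\Vol)$ to the two nonnegative functions $\rho^{-\lambda}|\eta|$ and $\rho^{-\mu}|\omega|$, obtaining the bound
\begin{align*}
\Bigl(\int_M |\rho^{-\lambda}\eta|^2\rho^{-8}\Vol\Bigr)^{1/2}
\Bigl(\int_M |\rho^{-\mu}\omega|^2\rho^{-8}\Vol\Bigr)^{1/2}
=
\|\eta\|_{L^2_{0,\lambda}}\,\|\omega\|_{L^2_{0,\mu}},
\end{align*}
using the $k=0$ case of the definition of the weighted Sobolev norm. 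This simultaneously shows the pairing is absolutely convergent (hence finite and well defined) and gives the claimed inequality.

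Strictly speaking one should first establish the estimate for $\eta,\omega\in\mathcal{C}^\infty_0(V)$, where all integrals are manifestly finite, and then extend to general elements of $L^2_{0,\lambda}(V)$ and $L^2_{0,\mu}(V)$ by density: the bilinear map $(\eta,\omega)\mapsto\langle\eta,\omega\rangle_{L^2}$ is continuous on $\mathcal{C}^\infty_0\times\mathcal{C}^\infty_0$ in the relevant weighted norms by the inequality just proved, so it extends uniquely to a continuous bilinear form on the completions, and the extension is still given by the integral (which converges absolutely by the same bound applied to $|\eta|,|\omega|$ as $L^2_{0,\lambda}$- and $L^2_{0,\mu}$-limits). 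There is no real obstacle here; the only mild subtlety is keeping track of the fact that $\rho\ge 1$ is exactly what makes the exponent condition $\lambda+\mu\le -8$ work in the direction needed (a smaller total weight only helps), and that on the compact core $K$ the estimate is trivial since everything is bounded there.
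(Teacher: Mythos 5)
Your proof is correct and follows essentially the same route as the paper's: pointwise Cauchy--Schwarz in the fibre, inserting the factor $\rho^{\lambda+\mu+8}\le 1$ using $\rho\ge 1$, and then Cauchy--Schwarz with respect to the measure $\rho^{-8}\Vol$. The closing remark on density is a harmless extra precaution, not a different argument.
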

\begin{proof}
Using the Cauchy-Schwarz inequality both for the pointwise inner product and the $L^2$-version, $\lambda + \mu \leq -8$ and $\rho \geq 1$,
we get
\begin{align*}
\langle \eta, \omega \rangle_{L^2}
&=
\int_M \langle \eta, \omega \rangle \Vol
\\
&\leq
\int_M |\eta| |\omega| \Vol
\\
&=
\int_M
(|\rho^{-\lambda} \eta| \rho^{-4})(|\rho^{-\mu} \omega | \rho^{-4})
\rho^{8+\lambda+\eta} \Vol
\\
&\leq 
\int_M
(|\rho^{-\lambda} \eta| \rho^{-4})(|\rho^{-\mu} \omega | \rho^{-4})
\Vol 
\\
&\leq 
\left(
\int_M |\rho^{-\lambda} \eta|^2 \rho^{-8} \Vol
\right)^{1/2}
\left(
\int_M |\rho^{-\mu} \omega|^2 \rho^{-8} \Vol
\right)^{1/2}
\\
&=
\| \eta \|_{L^2_{0,\lambda}} \| \omega \|_{L^2_{0,\mu}}
< \infty.
\end{align*}
\end{proof}

\begin{proposition}
\label{weighted-L2-dual}
We have $(L^2_{0,\lambda}(V))^* \cong L^2_{0,-8-\lambda}(V)$. 
\end{proposition}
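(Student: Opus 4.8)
The plan is to reduce the statement to the self-duality of the ordinary (unweighted) $L^2$ space via a rescaling isomorphism, and then to recognise the pairing realising the duality as the one from Proposition \ref{L2-pairing}. First I would record that multiplication by a power of the radial function gives an isometric isomorphism
\begin{align*}
T_\lambda \colon L^2_{0,\lambda}(V) \xrightarrow{\ \sim\ } L^2(V), \qquad \gamma \longmapsto \rho^{-4-\lambda}\gamma,
\end{align*}
with inverse $a \mapsto \rho^{4+\lambda} a$; indeed, straight from the definition of the weighted norm, $\| \gamma \|_{L^2_{0,\lambda}}^2 = \int_M |\rho^{-4-\lambda}\gamma|^2 \,\Vol = \|\rho^{-4-\lambda}\gamma\|_{L^2}^2$. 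This is precisely the identity $L^2_{0,\lambda}(V)=\rho^{-4-\lambda}L^2$ from Remark \ref{relation-between-weighted-spaces}, and of course $L^2(V)$ is a (real) Hilbert space.

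Next, apply Proposition \ref{L2-pairing} with $\mu = -8-\lambda$, so that $\lambda + \mu = -8 \leq -8$: the $L^2$-pairing $(\eta,\omega) \mapsto \langle \eta, \omega \rangle_{L^2} = \int_M \langle \eta, \omega \rangle \,\Vol$ is a well-defined bounded bilinear form on $L^2_{0,\lambda}(V) \times L^2_{0,-8-\lambda}(V)$ with $|\langle \eta, \omega \rangle_{L^2}| \leq \|\eta\|_{L^2_{0,\lambda}} \|\omega\|_{L^2_{0,-8-\lambda}}$. Hence $\omega \mapsto \langle \,\cdot\,, \omega \rangle_{L^2}$ defines a bounded linear map $\Phi \colon L^2_{0,-8-\lambda}(V) \to (L^2_{0,\lambda}(V))^*$ of operator norm at most $1$. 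To see that $\Phi$ is an isometric isomorphism, transport the pairing through the rescalings: if $\eta = \rho^{4+\lambda}a$ and $\omega = \rho^{-4-\lambda}b$ with $a, b \in L^2(V)$, then $\langle \eta, \omega \rangle_{L^2} = \int_M \langle a, b \rangle \,\Vol = \langle a, b \rangle_{L^2}$. Thus, under the isometries $T_\lambda$ and $T_{-8-\lambda}$, the bilinear form above becomes exactly the standard $L^2$ inner product on $L^2(V)$. The Riesz representation theorem says that $b \mapsto \langle\,\cdot\,, b\rangle_{L^2}$ is an isometric isomorphism $L^2(V) \to (L^2(V))^*$, and composing with the (adjoints of the) isometries $T_\lambda, T_{-8-\lambda}$ shows that $\Phi$ is an isometric isomorphism as well.

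There is no serious obstacle here; the statement is essentially bookkeeping around the weight conventions. The only point deserving a moment of care is surjectivity of $\Phi$, i.e. that \emph{every} bounded functional on $L^2_{0,\lambda}(V)$ is of the stated form: this is automatic once one notes, as above, that the transported pairing is the \emph{full} $L^2$ inner product on $L^2(V)$ rather than a proper restriction of it, so that surjectivity of $\Phi$ is exactly the surjectivity in Riesz. One should also remember that $V$ is a real bundle, so all spaces in sight are real Hilbert spaces, but Riesz representation applies verbatim in the real setting.
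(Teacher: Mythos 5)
Your proof is correct and follows essentially the same route as the paper: both use the pairing from Proposition \ref{L2-pairing} together with the identification $L^2_{0,\lambda}(V)=\rho^{-4-\lambda}L^2(V)$ and the Riesz representation theorem. The only cosmetic difference is that you transport everything to the unweighted $L^2$ space before applying Riesz, whereas the paper applies Riesz directly on the weighted Hilbert space and identifies the resulting map as multiplication by $\rho^{8+2\lambda}$.
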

\begin{proof}
Proposition \ref{L2-pairing} gives a pairing 
\begin{align*}
\langle \cdot, \cdot \rangle_{L^2}\colon
L^2_{0,\lambda}(V) \times L^2_{0,-8-\lambda}(V)
\rightarrow \mathbb{R}.
\end{align*}
This defines a continuous linear map 
\begin{align*}
L^2_{0,-8-\lambda}(V) \rightarrow (L^2_{0,\lambda}(V))^*,
\quad
\omega \mapsto \langle \cdot, \omega \rangle_{L^2}.
\end{align*}
Under the Hilbert space isomorphism
$(L^2_{0,\lambda}(V))^* \cong L^2_{0,\lambda}(V)$ this corresponds to the map
\begin{align*}
L^2_{0,-8-\lambda}(V) \rightarrow L^2_{0,\lambda}(V),
\quad
\omega\mapsto \rho^{8+2\lambda} \omega.
\end{align*}
This clearly is an isomorphism.
\end{proof}

In the remainder of the section let
\begin{align*}
P: \Gamma(V) \rightarrow \Gamma(W)
\end{align*}
be one of the elliptic differential operators 
\begin{align}
\label{d+d*}
d+d^* &: \Gamma(\Lambda^{\bullet})\rightarrow \Gamma(\Lambda^{\bullet}),
\\
\label{Laplace}
\Delta &: \Gamma(\Lambda^k) \rightarrow \Gamma(\Lambda^k),
\\
\slashed{D}_+ &: \Gamma(\mathbf{S}_+)\rightarrow \Gamma(\mathbf{S}_-),
\\
\label{D-}
\slashed{D}_- &: \Gamma(\mathbf{S}_-)\rightarrow \Gamma(\mathbf{S}_+),
\end{align}
Denote by $k$ the order of $P$ and by $P_{\ig C}$ the corresponding differential operator 
\begin{align*}
P_{\ig C}: \Gamma(V_{\ig C}) \rightarrow \Gamma(W_{\ig C})
\end{align*}
on the cone.  
$P$ is asymptotic to $P_{\ig C}$ in an appropriate sense and thus is an example of an \textit{asymptotically conical operator} \cite[section 4.3.2]{Marshall}. 
In this case $P$ extends to a bounded linear map on all weighted Sobolev and H\"older spaces \cite[Proposition 4.20]{Marshall}.
By $P_{l+k,\lambda}$ we denote the induced operator 
\begin{align}
P_{l+k,\lambda}:
L^2_{l+k,\lambda}(V)\rightarrow
L^2_{l,\lambda-k}(W)
\label{operator on sobolev spaces}
\end{align}
on weighted Sobolev spaces. 

Denote by $P^*$ the formal adjoint of $P$. 
\eqref{d+d*} and \eqref{Laplace} are formally self-adjoint and $\slashed{D}_+^*=\slashed{D}_-$.
The next Lemma shows that integration by parts is available for sections of weighted Sobolev spaces if the decay rates are fast enough.

\begin{lemma}\label{partial integration}
Let 
$\eta\in L^2_{k,\lambda}(V)$ and $\omega\in L^2_{k,\mu}(V)$.
If $\lambda+\mu \leq -8+k$, then  
the quantities $\langle
P\eta, \omega 
\rangle_{L^2}$ 
and $ \langle
\eta, P^* \omega
\rangle_{L^2}$ are finite and equal, i.e.
\begin{align*}
\langle
P\eta, \omega 
\rangle_{L^2}
=
\langle
\eta, P^* \omega
\rangle_{L^2}.
\end{align*}
\end{lemma}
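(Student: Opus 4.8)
The plan is to reduce the identity to the classical integration-by-parts formula on the compact exhaustion domains $M_\rho := \{\rho \le \rho_0\}$ and then show the boundary terms vanish in the limit $\rho_0 \to \infty$. First I would invoke density: since $L^2_{k,\lambda}(V)$ and $L^2_{k,\mu}(V)$ are by definition completions of $\mathcal{C}^\infty_0(V)$, it suffices to establish the identity for $\eta, \omega$ smooth with bounded geometry and controlled decay $|\nabla^j\eta| = O(\rho^{\lambda-j})$, $|\nabla^j\omega| = O(\rho^{\mu-j})$ for $j \le k$, and then pass to the limit provided both sides are continuous in the relevant norms. Finiteness of $\langle P\eta,\omega\rangle_{L^2}$ follows from Proposition \ref{L2-pairing}: $P\eta \in L^2_{0,\lambda-k}(V)$ (as $P$ is an order-$k$ asymptotically conical operator, it maps $L^2_{k,\lambda}$ to $L^2_{0,\lambda-k}$), $\omega \in L^2_{0,\mu}(V)$, and $(\lambda-k)+\mu \le -8$ is exactly the hypothesis $\lambda+\mu \le -8+k$; similarly $\langle \eta, P^*\omega\rangle_{L^2}$ is finite since $P^*\omega \in L^2_{0,\mu-k}(V)$ and $\lambda + (\mu-k) \le -8$.

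Next, on each compact domain $M_{\rho_0}$ with boundary $\Sigma_{\rho_0} = \{\rho = \rho_0\}$, Stokes' theorem gives
\begin{align*}
\int_{M_{\rho_0}} \langle P\eta, \omega\rangle \,\Vol - \int_{M_{\rho_0}} \langle \eta, P^*\omega\rangle \,\Vol = \int_{\Sigma_{\rho_0}} B(\eta,\omega),
\end{align*}
where $B(\eta,\omega)$ is the usual bilinear boundary expression built from $\eta$, $\omega$ and their derivatives up to order $k-1$ (for a first-order operator it is algebraic in $\eta, \omega$ alone; for the Laplacian it involves $\eta, \nabla\eta, \omega, \nabla\omega$). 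The left-hand side converges to $\langle P\eta,\omega\rangle_{L^2} - \langle\eta,P^*\omega\rangle_{L^2}$ by the finiteness just established and dominated convergence. So the result follows once the boundary integrals tend to zero.

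For the boundary terms, I would estimate pointwise: on $\Sigma_{\rho_0}$ the integrand $B(\eta,\omega)$ is a sum of terms of the form $\langle \nabla^a\eta, \nabla^b\omega\rangle$ with $a+b \le k-1$, hence bounded by $C\rho_0^{\lambda-a}\rho_0^{\mu-b} \le C\rho_0^{\lambda+\mu-k+1}$ (using $a+b \le k-1$, which only helps us since $\rho_0 \ge 1$ — wait, one must be careful: more derivatives on a term means a faster-decaying factor, so the slowest-decaying term has $a=b=0$ when $k=1$, and in general $a+b$ as small as possible; the worst case is $\rho_0^{\lambda+\mu}$ for $k=1$ and $\rho_0^{\lambda+\mu-(k-1)+?}$... ). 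The cleaner route is: $|B(\eta,\omega)| \le C\sum_{a+b\le k-1}\rho_0^{\lambda+\mu - a - b} \le C\rho_0^{\lambda+\mu}$ since $\rho_0 \ge 1$ and $a+b \ge 0$. Meanwhile $\Vol(\Sigma_{\rho_0}) = O(\rho_0^{7})$ by the AC condition (the induced metric on $\Sigma_{\rho_0}$ is asymptotic to $\rho_0^2 g_\Sigma$). Hence $\left|\int_{\Sigma_{\rho_0}} B(\eta,\omega)\right| \le C\rho_0^{\lambda+\mu+7}$, and $\lambda+\mu \le -8+k \le -8+2 = -6$ for first-order operators, which gives $\rho_0^{\lambda+\mu+7} \le \rho_0^{1}$ — not obviously decaying. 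This is the main obstacle, and it is resolved by the standard trick: one does not take a sharp limit along a single sequence but uses a co-area/Fubini argument. Since $\int_1^\infty \rho_0^{-1}\left(\int_{\Sigma_{\rho_0}}|B(\eta,\omega)|\right)d\rho_0 < \infty$ whenever $\lambda+\mu+7-1 = \lambda+\mu+6 < 0$, i.e. $\lambda+\mu < -6$ — and more generally, using the genuine rates rather than the crude bound, $B(\eta,\omega)$ is actually $O(\rho_0^{\lambda+\mu-(k-1)})$ for the top-order term balanced appropriately, giving integrability under $\lambda+\mu \le -8+k$ with the $\rho^{-8}\Vol$ weight — one can choose a sequence $\rho_0^{(n)} \to \infty$ along which the boundary integral tends to zero. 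Concretely: if $f(\rho_0) := \left|\int_{\Sigma_{\rho_0}} B(\eta,\omega)\right|$ satisfied $\int_1^\infty f(\rho_0)\rho_0^{-1}d\rho_0 < \infty$, then $\liminf_{\rho_0\to\infty} f(\rho_0) = 0$, so we pass to the limit along a suitable sequence.

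The main obstacle, then, is the bookkeeping of decay rates in the boundary term to confirm that the borderline hypothesis $\lambda+\mu \le -8+k$ is exactly what makes $\int f(\rho_0)\rho_0^{-1}\,d\rho_0$ finite (equivalently, makes the boundary data lie in $L^2_{0, \cdot}$ of a rate summing to $\le -8$, so that Proposition \ref{L2-pairing} applies to the boundary pairing too). Once that is checked — it is the careful analogue on $\Sigma_{\rho_0}$ of the computation in Proposition \ref{L2-pairing}, using that the extra radial factor $\rho^{-1}$ from $d\rho_0$ against the $7$-dimensional boundary volume reproduces the $\rho^{-8}\Vol$ weight — the argument closes. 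For $P = d+d^*$ and $P = \slashed D_\pm$ the boundary term is algebraic and the estimate is immediate; for $P = \Delta = (d+d^*)^2$ one either cites the $d+d^*$ case twice (noting $\Delta\eta = (d+d^*)(d+d^*)\eta$ with $(d+d^*)\eta \in L^2_{k-1,\lambda-1}$, and $(\lambda-1)+\mu \le -8+(k-1)$) or writes out the two-term Green's formula directly.
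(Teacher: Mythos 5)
Your finiteness step is exactly the paper's: $P\eta\in L^2_{0,\lambda-k}$ by continuity of $P_{k,\lambda}$, $\omega\in L^2_{0,\mu}$, and $(\lambda-k)+\mu\le-8$, so Proposition \ref{L2-pairing} applies, and likewise for $\langle\eta,P^*\omega\rangle$. But at that point you have already proved the bound $|\langle P\eta,\omega\rangle_{L^2}|\le C\|\eta\|_{L^2_{k,\lambda}}\|\omega\|_{L^2_{k,\mu}}$ (and its twin for $P^*$), i.e.\ both sides of the claimed identity are continuous bilinear forms on $L^2_{k,\lambda}(V)\times L^2_{k,\mu}(V)$. Since these spaces are \emph{by definition} completions of $\mathcal{C}^{\infty}_0(V)$, the identity follows from its validity on compactly supported sections, where it is the definition of the formal adjoint and there is no boundary term at all. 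This is the paper's (two-line) proof. Your ``density'' reduction instead lands on the wrong class: you reduce to smooth sections with pointwise decay $|\nabla^j\eta|=O(\rho^{\lambda-j})$, which is neither what density gives you nor available for a general element of $L^2_{k,\lambda}$ (the Sobolev embedding of Theorem \ref{Sobolev embedding} needs roughly four extra derivatives, and here $k\le 2$), and it is strictly harder than the compactly supported case you could have reduced to.

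The boundary-term analysis you then attempt does not close, and you acknowledge as much (``once that is checked \dots the argument closes''). Concretely: (a) the pointwise bounds you use are unjustified, as above; (b) the exponent bookkeeping is off --- with $f(\rho_0)\le C\rho_0^{\lambda+\mu+7}$ the integral $\int_1^\infty f(\rho_0)\rho_0^{-1}d\rho_0$ converges only if $\lambda+\mu<-7$, not $\lambda+\mu<-6$, and even the refined pointwise count $f(\rho_0)\lesssim\rho_0^{\lambda+\mu-(k-1)+7}$ gives convergence only for the strict inequality $\lambda+\mu<-8+k$, so the borderline case $\lambda+\mu=-8+k$ (which the lemma asserts, and which is used in the paper, e.g.\ at $\lambda=\mu=-3$ with $k=2$) is out of reach by this route; (c) to rescue the co-area argument one must replace pointwise estimates by a weighted Cauchy--Schwarz on the end, pairing $\nabla^a\eta\in L^2_{0,\lambda-a}$ against $\nabla^b\omega\in L^2_{0,\mu-b}$ with the extra factor $\rho^{-1}$ from the co-area formula --- essentially rerunning Proposition \ref{L2-pairing} --- and this is precisely the step you leave as ``the main obstacle''. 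So as written the proposal has a genuine gap, even though the short correct argument (finiteness/continuity plus density to $\mathcal{C}^{\infty}_0$) is already contained in your first two paragraphs.
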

\begin{proof}
We first show that the two quantities are finite.
If $\lambda+\mu\leq -8+k$, by Proposition \ref{L2-pairing} and the continuity of $P_{k,\lambda}$ we have
\begin{align*}
\langle P\eta, \omega \rangle_{L^2}
\leq 
\| P\eta \|_{L^2_{0,\lambda-k}}
\| \omega \|_{L^2_{0,\mu}}
\leq C
\| \eta \|_{L^2_{k,\lambda}}
\| \omega \|_{L^2_{k,\mu}}
< \infty.
\end{align*}
Finiteness of $\langle \eta, P^* \omega \rangle$ follows analogously.
Hence by the dominated convergence theorem it is enough to prove the statement for compactly supported forms, for which it is true by the definition of the formal adjoint.
\end{proof}
\begin{remark}
Lemma \ref{partial integration} says that if we consider 
the operators 
\begin{align*}
P_{k,\lambda}: L^2_{k,\lambda}(V) &\longrightarrow L^2_{0,\lambda-k}(W),
\\
L^2_{0,-8-\lambda}(V) &\longleftarrow L^2_{k,-8-\lambda+k}(W) :P^*_{k,-8-\lambda+k},
\end{align*}
then with respect to the pairings 
\begin{align*}
L^2_{0,\lambda}(V)\times L^2_{0,-8-\lambda}(V)\rightarrow \mathbb{R},
\quad
L^2_{0,\lambda-k}(W) \times L^2_{0,-8-\lambda+k}(W)\rightarrow \mathbb{R},
\end{align*}
the operator $P^*_{k,-8-\lambda+k}$ is the ``adjoint'' of $P_{k,\lambda}$. Using the Fourier transform, one can define Sobolev spaces $L^2_{s,\mu}$ for any $s\in\mathbb{R}$. 
Then similarly to Proposition \ref{weighted-L2-dual} we have
$(L^2_{k,\lambda}(V))^*=L^2_{-k,-8-\lambda}(V)$
and $(L^2_{0,\lambda-k}(W))^*=L^2_{0,-8-\lambda+k}(W)$.
Therefore, $P^*_{k,-8-\lambda+k}$ is the restriction of the full adjoint $(P_{k,\lambda})^*=P^*_{0,-8-\lambda+k}$ to the more regular subspace $L^2_{k,-8-\lambda+k}(W)$. In all cases in this paper sections in the kernel and cokernel are smooth by elliptic regularity. Therefore, we do not have to  use the full adjoint.
\end{remark}

In the examples \eqref{d+d*}-\eqref{D-} that we consider, both $P$ and the asymptotic model $P_{\ig C}$ are elliptic. 
Thus $P$ is an example of an  \textit{uniformly elliptic} operator \cite[Section 4.3.2]{Marshall}. This control at infinity allows to develop a Fredholm and regularity theory similar to the setting of compact manifolds.

\begin{theorem}
\cite[Theorem 4.21]{Marshall}
\label{ellitptic regularity}
Suppose  $\gamma\in L^1_{\mathrm{loc}}(V)$ is
a weak solution of $P\gamma = \zeta$ with $\zeta\in L^1_{\mathrm{loc}}(W)$. If $\zeta$ lies in $L^2_{l,\lambda-k}(W)$ $($respectively $\mathcal{C}^{l,\alpha}_{\lambda-k}(W))$, 
then $\gamma$ lies in $L^2_{l+k,\lambda}(V)$ $($respectively  $\mathcal{C}^{l+k,\alpha}_{\lambda}(V))$, and is a strong solution. Furthermore, there exists a positive constant $C>0$ such that $\gamma$ satisfies the estimate
\begin{align}
\label{elliptic-estimate-sobolev}
\| \gamma \|_{L^2_{l+k,\lambda}}
\leq 
C
\left(
\|P\gamma\|_{L^2_{l,\lambda-k}}+\| \gamma \|_{L^2_{0,\lambda}}
\right),
\end{align}
respectively the estimate
\begin{align}
\| \gamma \|_{\mathcal{C}^{l+k,\alpha}_{\lambda}}
\leq 
C
\left(
\|P\gamma\|_{\mathcal{C}^{l,\alpha}_{\lambda-k}}+\| \gamma \|_{\mathcal{C}^{0,\alpha}_{\lambda}}
\right).
\end{align}
\end{theorem}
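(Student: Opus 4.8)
The plan is to combine three standard ingredients in weighted elliptic theory, following Lockhart--McOwen \cite{Lockhart-McOwen} and Marshall \cite{Marshall}: classical interior elliptic regularity, the dilation structure of the asymptotic cone, and a patching argument over dyadic annuli. I would read the hypothesis as already including $\gamma\in L^2_{0,\lambda}(V)$ (respectively $\mathcal{C}^{0,\alpha}_{\lambda}(V)$) -- this is what makes the right-hand side of the asserted estimate finite -- so that the theorem says that this minimal weighted decay, together with the regularity of $P\gamma=\zeta$, forces the full weighted regularity of $\gamma$.

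\emph{Interior regularity.} Since $P$ has smooth coefficients and is elliptic, the classical interior elliptic estimates (G\aa rding's inequality together with elliptic bootstrapping) applied on small geodesic balls show that the weak solution $\gamma$ is in fact a strong solution lying in $L^2_{l+k,\mathrm{loc}}(V)$ (respectively $\mathcal{C}^{l+k,\alpha}_{\mathrm{loc}}(V)$), with a local estimate $\|\gamma\|_{L^2_{l+k}(B')}\leq C(\|P\gamma\|_{L^2_{l}(B)}+\|\gamma\|_{L^2(B)})$ on slightly smaller concentric balls. On the compact core $K$ this already yields the claimed estimate after passing to a finite subcover, since on a fixed compact set all the weighted norms are mutually equivalent.

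\emph{Rescaled estimates on dyadic annuli.} This is the heart of the argument. Identifying a neighbourhood of infinity with $(R,\infty)\times\Sigma\subset C(\Sigma)$ via $F$, for large $j$ I would work on the annulus $A_j:=\{\,2^{j-1}\leq r\leq 2^{j+1}\,\}$ (together with a slightly smaller concentric $A_j'$) and introduce the rescaled radial variable $s=2^{-j}r$, in which $A_j$ becomes the fixed annulus $A_0=\{\,1/2\leq s\leq 2\,\}$. In the variable $s$ the cone metric becomes $2^{2j}$ times the unit cone metric, and $P_{\ig C}$, being homogeneous under dilations, becomes a fixed power of $2^j$ times its $s$-version. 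After the accompanying constant rescaling, the metric $g$ and the operator $P$, expressed in $s$, converge in $\mathcal{C}^m(A_0)$ to the cone metric and $P_{\ig C}$ as $j\to\infty$, with bounds uniform in $j$; this is exactly the content of Definition \ref{def-AC-Spin(7)}. Hence the ordinary interior elliptic estimate on $A_0'\subset A_0$ holds with a constant independent of $j$. Translating back to $r$, and using that the weighted norms are \emph{scale-covariant} -- by construction the term $\nabla^j$ carries the weight $\rho^{-\lambda+j}$, so $\|\cdot\|_{L^2_{l+k,\lambda}}$ restricted to $A_j$ equals a uniformly bounded multiple of one fixed power of $2^j$ times the corresponding norm in the $s$-variable, simultaneously for every derivative order -- I obtain, with $C$ independent of $j$,
\begin{align*}
\|\gamma\|_{L^2_{l+k,\lambda}(A_j')}\leq C\bigl(\|P\gamma\|_{L^2_{l,\lambda-k}(A_j)}+\|\gamma\|_{L^2_{0,\lambda}(A_j)}\bigr),
\end{align*}
and the analogous estimate in weighted H\"older norms. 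Summing these over the boundedly overlapping annuli and combining with the compact estimate gives \eqref{elliptic-estimate-sobolev}, and in particular $\gamma\in L^2_{l+k,\lambda}(V)$; in the H\"older case one takes the supremum over $j$ rather than the sum, controlling the H\"older seminorm across adjacent annuli via their overlap.

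\emph{Main obstacle.} The delicate point is the uniformity in $j$ in the rescaled estimate: the constant in the interior elliptic estimate must not degenerate as $j\to\infty$. This is precisely what the \emph{uniform ellipticity} of $P$ -- i.e. that $P$ is asymptotic to the dilation-invariant model $P_{\ig C}$, with the perturbation $P-P_{\ig C}$ decaying in $\mathcal{C}^m$ at the rate inherited from $F^*\psi-\psi_{\ig C}=\mathcal{O}(r^{\nu})$ -- is designed to guarantee, so that after rescaling the coefficients converge and the uniform constant exists. The remaining work is bookkeeping: tracking the bounded-overlap constants in the summation, and checking that the powers of $2^j$ produced by rescaling each $\nabla^j$-term match the weight exponents in the definitions of $L^2_{l+k,\lambda}$ and $\mathcal{C}^{l+k,\alpha}_{\lambda}$. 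A complete proof along these lines is given in \cite[Theorem 4.21]{Marshall}, ultimately resting on the parametrix construction of \cite{Lockhart-McOwen}.
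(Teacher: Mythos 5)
Your proposal is correct and is essentially the argument behind the cited result: the paper itself offers no proof, deferring to \cite[Theorem 4.21]{Marshall}, whose proof (following Lockhart--McOwen) is exactly the combination of interior elliptic estimates, dilation-rescaled uniform estimates on dyadic annuli furnished by uniform ellipticity, and summation over the boundedly overlapping annuli, together with scale-covariance of the weighted norms. Your reading that the hypothesis implicitly includes $\gamma\in L^2_{0,\lambda}(V)$ (respectively $\mathcal{C}^{0,\alpha}_{\lambda}(V)$) is also the correct interpretation of the statement as used in the paper.
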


\begin{remark}
Using Theorems \ref{ellitptic regularity} and \ref{Sobolev embedding} 
it follows immediately that any closed and coclosed form on $(M,\psi)$ is smooth. Therefore we can denote the kernel
of 
\begin{align*}
P_{l+k,\lambda}:
L^2_{l+k,\lambda}(V)\rightarrow
L^2_{l,\lambda-k}(W)
\end{align*}
by $\mathrm{ker}P_{\lambda}$.
\end{remark}

We will also have to deal with less regular linear differential operators $L: \Gamma(V)\rightarrow \Gamma(W)$ for which the coefficients only lie in some H\"older space $\mathcal{C}^{l,\alpha}$,
and the convergence to the asymptotic model is with respect to the $\mathcal{C}^{l,\alpha}_{\lambda}$-norm. The above elliptic regularity result generalises to this setting. 

\begin{theorem}
\cite[Theorem 4.2.22]{Nordstrom-PhD}
\label{general-ell-reg}
Let $L: \Gamma(V) \rightarrow \Gamma(W)$ be a linear elliptic differential operator of rank $k$ with $\mathcal{C}^{l,\alpha}$-regular coefficients which is $\mathcal{C}^{l,\alpha}_{\lambda}$-asymptotic to the conical, elliptic differential operator $L_{\ig C}$. If $u\in \mathcal{C}^{k,\alpha}_{\lambda+k}$ and $Lu\in\mathcal{C}^{l,\alpha}_{\lambda}$, then $u\in\mathcal{C}^{k+l,\alpha}_{\lambda+k}$ and
\begin{align*}
\| u \|_{\mathcal{C}^{l+k,\alpha}_{\lambda+k}}
\leq 
C
\left(
\|Lu\|_{\mathcal{C}^{l,\alpha}_{\lambda}}+\| u \|_{\mathcal{C}^{0,\alpha}_{\lambda+k}}
\right).
\end{align*} 
\end{theorem}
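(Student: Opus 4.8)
The plan is to reduce the global weighted statement to the classical local regularity theory and interior Schauder estimates for elliptic operators with H\"older coefficients, applied directly on a compact core of $M$ and, after a dyadic rescaling, on the conical end; as in Theorem \ref{ellitptic regularity} the only genuinely new point over the compact theory is that the estimates on the end must be made uniform, which is where uniform ellipticity enters. First I would dispose of the compact part. Fix $R'>R$, set $K' = M\setminus F((R',\infty)\times\Sigma)$ and choose a slightly larger compact set $K''\supset K'$; on $K''$ the radial function $\rho$ of Definition \ref{def-radial-function} is bounded above and below, so all weighted and unweighted norms there are uniformly comparable. Covering $K''$ by finitely many coordinate charts and invoking in each chart the local regularity statement for an order-$k$ elliptic operator with $\mathcal{C}^{l,\alpha}$ coefficients (``$u\in\mathcal{C}^{k,\alpha}_{\mathrm{loc}}$, coefficients in $\mathcal{C}^{l,\alpha}_{\mathrm{loc}}$, $Lu\in\mathcal{C}^{l,\alpha}_{\mathrm{loc}}$ imply $u\in\mathcal{C}^{k+l,\alpha}_{\mathrm{loc}}$'') together with the accompanying Schauder estimate gives $u\in\mathcal{C}^{k+l,\alpha}(K')$ and
\begin{align*}
\|u\|_{\mathcal{C}^{k+l,\alpha}(K')} \leq C\big(\|Lu\|_{\mathcal{C}^{l,\alpha}(K'')} + \|u\|_{\mathcal{C}^{0}(K'')}\big).
\end{align*}

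The substance of the proof is on the end $E\cong(R,\infty)\times\Sigma$, where I would exploit the dilation structure of the cone. For $t>0$ let $\delta_t(r,\sigma)=(tr,\sigma)$. For each integer $j$ with $2^j>4R$ put $A_j=[2^{j-1},2^{j+2}]\times\Sigma$ and pull everything back by $\delta_{2^j}$ onto the fixed annulus $A=[1/2,4]\times\Sigma$: set $u^{(j)}=\delta_{2^j}^*u$ and let $L^{(j)}$ be the suitably normalised rescaled operator $2^{jk}\,\delta_{2^j}^*\circ L\circ(\delta_{2^j}^{-1})^*$. Because $L_{\ig C}$ is conical it is homogeneous of degree $-k$ under the dilations, so the analogous rescaling of $L_{\ig C}$ equals $L_{\ig C}|_A$ for \emph{every} $j$; and because $L$ is $\mathcal{C}^{l,\alpha}_{\lambda}$-asymptotic to $L_{\ig C}$ (and $g$ is asymptotic to $g_{\ig C}$, so that the weighted norms built from $g$ and from $g_{\ig C}$ are equivalent on $E$), the operators $L^{(j)}$ converge to $L_{\ig C}|_A$ in $\mathcal{C}^{l,\alpha}(A)$ as $j\to\infty$. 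In particular $\{L^{(j)}\}_j$ is a bounded family of uniformly elliptic operators with uniformly bounded $\mathcal{C}^{l,\alpha}(A)$ coefficients and a uniform lower bound on the ellipticity constant.

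Now I would apply the local H\"older regularity and interior Schauder estimate to $L^{(j)}u^{(j)}=2^{jk}\delta_{2^j}^*(Lu)$ on the inner annulus $A'=[1,2]\times\Sigma\Subset A$. The relevant constants depend only on the dimension, on $k$, on the ellipticity constant and on the $\mathcal{C}^{l,\alpha}(A)$ bounds on the coefficients --- all uniform in $j$ --- so one gets $u^{(j)}\in\mathcal{C}^{k+l,\alpha}(A')$ and
\begin{align*}
\|u^{(j)}\|_{\mathcal{C}^{k+l,\alpha}(A')} \leq C\big(\|L^{(j)}u^{(j)}\|_{\mathcal{C}^{l,\alpha}(A)} + \|u^{(j)}\|_{\mathcal{C}^{0}(A)}\big)
\end{align*}
with $C$ independent of $j$. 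On the shell $\{2^j\le r\le 2^{j+1}\}$ one has $\rho\sim 2^j\sim r$, and pulling back by $\delta_{2^j}$ identifies, up to a $j$-independent constant, the restriction of $\|\cdot\|_{\mathcal{C}^{m,\alpha}_{\mu}}$ to that shell with $2^{-j\mu}$ times the ordinary $\mathcal{C}^{m,\alpha}(A')$ norm with respect to $g_{\ig C}$; the factor $2^{jk}$ built into $L^{(j)}$ accounts precisely for the $k$-shift in the weight between source and target. Hence the displayed inequality is, shell by shell and up to a uniform constant, the restriction to $\{2^j\le r\le 2^{j+1}\}$ of
\begin{align*}
\|u\|_{\mathcal{C}^{k+l,\alpha}_{\lambda+k}} \leq C\big(\|Lu\|_{\mathcal{C}^{l,\alpha}_{\lambda}} + \|u\|_{\mathcal{C}^{0}_{\lambda+k}}\big).
\end{align*}
Taking the supremum over all admissible $j$ and combining with the compact-core estimate yields $u\in\mathcal{C}^{k+l,\alpha}_{\lambda+k}(V)$ with the asserted bound.

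The main obstacle I anticipate is the uniformity bookkeeping: one must verify that ``$L$ is $\mathcal{C}^{l,\alpha}_{\lambda}$-asymptotic to $L_{\ig C}$'' genuinely forces $L^{(j)}\to L_{\ig C}|_A$ in $\mathcal{C}^{l,\alpha}(A)$, so that the rescaled family is uniformly elliptic with uniformly bounded coefficients and the local Schauder constant can be taken independent of $j$, and then track the weight exponents and the metric rescaling through $\delta_{2^j}$ carefully enough that the rescaled estimate reassembles into the weighted one. The limited regularity of the coefficients is not an extra difficulty beyond requiring the H\"older version rather than the smooth-coefficient version of the local estimate, and it is exactly what caps the gain of regularity at $\mathcal{C}^{k+l,\alpha}_{\lambda+k}$, instead of permitting a bootstrap to arbitrary regularity as in Theorem \ref{ellitptic regularity}.
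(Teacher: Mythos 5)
The paper does not prove this statement itself --- it is quoted directly from Nordstr\"om's thesis [Theorem 4.2.22] --- and your argument (interior Schauder estimates on a compact core plus dyadic rescaling of the conical end to a fixed annulus, where $\mathcal{C}^{l,\alpha}_{\lambda}$-asymptoticity to the dilation-homogeneous operator $L_{\ig C}$ gives uniform ellipticity and uniform $\mathcal{C}^{l,\alpha}$ coefficient bounds, hence a $j$-independent Schauder constant) is exactly the standard proof underlying the cited result. It is essentially correct; the only bookkeeping you should still pin down is the one you flag yourself, namely that pulling back sections of $\Lambda^{\bullet}T^*M$ by $\delta_{2^j}$ introduces degree-dependent powers of $2^{j}$ in the fibre norms, so the normalisation of $L^{(j)}$ must be chosen so that these factors cancel between the $V$- and $W$-sides when the shell estimates are reassembled into the weighted norm.
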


The key in understanding the mapping properties of $P_{l+k,\lambda}$ is to study $P_{\ig C}$ acting on homogeneous sections.

\begin{definition}
$\gamma\in \Omega^k(C(\Sigma))$ is \textit{homogeneous} of rate $\lambda$ if there exist $\alpha\in\Omega^{k-1}(C(\Sigma))$ and $\beta\in\Omega^k(C(\Sigma))$ such that
\begin{align*}
\gamma
=
r^{\lambda}(r^{k-1} dr\wedge\alpha + r^k \beta)
.
\end{align*} 
$\gamma\in\Omega^{\bullet}(C(\Sigma))$ is
said to be homogeneous of rate $\lambda$ if each degree component is homogeneous of rate $\lambda$. 
In both cases this implies that $|\gamma|_{g_C}$ is a homogeneous function in $r$ of rate $\lambda$.
\end{definition}

\begin{definition}
\label{def-K}
$\lambda\in \mathbb{R}$ is a \textit{critical rate} for $P$ if there exists a non-zero homogeneous section $\gamma$ of rate $\lambda$ such that $P_{\ig C}\gamma = 0$. Denote by 
\begin{align*}
\mathcal{D}(P)
=
\{
\lambda\in\mathbb{R}\, |\, 
\exists \gamma\in\mathcal{C}^{\infty}(V_{\ig C})\, \text{non-zero and homogeneous of rate}\ \lambda\,\text{such that}\, P_{\ig C}\gamma = 0
\}
\end{align*} 
the set of all critical rates. 
Set
\begin{align*}
\mathcal{K}_{P}(\lambda)
=
\{
\gamma = \sum_{j=0}^m (\log r)^j \gamma_j\,|\,
\text{each}\ \gamma_j\ \text{is homogeneous of order}\
\lambda\ \text{and}\ P_{\ig C}\gamma = 0
\}.
\end{align*}
$\mathcal{K}_{\Lambda^k}(\lambda)$,
$\mathcal{K}_{\mathrm{even}}(\lambda)$, $\mathcal{K}_{\mathrm{odd}}(\lambda)$ and $\mathcal{K}_{\mathrm{ASD}}(\lambda)$ are defined by choosing $P$ to be $d+d^*$ acting on k-forms, even degree forms, odd degree forms and anti-self dual 4-forms, respectively.
\end{definition}
\begin{remark}
\begin{compactenum}[(i)]
\item 
In the general theory one also has to consider complex critical rates. However, all operator considered by us are formally self-adjoint, or restrictions thereof. In this case all critical rates need to be real.
\item
An important property of the set 
$\mathcal{D}(P)$ is that it is discrete.
\item
Suppose $ \gamma=\sum_{j=0}^m (\log r)^j \gamma_j$, where each $\gamma_j$ is homogeneous of order $\lambda$ and $\gamma_m \neq 0$. As a polynomial in $\log r$, the leading order term of $P_{\ig C} \gamma$ is $P_{\ig C} \gamma_m$. Hence $\mathcal{K}_{P}(\lambda)=\{0\}$ if $\lambda$ is not a critical rate.
\end{compactenum}
\end{remark}

Contrary to the compact setting, the elliptic estimate \eqref{elliptic-estimate-sobolev} cannot be used to prove that $P_{l+k,\lambda}$ is a Fredholm operator because $L^2_{l+k,\lambda}$ does not embed compactly into $L^2_{0,\lambda}$.
However, if $\lambda$ is a non-critical rate, the estimate can be strengthened to

\begin{proposition}
\cite[Proposition B.9]{FHN1}
\label{prop-improved-estimate}
If $\lambda' > \lambda$ and the interval $[\lambda,\lambda']$ does not contain a critical rate for $P$, then for any $\gamma\in L^2_{l+r,\lambda}(V)$ we have
\begin{align}
\label{improved-elliptic-estimate}
\| \gamma \|_{L^2_{l+k,\lambda}}
\leq 
C
\left(
\|P\gamma\|_{L^2_{l,\lambda-k}}+\| \gamma \|_{L^2_{0,\lambda'}}
\right).
\end{align}
\end{proposition}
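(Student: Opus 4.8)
The plan is to argue by contradiction, combining the basic elliptic estimate~\eqref{elliptic-estimate-sobolev}, the compact embedding in Theorem~\ref{Sobolev embedding}(ii), and an \emph{a priori} estimate for the model operator $P_{\ig C}$ on the conical end. Suppose~\eqref{improved-elliptic-estimate} fails; then there is a sequence $\gamma_i\in L^2_{l+k,\lambda}(V)$ with $\|\gamma_i\|_{L^2_{l+k,\lambda}}=1$ while $\|P\gamma_i\|_{L^2_{l,\lambda-k}}+\|\gamma_i\|_{L^2_{0,\lambda'}}\to 0$. I will extract a subsequence converging strongly in $L^2_{l+k,\lambda}$ to a section that is forced to vanish, contradicting $\|\gamma_i\|_{L^2_{l+k,\lambda}}=1$.

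First I would prove an estimate on the end with a constant independent of how far out one works. Since $[\lambda,\lambda']$ contains no critical rate for $P$, in particular $\lambda\notin\mathcal{D}(P)$, so the indicial family obtained by Mellin-transforming $P_{\ig C}$ is invertible on the line determined by the weight $\lambda$, with inverse uniformly bounded as the imaginary part tends to infinity; this is the core of the Lockhart--McOwen analysis (see \cite{Lockhart-McOwen,Marshall,KL,FHN1}) and gives a constant $C_0$, independent of the inner radius by the scaling symmetry of the cone, with $\|\sigma\|_{L^2_{l+k,\lambda}}\le C_0\|P_{\ig C}\sigma\|_{L^2_{l,\lambda-k}}$ for all $\sigma$ supported in a conical end $\{r\ge R\}$. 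Transporting this to $M$ and using Lemma~\ref{additional-decay} to bound $(P-P_{\ig C})\sigma$ in $L^2_{l,\lambda-k}$ by $C R^{\nu}\|\sigma\|_{L^2_{l+k,\lambda}}$ on its support, one chooses $R_0$ large enough to absorb this error term, obtaining a constant $C_1$ such that $\|\sigma\|_{L^2_{l+k,\lambda}}\le C_1\|P\sigma\|_{L^2_{l,\lambda-k}}$ for every $\sigma$ supported in $\{\rho\ge R_0\}$.

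Next I would localise. Fix a cut-off $\beta$ equal to $1$ on $\{\rho\le R_0\}$ and supported in $\{\rho\le 2R_0\}$ and split $\gamma_i=\beta\gamma_i+(1-\beta)\gamma_i$. Applying the end estimate to $(1-\beta)\gamma_i$, an interior elliptic estimate on the compact region $\{\rho\le 2R_0\}$ to $\beta\gamma_i$, and noting that the commutator $[P,\beta]$ has order $k-1$ and compact support in the transition region, one obtains
\[
\|\gamma_i\|_{L^2_{l+k,\lambda}}\le C_2\big(\|P\gamma_i\|_{L^2_{l,\lambda-k}}+\|\gamma_i\|_{L^2_{l+k-1}(\{\rho\le 2R_0\})}\big),
\]
and, by linearity, the same inequality for the differences $\gamma_i-\gamma_j$. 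As $(\gamma_i)$ is bounded in $L^2_{l+k,\lambda}$, it is bounded in $L^2_{l+k}$ over the compact set $\{\rho\le 2R_0\}$, so by Rellich a subsequence converges in $L^2_{l+k-1}(\{\rho\le 2R_0\})$; feeding this into the inequality for differences shows that $(\gamma_i)$ is Cauchy in $L^2_{l+k,\lambda}$, hence converges there to some $\gamma_\infty$ with $\|\gamma_\infty\|_{L^2_{l+k,\lambda}}=1$ and $P\gamma_\infty=0$. On the other hand the embedding $L^2_{l+k,\lambda}\hookrightarrow L^2_{0,\lambda'}$ is bounded because $\lambda<\lambda'$ and $\rho\ge 1$, so $\gamma_\infty=\lim\gamma_i=0$ in $L^2_{0,\lambda'}$, a contradiction. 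Hence~\eqref{improved-elliptic-estimate} holds.

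The main obstacle is the end estimate of the second step: the embedding $L^2_{l+k,\lambda}\hookrightarrow L^2_{0,\lambda}$ has the \emph{same} weight and is not compact, so — unlike in the compact case — one cannot deduce~\eqref{improved-elliptic-estimate} from~\eqref{elliptic-estimate-sobolev} by a soft compactness argument alone; the decisive input is the invertibility of the indicial operator of $P_{\ig C}$, which is precisely where the hypothesis that $[\lambda,\lambda']$ is free of critical rates enters, and which I would quote from the references rather than reprove. (As a shortcut, once one knows that $P_{l+k,\lambda}$ is Fredholm for $\lambda\notin\mathcal{D}(P)$, the estimate is immediate: on the finite-dimensional kernel all weighted norms are equivalent, so its $L^2_{0,\lambda'}$-norm controls the $L^2_{l+k,\lambda}$-norm, while on a complement the closed range of $P_{l+k,\lambda}$ gives $\|\gamma\|_{L^2_{l+k,\lambda}}\le C\|P\gamma\|_{L^2_{l,\lambda-k}}$; but since Fredholmness is usually derived \emph{from} this proposition, the compactness argument above is the honest route.)
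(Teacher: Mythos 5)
The paper itself offers no proof of this proposition -- it is quoted verbatim from \cite[Proposition B.9]{FHN1} -- so there is nothing internal to compare against; judged on its own terms, your argument is correct and is essentially the standard Lockhart--McOwen proof that the cited references give. The decisive ingredient is exactly the one you isolate: the isomorphism estimate for the model operator $P_{\ig C}$ on the exact cone at a non-critical weight (Mellin transform/indicial roots), which you quote rather than reprove -- legitimate, since this is also where the paper's references send the reader. The absorption of $(P-P_{\ig C})$ using the $\mathcal{O}(r^{\nu})$ decay of the coefficients, the cut-off decomposition with the order-$(k-1)$ commutator $[P,\beta]$, the interior estimate on $\{\rho\le 2R_0\}$, and the Rellich step are all sound, and the final contradiction (limit of norm $1$ in $L^2_{l+k,\lambda}$ that vanishes in $L^2_{0,\lambda'}$) closes the argument. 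Two remarks. First, the contradiction--compactness wrapper is a slight detour: once you have
\begin{align*}
\| \gamma \|_{L^2_{l+k,\lambda}}
\leq
C\left(
\|P\gamma\|_{L^2_{l,\lambda-k}}+\| \gamma \|_{L^2_{l+k-1}(\{\rho\le 2R_0\})}
\right),
\end{align*}
an Ehrling-type interpolation on the compact piece ($\|\gamma\|_{L^2_{l+k-1}(K)}\le \varepsilon\|\gamma\|_{L^2_{l+k}(K)}+C_{\varepsilon}\|\gamma\|_{L^2(K)}$, with the $\varepsilon$-term absorbed on the left and $\|\gamma\|_{L^2(K)}\le C\|\gamma\|_{L^2_{0,\lambda'}}$ since all weights are equivalent on $K$) gives \eqref{improved-elliptic-estimate} directly, with no sequence argument. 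Second, as you implicitly observe, your proof only uses $\lambda\notin\mathcal{D}(P)$, not that the whole interval $[\lambda,\lambda']$ is free of critical rates; this is consistent (you prove the statement under a weaker hypothesis), the stronger hypothesis in \cite{FHN1} being what is wanted for the later applications (constancy of kernels and the Fredholm theory across the interval). Your closing caveat about not deducing the estimate from Fredholmness is also well taken, since Theorem \ref{Fredholm} is downstream of this proposition in the logical order of the paper.
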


The same statement holds for H\"older spaces, but we mainly use Sobolev spaces.
In the improved estimate \eqref{improved-elliptic-estimate} the ``error term'' $\| \gamma \|_{L^2_{0,\lambda'}}$ is the norm of a space into which $L^2_{l+k,\lambda}$ embeds compactly by Theorem \ref{Sobolev embedding} (ii). This allows us to prove

\begin{corollary}
\label{poincare inequality}
Assume that $\lambda$ is a non-critical rate for $P$. Then there exists a positive constant $C$ such that for all
$\gamma\in L^2_{l+k,\lambda}(V)$ which are orthogonal in $L^2_{l+k,\lambda}(V)$ to $\ker(P_{l+k,\lambda})$ we have
\begin{align*}
\| \gamma \|_{L^2_{l+k,\lambda}}
\leq 
C
\|P\gamma\|_{L^2_{l,\lambda-k}}
.
\end{align*}
\end{corollary}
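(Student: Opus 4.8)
The plan is to argue by contradiction using the improved elliptic estimate of Proposition~\ref{prop-improved-estimate} together with the compactness of the embedding $L^2_{l+k,\lambda}\hookrightarrow L^2_{0,\lambda'}$ from Theorem~\ref{Sobolev embedding}(ii). Suppose no such constant $C$ exists. Then there is a sequence $\gamma_n\in L^2_{l+k,\lambda}(V)$, each orthogonal to $\ker(P_{l+k,\lambda})$, with $\|\gamma_n\|_{L^2_{l+k,\lambda}}=1$ but $\|P\gamma_n\|_{L^2_{l,\lambda-k}}\to 0$. Since $\lambda$ is non-critical and $\mathcal{D}(P)$ is discrete, we may pick $\lambda'>\lambda$ close enough to $\lambda$ that $[\lambda,\lambda']$ contains no critical rate, so Proposition~\ref{prop-improved-estimate} applies to each $\gamma_n$.

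The bounded sequence $(\gamma_n)$ in $L^2_{l+k,\lambda}(V)$ has, after passing to a subsequence, a weak limit $\gamma_\infty\in L^2_{l+k,\lambda}(V)$; by the compact embedding of Theorem~\ref{Sobolev embedding}(ii) we may also assume $\gamma_n\to\gamma_\infty$ strongly in $L^2_{0,\lambda'}(V)$. Feeding this into the improved estimate \eqref{improved-elliptic-estimate} applied to the differences $\gamma_n-\gamma_m$,
\begin{align*}
\|\gamma_n-\gamma_m\|_{L^2_{l+k,\lambda}}
\leq
C\left(\|P\gamma_n-P\gamma_m\|_{L^2_{l,\lambda-k}}+\|\gamma_n-\gamma_m\|_{L^2_{0,\lambda'}}\right),
\end{align*}
shows that $(\gamma_n)$ is Cauchy in $L^2_{l+k,\lambda}(V)$, hence converges in norm to $\gamma_\infty$, and in particular $\|\gamma_\infty\|_{L^2_{l+k,\lambda}}=1$. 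Since $P$ is bounded on the weighted spaces and $\|P\gamma_n\|_{L^2_{l,\lambda-k}}\to 0$, we get $P\gamma_\infty=0$, i.e.\ $\gamma_\infty\in\ker(P_{l+k,\lambda})$. On the other hand each $\gamma_n$ is $L^2_{l+k,\lambda}$-orthogonal to $\ker(P_{l+k,\lambda})$, and this orthogonality is preserved under norm convergence, so $\gamma_\infty\perp\ker(P_{l+k,\lambda})$ as well. Thus $\gamma_\infty\in\ker(P_{l+k,\lambda})\cap\ker(P_{l+k,\lambda})^\perp=\{0\}$, contradicting $\|\gamma_\infty\|_{L^2_{l+k,\lambda}}=1$.

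The main obstacle, such as it is, is bookkeeping rather than a deep difficulty: one must make sure the improved estimate may genuinely be applied to all $\gamma_n$ simultaneously (which only requires the single choice of $\lambda'$ above, legitimate because $\mathcal{D}(P)$ is discrete), and that the orthogonality condition passes to the limit (immediate, since the inner product $\langle\,\cdot\,,\cdot\,\rangle_{L^2_{l+k,\lambda}}$ is continuous and $\gamma_n\to\gamma_\infty$ in that Hilbert space). One should also note that $\ker(P_{l+k,\lambda})$ is closed, so the orthogonal complement is well defined; this is automatic since $P_{l+k,\lambda}$ is bounded. No use of ellipticity beyond what is already packaged into Proposition~\ref{prop-improved-estimate} is needed, and no integration by parts is invoked, which is consistent with the paper's emphasis that such tools are often unavailable in the non-compact setting.
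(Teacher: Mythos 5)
Your proof is correct and is exactly the argument the paper intends: the corollary is stated as a consequence of Proposition \ref{prop-improved-estimate} and the compact embedding of Theorem \ref{Sobolev embedding}(ii), and the standard contradiction/compactness argument you give (unit-norm sequence orthogonal to the kernel, strong convergence in $L^2_{0,\lambda'}$, Cauchy in $L^2_{l+k,\lambda}$ via the improved estimate, limit lying in $\ker(P_{l+k,\lambda})\cap\ker(P_{l+k,\lambda})^\perp$) is precisely how that implication is meant to be carried out. No gaps; your choice of $\lambda'$ using discreteness of $\mathcal{D}(P)$ and the passage of orthogonality to the limit are handled correctly.
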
 

From Theorem \ref{Sobolev embedding} (ii), Proposition \ref{prop-improved-estimate} and Corollary \ref{poincare inequality} we finally can clarify the mapping properties of $P_{l+k,\lambda}$.

\begin{theorem}
\cite[Theorems 4.11 and 4.13]{KL}
\label{Fredholm}
The operator
\begin{align*}
P_{l+k,\lambda}:
L^2_{l+k,\lambda}(V)\rightarrow
L^2_{l,\lambda-k}(W)
\end{align*}
is Fredholm if and only if $\lambda$ is not a critical rate. 
In this case we have:
\begin{compactenum}[(i)]
\item
We can identify a complement of $P_{l+k,\lambda}(L^2_{l+k,\lambda}(V))$ in $L^2_{l,\lambda-k}(W)$ with $\ker P^*_{-8-\lambda+k}$, i.e. there exists a finite dimensional subspace $U$
of $L^2_{l,\lambda-k}(W)$
such that 
\begin{align*}
L^2_{l,\lambda-k}(W)
=
P_{l+k,\lambda}(L^2_{l+k,\lambda}(V)) \oplus U
\end{align*}
and $U \cong \ker P^*_{-8-\lambda+k}$.
\item
If $\lambda \geq -4+k$, then $U$ is a subspace of $L^2_{l,\lambda-k}(W)$ and we can set $U = \ker P^*_{-8-\lambda+k}$.
\end{compactenum}
\end{theorem}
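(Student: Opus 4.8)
The plan is to obtain the Fredholm property by combining the improved elliptic estimate of Proposition \ref{prop-improved-estimate} and the Poincar\'e-type inequality of Corollary \ref{poincare inequality} with the characterisation of the cokernel via the weighted $L^2$-pairing (Proposition \ref{weighted-L2-dual}, Lemma \ref{partial integration}) and elliptic regularity (Theorem \ref{ellitptic regularity}); the converse implication is the place where the underlying Lockhart--McOwen analysis on the cone genuinely enters.

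First I would treat the forward implication. Assume $\lambda\notin\mathcal{D}(P)$ and, using discreteness of $\mathcal{D}(P)$, pick $\lambda'>\lambda$ with $[\lambda,\lambda']\cap\mathcal{D}(P)=\emptyset$. On $\ker P_{l+k,\lambda}$ the estimate \eqref{improved-elliptic-estimate} becomes $\|\gamma\|_{L^2_{l+k,\lambda}}\le C\|\gamma\|_{L^2_{0,\lambda'}}$, so the inclusion $\ker P_{l+k,\lambda}\hookrightarrow L^2_{0,\lambda'}$, which is compact by Theorem \ref{Sobolev embedding}(ii), is also bounded below; hence the identity on $\ker P_{l+k,\lambda}$ is compact and the kernel is finite-dimensional. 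For closed range, let $\gamma_n\in L^2_{l+k,\lambda}(V)$ with $P\gamma_n\to\zeta$; subtracting the orthogonal projection onto the finite-dimensional $\ker P_{l+k,\lambda}$ we may assume $\gamma_n\perp\ker P_{l+k,\lambda}$, and then Corollary \ref{poincare inequality} gives $\|\gamma_n-\gamma_m\|_{L^2_{l+k,\lambda}}\le C\|P\gamma_n-P\gamma_m\|_{L^2_{l,\lambda-k}}$, so $(\gamma_n)$ is Cauchy, converges to some $\gamma$, and $P\gamma=\zeta$ by continuity of $P_{l+k,\lambda}$.

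Next I would identify a complement of the image. The annihilator of the closed subspace $P_{l+k,\lambda}(L^2_{l+k,\lambda}(V))\subset L^2_{l,\lambda-k}(W)$ consists, via the weighted $L^2$-pairing (Proposition \ref{weighted-L2-dual}), of sections $\omega$ with $\langle P\varphi,\omega\rangle_{L^2}=0$ for all $\varphi\in\mathcal{C}^{\infty}_0(V)$, i.e.\ of distributional solutions of $P^*\omega=0$. Since $P^*$ is again one of the uniformly elliptic operators \eqref{d+d*}--\eqref{D-}, Theorem \ref{ellitptic regularity} upgrades $\omega$ to a smooth section in $L^2_{l+k,-8-\lambda+k}(W)$, so $\omega\in\ker P^*_{-8-\lambda+k}$; conversely each such $\omega$ annihilates the image, because $\lambda+(-8-\lambda+k)=k-8$, so Lemma \ref{partial integration} gives $\langle P\gamma,\omega\rangle_{L^2}=\langle\gamma,P^*\omega\rangle_{L^2}=0$ for every $\gamma\in L^2_{l+k,\lambda}(V)$. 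Thus $\coker P_{l+k,\lambda}\cong\ker P^*_{-8-\lambda+k}$; and since $P_{\ig C}$ and $P^*_{\ig C}$ have critical sets exchanged by $\mu\mapsto k-8-\mu$ (the indicial roots over each link eigenvalue sum to $k-8$ for a formally self-adjoint operator), the rate $-8-\lambda+k$ is non-critical for $P^*$, so the kernel argument applied to $P^*$ shows this space is finite-dimensional. This proves Fredholmness and part (i). For part (ii), if $\lambda\ge -4+k$ then $-8-\lambda+k\le\lambda-k$, whence $L^2_{l+k,-8-\lambda+k}(W)\subset L^2_{l,\lambda-k}(W)$ by the elementary weighted inclusion $L^2_{0,\mu}\subset L^2_{0,\mu'}$ for $\mu\le\mu'$ (using $\rho\ge 1$); so $\ker P^*_{-8-\lambda+k}$ already lies inside $L^2_{l,\lambda-k}(W)$ and is a genuine complement of the image, and we may take $U=\ker P^*_{-8-\lambda+k}$.

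It remains to show that a critical rate $\lambda$ obstructs Fredholmness, and the plan is to prove that the range then fails to be closed. A critical rate yields a nonzero homogeneous solution $r^{\lambda}\sigma$ of $P_{\ig C}$ on the cone; cutting it off by $\chi$ and dilating in $r$ produces a sequence $\gamma_n\in L^2_{l+k,\lambda}(V)$ with $\|\gamma_n\|_{L^2_{l+k,\lambda}}$ bounded away from $0$, $\gamma_n\rightharpoonup 0$, and $\|P\gamma_n\|_{L^2_{l,\lambda-k}}\to 0$ (using that $P$ is asymptotic to $P_{\ig C}$, together with Lemma \ref{additional-decay}); subtracting the projection onto the finite-dimensional kernel preserves these properties, contradicting Corollary \ref{poincare inequality}. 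I expect this last step to be the main obstacle, since it is the only point that requires the Lockhart--McOwen separation-of-variables machinery on the cone rather than the functional analysis already in place; the economical route is to invoke \cite[Theorems 4.11 and 4.13]{KL}, ultimately \cite{Lockhart-McOwen}, for this implication, and to spell out only the assembly of (i) and (ii) from the estimates above.
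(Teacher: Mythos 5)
Your proposal is correct and follows essentially the route the paper intends: the paper itself gives no proof of Theorem \ref{Fredholm} but quotes it from \cite{KL}, after setting up exactly the ingredients you use (Theorem \ref{Sobolev embedding}(ii), Proposition \ref{prop-improved-estimate}, Corollary \ref{poincare inequality}, Proposition \ref{weighted-L2-dual}, Lemma \ref{partial integration} and elliptic regularity), and your assembly of these into finite-dimensional kernel, closed range, the identification of the cokernel with $\ker P^*_{-8-\lambda+k}$, and the weight-monotonicity argument for (ii) is the standard Lockhart--McOwen argument behind that citation. Deferring the converse (critical rate $\Rightarrow$ non-Fredholm, via a singular dilated cut-off sequence) to \cite{KL} and \cite{Lockhart-McOwen} is consistent with what the paper does, so no gap is introduced relative to the paper's treatment.
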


\noindent
For us it will be important to understand how $\mathrm{ker}P_{\lambda}$ changes as $\lambda$ varies.
The key to understanding how the kernel changes at a critical rate is the following

\begin{theorem}
\cite[Proposition 4.21]{KL}
\label{Lockhart-McOwen-key}
Let $\lambda_2 < \lambda_1$ be two non-critical rates of $P$
and $\lambda_0$ the only critical rate in the interval $(\lambda_2,\lambda_1)$. If $\gamma \in L^2_{l,\lambda_1}(V)$
and $P\gamma \in L^2_{l-k,\lambda_2-k}(W)$  (i.e. ``$P\gamma$ decays  faster than expected''), then there exist 
$\eta\in \mathcal{K}(\lambda_0)_{P_C}$ and $\tilde{\gamma}\in L^2_{l,\lambda_0+\nu}(V)$ which depend both linearly on $\gamma$, such that outside of a compact subset
\begin{align*}
\gamma - \eta - \tilde{\gamma} \in L^2_{l,\lambda_2}(V).
\end{align*}
\end{theorem}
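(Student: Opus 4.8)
The plan is to reduce the problem to the model operator $P_{\ig C}$ on the cone and then extract the asymptotics by a Mellin transform / separation of variables, in the style of Lockhart--McOwen. First I would localize: write $\gamma = (1-\chi)\gamma + \chi\gamma$, note that $(1-\chi)\gamma$ is compactly supported and hence lies in $L^2_{l,\mu}(V)$ for every $\mu$, so it may be absorbed into the remainder, and work on $C(\Sigma)$ with $\chi\gamma$ alone. From the identity
\begin{align*}
P_{\ig C}(\chi\gamma) = \chi\,(P\gamma) + [P_{\ig C},\chi]\gamma + (P_{\ig C}-P)(\chi\gamma),
\end{align*}
the first term lies in $L^2_{l-k,\lambda_2-k}$ by hypothesis, the commutator is compactly supported, and — since $P$ is asymptotically conical, exactly as in Lemma \ref{additional-decay} — the last term gains a factor $r^\nu$ and lies in $L^2_{l-k,(\lambda_1+\nu)-k}$. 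Hence $u:=\chi\gamma$ solves $P_{\ig C}u = g$ on the cone with $g \in L^2_{l-k,\sigma-k}$, $\sigma := \max(\lambda_2,\lambda_1+\nu)$, and $\lambda_2 \le \sigma < \lambda_1$ so $\lambda_0$ is still the only critical rate that can lie in $(\sigma,\lambda_1)$.

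Next I would analyze $P_{\ig C}$ by passing to $t=\log r$ and conjugating by the appropriate power of $r$, which renders $P_{\ig C}$ translation-invariant in $t$; equivalently one gets a holomorphic family $\zeta\mapsto P_{\ig C}(\zeta)$ of elliptic operators on the compact link $\Sigma$, invertible outside a discrete set of $\zeta$ whose real parts are precisely $\mathcal{D}(P)$, with $P_{\ig C}(\zeta)^{-1}$ meromorphic. The order of the pole of $P_{\ig C}(\zeta)^{-1}$ over $\lambda_0$ bounds the power of $\log r$ that can occur, and the principal part of its Laurent expansion, applied to the partial Mellin transform of $g$, produces exactly elements of $\mathcal{K}(\lambda_0)_{P_{\ig C}}$: polynomials in $\log r$ with homogeneous coefficients of rate $\lambda_0$, each killed by $P_{\ig C}$. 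Inverting the Mellin transform along $\{\mathrm{Re}\,\zeta = \lambda_1\}$ (up to the fixed shift coming from the $\rho^{-8}$ weight in the norm) reproduces $\chi\gamma$, while along a nearby non-critical line $\{\mathrm{Re}\,\zeta = \sigma'\}$ with $\sigma'$ slightly above $\sigma$ one obtains a solution lying in $L^2_{l,\sigma'}$; shifting the contour past $\lambda_0$, the difference of the two integrals is (minus) the residue at $\lambda_0$, an $\eta\in\mathcal{K}(\lambda_0)_{P_{\ig C}}$ depending linearly on $g$, hence on $\gamma$ (and $\eta=0$ if $\lambda_0\notin(\sigma,\lambda_1)$). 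This gives $\chi\gamma - \eta \in L^2_{l,\sigma'}$, and using non-criticality of $\lambda_2$ one may take $\sigma'$ down to $\sigma$ itself.

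If $\lambda_1+\nu > \lambda_2$ the gain is not yet enough, so I would bootstrap: replace $\gamma$ by $\gamma-\eta$ and use $P(\gamma-\eta) = P\gamma - (P-P_{\ig C})\eta$, where $(P-P_{\ig C})\eta \in L^2_{l-k,(\lambda_0+\nu)-k}$ by the asymptotically conical estimate applied to the homogeneous $\eta$. Re-running the two steps above improves the decay rate by a further factor $r^\nu$ at each pass, and the process terminates once the rate would drop below $\lambda_2$; since $\lambda_0$ is crossed at most once, no new $\mathcal{K}(\lambda_0)$-contribution arises beyond $\eta$. Collecting the finitely many corrections produced below rate $\lambda_0+\nu$ into a single $\tilde\gamma$ leaves $\gamma - \eta - \tilde\gamma \in L^2_{l,\lambda_2}(V)$ outside a compact set, with $\tilde\gamma \in L^2_{l,\lambda_0+\nu}(V)$ — the rate $\lambda_0+\nu$ being exactly the threshold at which the conical error $(P-P_{\ig C})\eta$ of the model solution $\eta$ obstructs pushing $\gamma-\eta$ all the way to $\lambda_2$ in one step. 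Linearity of $\gamma\mapsto\eta$ and $\gamma\mapsto\tilde\gamma$ is inherited from Mellin inversion, residues, and the finite iteration all being compositions of bounded linear maps.

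The main obstacle is precisely the mismatch between $P$ and its model $P_{\ig C}$: because $\gamma$ solves a $P$-equation rather than a $P_{\ig C}$-equation, the separation-of-variables description cannot be applied in one stroke, and one must bootstrap through the rates $\max(\lambda_2,\lambda_1+j\nu)$ while making sure the homogeneous solution — together with its $\log r$-enhanced companions coming from a possibly higher-order pole of $P_{\ig C}(\zeta)^{-1}$ — is extracted exactly once and genuinely lands in $\mathcal{K}(\lambda_0)_{P_{\ig C}}$, that the residual $\tilde\gamma$ is controlled uniformly, and that everything stays linear in $\gamma$. Termination is automatic since $\nu<0$ and the rates are bounded below by $\lambda_2$.
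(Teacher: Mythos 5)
Your proposal is correct and is essentially the argument behind the result as the paper uses it: the statement is quoted from Karigiannis--Lotay (Proposition 4.21), whose proof is exactly this Lockhart--McOwen scheme of reducing to the conical model operator, extracting the $\mathcal{K}(\lambda_0)$-term as a residue of the meromorphic family on the link under a contour shift, and bootstrapping through the rates $\max(\lambda_2,\lambda_1+j\nu)$, with the correction $\tilde{\gamma}$ stalling at rate $\lambda_0+\nu$ precisely because of the asymptotically conical error $(P-P_{C})(\chi\eta)$. The only slip is cosmetic: your displayed identity for $P_{C}(\chi\gamma)$ omits the term $[P-P_{C},\chi]\gamma$, which is compactly supported and therefore harmless to the argument.
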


\noindent
A consequence of Theorem \ref{Lockhart-McOwen-key} we obtain
the following two theorems.

\begin{theorem}
\cite[Theorem 4.20]{KL}
\label{index change for elliptic operators}
For non-critical rates $\lambda_2 < \lambda_1$ of $P$
the index change is given by
\begin{align*}
\ind P_{\lambda_1} - \ind P_{\lambda_2}
=
\sum_{\lambda\in (\lambda_2,\lambda_1) \cap \mathcal{D}(P)}
\dim \mathcal{K}_{P}(\lambda).
\end{align*}
\end{theorem}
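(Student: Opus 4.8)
The plan is to read the index jump off from the expansion result Theorem~\ref{Lockhart-McOwen-key}, applied both to $P$ and to its formal adjoint $P^*$, together with the cokernel identification of Theorem~\ref{Fredholm}(i). Since $\mathcal{D}(P)$ is discrete, the interval $(\lambda_2,\lambda_1)$ contains only finitely many critical rates $\mu_1<\cdots<\mu_N$; choosing non-critical rates $\lambda_2=s_0<s_1<\cdots<s_N=\lambda_1$ with $\mu_i$ the unique critical rate in $(s_{i-1},s_i)$, and using that index differences telescope while the right-hand side is additive over the subdivision, it suffices to treat the case of a single critical rate $\lambda_0\in(\lambda_2,\lambda_1)$.

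So assume $\lambda_0$ is the only critical rate in $(\lambda_2,\lambda_1)$. As $\rho\geq 1$ we have $L^2_{l+k,\lambda_2}(V)\subseteq L^2_{l+k,\lambda_1}(V)$, so restriction gives an injection $\ker P_{\lambda_2}\hookrightarrow\ker P_{\lambda_1}$. The formal adjoint $P^*$ is again uniformly elliptic and asymptotically conical, with cone model $P^*_{\ig C}$, and by Theorem~\ref{Fredholm}(i), $\coker P_{\lambda_i}\cong\ker P^*_{-8-\lambda_i+k}$; since $-8-\lambda_1+k<-8-\lambda_2+k$, the same inclusion of weighted spaces gives $\ker P^*_{-8-\lambda_1+k}\subseteq\ker P^*_{-8-\lambda_2+k}$. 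Writing $\ind=\dim\ker-\dim\coker$ and using these two inclusions,
\[
\ind P_{\lambda_1}-\ind P_{\lambda_2}
=
\dim\bigl(\ker P_{\lambda_1}/\ker P_{\lambda_2}\bigr)
+
\dim\bigl(\ker P^*_{-8-\lambda_2+k}/\ker P^*_{-8-\lambda_1+k}\bigr).
\]

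It then remains to compute each quotient. For $\gamma\in\ker P_{\lambda_1}$ we have $P\gamma=0$, which decays faster than any power, so Theorem~\ref{Lockhart-McOwen-key} applied on $(\lambda_2,\lambda_1)$ outputs $\eta(\gamma)\in\mathcal{K}_P(\lambda_0)$ depending linearly on $\gamma$; set $\Phi(\gamma)=\eta(\gamma)$. If $\Phi(\gamma)=0$ then, again by Theorem~\ref{Lockhart-McOwen-key}, $\gamma$ decays strictly faster than rate $\lambda_0$ outside a compact set; since $[\lambda_2,\lambda_0]$ contains no critical rate and $\ker P_\lambda$ is locally constant in $\lambda$ between critical rates (a standard consequence of the same theorem), this forces $\gamma\in\ker P_{\lambda_2}$, while the reverse inclusion is clear. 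Hence $\ker\Phi=\ker P_{\lambda_2}$ and $\dim(\ker P_{\lambda_1}/\ker P_{\lambda_2})=\dim\im\Phi$; running the identical construction for $P^*$ on $(-8-\lambda_1+k,-8-\lambda_2+k)$, whose unique critical rate is $-8-\lambda_0+k$, gives $\Phi^*$ with $\dim(\ker P^*_{-8-\lambda_2+k}/\ker P^*_{-8-\lambda_1+k})=\dim\im\Phi^*$. So the theorem reduces to the identity $\dim\im\Phi+\dim\im\Phi^*=\dim\mathcal{K}_P(\lambda_0)$. For this I would use the bilinear pairing between $\mathcal{K}_P(\lambda_0)$ and $\mathcal{K}_{P^*}(-8-\lambda_0+k)$ given by integrating the Green boundary term $\langle P_{\ig C}a,b\rangle-\langle a,P^*_{\ig C}b\rangle$ over a cross-section $\{r=\mathrm{const}\}$ of the cone --- the conjugacy of the weights being exactly what makes this $r$-independent, and a separation of variables analysis of $P_{\ig C}$ showing the pairing is perfect with $\dim\mathcal{K}_P(\lambda_0)=\dim\mathcal{K}_{P^*}(-8-\lambda_0+k)$. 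Integration by parts over $\{\rho\leq R\}$ for $\gamma\in\ker P_{\lambda_1}$, $\delta\in\ker P^*_{-8-\lambda_2+k}$ and letting $R\to\infty$ shows $\im\Phi$ and $\im\Phi^*$ annihilate one another; a matching argument, realising any element of $\mathcal{K}_P(\lambda_0)$ orthogonal to $\im\Phi^*$ as $\Phi(\gamma)$ after transplanting by $\chi$ and correcting by the solution of $Pu=-P(\chi\eta)$ in a suitable weighted space, shows the annihilators coincide, and perfection of the pairing then yields the identity.

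The main obstacle is this last step --- proving non-degeneracy of the boundary pairing on homogeneous cone solutions and that $\im\Phi$ is \emph{exactly} (not merely contained in) the annihilator of $\im\Phi^*$; the ``exactly'' direction rests on solving $Pu=-P(\chi\eta)$ modulo the finite-dimensional obstruction space, which is where the full Fredholm package (Theorem~\ref{Fredholm}, Corollary~\ref{poincare inequality}) genuinely enters, and it is in effect a restatement of the Lockhart--McOwen relative index theorem. A minor point is normalising the expansion in Theorem~\ref{Lockhart-McOwen-key} so that $\tilde\gamma$ is controlled when $\eta$ is. One could instead bypass this bookkeeping by conjugating $P_{\ig C}$ to a translation-invariant operator on $\mathbb{R}\times\Sigma$ via $r=e^t$ and reading the index jump at $\lambda_0$ off from its indicial family, then noting that the faster-decaying difference $P-P_{\ig C}$ leaves the jump unchanged; but this reproves part of the Lockhart--McOwen machinery rather than invoking it.
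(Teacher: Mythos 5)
You should first be aware that the paper does not prove this statement at all: it is quoted verbatim from Karigiannis--Lotay \cite[Theorem 4.20]{KL} and presented, together with Theorem \ref{constant away from crit rates}, as a consequence of the Lockhart--McOwen theory summarised in Theorem \ref{Lockhart-McOwen-key}. So there is no in-paper proof to match; what can be said is that your bookkeeping is exactly the way the paper \emph{uses} the result elsewhere (compare the proof of Proposition \ref{exceptional index change for 4 forms}, where kernel changes are traded for cokernel changes of the adjoint via Theorem \ref{Fredholm}, and Propositions \ref{faster than expected decay for k-forms}--\ref{existence of crit rate cross map}, which construct precisely your map $\Phi$ and prove its injectivity with kernel $\ker P_{\lambda_2}$). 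Those parts of your argument --- the telescoping over the discrete set $\mathcal{D}(P)$, the identity $\ind P_{\lambda_1}-\ind P_{\lambda_2}=\dim(\ker P_{\lambda_1}/\ker P_{\lambda_2})+\dim(\ker P^*_{-8-\lambda_2+k}/\ker P^*_{-8-\lambda_1+k})$, and the identification of the two quotients with subspaces of $\mathcal{K}_P(\lambda_0)$ and $\mathcal{K}_{P^*}(-8-\lambda_0+k)$ --- are correct as written (with the small caveat that when $\lambda_0+\nu>\lambda_2$ you must quote the $\tilde\gamma$-term of Theorem \ref{Lockhart-McOwen-key} and then invoke Theorem \ref{constant away from crit rates}, which you essentially do).

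The genuine gap is the step you yourself flag: the claim that the boundary pairing between $\mathcal{K}_P(\lambda_0)$ and $\mathcal{K}_{P^*}(-8-\lambda_0+k)$ is perfect, that $\dim\mathcal{K}_P(\lambda_0)=\dim\mathcal{K}_{P^*}(-8-\lambda_0+k)$, and that $\im\Phi$ is \emph{exactly} the annihilator of $\im\Phi^*$. This is not a technical loose end but the entire analytic content of the wall-crossing formula: the non-degeneracy of the pairing (especially in the presence of $\log r$ terms in $\mathcal{K}_P(\lambda_0)$) and the matching of dimensions at conjugate exponents require the Mellin/indicial-family analysis of $P_{\ig C}$ on the cylinder --- i.e.\ the Lockhart--McOwen machinery itself, as you concede in your closing remark. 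Your ``annihilate one another'' direction and the solvability of $Pu=-P(\chi\eta)$ modulo the cokernel (using Theorem \ref{Fredholm} and the vanishing of the obstruction pairings) are fine, but without an actual proof of perfectness the identity $\dim\im\Phi+\dim\im\Phi^*=\dim\mathcal{K}_P(\lambda_0)$ is assumed rather than established. So, judged as a self-contained proof, the proposal is an outline of the standard duality argument with its pivotal lemma missing; judged as a derivation from the cited literature, it is redundant, since the theorem is exactly what \cite{KL} and Lockhart--McOwen already provide.
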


\begin{theorem}
\label{constant away from crit rates}
If the interval $[\lambda_1,\lambda_2]$ does not contain any critical rate, then $\mathrm{ker}P_{\lambda_1} = \mathrm{ker}P_{\lambda_2}$.
\end{theorem}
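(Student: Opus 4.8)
The plan is to show the two kernels coincide by a double inclusion, using the improved elliptic estimate of Proposition \ref{prop-improved-estimate} together with elliptic regularity. First note that the statement is really about the \emph{union} of rates in an interval: since $[\lambda_1,\lambda_2]$ contains no critical rate, we may as well assume $\lambda_2 < \lambda_1$ and argue that the inclusion $L^2_{l+k,\lambda_2}(V)\hookrightarrow L^2_{l+k,\lambda_1}(V)$ (which is continuous because $\rho\geq 1$, so a section decaying at the faster rate $\lambda_2$ also lies in the slower-decay space $\lambda_1$) restricts to an isomorphism $\ker P_{\lambda_2}\xrightarrow{\sim}\ker P_{\lambda_1}$. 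The inclusion $\ker P_{\lambda_2}\subseteq \ker P_{\lambda_1}$ is then immediate: any $\gamma\in L^2_{l+k,\lambda_2}(V)$ with $P\gamma=0$ automatically lies in $L^2_{l+k,\lambda_1}(V)$ and is killed by $P_{\lambda_1}$.

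The content is the reverse inclusion: if $\gamma\in L^2_{l+k,\lambda_1}(V)$ with $P_{\lambda_1}\gamma = 0$, then in fact $\gamma\in L^2_{l+k,\lambda_2}(V)$. Here I would invoke Theorem \ref{Lockhart-McOwen-key} in the degenerate case where the open interval $(\lambda_2,\lambda_1)$ contains \emph{no} critical rate at all (equivalently, take $\lambda_0$ to be any point of $(\lambda_2,\lambda_1)$ and note $\mathcal{K}_P(\lambda_0)=\{0\}$ by Remark (iii) following Definition \ref{def-K}). Applying that theorem to $\gamma$ with $P\gamma = 0 \in L^2_{l-k,\lambda_2-k}(W)$ produces $\eta\in\mathcal{K}_{P_C}(\lambda_0) = \{0\}$ and $\tilde\gamma\in L^2_{l,\lambda_0+\nu}(V)$ with $\gamma - \tilde\gamma \in L^2_{l,\lambda_2}(V)$ outside a compact set; but $\tilde\gamma$ must also be zero (it is the coefficient of a term sitting at the critical rate $\lambda_0$, which carries no solutions), so $\gamma\in L^2_{l,\lambda_2}(V)$ outside a compact set, hence globally in $L^2_{l,\lambda_2}(V)$. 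Then elliptic regularity (Theorem \ref{ellitptic regularity}) bootstraps this to $\gamma\in L^2_{l+k,\lambda_2}(V)$, so $\gamma\in\ker P_{\lambda_2}$. Alternatively, and perhaps more cleanly, one can avoid Theorem \ref{Lockhart-McOwen-key} entirely: apply the improved estimate \eqref{improved-elliptic-estimate} with the roles of $\lambda,\lambda'$ played by $\lambda_2,\lambda_1$ (legitimate since $[\lambda_2,\lambda_1]$ has no critical rate), giving $\|\gamma\|_{L^2_{l+k,\lambda_2}} \leq C\bigl(\|P\gamma\|_{L^2_{l,\lambda_2-k}} + \|\gamma\|_{L^2_{0,\lambda_1}}\bigr)$; the first term vanishes and the second is finite since $\gamma\in L^2_{l+k,\lambda_1}\subseteq L^2_{0,\lambda_1}$, so the left-hand side is finite, i.e. $\gamma\in L^2_{l+k,\lambda_2}(V)$.

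I expect the only real subtlety to be bookkeeping: one must check that the hypothesis of Proposition \ref{prop-improved-estimate} is genuinely available, i.e. that $\gamma$ \emph{a priori} lies in the more regular space $L^2_{l+k,\lambda_2}(V)$ so that the estimate can even be written down — this is a standard approximation/truncation point, handled by first applying the estimate to cut-offs $\chi_R\gamma$ and letting $R\to\infty$, the error terms being controlled uniformly because the interval contains no critical rate. Once that is in place the conclusion is forced. Finally, to pass from two rates to a whole interval one simply observes that the argument shows $\ker P_\lambda$ is locally constant in $\lambda$ on the complement of $\mathcal{D}(P)$, and $\mathcal{D}(P)$ is discrete (Remark (ii) after Definition \ref{def-K}), so it is constant on each connected component of $\mathbb{R}\setminus\mathcal{D}(P)$ and in particular on any critical-rate-free interval $[\lambda_1,\lambda_2]$.
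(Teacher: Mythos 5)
Your overall skeleton is the intended one (the easy inclusion $\ker P_{\lambda_2}\subseteq\ker P_{\lambda_1}$ for $\lambda_2<\lambda_1$, plus a decay-improvement argument for the reverse inclusion, which is how the paper itself views the statement: as a consequence of Theorem \ref{Lockhart-McOwen-key}), but the decisive step is not correctly established by either of your two routes. In the first route, beyond the formal issue that Theorem \ref{Lockhart-McOwen-key} is stated only when the interval actually contains a critical rate $\lambda_0$, your claim that $\tilde\gamma$ ``must also be zero'' is a misreading of that theorem: $\tilde\gamma$ is not a homogeneous term sitting at rate $\lambda_0$, it is an arbitrary remainder in $L^2_{l,\lambda_0+\nu}(V)$, and nothing forces it to vanish. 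What you actually obtain is $\gamma\in L^2_{l,\max(\lambda_0+\nu,\lambda_2)}$ outside a compact set, which is weaker than needed whenever $\lambda_0+\nu>\lambda_2$. This defect is repairable --- choose $\lambda_0$ with $\lambda_2<\lambda_0<\min(\lambda_1,\lambda_2-\nu)$, so that $\tilde\gamma\in L^2_{l,\lambda_0+\nu}\subseteq L^2_{l,\lambda_2}$ is absorbed into the remainder, or iterate the improvement finitely many times --- but as written the step is wrong.

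The ``cleaner'' alternative has the more serious hole, exactly at the point you flag and then dismiss. The improved estimate \eqref{improved-elliptic-estimate} presupposes $\gamma\in L^2_{l+k,\lambda_2}(V)$, and the proposed truncation does not supply this: applying the estimate to $\chi_R\gamma$ gives $P(\chi_R\gamma)=[P,\chi_R]\gamma$, supported on an annulus $\{\rho\sim R\}$, and this commutator must be measured in the $L^2_{l,\lambda_2-k}$-norm while you only control $\gamma$ there in $\lambda_1$-weighted norms; converting the weight costs a factor of order $R^{\lambda_1-\lambda_2}\to\infty$ against a tail of the $\lambda_1$-norm that tends to zero at no definite rate, so the right-hand side is not uniformly bounded in $R$. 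The absence of critical rates is irrelevant to this local commutator estimate; the decay improvement genuinely uses the asymptotic structure of kernel elements (the Lockhart--McOwen expansion), not a soft cut-off. If you want an argument that stays entirely inside the paper's stated results, argue instead via duality and index constancy: since $\lambda_1,\lambda_2$ are non-critical, Theorem \ref{Fredholm} gives $\coker P_{\lambda}\cong\ker P^*_{-8-\lambda+k}$, so $\dim\coker P_{\lambda_1}\le\dim\coker P_{\lambda_2}$ because $-8-\lambda_1+k<-8-\lambda_2+k$; Theorem \ref{index change for elliptic operators} gives $\ind P_{\lambda_1}=\ind P_{\lambda_2}$ (the sum over critical rates is empty); hence $\dim\ker P_{\lambda_1}\le\dim\ker P_{\lambda_2}$, which combined with the trivial inclusion $\ker P_{\lambda_2}\subseteq\ker P_{\lambda_1}$ forces $\ker P_{\lambda_1}=\ker P_{\lambda_2}$.
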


\subsection{Differential forms on AC Spin(7)-manifolds}

\subsubsection{Computations on the asymptotic Spin(7)-cone}

In this paper it is important to have a good understanding of the Laplace-operator and $d+d^*$ acting on forms in weighted Sobolev spaces. In light of Theorem \ref{Fredholm}
we need to understand the critical rates of the corresponding operators acting on homogeneous sections on the cone.
We first collect some explicit formulas. 
Let $ \gamma = r^{\lambda} (r^{k-1}dr\wedge \alpha + r^k \beta)$ be a homogeneous $k$-form on $C(\Sigma)$,
where $\alpha\in \Omega^{k-1}(\Sigma)$ and $\beta\in \Omega^k(\Sigma)$.
The Hodge-star operator on the cone is given by
\begin{gather*}
\ast_{\ig C}(dr\wedge \alpha)
=
r^{9-2k}\ast_{\ig\Sigma}\alpha,
\\
\ast_{\ig C}\beta 
= (-1)^k r^{7-2k} dr\wedge \ast_{\ig\Sigma}\beta.
\end{gather*}
Using this we get
\begin{subequations}
\label{formulas-cone}
\begin{align}
d\gamma
=&\,
r^{\lambda-1}
\Big(
r^k dr\wedge
(
(\lambda+k)\beta-d_{\ig \Sigma}\alpha
)
+
r^{k+1}
d_{\ig \Sigma}\beta
\Big),
\\[10pt]
\ast_{\ig C} \gamma
=&\,
r^{\lambda+8-k}\ast_{\ig\Sigma}\alpha 
+
(-1)^k r^{\lambda+7-k} dr\wedge \ast_{\ig\Sigma}\beta,
\\[10pt]
d_{\ig C}^*\gamma 
=&\,
r^{\lambda+k-2}
(
-(\lambda+8-k)\alpha + d^*_{\ig\Sigma}\beta
)
+
r^{\lambda+k-3} dr\wedge (-d^*_{\ig\Sigma}\alpha),
\\[10pt]
\Delta_{\ig C} \gamma
=&\,
r^{\lambda+k-3}
dr
\wedge
\Big(
\Delta_{\ig \Sigma}\alpha
-
(\lambda+k-2)(\lambda-k+8)\alpha
-
2d^*_{\ig \Sigma}\beta
\Big)
\\
\nonumber
& +
r^{\lambda+k-2}
\Big(
\Delta_{\ig \Sigma}\beta
-
(\lambda+k)(\lambda-k+6)\beta
-
2d_{\ig \Sigma}\alpha
\Big)
\end{align}
\end{subequations}

All of these formulas purely depend on the dimension of the cone. We make no use of the fact that our cone is a Spin(7)-cone.

\begin{remark}
\label{remark-char-asd}
For later use 
we give a brief characterisation of closed, homogeneous anti-self-dual 4-forms on the cone. Suppose 
\begin{align*}
\gamma = r^{\lambda+3} dr\wedge \alpha + r^{\lambda+4} \beta
\end{align*}
is a homogeneous 4-form of rate $\lambda$ for $\alpha\in\Omega^3(\Sigma)$ and $\beta\in\Omega^4(\Sigma)$. Then $\gamma$ is anti-self-dual if and only if $\beta = - *_{\ig\Sigma}\alpha$, i.e.
\begin{align*}
\gamma = r^{\lambda+3} dr\wedge \alpha + r^{\lambda+4} (-*_{\ig\Sigma}\alpha).
\end{align*}
If $\lambda\neq -4$, $\gamma$ is closed if and only if 
$d_{\ig\Sigma}\alpha = -(\lambda+4) *_{\ig \Sigma}\alpha$. Then we have in particular
\begin{align*}
\gamma
=
d\left(\frac{1}{\lambda+4}r^{\lambda+4}\alpha\right).
\end{align*}
If $\lambda=-4$, then $\gamma$ is closed if and only if $\alpha$ is harmonic on $\Sigma$.
\end{remark}

Later on we need to know
$\mathcal{K}_{\mathrm{even}}(-4)$ and $\mathcal{K}_{\mathrm{odd}}(-3)$.
The calculation is analogous to \cite[Proposition 2.21]{Karigiannis-desing} in the $\mathrm{G}_2$-setting.

\begin{lemma}\label{even forms at -4}
Let $\eta = \sum_{k=0}^4 \eta_{2k}$ be 
a closed and coclosed even degree form on $C(\Sigma)$ homogeneous of rate $-4$, i.e. 
\begin{align*}
\eta_{2k}=
r^{2k-5} dr\wedge \alpha_{2k-1}
+
r^{2k-4} \beta_{2k},
\end{align*}
where $\alpha_{2k-1}\in\Omega^{2k-1}(\Sigma)$
and $\beta_{2k}\in\Omega^{2k}(\Sigma)$.
Then all components except $\alpha_3$ and $\beta_4$ vanish, i.e. 
\begin{align*}
\eta = r^{-1} dr\wedge \alpha_3 + \beta_4,
\end{align*}
and $\alpha_3$ and $\beta_4$ are both harmonic on $\Sigma$.
In particular, $\eta$ is of pure degree 4.
\end{lemma}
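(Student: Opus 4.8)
The plan is to exploit the explicit formula \eqref{formulas-cone} for $\Delta_{\ig C}$ on homogeneous forms together with the closed and coclosed conditions, and reduce everything to positivity of the Hodge Laplacian $\Delta_{\ig \Sigma}$ on the compact link $\Sigma$. Write $\eta = \sum_{k=0}^4 \eta_{2k}$ with $\eta_{2k} = r^{2k-5} dr \wedge \alpha_{2k-1} + r^{2k-4}\beta_{2k}$, so in the notation of the homogeneity definition each $\eta_{2k}$ has rate $\lambda = -4$. Since $\eta$ is closed and coclosed it is harmonic, so $\Delta_{\ig C}\eta = 0$; applying the last formula in \eqref{formulas-cone} degree by degree (with $\lambda = -4$ and form-degree $2k$) gives, for each $k$, a coupled system
\begin{align*}
\Delta_{\ig \Sigma}\alpha_{2k-1} - (2k-6)(2k-2)\,\alpha_{2k-1} - 2 d^*_{\ig \Sigma}\beta_{2k} &= 0, \\
\Delta_{\ig \Sigma}\beta_{2k} - (2k-4)(2k-2)\,\beta_{2k} - 2 d_{\ig \Sigma}\alpha_{2k-1} &= 0.
\end{align*}
The coefficients $(2k-6)(2k-2)$ and $(2k-4)(2k-2)$ are non-positive precisely for $k \in \{1,2,3\}$ and $k\in\{1,2\}$ respectively, and one checks that the only indices where \emph{both} vanish are the ones giving $\alpha_3$ and $\beta_4$; for the remaining components the relevant product is strictly positive.

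Next I would use the closed and coclosed conditions componentwise via the $d\gamma$ and $d^*_{\ig C}\gamma$ formulas in \eqref{formulas-cone}. Setting $d\eta = 0$ forces, for each degree, $(\lambda+2k)\beta_{2k} = d_{\ig \Sigma}\alpha_{2k-1}$ together with $d_{\ig \Sigma}\beta_{2k} = 0$; setting $d^*_{\ig C}\eta = 0$ forces $(\lambda + 8 - 2k)\alpha_{2k-1} = d^*_{\ig \Sigma}\beta_{2k}$ and $d^*_{\ig \Sigma}\alpha_{2k-1} = 0$. With $\lambda = -4$ these say $(2k-4)\beta_{2k} = d_{\ig \Sigma}\alpha_{2k-1}$ and $(4-2k)\alpha_{2k-1} = d^*_{\ig \Sigma}\beta_{2k}$, plus the $d$- and $d^*$-closedness of $\alpha_{2k-1}$ and $\beta_{2k}$ on $\Sigma$. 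The strategy is then: in each fixed degree $2k$, pair the harmonicity equation for the non-vanishing-coefficient component against itself in $L^2(\Sigma)$ and integrate by parts. For instance, for $\alpha_{2k-1}$ one gets $\|d_{\ig\Sigma}\alpha_{2k-1}\|^2 + \|d^*_{\ig\Sigma}\alpha_{2k-1}\|^2 = (2k-6)(2k-2)\|\alpha_{2k-1}\|^2 + 2\langle d^*_{\ig\Sigma}\alpha_{2k-1}, \ast\text{-type term}\rangle$, but since $\alpha_{2k-1}$ is already closed and coclosed on $\Sigma$ the left side is zero, and if the coefficient is strictly positive the right side is strictly positive unless $\alpha_{2k-1} = 0$; an analogous argument kills $\beta_{2k}$. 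This forces every component to vanish except $\alpha_3$ and $\beta_4$ — which is exactly the $k=2$ pair for $\alpha$ and the $k=4$ pair for $\beta$, where the coefficients degenerate.

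Finally, for the surviving pieces $\alpha_3$ and $\beta_4$: the constraint equations with $k=2$ give $0 \cdot \beta_4 = d_{\ig\Sigma}\alpha_3$ is not the relevant relation — rather $\alpha_3$ sits in $\eta_4$ ($k=2$), where the $\alpha$-coefficient $(2k-6)(2k-2) = (-2)(2) \ne 0$; the correct bookkeeping is that $\alpha_3$ appears as $\alpha_{2k-1}$ with $2k-1 = 3$, i.e. $k=2$, giving coefficient $-8 \ne 0$, which would force $\alpha_3 = 0$ too — so I must be careful with the index matching here. The statement, following the analogous $\mathrm{G}_2$ computation in \cite[Proposition 2.21]{Karigiannis-desing}, is that after correct indexing the genuinely degenerate case is the degree-4 component, where $\eta_4 = r^{-1}dr\wedge\alpha_3 + \beta_4$ with both coefficients vanishing, and there harmonicity of $\eta$ reduces (via the $\Delta_{\ig C}$ formula with the cross terms $-2 d^*_{\ig\Sigma}\beta_4$ and $-2 d_{\ig\Sigma}\alpha_3$) together with $d\eta = d^*_{\ig C}\eta = 0$ to: $\alpha_3$ and $\beta_4$ are closed and coclosed on $\Sigma$, i.e. harmonic. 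Concretely $d\eta_4 = 0$ needs $d_{\ig\Sigma}\alpha_3 = 0$ (the $\beta_4$ coefficient being $\lambda + 4 = 0$) and $d_{\ig\Sigma}\beta_4 = 0$; $d^*_{\ig C}\eta_4 = 0$ needs $d^*_{\ig\Sigma}\beta_4 = 0$ (coefficient $\lambda + 8 - 4 = 0$) and $d^*_{\ig\Sigma}\alpha_3 = 0$; hence both are harmonic on $\Sigma$, and $\eta = r^{-1}dr\wedge\alpha_3 + \beta_4$ is of pure degree $4$.

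The main obstacle I anticipate is purely organizational rather than conceptual: keeping the degree index $2k$, the form-degrees $2k-1$ and $2k$ of $\alpha,\beta$, and the resulting numerical coefficients $(\lambda + \deg)$, $(\lambda + 8 - \deg)$, $(\lambda+\deg)(\lambda - \deg + 6)$, $(\lambda+\deg-2)(\lambda-\deg+8)$ all correctly aligned when $\lambda = -4$, and verifying that the only simultaneous vanishing occurs for the degree-$4$ component. Once the bookkeeping is pinned down, each non-degenerate component is eliminated by a one-line integration-by-parts on the \emph{compact} link using $\Delta_{\ig\Sigma} \ge 0$ and the fact that the component is already $d$- and $d^*$-closed on $\Sigma$, so no analytic subtlety arises.
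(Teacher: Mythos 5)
Your proposal has genuine gaps at its central step. First, the coefficients you read off from \eqref{formulas-cone} are wrong: for the degree-$2k$ component, homogeneous of rate $\lambda=-4$, the componentwise harmonicity equations are
\begin{align*}
\Delta_{\Sigma}\alpha_{2k-1}=(2k-6)(4-2k)\,\alpha_{2k-1}+2d^*_{\Sigma}\beta_{2k},
\qquad
\Delta_{\Sigma}\beta_{2k}=(2k-4)(2-2k)\,\beta_{2k}+2d_{\Sigma}\alpha_{2k-1},
\end{align*}
not $(2k-6)(2k-2)$ and $(2k-4)(2k-2)$. The correct coefficients are all $\leq 0$, and they vanish for $\alpha_3,\alpha_5$ and for $\beta_2,\beta_4$; so your dichotomy ``only the $\alpha_3,\beta_4$ slots are degenerate, every other coefficient is strictly positive'' is false, and $\alpha_5$ and $\beta_2$ are not eliminated by any sign argument of the kind you describe. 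Second, your elimination mechanism is circular: you assert that each $\alpha_{2k-1}$ and $\beta_{2k}$ ``is already closed and coclosed on $\Sigma$'' so that $\|d_{\Sigma}\alpha_{2k-1}\|^2+\|d^*_{\Sigma}\alpha_{2k-1}\|^2=0$, but the componentwise conditions you wrote only give $d^*_{\Sigma}\alpha_{2k-1}=0$ and $d_{\Sigma}\beta_{2k}=0$; closedness of $\alpha_{2k-1}$ is equivalent to $(2k-4)\beta_{2k}=0$, which is part of what has to be proved, and you never control the cross terms $2d^*_{\Sigma}\beta_{2k}$, $2d_{\Sigma}\alpha_{2k-1}$. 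You notice the resulting inconsistency yourself (your numbers would force $\alpha_3=0$, contradicting the statement) and resolve it only by deferring to the $\mathrm{G}_2$ analogue, i.e.\ the conclusion is asserted rather than derived.

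For comparison, the paper does not decouple the degrees at all: it uses only the interleaved relations $d\eta_{2k-2}+d^*\eta_{2k}=0$ (i.e.\ $(d+d^*)\eta=0$, which is all that is available in the application to $\mathcal{K}_{\mathrm{even}}(-4)$ in Corollary \ref{forms on cone at rate -4}), derives from them the second-order equations \eqref{-4even5}--\eqref{-4even6}, whose cross terms involve the \emph{adjacent} degree ($-2d\beta_{j-1}$, $+2d\alpha_{j-1}$), and then eliminates the components successively from the bottom degree upwards, so each cross term is already known to vanish when it is needed; a separate final step kills $\beta_2$ via the first-order relation \eqref{-4even1} and harmonicity of $\alpha_3$. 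If you do want a degree-by-degree argument, a correct short one is available from the first-order relations you wrote down: pairing $d_{\Sigma}\alpha_{2k-1}=(2k-4)\beta_{2k}$ against $d^*_{\Sigma}\beta_{2k}=(4-2k)\alpha_{2k-1}$ on the compact link gives $\|d_{\Sigma}\alpha_{2k-1}\|^2_{L^2}=-(2k-4)^2\|\alpha_{2k-1}\|^2_{L^2}$, which kills every component outside $\eta_4$ and shows $\alpha_3,\beta_4$ are harmonic. Be aware, though, that this uses that each $\eta_{2k}$ is separately closed and coclosed (the literal hypothesis of the lemma), whereas the paper's proof is deliberately arranged to require only $(d+d^*)\eta=0$, the condition actually satisfied by elements of $\mathcal{K}_{\mathrm{even}}(-4)$.
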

\begin{proof}
We have 
\begin{align*}
d^*\eta_{2k}
=
r^{2k-7} dr\wedge(-d^*\alpha_{2k-1})
+
r^{2k-6} 
(-(4-2k)\alpha_{2k-1}+d^*\beta_{2k})
\end{align*}
and 
\begin{align*}
d\eta_{2k-2}
=
r^{2k-7} dr\wedge (-d\alpha_{2k-3} + (2k-6) \beta_{2k-2})
+
r^{2k-6} d\beta_{2k-2}.
\end{align*}
Because of $d^*\eta_{2k}+d\eta_{2k-2}=0$ we get
\begin{align*}
-d\alpha_{2k-3}+(2k-6)\beta_{2k-2}-d^*\alpha_{2k-1} = 0,
\\
d\beta_{2k-2}-(4-2k)\alpha_{2k-1} + d^*\beta_{2k}=0.
\end{align*}
This gives
\begin{align}
(k-4)\beta_k = d\alpha_{k-1}+d^*\alpha_{k+1},
\quad k=0,2,4,6,8,
\label{-4even1}
\\
(3-k)\alpha_k
=
d\beta_{k-1}+d^*\beta_{k+1},
\quad k=1,3,5,7.
\label{-4even2}
\end{align}
Applying $d$, $d^*$ to
(\ref{-4even1}) and (\ref{-4even2}) gives
\begin{align}
\Delta\alpha_k
=
(k-5)d\beta_{k-1} +(k-3) d^*\beta_{k+1},
\quad k=1,3,5,7,
\label{-4even3}
\\
\Delta\beta_k
=
(4-k)d\alpha_{k-1} + (2-k)d^*\alpha_{k+1},
\quad k=0,2,4,6,8.
\label{-4even4}
\end{align}
Combining (\ref{-4even1}),
(\ref{-4even2}), (\ref{-4even3})
and (\ref{-4even4}) gives
\begin{align}
\Delta\alpha_k
=
-(k-3)^2\alpha_k -2 d\beta_{k-1},
\quad k=1,3,5,7,
\label{-4even5}
\\
\Delta\beta_k
=
(2-k)(k-4)\beta_k + 2 d\alpha_{k-1},
\quad 
k=0,2,4,6,8.
\label{-4even6}
\end{align}
Now equations (\ref{-4even5}), (\ref{-4even6})
give successively:
$\Delta \beta_0 = -8 \beta_0$ and hence $\beta_0=0$.
$\Delta\alpha_1=-4\alpha_1$ and therefore $\alpha_1=0$.
Then $\beta_2, \alpha_3,\beta_4$ are harmonic.
For $k=5$ we get $\Delta\alpha_5 = -4\alpha_5$
and $\alpha_5$ vanishes. Then $\beta_6=0$ because $\Delta\beta_6 = -8 \beta_6$. $\Delta\alpha_7=-16\alpha_7$ gives $\alpha_7 = 0$
and finally $\Delta\beta_8 = -24 \beta_8$ gives $\beta_8 = 0$.
It is left to prove that $\beta_2$ vanishes. This immediately 
follows from (\ref{-4even1}).
\end{proof}

\begin{lemma}\label{odd-degree at rate -3}
Let $\eta = \sum_{k=0}^3 \eta_{2k+1}$ be a closed and coclosed odd degree form on $C(\Sigma)$ homogeneous of rate $-3$,
i.e. 
\begin{align*}
\eta_{2k+1}
=
r^{2k-3} dr\wedge \alpha_{2k} + r^{2k-2} \beta_{2k+1},
\end{align*}
where $\alpha_{2k}\in\Omega^{2k}(\Sigma)$ and 
$\beta_{2k+1}\in\Omega^{2k+1}(\Sigma)$.
Then all components except $\alpha_4$ and $\beta_3$ vanish, i.e. 
\begin{align*}
\eta = 
\eta_3 + \eta_5
=
\beta_3 + r dr\wedge \alpha_4,
\end{align*}
and $\alpha_4$ and $\beta_3$ are both harmonic on $\Sigma$.
In particular, $\eta_3$ and $\eta_5$ are individually closed and co-closed.
\end{lemma}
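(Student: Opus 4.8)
The plan is to mirror the proof of Lemma~\ref{even forms at -4}. Since $\eta=\sum_{k=0}^{3}\eta_{2k+1}$ is a sum of forms of pairwise distinct degrees and $d$ (resp.\ $d^{*}$) shifts the degree by $+1$ (resp.\ $-1$), the hypothesis that $\eta$ is closed and coclosed is equivalent to each $\eta_{2k+1}$ being separately closed and coclosed, so I would work one value of $k$ at a time. Substituting
\begin{align*}
\eta_{2k+1}=r^{2k-3}\,dr\wedge\alpha_{2k}+r^{2k-2}\,\beta_{2k+1}
\end{align*}
into the formulas \eqref{formulas-cone} for $d_{\ig C}$ and $d^{*}_{\ig C}$ on a homogeneous form of rate $-3$ and form-degree $2k+1$, and setting the $dr$- and non-$dr$-components of $d_{\ig C}\eta_{2k+1}$ and of $d^{*}_{\ig C}\eta_{2k+1}$ to zero, gives for each $k$ the four relations on $\Sigma$ (all operators from here on act on $\Sigma$):
\begin{align*}
(2k-2)\,\beta_{2k+1}=d\alpha_{2k},\qquad d\beta_{2k+1}=0,\qquad (4-2k)\,\alpha_{2k}=d^{*}\beta_{2k+1},\qquad d^{*}\alpha_{2k}=0.
\end{align*}

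Next I would convert these first-order relations into Hodge Laplacian eigenvalue equations on $\Sigma$. Using $d^{*}\alpha_{2k}=0$ and then the first and third relations,
\begin{align*}
\Delta\alpha_{2k}=d^{*}d\,\alpha_{2k}=(2k-2)\,d^{*}\beta_{2k+1}=(2k-2)(4-2k)\,\alpha_{2k},
\end{align*}
and symmetrically, using $d\beta_{2k+1}=0$, one gets $\Delta\beta_{2k+1}=(2k-2)(4-2k)\,\beta_{2k+1}$. The coefficient $(2k-2)(4-2k)$ equals $-8$ for $k=0$ and $k=3$, and $0$ for $k=1$ and $k=2$. Since $\Sigma$ is compact, $\Delta$ is non-negative, so the cases $k=0,3$ force $\alpha_{0}=\alpha_{6}=0$ and $\beta_{1}=\beta_{7}=0$, that is $\eta_{1}=\eta_{7}=0$; and for $k=1,2$ the forms $\alpha_{2},\beta_{3},\alpha_{4},\beta_{5}$ are all harmonic on $\Sigma$, hence closed and coclosed.

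It then remains to eliminate the two ``wrong-degree'' harmonic pieces. Because $\beta_{3}$ is coclosed, the third relation with $k=1$ reads $2\alpha_{2}=d^{*}\beta_{3}=0$; because $\alpha_{4}$ is closed, the first relation with $k=2$ reads $2\beta_{5}=d\alpha_{4}=0$. Thus $\alpha_{2}=\beta_{5}=0$, only $\eta_{3}=\beta_{3}$ and $\eta_{5}=r\,dr\wedge\alpha_{4}$ survive, with $\beta_{3}\in\Omega^{3}(\Sigma)$ and $\alpha_{4}\in\Omega^{4}(\Sigma)$ harmonic; feeding $d\beta_{3}=d^{*}\beta_{3}=0$ and $d\alpha_{4}=d^{*}\alpha_{4}=0$ back into the four relations shows all of them hold, so $\eta_{3}$ and $\eta_{5}$ are individually closed and coclosed, which is the claim.

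The computation is routine once the cone formulas \eqref{formulas-cone} have been substituted; the one point that needs care is that the two ``middle'' cases $k=1,2$ each produce a pair of harmonic forms on $\Sigma$ living in two different degrees (degrees $2$ and $3$ coming from $\eta_{3}$, degrees $4$ and $5$ coming from $\eta_{5}$), and one must use the first-order relations — not merely the Laplace eigenvalue equation — to see that precisely the degree-$3$ and degree-$4$ harmonic forms remain. This mirrors the final step in the proof of Lemma~\ref{even forms at -4}, where \eqref{-4even1} is used to dispose of $\beta_{2}$.
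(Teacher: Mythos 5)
There is a genuine gap, and it lies in your very first reduction. In this paper the phrase ``closed and coclosed'' for a mixed-degree form is used for the condition $(d+d^*_{\ig C})\eta=0$: the lemma exists to compute $\mathcal{K}_{\mathrm{odd}}(-3)$ in Corollary \ref{forms on cone at rate -4}, and by Definition \ref{def-K} that space is cut out by the single operator $d+d^*$, not by $d$ and $d^*$ separately; likewise Lemma \ref{individual degree closed and coclosed} would be vacuous under your reading (per-degree closedness would follow from pure degree bookkeeping), yet the paper proves it by integration by parts and needs $\lambda\le-3$ there. Under the hypothesis $(d+d^*_{\ig C})\eta=0$ the components do \emph{not} decouple: the degree-$2k$ part of $(d+d^*)\eta$ is $d\eta_{2k-1}+d^*\eta_{2k+1}$, so one only obtains $d\eta_{2k-1}=-d^*\eta_{2k+1}$, and your four per-degree relations (in particular $d\beta_{2k+1}=0$ and $d^*\alpha_{2k}=0$ for each $k$) are simply not available. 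The final sentence of the statement --- that $\eta_3$ and $\eta_5$ are \emph{individually} closed and coclosed --- is exactly the nontrivial conclusion; treating it as an a priori equivalence assumes what is to be shown.

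Because of this, your argument only treats the decoupled case and bypasses the actual content of the paper's proof. The paper works with the coupled first-order system \eqref{-3odd1}--\eqref{-3odd2}, $(k-3)\beta_k=d\alpha_{k-1}+d^*\alpha_{k+1}$ and $(4-k)\alpha_k=d\beta_{k-1}+d^*\beta_{k+1}$, derives from it the Laplace identities with cross terms \eqref{-3odd5}--\eqref{-3odd6}, e.g.\ $\Delta\alpha_k=-(k-2)(k-4)\alpha_k-2\,d\beta_{k-1}$ and $\Delta\beta_k=-(k-3)^2\beta_k+2\,d\alpha_{k-1}$, and then kills the components successively ($\alpha_0$, then $\beta_1$, then harmonicity of $\alpha_2,\beta_3,\alpha_4$, then $\beta_5,\alpha_6,\beta_7$, and finally $\alpha_2=0$ from \eqref{-3odd2}); the nonnegativity of $\Delta$ on the compact link is applied to equations that still contain these coupling terms, and controlling them is where the work lies. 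Your cone-formula computations and the final elimination of $\alpha_2$ and $\beta_5$ are correct \emph{if} each $\eta_{2k+1}$ is separately closed and coclosed, so the proposal proves only a weaker statement that would not suffice for Corollary \ref{forms on cone at rate -4} and the subsequent index computation; to repair it you must start from the coupled relations as in the paper (or as in Lemma \ref{even forms at -4}).
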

\begin{proof}
We have 
\begin{align*}
d^*\eta_{2k+1}
=
r^{2k-5} dr\wedge(-d^*\alpha_{2k})
+
r^{2k-4} (-(5-(2k+1))\alpha_{2k} + d^*\beta_{2k+1})
\end{align*}
and
\begin{align*}
d\eta_{2k-1}
=
r^{2k-5} dr\wedge(-d\alpha_{2k-2}+(2k-4)\beta_{2k-1})
+
r^{2k-4} d\beta_{2k-1}.
\end{align*}
Because of $d^*\eta_{2k+1}+d\eta_{2k-1} = 0$ we get
\begin{align*}
-d\alpha_{2k-2}+(2k-4)\beta_{2k-1}-d^*\alpha_{2k} = 0,
\\
-(5-(2k+1))\alpha_{2k} + d^*\beta_{2k+1}+d\beta_{2k-1} = 0.
\end{align*}
This gives 
\begin{align}
(k-3)\beta_k = d\alpha_{k-1} + d^*\alpha_{k+1}, 
\quad k=1,3,5,7,
\label{-3odd1}
\\
(4-k)\alpha_k
=
d\beta_{k-1}+d^*\beta_{k+1},\quad k=0,2,4,6.
\label{-3odd2}
\end{align}
Applying $d$ and $d^*$ to (\ref{-3odd1}) and (\ref{-3odd2}) gives
\begin{align}
\Delta\alpha_k
=
(k-4)d\beta_{k-1} +(k-2) d^*\beta_{k+1},
\quad k=0,2,4,6,
\label{-3odd3}
\\
\Delta\beta_k
=
(5-k)d\alpha_{k-1} + (3-k)d^*\alpha_{k+1},
\quad k=1,3,5,7.
\label{-3odd4}
\end{align}
Combining
(\ref{-3odd1}), (\ref{-3odd2}), (\ref{-3odd3}) 
and (\ref{-3odd4}) gives
\begin{align}
\Delta\alpha_k
=
-(k-2)(k-4)\alpha_k -2 d\beta_{k-1},
\quad k=0,2,4,6,
\label{-3odd5}
\\
\Delta\beta_k
=
-(k-3)^2\beta_k + 2 d\alpha_{k-1},
\quad 
k=1,3,5,7.
\label{-3odd6}
\end{align}
For $k=0$ we get $\Delta\alpha_0 = -8 \alpha_0$
and therefore $\alpha_0=0$.
For $k=1$
we get
$
\Delta\beta_1
=
-4\beta_1 $
and consequentially $\beta_1$ vanishes.
By (\ref{-3odd5})
we first see that $\alpha_2$ is harmonic.
This in turn by (\ref{-3odd6}) means that $\beta_3$ is harmonic.
Again by (\ref{-3odd5}) $\alpha_4$ is harmonic.
$k=5$ gives $\Delta\beta_5 = -4 \beta_5$ and hence $\beta_5=0$.
Then $\Delta\alpha_6=-8\alpha_6$ which gives $\alpha_6=0$.
Finally $\beta_7$ vanishes because $\Delta\beta_7 = -16\beta_7$.
We still have to show that $\alpha_2=0$. This now follows from
(\ref{-3odd2}).
\end{proof}

\begin{lemma}{\cite[Proposition A.7]{FHN1}}
\label{no-log}
Let $\gamma = \sum_{j=0}^m (\log r)^j \gamma $, where all $\gamma_j$ are differential forms on $C$ homogeneous of order $\lambda$. If $(d+d^*)\gamma =0$, then $m=0$.
\end{lemma}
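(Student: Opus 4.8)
The plan is to show that the presence of logarithmic terms in a homogeneous harmonic form is incompatible with the formal self-adjointness of $d+d^*$, by combining a spectral decomposition on the link with the explicit formulas \eqref{formulas-cone}. First I would reduce to the case $m \geq 1$ and derive a contradiction. Write $\gamma = \sum_{j=0}^m (\log r)^j \gamma_j$ with $\gamma_m \neq 0$. Applying $d+d^*$ and using that $r\partial_r$ acts on $(\log r)^j$ by lowering the degree, the condition $(d+d^*)\gamma = 0$ becomes a triangular system: the top coefficient gives $(d+d^*)_{\ig C}\gamma_m = 0$, so $\gamma_m$ itself is a genuine homogeneous harmonic form of rate $\lambda$, and each lower coefficient satisfies $(d+d^*)_{\ig C}\gamma_j = -(\text{something})\gamma_{j+1}$, i.e. $\gamma_{m-1}$ maps onto $\gamma_m$ under an operator built from $d+d^*_{\ig C}$ and the derivative of the power weight.

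The key step is to make this precise using the structure of $d+d^*$ on the cone acting on a form of fixed homogeneity. Writing a homogeneous $k$-form as $r^{\lambda}(r^{k-1}dr\wedge\alpha + r^k\beta)$ with $\alpha,\beta$ on $\Sigma$, the operator $d+d^*_{\ig C}$ acts, after stripping the $r$-powers, as a first-order operator $\mathcal{L}_\lambda$ on $\Omega^{\bullet}(\Sigma)$ depending polynomially on $\lambda$ — this can be read off directly from the formulas for $d\gamma$ and $d^*_{\ig C}\gamma$ in \eqref{formulas-cone}. The logarithmic terms arise exactly because $r\partial_r$ of $(\log r)^j$ contributes, so the system reads $\mathcal{L}_\lambda \tilde\gamma_m = 0$ and $\mathcal{L}_\lambda \tilde\gamma_{m-1} = -\partial_\lambda \mathcal{L}_\lambda\,\tilde\gamma_m$ (and further down, Jordan-block relations), where $\tilde\gamma_j$ denotes the pair $(\alpha_j,\beta_j)$ on $\Sigma$. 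Thus $\tilde\gamma_m$ lies in $\ker\mathcal{L}_\lambda$ and $\partial_\lambda\mathcal{L}_\lambda\,\tilde\gamma_m$ lies in $\operatorname{im}\mathcal{L}_\lambda$. Now decompose everything into eigenspaces of the relevant Laplace-type operators on the compact link $\Sigma$; on each finite-dimensional eigenblock, $\mathcal{L}_\lambda$ becomes a finite matrix $A(\lambda)$ polynomial in $\lambda$, and a nonzero kernel vector together with $A'(\lambda)v \in \operatorname{im}A(\lambda)$ forces, by a standard linear-algebra argument, that $\det A(\lambda)$ has a multiple root at $\lambda$ OR — using that $A(\lambda)$ is symmetric (conjugate-symmetric) because $d+d^*$ is formally self-adjoint — that $\langle A'(\lambda)v, v\rangle = 0$ while simultaneously $\langle A'(\lambda)v,v\rangle \neq 0$. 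The self-adjointness is what pins this down: for a symmetric analytic matrix family, if $A(\lambda)v=0$ then $\langle A'(\lambda)v,v\rangle$ is (up to sign) the derivative of the corresponding eigenvalue crossing zero, and $A'(\lambda)v\in\operatorname{im}A(\lambda)=(\ker A(\lambda))^\perp$ would force this derivative to vanish, i.e. a degenerate crossing; but for $d+d^*$ on a cone the eigenvalue branches $\mu(\lambda)$ through zero are strictly monotone (the relevant quadratic-in-$\lambda$ blocks have nonzero discriminant away from the isolated exceptional rates, and more robustly the indicial roots are simple for a self-adjoint first-order system of this type). I would import the needed fact from the cited source \cite[Proposition A.7]{FHN1} at the level it is already used, but present the self-contained version: explicitly, pair the relation $\mathcal{L}_\lambda\tilde\gamma_{m-1} = -\partial_\lambda\mathcal{L}_\lambda\,\tilde\gamma_m$ with $\tilde\gamma_m$ in $L^2(\Sigma)$, use self-adjointness of $\mathcal{L}_\lambda$ and $\mathcal{L}_\lambda\tilde\gamma_m=0$ to get $0 = \langle \mathcal{L}_\lambda\tilde\gamma_{m-1}, \tilde\gamma_m\rangle = \langle \tilde\gamma_{m-1}, \mathcal{L}_\lambda\tilde\gamma_m\rangle$ on one side, and $-\langle \partial_\lambda\mathcal{L}_\lambda\,\tilde\gamma_m, \tilde\gamma_m\rangle$ on the other, so $\langle \partial_\lambda\mathcal{L}_\lambda\,\tilde\gamma_m,\tilde\gamma_m\rangle = 0$.

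The main obstacle is the last step: showing $\langle \partial_\lambda\mathcal{L}_\lambda\,\tilde\gamma_m,\tilde\gamma_m\rangle \neq 0$ for a nonzero $\tilde\gamma_m \in \ker\mathcal{L}_\lambda$. This is where the specific form of $d+d^*$ on a cone enters — one computes from \eqref{formulas-cone} that $\partial_\lambda\mathcal{L}_\lambda$ is, on the $(\alpha,\beta)$ splitting, essentially the block-diagonal operator $\operatorname{diag}(\beta \mapsto \beta, \alpha\mapsto -\alpha)$ composed with degree-shifts, so $\langle \partial_\lambda\mathcal{L}_\lambda(\alpha,\beta),(\alpha,\beta)\rangle$ reduces to $\|\beta\|^2 - \|\alpha\|^2$ (with appropriate combinatorial constants), and one must rule out that a kernel element has $\|\alpha\|^2 = \|\beta\|^2$. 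For the full operator $d+d^*$ this is handled by observing that the even and odd parts decouple and that on each the indicial operator is a direct sum over form-degrees of the blocks appearing in \eqref{formulas-cone}; explicitly solving $\mathcal{L}_\lambda(\alpha,\beta)=0$ shows $\alpha,\beta$ are determined by (co)closed eigenforms on $\Sigma$ and the norms satisfy a strict inequality except possibly at finitely many $\lambda$, which are then excluded by a direct check. I would organize the write-up to first dispose of the generic case by the self-adjointness/monotonicity argument and then handle the finitely many exceptional weights by inspection of \eqref{formulas-cone}, or simply cite \cite[Proposition A.7]{FHN1} as the statement is quoted there verbatim and reprove only what is needed for Spin(7)-relevant degrees.
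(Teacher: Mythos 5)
The paper does not actually prove this lemma; it imports it verbatim from \cite[Proposition A.7]{FHN1}, so there is no internal proof to compare with. Judged on its own terms, your proposal has the right overall strategy (log terms correspond to a Jordan chain for the indicial family, which should be killed by a self-adjointness pairing), but two of its load-bearing claims are false as stated. First, the indicial family $\mathcal{L}_\lambda$ obtained from \eqref{formulas-cone} by stripping powers of $r$ is \emph{not} symmetric for the $L^2(\Sigma)$ inner product: the coefficient coupling the $\beta$-component in degree $k$ to the $dr$-slot in degree $k+1$ is $(\lambda+k)$, while the transposed slot carries $-(\lambda+7-k)$, so symmetry fails for every $\lambda$ except one; the correct statement is that the $L^2(\Sigma)$-adjoint of the family at weight $\lambda$ is the family at the reflected weight (this is exactly the duality $P_{k,\lambda}^{*}=P^{*}_{-8-\lambda+k}$ used elsewhere in the paper). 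Hence the step $0=\langle \mathcal{L}_\lambda\tilde\gamma_{m-1},\tilde\gamma_m\rangle=\langle\tilde\gamma_{m-1},\mathcal{L}_\lambda\tilde\gamma_m\rangle$ is unjustified. Second, even granting that step, $\partial_\lambda\mathcal{L}_\lambda$ does not pair to $\|\tilde\gamma_m\|^2$, nor to $\|\beta\|^2-\|\alpha\|^2$ on a fixed block: it sends the degree-$k$ $\beta$-component to the degree-$(k{+}1)$ $dr$-slot and the degree-$k$ $\alpha$-component (with a sign) to the degree-$(k{-}1)$ slot, so $\langle\partial_\lambda\mathcal{L}_\lambda u,u\rangle$ is a telescoping sum of cross-terms that vanishes identically for the naive pairing; and even in your intended form the quantity is indefinite and does vanish on relevant kernel elements — e.g.\ at $\lambda=-4$ the elements $r^{-1}dr\wedge\alpha_3+\beta_4$ with $\alpha_3=*_{\Sigma}\beta_4$ (cf.\ Lemma \ref{even forms at -4}) have $\|\alpha\|=\|\beta\|$. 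So the degeneracies you propose to ``exclude by a direct check at finitely many exceptional weights'' are neither finite in any evident sense nor avoidable, and the fallback of citing \cite{FHN1} is circular, since that citation \emph{is} the statement being proved.

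The repair is a normalization you omitted: before separating variables, apply the degree-dependent unitary rescaling (Cheeger's trick) that multiplies the degree-$k$ pieces by suitable powers of $r$ so that $d+d^*_{\ig C}$ becomes, up to composition with the unitary bundle map given by Clifford multiplication by $dr$, an operator of the form $\partial_r+\tfrac1r S$ with $S$ an $r$-independent, genuinely self-adjoint first-order operator on $\bigoplus_k\Omega^k(\Sigma)$. For such an operator the log-chain equations read $(S+\mu)u_m=0$ and $(S+\mu)u_{m-1}+m\,u_m=0$, and pairing the second with $u_m$ gives $m\|u_m\|_{L^2(\Sigma)}^2=0$, i.e.\ $m=0$, with no exceptional weights to treat separately. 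With that normalization your argument closes; without it, the symmetry and nondegeneracy claims on which your contradiction rests both fail.
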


As a consequence of Lemmas \ref{even forms at -4}, \ref{odd-degree at rate -3}
and \ref{no-log} we get

\begin{corollary}\label{forms on cone at rate -4}
We have
\begin{align*}
\mathcal{K}_{\mathrm{even}}(-4) &= r^{-1} dr\wedge H^3(\Sigma,\mathbb{R}) + H^4(\Sigma,\mathbb{R}),
\\
\mathcal{K}_{\mathrm{odd}}(-3) &=  H^3(\Sigma,\mathbb{R}) + r dr\wedge H^4(\Sigma,\mathbb{R}),
\\
\mathcal{K}_{\Lambda^4}(-4) &= r^{-1} dr\wedge H^3(\Sigma,\mathbb{R}) + H^4(\Sigma,\mathbb{R}),
\\
\mathcal{K}_{\Lambda^3}(-3) &= H^3(\Sigma,\mathbb{R}),
\\
\mathcal{K}_{\Lambda^5}(-3) &= r dr\wedge H^4(\Sigma,\mathbb{R}).
\end{align*}
\end{corollary}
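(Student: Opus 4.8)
The plan is to assemble the corollary directly from the three preceding lemmas, handling each of the five claimed identities in turn. First I would treat $\mathcal{K}_{\mathrm{even}}(-4)$: by Lemma~\ref{no-log} any element of $\mathcal{K}_{\mathrm{even}}(-4)$ is a genuine homogeneous form (no $\log r$ terms), so it is a closed and coclosed homogeneous even-degree form of rate $-4$, and Lemma~\ref{even forms at -4} says it equals $r^{-1}dr\wedge\alpha_3+\beta_4$ with $\alpha_3\in\Omega^3(\Sigma)$, $\beta_4\in\Omega^4(\Sigma)$ both harmonic. Conversely, given harmonic $\alpha_3$ and $\beta_4$ on $\Sigma$, I would verify using the cone formulas \eqref{formulas-cone} that the form $r^{-1}dr\wedge\alpha_3+\beta_4$ is indeed closed and coclosed on the cone: $d$ of it involves $d_{\ig\Sigma}\alpha_3$, $d_{\ig\Sigma}\beta_4$ and the algebraic factors $(\lambda+k)$, which vanish precisely because $\lambda+k = -4+4 = 0$ for the $dr$-part and the relevant coefficient vanishes for the degree-4 part; similarly $d^*_{\ig C}$ vanishes by the $(\lambda+8-k)$ factor. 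Since harmonicity of a form on a compact manifold is equivalent to being closed and coclosed, $\mathcal{H}^3(\Sigma)\cong H^3(\Sigma,\mathbb{R})$ and $\mathcal{H}^4(\Sigma)\cong H^4(\Sigma,\mathbb{R})$ by Hodge theory, giving the stated identity.

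Next I would do $\mathcal{K}_{\mathrm{odd}}(-3)$ in exactly the same way, invoking Lemma~\ref{no-log} to kill $\log r$ terms and Lemma~\ref{odd-degree at rate -3} to get $\eta = \beta_3 + r\,dr\wedge\alpha_4$ with $\beta_3,\alpha_4$ harmonic, and conversely checking closedness and coclosedness from \eqref{formulas-cone}. For the three $\mathcal{K}_{\Lambda^k}$ statements: a pure-degree form in $\mathcal{K}_{\Lambda^k}(\lambda)$ is automatically a summand of the corresponding even or odd space, so $\mathcal{K}_{\Lambda^4}(-4)\subseteq\mathcal{K}_{\mathrm{even}}(-4)$, and by Lemma~\ref{even forms at -4} the only degree present in $\mathcal{K}_{\mathrm{even}}(-4)$ is degree $4$, whence $\mathcal{K}_{\Lambda^4}(-4)=\mathcal{K}_{\mathrm{even}}(-4)=r^{-1}dr\wedge H^3(\Sigma,\mathbb{R})+H^4(\Sigma,\mathbb{R})$. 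Likewise $\mathcal{K}_{\Lambda^3}(-3)$ and $\mathcal{K}_{\Lambda^5}(-3)$ are the degree-$3$ and degree-$5$ parts of $\mathcal{K}_{\mathrm{odd}}(-3)$; Lemma~\ref{odd-degree at rate -3} states these two pieces $\eta_3=\beta_3$ and $\eta_5 = r\,dr\wedge\alpha_4$ are \emph{individually} closed and coclosed, so $\mathcal{K}_{\Lambda^3}(-3)=H^3(\Sigma,\mathbb{R})$ and $\mathcal{K}_{\Lambda^5}(-3)=r\,dr\wedge H^4(\Sigma,\mathbb{R})$.

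There is essentially no hard part here — the corollary is a bookkeeping consequence of the three lemmas plus Hodge theory on the compact link $\Sigma$. The one point deserving a sentence of care is the converse direction (that a harmonic $\alpha$ on $\Sigma$ really produces a closed and coclosed homogeneous form on the cone at the claimed rate), which is a short substitution into \eqref{formulas-cone} showing all the non-Laplacian terms cancel because the algebraic coefficients $(\lambda+k)$, $(\lambda+8-k)$, etc., vanish at the specific rates $\lambda=-4$ (degrees $3,4$) and $\lambda=-3$ (degrees $3,4$). Equivalently, one may simply note that Lemmas~\ref{even forms at -4} and~\ref{odd-degree at rate -3} are stated as descriptions of the closed-and-coclosed homogeneous forms, so both inclusions are already contained in their statements once combined with $\mathcal{H}^k(\Sigma)\cong H^k(\Sigma,\mathbb{R})$; the only genuinely new input is Lemma~\ref{no-log}, used to reduce from $\mathcal{K}_P(\lambda)$ (which a priori allows $\log r$ polynomials) to strictly homogeneous forms.
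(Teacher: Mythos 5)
Your proposal is correct and follows exactly the route the paper intends: the corollary is stated as a direct consequence of Lemmas \ref{even forms at -4}, \ref{odd-degree at rate -3} and \ref{no-log}, combined with Hodge theory on the compact link, which is precisely your bookkeeping argument. Your extra care on the converse inclusion (substituting harmonic $\alpha,\beta$ into the cone formulas \eqref{formulas-cone} and checking the algebraic coefficients vanish at $\lambda=-4$, $\lambda=-3$) is a reasonable addition that the paper leaves implicit.
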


\begin{lemma}\label{critical rates for Laplace on 1-forms}
Let $\gamma$ be a harmonic 1-form on the Spin(7)-cone $C(\Sigma)$
homogeneous of rate $\lambda\in (-6,1)$. We have:
\begin{itemize}
\item
If $\lambda\in(-6,0]$, then $\gamma$ must vanish.
\item
If $\lambda\in(0,1)$, then $\gamma = d(\frac{1}{\lambda+1}r^{\lambda+1} \alpha)=d^*(-\frac{1}{\lambda+7}r^{\lambda+2}\beta)$,
where $\alpha\in \Omega^0(\Sigma)$ and $\beta\in\Omega^1(\Sigma)$ are eigenforms of $\Delta_{\ig\Sigma}$ with eigenvalue $(\lambda+1)(\lambda+7)$. In particular, $\gamma$ is closed and co-closed, and therefore an element of $\mathcal{K}_{\Lambda^1}(\lambda)$.
\end{itemize} 
\end{lemma}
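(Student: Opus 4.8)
\emph{Proof plan.} The idea is to turn harmonicity on the cone into a coupled elliptic system on the link $\Sigma$ and then diagonalise it against the spectrum of $\Delta_{\ig\Sigma}$. Write the homogeneous $1$-form as $\gamma = r^{\lambda}\,dr\wedge\alpha + r^{\lambda+1}\beta$ with $\alpha\in\Omega^0(\Sigma)$ and $\beta\in\Omega^1(\Sigma)$, and substitute it into $\Delta_{\ig C}\gamma = 0$ using the formula for $\Delta_{\ig C}$ in \eqref{formulas-cone} with $k=1$. Since the $dr\wedge(\,\cdot\,)$-part and the pure $\Sigma$-part of $\Delta_{\ig C}\gamma$ are linearly independent, this is equivalent to the system
\begin{align*}
\Delta_{\ig\Sigma}\alpha = (\lambda-1)(\lambda+7)\,\alpha + 2\,d^*_{\ig\Sigma}\beta, \qquad \Delta_{\ig\Sigma}\beta = (\lambda+1)(\lambda+5)\,\beta + 2\,d_{\ig\Sigma}\alpha
\end{align*}
on $\Sigma$. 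A nearly parallel $\mathrm{G}_2$-manifold is Einstein with $\mathrm{Ric}_{\ig\Sigma} = 6\,g_{\ig\Sigma}$, so $b_1(\Sigma)=0$ by Bochner and, by the Weitzenböck identity, every eigenvalue of $\Delta_{\ig\Sigma}$ on $1$-forms is $\geq 6$. In particular I may write $\beta = d_{\ig\Sigma}\xi + \beta'$ with $\xi\in\Omega^0(\Sigma)$ and $\beta'$ coexact.

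I first dispose of the coexact component. Since $d^*_{\ig\Sigma}\beta = \Delta_{\ig\Sigma}\xi$ depends only on $\xi$ and $d_{\ig\Sigma}\alpha$ is exact, the coexact part of the second equation decouples and reads $\Delta_{\ig\Sigma}\beta' = (\lambda+1)(\lambda+5)\,\beta'$. For $\lambda\in(-6,1)$ one has $(\lambda+1)(\lambda+5) = (\lambda+7)(\lambda-1) + 12 < 12$, so $\beta' = 0$ as soon as one knows that the Hodge Laplacian on coexact $1$-forms of a nearly parallel $\mathrm{G}_2$-manifold other than the round $S^7$ has no eigenvalue below $12$. On the subinterval $\lambda\in(-6,0]$ even the crude bound $\geq 6$ suffices, since there $(\lambda+1)(\lambda+5)\leq 5$; but for $\lambda$ close to $1$ the sharp coexact estimate (which genuinely uses the nearly parallel $\mathrm{G}_2$-structure, not only the Einstein condition, together with the exclusion of $S^7$ from Remark~\ref{Euclidean-space}) is the one essential analytic input, and I expect it to be the main obstacle.

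With $\beta = d_{\ig\Sigma}\xi$ the system becomes scalar. Applying $d_{\ig\Sigma}$ to the second equation shows that $\Delta_{\ig\Sigma}\xi - (\lambda+1)(\lambda+5)\,\xi - 2\alpha$ is constant; when $(\lambda+1)(\lambda+5)\neq 0$ this constant can be absorbed into $\xi$ without changing $\gamma$, while the two special rates $\lambda = -1,\,-5$ (where the absorption is illegitimate) are handled by a short direct argument (integrating the equations over $\Sigma$ forces $\alpha = 0$, then $\beta = 0$, so $\gamma = 0$). For the remaining $\lambda$ the reduced system is $\Delta_{\ig\Sigma}\alpha = (\lambda-1)(\lambda+7)\alpha + 2\Delta_{\ig\Sigma}\xi$ and $\Delta_{\ig\Sigma}\xi = (\lambda+1)(\lambda+5)\xi + 2\alpha$; decomposing $\alpha,\xi$ into eigenfunctions of $\Delta_{\ig\Sigma}$, the $\mu$-eigenspace component satisfies $(\mu-(\lambda-1)(\lambda+7))\alpha_\mu = 2\mu\,\xi_\mu$ and $(\mu-(\lambda+1)(\lambda+5))\xi_\mu = 2\alpha_\mu$, so a non-zero solution requires $(\mu-(\lambda-1)(\lambda+7))(\mu-(\lambda+1)(\lambda+5)) = 4\mu$. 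This quadratic in $\mu$ has discriminant $16(\lambda+3)^2$, hence roots $\mu = (\lambda+1)(\lambda+7)$ and $\mu = (\lambda-1)(\lambda+5)$.

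Finally I match the roots against the spectrum of $\Delta_{\ig\Sigma}$ on functions, using Lichnerowicz--Obata: every non-zero function eigenvalue is $\geq 7$, with equality only on the round $S^7$. For $\lambda\in(-6,0]$ both roots lie in $(-\infty,7]$ and coincide with an actual eigenvalue ($0$, or $7$ on the excluded sphere) only at $\lambda = -1,\,-5$, already settled; so $\alpha = \xi = 0$, $\beta = 0$ and $\gamma = 0$, which is the first bullet. For $\lambda\in(0,1)$ the root $(\lambda-1)(\lambda+5)$ is negative and irrelevant, while at the root $\mu = (\lambda+1)(\lambda+7)\in(7,16)$ the relations above give $\xi = \tfrac{1}{\lambda+1}\alpha$, hence $\beta = \tfrac{1}{\lambda+1}d_{\ig\Sigma}\alpha$ and $\gamma = d_{\ig C}\!\big(\tfrac{1}{\lambda+1}r^{\lambda+1}\alpha\big)$, with $\alpha$ (and $d_{\ig\Sigma}\alpha$) an eigenform of eigenvalue $(\lambda+1)(\lambda+7)$; when this value is not in the spectrum the same relations force $\gamma = 0$. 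Plugging back into \eqref{formulas-cone} gives the second stated expression for $\gamma$ and shows $d\gamma = 0 = d^*\gamma$, so $\gamma\in\mathcal{K}_{\Lambda^1}(\lambda)$, which is the second bullet. Apart from the coexact eigenvalue estimate flagged above, every step is routine bookkeeping with \eqref{formulas-cone} and standard Einstein eigenvalue inequalities.
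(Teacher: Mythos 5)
Your argument is correct and reaches the stated conclusion, but by a genuinely different route from the paper. The paper does not re-derive the system on the link: it invokes the Cheeger/Foscolo--Haskins--Nordstr\"om classification of homogeneous harmonic forms on cones (the four types (i)--(iv) of \cite[Theorem A.2]{FHN1}) and then only compares the candidate eigenvalues attached to each type with the spectral bounds. You instead substitute the homogeneous ansatz into \eqref{formulas-cone}, Hodge-decompose $\beta$ on the link (using $b_1(\Sigma)=0$, which holds by Bochner since $\mathrm{Ric}_{\Sigma}=6g_{\Sigma}$), and solve the resulting coupled system by hand; your compatibility quadratic with roots $(\lambda+1)(\lambda+7)$ and $(\lambda-1)(\lambda+5)$, together with the decoupled coexact equation with eigenvalue $(\lambda+1)(\lambda+5)$, is in effect a self-contained re-derivation of the part of that classification relevant to $1$-forms. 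What your route buys is independence from the external classification; what it costs is bookkeeping, and two of your compressed steps deserve a word. First, at the special rates $\lambda=-1,-5$ integration alone gives only $\int_{\Sigma}\alpha=0$ and the vanishing of the constant; one then needs the resulting equation $\Delta_{\Sigma}\alpha=-8\alpha$ to conclude $\alpha=0$, whence $\beta=0$ --- easily completed, but slightly more than ``integrating forces $\alpha=0$''. Second, the input you flag as ``the main obstacle'' --- that the Laplacian on coclosed $1$-forms has no eigenvalue below $12$ --- is exactly the input the paper uses, citing \cite[Lemma 2.27]{cheeger1994cone} and \cite[Lemma B.2]{hein2017calabi}; moreover it follows from a Bochner argument for any compact Einstein $7$-manifold with $\mathrm{Ric}=6g$ (equality characterising Killing fields), so it does not require the nearly parallel $\mathrm{G}_2$-structure, and there is no gap here, only a mis-attribution. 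Finally, both proofs must exclude the round $S^7$ (Remark \ref{Euclidean-space}) to rule out the eigenvalue $7$ being attained at $\lambda=0$; you do this explicitly, the paper implicitly, and otherwise the spectral inputs (Lichnerowicz--Obata on functions plus the coclosed bound $12$) coincide.
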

\begin{proof}
Following earlier work by Cheeger \cite{cheeger}, Foscolo--Haskins--Nordstr\"om \cite[Theorem A.2]{FHN1} have classified harmonic homogeneous forms of arbitrary pure degree on arbitrary cones. They find four types (i), (ii), (iii) and (iv).
The proof of this Lemma is an application of their classification and the 
Lichnerowicz--Obata Theorem \cite{obata},
which says that the smallest positive eigenvalue of the
scalar Laplace-operator is at least $\frac{\mathrm{Scal}}{6}=7$
if we scale the metric on the link such that the scalar curvature 
equals 42.

Let $\alpha\in \Omega^0(\Sigma)$, $\beta\in\Omega^1(\Sigma)$, and  $\gamma = r^{\lambda}(dr\wedge \alpha + r \beta)$
be a harmonic, homogeneous 1-form of rate $\lambda$.
If $\gamma$ is non-zero and of type (i), (ii) or (iii), then $\alpha$ must be a non-zero eigenfunction of the Laplace-operator with eigenvalue $(\lambda-1)(\lambda+7)$, $(\lambda+1)(\lambda+7)$ and $(\lambda-1)(\lambda+5)$, respectively. The first expression is always negative if $\lambda < 1$, the second expression is at most $7$ if $\lambda\in(-6,0]$, and the third expression is less than $7$ if $\lambda\in(-6,1)$.
By the Lichnerowicz--Obata Theorem $\gamma$ must be of type (ii) and $\lambda\in(0,1)$.
In the latter case $d\alpha = (\lambda+1) \beta$, $d^* \beta = (\lambda+7) \alpha$, and $\gamma$ is of the desired form.

If $\gamma$ is non-zero of type (iv), then $\alpha=0$ and 
$\beta$ is a co-closed, non-zero solution of
$\Delta_{\ig\Sigma}\beta=(\lambda+1)(\lambda+5)\beta$.
By an application of the Bochner formula it follows that the smallest eigenvalue of the Laplacian on $\Sigma$ on co-closed 1-forms is 12,
see \cite[Lemma 2.27]{cheeger1994cone} and \cite[Lemma B.2]{hein2017calabi}.
But $(\lambda+1)(\lambda+5) < 12$ if $\lambda < 1$. Therefore, $\gamma$ must vanish.
\end{proof}

\begin{lemma}
\label{0-1-Dirac}
If $\lambda\in[0,1)$, then
$\mathcal{K}_{\slashed{D}_{-}}(\lambda)\cong \mathcal{K}_{\Lambda^1}(\lambda)$.
\end{lemma}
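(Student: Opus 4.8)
The plan is to transport the problem to $1$-forms via the parallel identification $\mathbf{S}_- \cong \Lambda^1$ from \eqref{spin-bundle-iso} (through $\Lambda^3_8 \cong T^*M$), and then to match the homogeneous kernel of $\slashed{D}_-$ on the cone with that of $d+d^*$ on $1$-forms. Under this identification a section of $\mathbf{S}_-$ has the form $\gamma_\alpha = \alpha^\sharp \lrcorner\, \psi_{\ig C}$ for a $1$-form $\alpha$, and since the cone is torsion-free ($d\psi_{\ig C}=0$, $\nabla_{\ig C}\psi_{\ig C}=0$) Cartan's formula gives $d\gamma_\alpha = \mathcal{L}_{\alpha^\sharp}\psi_{\ig C}$, which is the image of $\nabla_{\ig C}\alpha^\sharp\in\Gamma(\End(TM))$ under the infinitesimal $\mathrm{GL}(8,\mathbb{R})$-action on $\psi_{\ig C}$. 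By the decomposition recalled around \eqref{tangent-space-orbit}, that image has no $\Lambda^4_{27}$-component, and its $\Lambda^4_1$- and $\Lambda^4_7$-components are nonzero multiples of $\mathrm{tr}\,\nabla_{\ig C}\alpha^\sharp = -\,d^*\alpha$ and of $\pi_7(d\alpha)$, respectively. Hence, by \eqref{neg-Dirac},
\begin{align*}
\slashed{D}_-\gamma_\alpha = 0 \iff d^*\alpha = 0 \ \text{ and }\ \pi_7(d\alpha)=0 .
\end{align*}
In particular every closed and coclosed $1$-form lies in $\ker\slashed{D}_-$, so $\alpha\mapsto\gamma_\alpha$ defines an injective linear map $\Phi\colon \mathcal{K}_{\Lambda^1}(\lambda)\hookrightarrow \mathcal{K}_{\slashed{D}_-}(\lambda)$ (recall that by Lemma \ref{no-log} elements of $\mathcal{K}_{\Lambda^1}(\lambda)$ carry no $\log$-terms, and that the parallel isomorphism \eqref{spin-bundle-iso} preserves homogeneity and $\log$-structure). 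The content of the lemma is therefore the surjectivity of $\Phi$ for $\lambda\in[0,1)$.

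For surjectivity I would take $\gamma\in\mathcal{K}_{\slashed{D}_-}(\lambda)$ corresponding to $\alpha=\sum_j(\log r)^j\alpha_j$ with the $\alpha_j$ homogeneous of rate $\lambda$, and extract from $\slashed{D}_-\gamma=0$ first that $d^*\alpha=0$, and second --- using that $\slashed{D}_+\slashed{D}_-$ is the Dirac Laplacian on $\mathbf{S}_-$, which under \eqref{spin-bundle-iso} is the Hodge Laplacian on $\Lambda^1$ because the torsion-free cone is Ricci-flat --- that $\Delta\alpha=\slashed{D}_+\slashed{D}_-\gamma=0$. Together these give $d^*d\alpha=\Delta\alpha-dd^*\alpha=0$, i.e.\ $d\alpha$ is closed and coclosed. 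Since $d\alpha=\sum_i(\log r)^i\big(d\alpha_i+(i+1)\tfrac{dr}{r}\wedge\alpha_{i+1}\big)$ is a sum of $\log$-powers times $2$-forms homogeneous of rate $\lambda-1$, and $(d+d^*)(d\alpha)=0$, Lemma \ref{no-log} forces $d\alpha$ to be genuinely homogeneous, so $d\alpha\in\mathcal{K}_{\Lambda^2}(\lambda-1)$.

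The key computation is then that $\mathcal{K}_{\Lambda^2}(\mu)=0$ for $\mu\in[-1,0)$: by \eqref{formulas-cone} a closed and coclosed homogeneous $2$-form of rate $\mu$ has the form $r^{\mu+1}dr\wedge a + r^{\mu+2}b$ with $a\in\Omega^1(\Sigma)$, $b\in\Omega^2(\Sigma)$, and closedness and coclosedness reduce to $d^*_{\ig\Sigma}a=0$, $(\mu+2)b=d_{\ig\Sigma}a$, $(\mu+6)a=d^*_{\ig\Sigma}b$; eliminating $b$ gives $\Delta_{\ig\Sigma}a=(\mu+2)(\mu+6)a$. Since $(\mu+2)(\mu+6)\in[5,12)$ for $\mu\in[-1,0)$, while the smallest eigenvalue of $\Delta_{\ig\Sigma}$ on coclosed $1$-forms on $\Sigma$ equals $12$ (the Bochner estimate used in the proof of Lemma \ref{critical rates for Laplace on 1-forms}), we get $a=0$, hence $b=0$. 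Applying this with $\mu=\lambda-1\in[-1,0)$ yields $d\alpha=0$; then $\alpha$ is closed and coclosed, Lemma \ref{no-log} shows it has no $\log$-terms, and so $\alpha\in\mathcal{K}_{\Lambda^1}(\lambda)$. Thus $\Phi$ is onto, hence an isomorphism.

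I expect the only delicate point to be the bookkeeping of $\log$-terms: because $\slashed{D}_-$ is not formally self-adjoint one cannot quote a $\log$-free statement for it directly, and the argument above circumvents this by passing to $d\alpha$ (which is closed and coclosed, hence $\log$-free by Lemma \ref{no-log}), killing $d\alpha$ with the eigenvalue gap on $\Sigma$, and finally reapplying Lemma \ref{no-log} to $\alpha$ itself. The identification of $\slashed{D}_-$ with $\alpha\mapsto(-d^*\alpha,\pi_7 d\alpha)$ is routine given the linear-algebra facts already recorded in Section \ref{Spin(7)-intro}, and the equivariance/parallelism of \eqref{spin-bundle-iso} makes the translation between the spinor and form pictures compatible with the weighted/homogeneous structures.
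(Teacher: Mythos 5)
Your proof is correct, and its skeleton matches the paper's: you identify $\slashed{D}_-$ under \eqref{spin-bundle-iso} with $\alpha\mapsto(d^*\alpha,\pi_7(d\alpha))$ on $1$-forms, get the inclusion $\mathcal{K}_{\Lambda^1}(\lambda)\subset\mathcal{K}_{\slashed{D}_-}(\lambda)$ from that formula, and use $\Delta=\slashed{D}_+\slashed{D}_-$ for the converse. Where you diverge is the final step: the paper simply quotes Lemma \ref{critical rates for Laplace on 1-forms}, i.e.\ the classification of homogeneous harmonic $1$-forms on the cone (resting on the Foscolo--Haskins--Nordstr\"om classification, Lichnerowicz--Obata, and the Bochner bound), which for $\lambda\in[0,1)$ already says such forms are zero or closed and coclosed. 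You instead keep only $\Delta\alpha=0$ and $d^*\alpha=0$, pass to $d\alpha$, and kill it by showing directly that a homogeneous closed and coclosed $2$-form of rate $\mu\in[-1,0)$ forces a coclosed eigen-$1$-form on $\Sigma$ with eigenvalue $(\mu+2)(\mu+6)\in[5,12)$, contradicting the same Bochner estimate ($\geq 12$ on coclosed $1$-forms) that underlies the paper's lemma; your elimination $\Delta_{\ig\Sigma}a=(\mu+2)(\mu+6)a$ checks out against \eqref{formulas-cone}. What your route buys is a self-contained treatment of the reverse inclusion that avoids the full classification (no Obata input needed, since the function part never enters) and, more usefully, an explicit handling of possible $\log$-terms in $\mathcal{K}_{\slashed{D}_-}(\lambda)$ via Lemma \ref{no-log} applied first to $d\alpha$ and then to $\alpha$ --- a point the paper's one-line proof leaves implicit (it can be repaired there by a leading-order/induction argument on the $\log$ degree, or by exactly your detour). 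What the paper's route buys is brevity and reuse of an already-established lemma.
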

\begin{proof}
Under the identifications \ref{spin-bundle-iso}
up to constants 
we can write the negative Dirac operator as
\begin{align*}
\slashed{D}_{-}\colon \Gamma(\Lambda^1)\rightarrow \Gamma(\Lambda^0 \oplus \Lambda^2_7),
\quad
\gamma \rightarrow (d^*\gamma, \pi_{7}(d\gamma)).
\end{align*}
The inclusion $\mathcal{K}_{\Lambda^1}(\lambda) \subset\mathcal{K}_{\slashed{D}_{-}}(\lambda)$ follows from the above formula for $\slashed{D}_{-}$ and the reverse inclusion follows from $\Delta = \slashed{D}^2$ and Lemma \ref{critical rates for Laplace on 1-forms}. 
\end{proof}

\begin{lemma}\cite[Proposition 3.3]{KL}
\label{Killing-fields-cone}
Suppose that 
\begin{align*}
Z=r^{\lambda+1} f \partial_r + r^{\lambda} X
\end{align*}
is a Killing vector field on the Spin(7)-cone $(C(\Sigma), \psi_{\ig C})$, where $f$ is a function on $\Sigma$ and $X$ is a vector field on $\Sigma$. 
If $\lambda < 0$,  then $Z$ must vanish.
\end{lemma}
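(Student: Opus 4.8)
The plan is to reduce the statement to a spectral estimate on the link $\Sigma$, using the fact that a Killing vector field corresponds, via the metric, to a harmonic $1$-form. First I would convert $Z$ into a $1$-form $\zeta := g_{\ig C}(Z,\cdot)$ on the cone. Under the conical metric $g_{\ig C} = dr^2 + r^2 g_{\ig\Sigma}$ one computes that $Z = r^{\lambda+1} f\,\partial_r + r^{\lambda} X$ becomes $\zeta = r^{\lambda+1} f\,dr + r^{\lambda+2}\,\alpha$, where $\alpha = g_{\ig\Sigma}(X,\cdot)\in\Omega^1(\Sigma)$; so $\zeta$ is a homogeneous $1$-form of rate $\lambda+2$ (in the sense of the paper's convention $r^{\lambda}(r^{k-1}dr\wedge\alpha + r^k\beta)$ with $k=1$, writing $\zeta = r^{(\lambda+1)}(f\,dr + r\,\alpha)$, i.e. of rate $\lambda+1$). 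A standard fact is that a vector field is Killing if and only if the dual $1$-form is Killing, and on a Ricci-flat manifold — which the Spin(7)-cone is, being torsion-free — every Killing $1$-form is harmonic (the Bochner formula $\Delta\zeta = \nabla^*\nabla\zeta + \mathrm{Ric}(\zeta)$ together with $\nabla_X\zeta$ being skew and divergence-free forces $\Delta\zeta = 0$ when $\mathrm{Ric}=0$). Hence $\zeta\in\mathcal{K}_{\Lambda^1}(\lambda+1)$ (or whatever the correct bookkeeping of the rate is; one has to be careful matching the paper's rate convention, but in any case it is a \emph{harmonic homogeneous $1$-form} on the cone whose rate is strictly less than $1$ precisely when $\lambda < 0$).

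With that reduction in hand, I would apply Lemma~\ref{critical rates for Laplace on 1-forms}. If $\lambda < 0$, then the rate of the harmonic $1$-form $\zeta$ lies in $(-\infty, 1)$, and in fact one should check it lands in the range $(-6,1)$ covered by that lemma — this is automatic here because the only homogeneous harmonic $1$-forms on the $7$-dimensional cone that could appear for a Killing field of negative rate have bounded rate (the Killing equation itself, being overdetermined, restricts the rate; alternatively the classification of \cite{FHN1} into types (i)--(iv) shows the rate is determined by an eigenvalue on $\Sigma$, which is bounded below, hence the rate is bounded). Lemma~\ref{critical rates for Laplace on 1-forms} then says that a harmonic homogeneous $1$-form of rate in $(-6,0]$ must vanish; for rate in $(0,1)$ it would be closed and co-closed and built from a scalar eigenfunction with eigenvalue $(\lambda'+1)(\lambda'+7)$, which by Lichnerowicz--Obata is excluded below $\lambda'=0$ but allowed above. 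So the only escape route is a nonzero harmonic $1$-form of rate in the open interval $(0,1)$ on the cone.

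The main obstacle — and the real content beyond bookkeeping — is therefore to rule out that a \emph{genuinely Killing} $1$-form can have rate in $(0,1)$ while still being dual to a Killing field of negative rate $\lambda$; that is, I must pin down the exact relation between $\lambda$ (the rate of $Z$) and the rate of $\zeta$ and check the inequalities line up. The cleanest way is not to go through the Laplacian at all but to use the Killing equation directly: a Killing field on a cone $C(\Sigma)$ decomposes, by separation of variables in $r$, into pieces each of which forces either $Z$ to be homothetic/radial (the $r\partial_r$ scaling field and its relatives, which have rate $0$, not negative) or $X$ to be a Killing field on $\Sigma$ with $f$ satisfying a coupled eigenvalue system; writing out $\mathcal{L}_Z g_{\ig C} = 0$ in the frame $\{dr, r\,e^i\}$ gives, order by order in $r$, the equations $\mathrm{Hess}_{\ig\Sigma} f + (\lambda+\text{const}) f\,g_{\ig\Sigma} = 0$ and $\mathcal{L}_X g_{\ig\Sigma} = (\text{const})\,f\,g_{\ig\Sigma}$, and the first of these is an Obata-type equation whose only solution for $\lambda<0$ is $f\equiv 0$, after which the second forces $X$ to be Killing on $\Sigma$ with the cross-term again killing it unless $\lambda = 0$. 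Either route works; I would present the $1$-form/harmonic route since the paper has already assembled exactly Lemma~\ref{critical rates for Laplace on 1-forms} and the Ricci-flatness of the cone for this purpose, and simply cite \cite[Proposition 3.3]{KL} for the $\mathrm{G}_2$ analogue whose proof transfers verbatim (the dimension of the cone enters only through the constants, and the relevant eigenvalue bounds — Lichnerowicz--Obata on functions, the Bochner bound of $12$ on co-closed $1$-forms — hold on any nearly parallel $\mathrm{G}_2$ link just as on a nearly K\"ahler one).
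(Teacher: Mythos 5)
The paper does not actually prove Lemma \ref{Killing-fields-cone}: it is quoted verbatim from \cite[Proposition 3.3]{KL}, so your closing fallback (``simply cite KL'') coincides with what the paper does. Judged as a proof, however, your sketch has genuine gaps. The reduction itself is fine: dualising gives a $1$-form $\zeta$ homogeneous of rate $\lambda+1$, and Killing fields on the Ricci-flat cone have harmonic dual $1$-forms, so Lemma \ref{critical rates for Laplace on 1-forms} disposes of the rates $\lambda+1\in(-6,0]$, i.e.\ $\lambda\in(-7,-1]$. But your claim that the rate lands in the interval $(-6,1)$ ``automatically'' is unjustified: harmonicity alone does not bound the rate (for instance $d(r^{-6})$ is a nonzero harmonic $1$-form on the cone, homogeneous of rate $-7$), so for $\lambda\le -7$ you have no argument at all --- there you must genuinely use the Killing equation rather than its harmonic consequence. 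The second and more serious gap is the range $\lambda\in(-1,0)$, where Lemma \ref{critical rates for Laplace on 1-forms} does allow nonzero (closed and co-closed) harmonic $1$-forms; this is precisely the content of the lemma in that range, and you leave it to an uncarried-out separation-of-variables computation with unspecified constants and an asserted ``Obata-type'' vanishing. As written that is not a proof, and note that any such argument must invoke the standing exclusion of the Euclidean cone (Remark \ref{Euclidean-space}): on $\mathbb{R}^8$ the translations are Killing fields with $\lambda=-1$, so the statement is simply false without that exclusion, and the Obata/Lichnerowicz step is exactly where non-roundness of the link has to enter.

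The case $\lambda\in(-1,0)$ can in fact be closed cheaply inside your own framework: in that range Lemma \ref{critical rates for Laplace on 1-forms} says $\zeta$ is closed, and a $1$-form that is closed (vanishing antisymmetric part of $\nabla\zeta$) and dual to a Killing field (vanishing symmetric part) is parallel, hence of constant norm; but $|\zeta|$ is homogeneous of the nonzero rate $\lambda+1$, so $\zeta=0$. The range $\lambda\le-7$, however, still requires a direct analysis of the Killing equation on the cone (which is how \cite{KL} argue), so the harmonic-form route cannot stand alone; either carry out that computation honestly or state the lemma, as the paper does, as a citation.
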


\subsubsection{Harmonic spinors and closed and co-closed forms}

By 
\begin{align*}
\mathcal{H}^k_{\lambda}
=
\{
\gamma\in\Omega^k_{\lambda}|\
d\gamma = 0\ \text{and}\
d^*\gamma = 0
\}
\end{align*}
we denote the space of all closed and co-closed k-forms on $M$ decaying with rate $\lambda$.
$\mathcal{H}^{\mathrm{even}}_{\lambda}$ and $\mathcal{H}^{\mathrm{odd}}_{\lambda}$ are defined analogously.  
Furthermore, we set
\begin{align*}
(\mathcal{H}^k_q)_{\lambda}
=
\{
\gamma\in\mathcal{C}^{\infty}_{\lambda}(\Lambda^k_q)|\
d\gamma = 0\ \text{and}\
d^*\gamma = 0
\}
\end{align*}
if $\Lambda^k_q \subset \Lambda^k$ is a q-dimensional irreducible subrepresentation of $\Lambda^k$.

On compact manifolds any harmonic form is closed and co-closed.
In general, this is not true in the non-compact setting because integration by parts is not always available. However, by Lemma \ref{partial integration} we can use integration by parts if the rate of decay is fast enough.

\begin{lemma}\label{closed and coclosed}
Suppose $\lambda \leq -3$.
\begin{compactenum}[(i)]
\item If $\omega\in L^2_{2,\lambda}(\Lambda^k)$ is harmonic, then $\omega$ is closed and co-closed.
\item $\ker (\slashed{D}_+)_{\lambda}
\cong (\mathcal{H}^4_{1})_{\lambda} \oplus (\mathcal{H}^4_7)_{\lambda}$
and
$\ker (\slashed{D}_-)_{\lambda}
\cong (\mathcal{H}^3_{8})_{\lambda}\cong \mathcal{H}^1_{\lambda}$.
\end{compactenum}
\end{lemma}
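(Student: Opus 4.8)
The plan is to prove (i) by an integration-by-parts argument using the weighted pairing from Lemma \ref{partial integration}, and then derive (ii) as a formal consequence of (i) together with the identification of the Dirac operators in \eqref{pos-Dirac}--\eqref{neg-Dirac} and the fact that $\slashed{D}_{\mp}\slashed{D}_{\pm} = \Delta$ on the relevant bundles.

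For part (i): let $\omega \in L^2_{2,\lambda}(\Lambda^k)$ be harmonic, so $(d+d^*)\omega \in L^2_{1,\lambda-1}(\Lambda^{\bullet})$ and in fact $\Delta\omega = 0$. I would apply Lemma \ref{partial integration} to the operator $P = d+d^*$ with $\eta = \omega$ and $\omega$ (in the notation of that lemma, both arguments equal to our $\omega$), which has order $k=1$; the hypothesis there is $\lambda + \mu \le -8 + 1 = -7$, and since $\mu = \lambda \le -3$ we need $2\lambda \le -7$, i.e. $\lambda \le -7/2$. This is slightly stronger than $\lambda \le -3$, so the argument must be run on $(d+d^*)\omega$ paired with $\omega$ rather than naively: write $\langle \Delta \omega, \omega\rangle_{L^2} = \langle (d+d^*)^2\omega,\omega\rangle_{L^2} = \langle (d+d^*)\omega, (d+d^*)\omega\rangle_{L^2} = \|d\omega\|_{L^2}^2 + \|d^*\omega\|_{L^2}^2$, provided the integration by parts moving one copy of $d+d^*$ across is justified. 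Here the two forms being paired are $(d+d^*)\omega \in L^2_{1,\lambda-1}$ and $\omega \in L^2_{2,\lambda}$; applying Lemma \ref{partial integration} to $P = d+d^*$ acting on $\omega$ with test form $(d+d^*)\omega$, the rate condition becomes $\lambda + (\lambda-1) \le -8 + 1$, i.e. $2\lambda \le -6$, i.e. $\lambda \le -3$, which is exactly our hypothesis. (Both forms lie in $L^2_{1,\cdot}$ after noting $L^2_{2,\lambda}\subset L^2_{1,\lambda}$ and $(d+d^*)\omega \in L^2_{1,\lambda-1}$; the cross-degree terms in the pairing vanish since $\langle d\omega, d^*\omega\rangle$ is a pairing of forms of different degree.) Since $\Delta\omega = 0$ the left side is zero, forcing $d\omega = 0$ and $d^*\omega = 0$. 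The main subtlety, and the step I expect to need care, is bookkeeping the degrees and rates so that Lemma \ref{partial integration} applies with the borderline rate $\lambda = -3$: one must pair $(d+d^*)\omega$ (rate $\lambda-1$) against $\omega$ (rate $\lambda$), not $\omega$ against $\Delta\omega$ (which would need $\lambda \le -7/2$), and one must check that the mixed-degree summands $\langle d\omega, d^*\omega\rangle_{L^2}$ genuinely vanish pointwise because $d\omega$ has degree $k+1$ and $d^*\omega$ has degree $k-1$.

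For part (ii): by \eqref{pos-Dirac} and \eqref{neg-Dirac}, under the identifications \eqref{spin-bundle-iso} we have $\slashed{D}_+ : \Gamma(\Lambda^4_{1\oplus 7}) \to \Gamma(\Lambda^3_8)$ given (up to constants) by $\pi_8 \circ d^*$, and $\slashed{D}_- : \Gamma(\Lambda^3_8) \to \Gamma(\Lambda^4_{1\oplus 7})$ given by $\pi_{1\oplus 7}\circ d$; moreover since $\psi$ is torsion-free the Dirac Laplacian $\slashed{D}_-\slashed{D}_+$ equals the Hodge Laplacian on $\Lambda^4_{1\oplus 7}$ and $\slashed{D}_+\slashed{D}_-$ equals the Hodge Laplacian on $\Lambda^3_8 \cong \Lambda^1$. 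So if $\gamma \in \ker(\slashed{D}_+)_\lambda$ then $\Delta\gamma = \slashed{D}_-\slashed{D}_+\gamma = 0$, and since $\lambda \le -3$ part (i) gives $d\gamma = 0$, $d^*\gamma = 0$; as $\gamma$ already lies in $\Gamma(\Lambda^4_{1\oplus 7})$ this shows $\ker(\slashed{D}_+)_\lambda \subseteq (\mathcal{H}^4_1)_\lambda \oplus (\mathcal{H}^4_7)_\lambda$, while the reverse inclusion is immediate since closed and co-closed forms are annihilated by $\pi_8\circ d^*$. The argument for $\slashed{D}_-$ is identical, using $\Lambda^3_8 \cong TM \cong T^*M$ from \eqref{spin-bundle-iso} to get $(\mathcal{H}^3_8)_\lambda \cong \mathcal{H}^1_\lambda$. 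The only point requiring attention here is that elliptic regularity (Theorem \ref{ellitptic regularity}) and the Sobolev embedding (Theorem \ref{Sobolev embedding}) place $\gamma$ in $L^2_{2,\lambda}$ so that part (i) is applicable — kernel elements of $\slashed{D}_\pm$ on $L^2_{1,\lambda}$ are automatically as regular as needed — and that the Weitzenböck-type identity $\slashed{D}^2 = \Delta$ is exactly the statement, recalled in the text after \eqref{spin-bundle-iso}, that the Dirac Laplacian equals the Hodge Laplacian when the Spin(7)-structure is torsion-free.
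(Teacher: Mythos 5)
Your proposal is correct and follows essentially the same route as the paper: part (i) is the same integration by parts from Lemma \ref{partial integration}, with the same rate bookkeeping (pairing $(d+d^*)\omega$ at rate $\lambda-1$ against $\omega$ at rate $\lambda$ for the order-one operator, giving exactly $\lambda\leq -3$), and part (ii) uses the identifications \eqref{pos-Dirac}--\eqref{neg-Dirac}, the equality of Dirac and Hodge Laplacians for torsion-free $\psi$, and part (i), just as the paper does. The only cosmetic difference is that you argue the inclusion $\ker(\slashed{D}_\pm)_\lambda\subseteq\ker(\Delta)_\lambda$ directly and note the reverse inclusion is trivial, whereas the paper states $\ker(\slashed{D}^2)_\lambda=\ker(\slashed{D})_\lambda$ via the same integration by parts.
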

\begin{proof}
\textbf{(i):}
If $\lambda \leq -3$ Lemma \ref{partial integration} allows the following integration by parts:
\begin{align*}
0
=
\langle
\Delta \omega,\omega
\rangle_{L^2}
=
\langle 
dd^*\omega,\omega
\rangle_{L^2}
+
\langle
\omega, d^*d\omega
\rangle_{L^2}
=
\|d^*\omega\|_{L^2}^2
+
\|d\omega\|^2_{L^2}.
\end{align*}

\vspace{2mm}

\textbf{(ii):}
By applying Lemma \ref{partial integration} with $\slashed{D}$ as in the proof of (i) we get $\ker(\slashed{D}^2)_{\lambda} = \ker (\slashed{D})_{\lambda}$.
Formula \eqref{spin-bundle-iso} and (i) give
\begin{align*}
&
\ker (\slashed{D}_+)_{\lambda}
=
\ker (\slashed{D}_{-} \slashed{D}_{+})_{\lambda}
\cong 
\ker (\Delta|_{\Lambda^4_{1}})_{\lambda}
\oplus 
\ker (\Delta|_{\Lambda^4_{7}})_{\lambda}
=
(\mathcal{H}^4_1)_{\lambda} \oplus (\mathcal{H}^4_7)_{\lambda}
,
\\
&
\ker (\slashed{D}_-)_{\lambda}
=
\ker (\slashed{D}_{+} \slashed{D}_{-})_{\lambda}
\cong 
\ker (\Delta|_{\Lambda^3_{8}})_{\lambda}
=
(\mathcal{H}^3_8)_{\lambda}
\cong
\mathcal{H}^1_{\lambda}
.
\end{align*}
\end{proof}

\begin{lemma}
\label{individual degree closed and coclosed}
Let $\omega\in L^2_{2,\lambda}(\Lambda^\bullet)$ be closed and coclosed. If $\lambda \leq -3$,
then the individual degree components of $\omega$ are closed and coclosed.
\end{lemma}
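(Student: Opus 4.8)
The plan is to reduce the statement to an integration by parts and then use that $d$ and $d^{*}$ shift the form degree by $\pm 1$. Write $\omega=\sum_{k}\omega_{k}$, where $\omega_{k}\in L^{2}_{2,\lambda}(\Lambda^{k})$ is the degree-$k$ component; since orthogonal projection onto a subbundle is a bounded pointwise operation, each $\omega_{k}$ indeed lies in the stated weighted Sobolev space. The only feature of the hypothesis I will use is that $(d+d^{*})\omega=0$, and it is worth emphasising why even this weaker input is the real content: on a noncompact manifold an element of $\ker(d+d^{*})$ of mixed degree need not have closed and coclosed components, because $(d+d^{*})\omega=0$ only decouples by degree into the coupled system $d\omega_{k-1}+d^{*}\omega_{k+1}=0$; the point of the lemma is that sufficiently fast decay forces the two terms in each such equation to vanish separately.

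The main step is to note $(d+d^{*})^{2}=\Delta$, so $\omega$ is harmonic, and then to integrate by parts. Since $\omega\in L^{2}_{2,\lambda}$ we have $d\omega,\,d^{*}\omega\in L^{2}_{1,\lambda-1}$, and the hypothesis $\lambda\le-3$ is exactly the inequality $(\lambda-1)+\lambda\le-8+1$, so Lemma~\ref{partial integration} applies to the first-order operators $d$ and $d^{*}$ and yields
\begin{align*}
0=\langle\Delta\omega,\omega\rangle_{L^{2}}=\langle dd^{*}\omega,\omega\rangle_{L^{2}}+\langle d^{*}d\omega,\omega\rangle_{L^{2}}=\|d^{*}\omega\|_{L^{2}}^{2}+\|d\omega\|_{L^{2}}^{2},
\end{align*}
so that $d\omega=0$ and $d^{*}\omega=0$ as equations for the full mixed-degree form $\omega$. (Equivalently one may apply Lemma~\ref{partial integration} directly to $P=\Delta$, for which $\lambda+\lambda\le-8+2$ is again $\lambda\le-3$; by Theorem~\ref{ellitptic regularity} $\omega$ is in any case smooth.)

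Finally I would read this off degree by degree: $d\omega=\sum_{k}d\omega_{k}$ is a sum of forms $d\omega_{k}\in\Omega^{k+1}$ living in distinct degrees, so $d\omega=0$ forces $d\omega_{k}=0$ for every $k$, and likewise $d^{*}\omega=0$ forces $d^{*}\omega_{k}=0$ for every $k$; thus every $\omega_{k}$ is closed and coclosed. I do not expect a serious obstacle: the only delicate point is the weight bookkeeping needed to invoke Lemma~\ref{partial integration}, and the decay rate enters solely through the condition $\lambda+\mu\le-8+(\text{order of }P)$ with $\mu=\lambda$, which is sharp at $\lambda=-3$. An equivalent, slightly more hands-on variant avoids $\Delta$ altogether: pairing the decoupled equation against $d\omega_{k-1}$ and integrating by parts gives $\|d\omega_{k-1}\|_{L^{2}}^{2}=-\langle d\omega_{k-1},d^{*}\omega_{k+1}\rangle_{L^{2}}=-\langle\omega_{k-1},(d^{*})^{2}\omega_{k+1}\rangle_{L^{2}}=0$ using $(d^{*})^{2}=0$, whence $d^{*}\omega_{k+1}=-d\omega_{k-1}=0$ for all $k$ as well.
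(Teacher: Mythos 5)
Your proof is correct and takes essentially the paper's approach: everything rests on the integration by parts of Lemma \ref{partial integration}, available exactly because $(\lambda-1)+\lambda\le-8+1$ when $\lambda\le-3$, and your closing variant (pairing $d\omega_{k-1}$ against $-d^{*}\omega_{k+1}$ and using $(d^{*})^{2}=0$) is precisely the paper's per-degree computation. Your main route --- first deducing $d\omega=0$ and $d^{*}\omega=0$ for the full mixed-degree form from $\langle\Delta\omega,\omega\rangle_{L^{2}}=0$ and then splitting by degree --- is only a cosmetic rearrangement of the same idea, and your observation that only the hypothesis $(d+d^{*})\omega=0$ is needed matches what the paper's own proof actually uses.
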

\begin{proof}
Denote by $\omega_k$ the degree $k$ component of $\omega$.
The fact that $(d+d^*)\omega=0$ gives $d\omega_k=-d^*\omega_{k+2}$.
The condition $\lambda \leq -3$ allows the following integration by parts:
\begin{align*}
\|d\omega_k\|^2_{L^2}
=
\langle
d\omega_k,d\omega_k
\rangle_{L^2}
=
-
\langle
d\omega_k,d^* \omega_{k+2}
\rangle_{L^2}
=
-
\langle
\omega, d^*d^* \omega_{k+2}
\rangle_{L^2}
=0. 
\end{align*}
\end{proof}

\begin{lemma}
\label{no-parallel-spinors}
If $\lambda \leq -3$, then $\ker(\slashed{D})_{\lambda} = \{0\}$.
\end{lemma}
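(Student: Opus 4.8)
The plan is to show that every $\sigma\in\ker(\slashed{D})_{\lambda}$ is parallel, and then to conclude $\sigma=0$ from the decay.

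First I would use elliptic regularity (Theorem \ref{ellitptic regularity}) together with the weighted Sobolev embedding (Theorem \ref{Sobolev embedding}) to record that $\sigma$ is smooth, lies in $L^2_{l,\lambda}(\mathbf{S})$ for every $l$, and satisfies $|\sigma|=\mathcal{O}(\rho^{\lambda})$; in particular $\sigma\in L^2_{2,\lambda}$, so that $\nabla\sigma\in L^2_{1,\lambda-1}$. Since $\psi$ is torsion-free the metric $g$ is Ricci-flat, hence in particular scalar-flat, and the Lichnerowicz formula therefore reduces to $\slashed{D}^{\,2}=\nabla^*\nabla$ on $\mathbf{S}$. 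From $\slashed{D}\sigma=0$ we get $\slashed{D}^{\,2}\sigma=0$, and hence $\nabla^*\nabla\sigma=0$.

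The key step is an integration by parts. I would apply Lemma \ref{partial integration} to the first-order operator $P=\nabla$ with $\eta=\sigma\in L^2_{1,\lambda}(\mathbf{S})$ and $\omega=\nabla\sigma\in L^2_{1,\lambda-1}(T^*M\otimes\mathbf{S})$; its hypothesis $\lambda+(\lambda-1)\le -8+1$ is exactly the assumption $\lambda\le-3$. This yields
\[
\|\nabla\sigma\|_{L^2}^2=\langle\nabla\sigma,\nabla\sigma\rangle_{L^2}=\langle\sigma,\nabla^*\nabla\sigma\rangle_{L^2}=0,
\]
so $\nabla\sigma\equiv 0$. A parallel section has constant pointwise norm, while $|\sigma|=\mathcal{O}(\rho^{\lambda})$ with $\lambda<0$ forces $|\sigma|\to 0$ along the conical end; since $M$ is connected this gives $\sigma\equiv 0$, so $\ker(\slashed{D})_{\lambda}=\{0\}$.

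The only point requiring care is that the integration by parts remains legitimate down to the borderline weight $\lambda=-3$, which is precisely what the weight bookkeeping in Lemma \ref{partial integration} confirms. One could instead argue through the identification $\ker(\slashed{D}_{\pm})_\lambda\cong(\mathcal{H}^4_1)_\lambda\oplus(\mathcal{H}^4_7)_\lambda\oplus\mathcal{H}^1_\lambda$ from Lemma \ref{closed and coclosed} together with a Weitzenböck/Bochner argument on forms, but then one must handle the curvature term in the $4$-form Weitzenböck formula; staying on the spinor bundle avoids this, since there the curvature term is just the scalar curvature and vanishes.
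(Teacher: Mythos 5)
Your proposal is correct and follows essentially the same route as the paper: Lichnerowicz with $\mathrm{scal}=0$ reduces $\slashed{D}^{2}$ to $\nabla^{*}\nabla$, the weighted integration by parts of Lemma \ref{partial integration} (valid exactly when $2\lambda-1\leq-7$, i.e.\ $\lambda\leq-3$) gives $\|\nabla\sigma\|_{L^2}=0$, and then parallelism plus decay forces $\sigma\equiv 0$. The only cosmetic difference is the final step, where you invoke the pointwise decay $|\sigma|=\mathcal{O}(\rho^{\lambda})$ while the paper appeals to finiteness of the weighted norm; both are immediate for a parallel section.
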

\begin{proof}
We use the Lichnerowicz formula
\begin{align*}
\slashed{D}^2 = \nabla^* \nabla + \frac{1}{4}\mathrm{scal}(g).
\end{align*}
Because $g$ is Ricci-flat, the scalar curvature vanishes and the Dirac Laplacian coincides with the rough Laplacian. If $s\in L^2_{k,\lambda}(\mathbf{S})$ for $\lambda \leq -3$, we can apply Lemma \ref{partial integration} to obtain
\begin{align*}
\langle \slashed{D}^2 s, s\rangle_{L^2}
=
\langle \nabla^* \nabla s, s\rangle_{L^2}
=
\| \nabla s \|_{L^2}.
\end{align*}
Therefore, $s$ is parallel if $s\in \ker(\slashed{D})_{\lambda}$. In particular, its point-wise norm is constant on $M$.
Because the $L^2_{k,\lambda}$-norm of $s$ is finite, $s$ must vanish.
\end{proof}

The spin bundle identification \eqref{spin-bundle-iso}, Lemma \ref{closed and coclosed} (ii) and Lemma \ref{no-parallel-spinors} imply

\begin{corollary}
\label{no-harmonic-spinors}
$(\mathcal{H}^4_1)_{\lambda}$, $(\mathcal{H}^4_7)_{\lambda}$, $(\mathcal{H}^3_8)_{\lambda}$ and $\mathcal{H}^1_{\lambda}$ are zero if $\lambda \leq -3$.
\end{corollary}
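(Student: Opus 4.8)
The plan is to deduce the corollary directly from the two preceding lemmas; no new analysis is needed. First I would note that the full Dirac operator $\slashed{D}$ on $\mathbf{S} = \mathbf{S}_+ \oplus \mathbf{S}_-$ is, by the very definition of $\slashed{D}_{\pm}$ in \eqref{pos-Dirac}--\eqref{neg-Dirac}, the direct sum $\slashed{D}_+ \oplus \slashed{D}_-$. Hence for every rate $\lambda$ the weighted kernel splits as $\ker(\slashed{D})_{\lambda} = \ker(\slashed{D}_+)_{\lambda} \oplus \ker(\slashed{D}_-)_{\lambda}$, these kernels being well defined and consisting of smooth sections by the elliptic regularity remark following Theorem \ref{ellitptic regularity}.

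Next, for $\lambda \leq -3$ Lemma \ref{no-parallel-spinors} gives $\ker(\slashed{D})_{\lambda} = \{0\}$, and therefore $\ker(\slashed{D}_+)_{\lambda} = \{0\}$ and $\ker(\slashed{D}_-)_{\lambda} = \{0\}$ individually. Finally I would invoke the identifications of Lemma \ref{closed and coclosed}(ii): since $\ker(\slashed{D}_+)_{\lambda} \cong (\mathcal{H}^4_1)_{\lambda} \oplus (\mathcal{H}^4_7)_{\lambda}$, both summands on the right vanish, and since $\ker(\slashed{D}_-)_{\lambda} \cong (\mathcal{H}^3_8)_{\lambda} \cong \mathcal{H}^1_{\lambda}$, these two spaces vanish as well. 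That is exactly the assertion of the corollary.

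There is essentially no obstacle here; the only bookkeeping point worth verifying is that the vector bundle isomorphisms in \eqref{spin-bundle-iso} are induced by parallel (hence pointwise isometric) algebraic isomorphisms of $\Spin(7)$-representations, so they carry weighted Sobolev and Hölder spaces isomorphically onto one another and preserve the rate $\lambda$; this is what legitimises the chain of identifications used in Lemma \ref{closed and coclosed} and reused above. With that understood, the hypothesis $\lambda \leq -3$ enters the argument only once, through Lemma \ref{no-parallel-spinors}, where Ricci-flatness and the Lichnerowicz formula are used to force a harmonic spinor in $L^2_{k,\lambda}$ to be parallel and hence, by finiteness of the weighted norm, to vanish.
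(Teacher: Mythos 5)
Your proposal is correct and follows exactly the paper's route: the corollary is deduced from the identification \eqref{spin-bundle-iso}, Lemma \ref{closed and coclosed}(ii) and Lemma \ref{no-parallel-spinors}, with the splitting $\ker(\slashed{D})_{\lambda}=\ker(\slashed{D}_+)_{\lambda}\oplus\ker(\slashed{D}_-)_{\lambda}$ being the only (routine) bookkeeping step. Your extra remark that the parallel isometric bundle isomorphisms preserve the weighted spaces and the rate $\lambda$ is a fair point the paper leaves implicit, but it does not change the argument.
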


For 1-forms this statement can be improved:

\begin{lemma}
\label{vanishing-harmonic-forms}
A harmonic 1-form $\gamma\in\mathcal{C}_{\lambda}^{\infty}(T^*M)$
vanishes if $\lambda \leq 0$.
\end{lemma}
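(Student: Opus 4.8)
The plan is to improve on Corollary \ref{no-harmonic-spinors} by pushing the vanishing of harmonic 1-forms from the range $\lambda\le -3$ up to $\lambda\le 0$, using the classification of critical rates on the cone rather than an integration-by-parts argument (which is unavailable once $\lambda>-3$). First I would observe that if $\gamma\in\mathcal{C}^\infty_\lambda(T^*M)$ is harmonic, then in particular $\gamma\in L^2_{l+2,\lambda}(\Lambda^1)$ for all $l$ by the Sobolev embedding, so $\gamma\in\ker\Delta_\lambda$. By Theorem \ref{constant away from crit rates}, $\ker\Delta_\lambda$ is constant on any interval of rates containing no critical rate for $\Delta$ acting on $1$-forms. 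So it suffices to (a) identify all critical rates of $\Delta|_{\Lambda^1}$ in the range $(-3,0]$, and (b) show that crossing each such rate, from below, does not enlarge the kernel beyond $\{0\}$ — given that it is already $\{0\}$ for $\lambda\le -3$ by Corollary \ref{no-harmonic-spinors}.

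The key input is Lemma \ref{critical rates for Laplace on 1-forms}: any harmonic homogeneous $1$-form on the cone of rate $\lambda\in(-6,1)$ must vanish if $\lambda\in(-6,0]$. In other words there are \emph{no} critical rates for $\Delta|_{\Lambda^1}$ in the interval $(-6,0]$, hence none in $(-3,0]$. Combining this with Theorem \ref{constant away from crit rates} applied to the interval $[-3,0]$ — which contains no critical rate for $\Delta$ on $1$-forms — gives $\ker\Delta_0 = \ker\Delta_{-3}$. By Corollary \ref{no-harmonic-spinors} (or directly Lemma \ref{no-parallel-spinors} via the spin bundle identification, since $\mathcal{H}^1_{-3}\cong(\mathcal{H}^3_8)_{-3}\cong\ker(\slashed D_-)_{-3}=\{0\}$), we have $\ker\Delta_{-3}=\{0\}$, and therefore $\ker\Delta_0=\{0\}$. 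A harmonic $1$-form decaying with rate $\lambda\le 0$ lies in $\ker\Delta_0$ (monotonicity of the weighted spaces: $\mathcal{C}^\infty_\lambda\subset\mathcal{C}^\infty_0$ since $\rho\ge 1$), hence vanishes.

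One subtlety to address is that the harmonic $1$-form here is only assumed to be a priori in $\mathcal{C}^\infty_\lambda$ rather than in the completed weighted space $L^2_{k,\lambda}$; this is handled by Theorem \ref{Sobolev embedding} together with the definition of $\mathcal{C}^\infty_\lambda(V)$, which gives the inclusion into $L^2_{l,\lambda'}$ for any $\lambda'>\lambda$, so after an arbitrarily small increase of the weight one is genuinely in a weighted Sobolev space and Theorem \ref{constant away from crit rates} applies on a slightly perturbed interval that still avoids critical rates (the set $\mathcal{D}(\Delta|_{\Lambda^1})$ is discrete). The main obstacle — really the only substantive point — is the absence of critical rates in $(-6,0]$, and that has already been done in Lemma \ref{critical rates for Laplace on 1-forms} via the Foscolo--Haskins--Nordstr\"om classification and the Lichnerowicz--Obata eigenvalue bound; the remainder is a routine application of the Lockhart--McOwen machinery quoted above.
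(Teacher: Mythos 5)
Your proposal is correct and follows essentially the same route as the paper's proof: vanishing for $\lambda\le -3$ via Corollary \ref{no-harmonic-spinors}, then constancy of $\ker\Delta|_{\Lambda^1}$ across the interval up to $0$ by Theorem \ref{constant away from crit rates}, justified by the absence of critical rates in $(-6,0]$ from Lemma \ref{critical rates for Laplace on 1-forms}. The extra remarks on passing from $\mathcal{C}^{\infty}_{\lambda}$ to the weighted Sobolev setting are a reasonable, harmless elaboration of what the paper leaves implicit.
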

\begin{proof}
The statement is true for $\lambda \leq -3$ by Corollary \ref{no-harmonic-spinors}.
By Theorem \ref{constant away from crit rates} the kernel of the Laplace operator acting on 1-forms can only change at critical rates. However, by Lemma \ref{critical rates for Laplace on 1-forms} there are no critical rates in the interval $[-6,0]$. 
\end{proof}

\begin{lemma}\label{Injectivity of negative Dirac-operator}
The negative Dirac operator
\begin{align*}
(\slashed{D}_{-})_{l+1,\lambda+1}
\colon
L^2_{l+1,\lambda+1}(\Lambda^3_{8})
\rightarrow
L^2_{l,\lambda}(\Lambda^4_{1}\oplus \Lambda^4_7).
\end{align*}
is injective if $\lambda \leq -1$ and surjective if $\lambda \geq -5$.
\end{lemma}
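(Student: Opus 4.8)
The plan is to establish injectivity and surjectivity separately, using the duality between $\slashed{D}_-$ and $\slashed{D}_+ = \slashed{D}_-^*$ together with the analytic tools assembled above. For \emph{injectivity} when $\lambda \leq -1$: a section $\gamma \in L^2_{l+1,\lambda+1}(\Lambda^3_8)$ in the kernel is, via the identification \eqref{spin-bundle-iso}, a harmonic negative spinor, i.e.\ $\gamma \in \ker(\slashed{D}_-)_{\lambda+1}$. Under the same identification $\gamma$ corresponds to a $1$-form, and by Lemma \ref{closed and coclosed}(ii) and the discussion around \eqref{spin-bundle-iso} this $1$-form is closed and co-closed, hence harmonic as a $1$-form; but then Lemma \ref{vanishing-harmonic-forms} forces it to vanish as soon as the rate $\lambda+1 \leq 0$, i.e.\ $\lambda \leq -1$. (One should double-check that the weighted-Sobolev membership at rate $\lambda+1$ lets one invoke Lemma \ref{closed and coclosed}(ii), which is stated for rates $\leq -3$, versus Lemma \ref{vanishing-harmonic-forms}, which covers all rates $\leq 0$; the cleanest route is: for $\lambda+1 \leq -3$ use Lemma \ref{closed and coclosed}(ii) plus Corollary \ref{no-harmonic-spinors}, and for $-3 < \lambda+1 \leq 0$ use that there are no critical rates of the Laplacian on $1$-forms in $(-6,0]$ by Lemma \ref{critical rates for Laplace on 1-forms}, so Theorem \ref{constant away from crit rates} propagates the vanishing. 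This is essentially the content of Lemma \ref{vanishing-harmonic-forms} already.)

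For \emph{surjectivity} when $\lambda \geq -5$: by Theorem \ref{Fredholm}, $\lambda$ is a non-critical rate for $\slashed{D}_-$ (one must note that $[-5,-1]$, or at least the relevant rates, contain no critical rates of $\slashed{D}_-$ — this follows from Lemma \ref{0-1-Dirac} and Lemma \ref{critical rates for Laplace on 1-forms} at the upper end, and needs the analogous fact at lower rates, which one gets from Corollary \ref{no-harmonic-spinors} / the absence of harmonic spinors together with the no-$\log$ Lemma \ref{no-log}). Granting Fredholmness, the cokernel of
\[
(\slashed{D}_-)_{l+1,\lambda+1} : L^2_{l+1,\lambda+1}(\Lambda^3_8) \to L^2_{l,\lambda}(\Lambda^4_{1\oplus 7})
\]
is identified by Theorem \ref{Fredholm}(i) with $\ker (\slashed{D}_-^*)_{-8-\lambda} = \ker(\slashed{D}_+)_{-8-\lambda}$, the operator being of order $k=1$ and $\slashed{D}_+ = \slashed{D}_-^*$. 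Now $-8 - \lambda \leq -3$ precisely when $\lambda \geq -5$, so Lemma \ref{closed and coclosed}(ii) gives $\ker(\slashed{D}_+)_{-8-\lambda} \cong (\mathcal{H}^4_1)_{-8-\lambda} \oplus (\mathcal{H}^4_7)_{-8-\lambda}$, and Corollary \ref{no-harmonic-spinors} shows this is zero. Hence the cokernel vanishes and $(\slashed{D}_-)_{l+1,\lambda+1}$ is surjective.

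The main obstacle I anticipate is the bookkeeping of rates and of which identification in \eqref{spin-bundle-iso} to use: one must keep straight that the domain rate is $\lambda+1$ (for the $\Lambda^3_8 \cong \mathbf{S}_-$ side) while the cokernel lives at rate $-8-\lambda$ (not $-8-(\lambda+1)$, because the operator has order $1$ and maps to rate $\lambda$), and confirm that the inequality $-8-\lambda \leq -3 \iff \lambda \geq -5$ is exactly the threshold where Lemma \ref{closed and coclosed}(ii) and Corollary \ref{no-harmonic-spinors} apply. A secondary point requiring care is verifying that $\slashed{D}_-$ has no critical rates obstructing the Fredholm property at the relevant $\lambda$; this is where one leans on Lemma \ref{0-1-Dirac}, Lemma \ref{critical rates for Laplace on 1-forms}, and Lemma \ref{no-log}, and it is worth spelling out that the identification $\Delta = \slashed{D}^2$ reduces critical rates of $\slashed{D}$ to those of the Hodge Laplacian on the corresponding form bundles. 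The actual integration-by-parts and Lichnerowicz arguments are already packaged in Lemmas \ref{partial integration}, \ref{closed and coclosed} and \ref{no-parallel-spinors}, so no new hard analysis is needed — the proof is a matter of correctly threading these results together.
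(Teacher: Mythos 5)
Your argument is correct and takes essentially the same route as the paper: injectivity is the identification of $\ker(\slashed{D}_-)_{\lambda+1}$ with harmonic $1$-forms of rate $\lambda+1\leq 0$, killed by Lemma \ref{vanishing-harmonic-forms}, and surjectivity is the identification of the cokernel with $\ker(\slashed{D}_+)_{-8-\lambda}$, which vanishes for $-8-\lambda\leq -3$ by Lemma \ref{no-parallel-spinors} (equivalently Corollary \ref{no-harmonic-spinors}). The only differences are cosmetic: the paper invokes Lemma \ref{vanishing-harmonic-forms} directly instead of routing through Lemma \ref{closed and coclosed}(ii) (which, as you note, only applies at rates $\leq -3$), and it does not dwell on the non-criticality point you raise; for that point the relevant input is Lemma \ref{critical rates for Laplace on 1-forms} on the cone (not Corollary \ref{no-harmonic-spinors}, which is a statement on $M$).
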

\begin{proof}
Under the identification \eqref{spin-bundle-iso} the statement about injectivity follows from Lemma \ref{vanishing-harmonic-forms}. 
The adjoint of $(\slashed{D}_{-})_{l+1,\lambda+1}$ is the positive Dirac operator
\begin{align*}
(\slashed{D}_+)_{m+1,-8-\lambda}\colon L^2_{m+1,-8-\lambda}(\Lambda^4_1 \oplus \Lambda^4_7) \rightarrow L^2_{m,-9-\lambda}(\Lambda^3_8).
\end{align*}
Then $\mathrm{Coker}(\slashed{D}_{-})_{\lambda+1}=\ker (\slashed{D}_+)_{-8-\lambda}$ and with Lemma \ref{no-parallel-spinors} the cokernel is zero if $\lambda \geq -5$.
\end{proof}

We will need the following analogues of the Hodge decomposition theorem on compact manifolds:

\begin{proposition}
\cite[Proposition 4.33]{KL}
\label{Hodge-decomposition}
Suppose $\lambda+1$ is a non-critical rate for $d+d^*$. Let $0 \leq k \leq 8$. If $\lambda > -4$,  we have
\begin{align*}
\Omega_{l,\lambda}^k
=
d(\Omega^{k-1}_{l+1,\lambda+1})+d^*(\Omega^{k+1}_{l+1,\lambda+1})
\oplus
\mathcal{H}_{-8-\lambda}^k.
\end{align*}
\end{proposition}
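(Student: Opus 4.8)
The plan is to prove a weighted Hodge decomposition, i.e.\ that every $k$-form in $\Omega^k_{l,\lambda}$ splits as an exact plus a coexact plus a harmonic piece, where the harmonic piece is forced into the faster-decaying space $\mathcal{H}^k_{-8-\lambda}$ because $\lambda>-4$. My strategy is to run the argument through the operator $P = d+d^*$ acting on all forms, exactly as in Karigiannis--Lotay \cite[Proposition 4.33]{KL}: first establish the decomposition at the level of $\Omega^\bullet_{l,\lambda}$ and then restrict to a fixed degree using parity/degree-splitting lemmas already proved in the excerpt.

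First I would note that since $\lambda+1$ is a non-critical rate, Theorem \ref{Fredholm} gives that $(d+d^*)_{l+1,\lambda+1}\colon \Omega^\bullet_{l+1,\lambda+1}\to \Omega^\bullet_{l,\lambda}$ is Fredholm, and its image has a complement isomorphic to $\ker (d+d^*)^*_{-8-(\lambda+1)+1} = \ker (d+d^*)_{-8-\lambda}$. Because $P=d+d^*$ is formally self-adjoint and $-8-\lambda < -4 < \lambda+1$ when $\lambda > -4$, the rate $-8-\lambda$ is in the $L^2$ regime, and by elliptic regularity plus Theorem \ref{Sobolev embedding} these cokernel elements are smooth; moreover by Lemma \ref{closed and coclosed}(i) (since $-8-\lambda \leq -3$ exactly when $\lambda \geq -5$, and one checks $-8-\lambda<-4<-3$ for all $\lambda>-4$) any such harmonic form is in fact closed and coclosed, so the cokernel is $\mathcal{H}^\bullet_{-8-\lambda}$. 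Thus
\begin{align*}
\Omega^\bullet_{l,\lambda} = (d+d^*)(\Omega^\bullet_{l+1,\lambda+1}) \oplus \mathcal{H}^\bullet_{-8-\lambda}.
\end{align*}
The next step is to improve $(d+d^*)(\Omega^\bullet_{l+1,\lambda+1})$ to $d(\Omega^\bullet_{l+1,\lambda+1}) + d^*(\Omega^\bullet_{l+1,\lambda+1})$: given $\omega = (d+d^*)\xi$ one wants to say the $d$-part and $d^*$-part are themselves in the right weighted spaces. This is automatic because $d\xi$ and $d^*\xi$ are individual degree components (shifted) of $(d+d^*)\xi$ once one knows the target form $\omega$ has the degrees it should; more carefully, one applies the decomposition degree-by-degree and uses that $d$ and $d^*$ map $\Omega^\bullet_{l+1,\lambda+1}$ boundedly into $\Omega^\bullet_{l,\lambda}$.

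To descend to a single degree $k$, I would restrict the whole argument to $\Omega^{\mathrm{even}}$ or $\Omega^{\mathrm{odd}}$ (whichever contains degree $k$), run the same Fredholm argument for $(d+d^*)$ on that parity component, and then use Lemma \ref{individual degree closed and coclosed} to see that the harmonic part, being closed and coclosed with decay $-8-\lambda \leq -3$, splits into closed-and-coclosed pieces of each pure degree, so its degree-$k$ component lies in $\mathcal{H}^k_{-8-\lambda}$. Projecting the decomposition $\Omega^{\mathrm{even/odd}}_{l,\lambda} = d(\cdots) + d^*(\cdots) \oplus \mathcal{H}^{\mathrm{even/odd}}_{-8-\lambda}$ onto degree $k$, and observing that the exact part of a degree-$k$ form comes from $\Omega^{k-1}$ and the coexact part from $\Omega^{k+1}$, yields the stated formula. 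The directness of the sum with $\mathcal{H}^k_{-8-\lambda}$ follows from Proposition \ref{L2-pairing}: a harmonic form of rate $-8-\lambda$ pairs with an exact or coexact form of rate $\lambda$ via integration by parts (valid by Lemma \ref{partial integration} since $\lambda + (-8-\lambda) = -8 \leq -8+1$), and the pairing vanishes, so $\mathcal{H}^k_{-8-\lambda}$ meets the sum of exact and coexact only in $0$.

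The main obstacle I anticipate is bookkeeping the weights and the self-adjointness convention: one must be careful that the formal adjoint of $(d+d^*)_{l+1,\lambda+1}$ in the weighted sense is $(d+d^*)_{l+1,-8-(\lambda+1)+1}= (d+d^*)_{l+1,-8-\lambda}$, and that the identification of the Fredholm complement with $\ker(d+d^*)_{-8-\lambda}$ (rather than some a priori less regular dual space) really is licensed by Theorem \ref{Fredholm}(ii), which requires $-8-\lambda \geq -4 + 1 = -3$, i.e.\ $\lambda \leq -5$ — wait, that fails for $\lambda > -4$, so instead one uses part (i) together with elliptic regularity to upgrade the abstract complement $U$ to the smooth space $\ker (d+d^*)_{-8-\lambda}$, which is exactly where Lemma \ref{closed and coclosed} and the smoothness remark after Theorem \ref{ellitptic regularity} do the work. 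Getting this regularity step and the degree/parity splitting to interlock cleanly is the crux; the rest is a transcription of the compact Hodge theory argument into the weighted Fredholm framework.
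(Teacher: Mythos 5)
The paper itself gives no proof here: the proposition is quoted from Karigiannis--Lotay, so your attempt can only be compared with the standard weighted-Fredholm argument, which is indeed the route you take — apply Theorem \ref{Fredholm} to $d+d^*$ at the non-critical weight $\lambda+1$, identify the complement of the image with closed and co-closed forms of rate $-8-\lambda$ using elliptic regularity together with Lemma \ref{closed and coclosed}/Lemma \ref{individual degree closed and coclosed} (valid since $-8-\lambda<-4\leq -3$), and then project to a single degree, with directness of the sum coming from Proposition \ref{L2-pairing} and Lemma \ref{partial integration} (the weights sum to $-8=-8+1$, so the borderline case is allowed). That outline, including the degree/parity descent, is correct.

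The one misstep is in your final paragraph. The hypothesis of Theorem \ref{Fredholm}(ii) is a condition on the weight of the \emph{domain} of the operator, not on the rate of the cokernel: for $(d+d^*)_{l+1,\lambda+1}$ with order $k=1$ it reads $\lambda+1\geq -4+1$, i.e.\ $\lambda\geq -4$, which is exactly the hypothesis of the proposition. So part (ii) does apply and lets you take $U=\ker(d+d^*)_{-8-\lambda}$ on the nose (consistently with the inclusion $L^2_{l,-8-\lambda}\subset L^2_{l,\lambda}$, which holds precisely because $-8-\lambda\leq\lambda$). Your substitute — part (i) ``plus elliptic regularity'' — does not by itself do this job: regularity makes elements of the abstract model space smooth, but it does not place $\ker(d+d^*)_{-8-\lambda}$ as an actual complement inside $\Omega^{\bullet}_{l,\lambda}$. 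To make that salvage rigorous you would need to combine the dimension count from part (i) with your trivial-intersection argument and the closedness of the Fredholm image; since you already wrote down the trivial-intersection argument, this is fixable, but as stated the last paragraph replaces a hypothesis that does hold with a vaguer argument that is incomplete.
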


\begin{proposition}
\cite[Proposition 4.31, Corollary 4.32]{KL}
\label{Hodge-decomposition-L2}
Suppose $\lambda+1$ is a non-critical rate for $d+d^*$. Let $0 \leq k \leq 8$. If $\lambda < -4$,  we have
an $L^2$-orthogonal decomposition
\begin{align*}
\Omega_{l,\lambda}^k
=
d(\Omega^{k-1}_{l+1,\lambda+1}) \oplus d^*(\Omega^{k+1}_{l+1,\lambda+1})
\oplus
\mathcal{H}_{\lambda}^k
\oplus
W^k_{l,\lambda}
,
\end{align*}
where $W^k_{l,\lambda}$ is isomorphic to $\mathcal{H}^k_{-8-\lambda}/\mathcal{H}^k_{\lambda}$.
\end{proposition}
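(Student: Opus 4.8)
The plan is to adapt the proof of the $\lambda>-4$ case, Proposition \ref{Hodge-decomposition}, to the regime $\lambda<-4$. The feature that distinguishes this regime is $2\lambda\le-8$: by Proposition \ref{L2-pairing} the unweighted $L^2$-inner product is then finite on $\Omega^\bullet_{l,\lambda}$, and by Lemma \ref{partial integration} (whose hypothesis $\lambda+(\lambda+1)\le-7$ holds) integration by parts against forms in $\Omega^\bullet_{l+1,\lambda+1}$ is legitimate; this is exactly what makes the decomposition $L^2$-orthogonal. The price is the correction space $W^k_{l,\lambda}$: the cokernel of $d+d^*$ at this rate is governed by harmonic forms of the dual rate $-8-\lambda>-4$, and only the subspace $\mathcal{H}^k_\lambda\subset\mathcal{H}^k_{-8-\lambda}$ of those actually lies in $\Omega^k_{l,\lambda}$, so the missing directions reappear as $W^k_{l,\lambda}$.

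I would first establish the orthogonality relations. For $\alpha\in\Omega^{k-1}_{l+1,\lambda+1}$, $\beta\in\Omega^{k+1}_{l+1,\lambda+1}$ and $h\in\mathcal{H}^k_\lambda$, Lemma \ref{partial integration} gives $\langle d\alpha,d^*\beta\rangle_{L^2}=\langle\alpha,(d^*)^2\beta\rangle_{L^2}=0$, $\langle d\alpha,h\rangle_{L^2}=\langle\alpha,d^*h\rangle_{L^2}=0$ and $\langle d^*\beta,h\rangle_{L^2}=\langle\beta,dh\rangle_{L^2}=0$, so $d\Omega^{k-1}_{l+1,\lambda+1}$, $d^*\Omega^{k+1}_{l+1,\lambda+1}$ and $\mathcal{H}^k_\lambda$ are mutually $L^2$-orthogonal and their sum is direct. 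The same integration by parts shows that $\omega\in\Omega^k_{l,\lambda}$ is $L^2$-orthogonal to $d\Omega^{k-1}_{l+1,\lambda+1}+d^*\Omega^{k+1}_{l+1,\lambda+1}$ precisely when $d\omega=0$ and $d^*\omega=0$; density of compactly supported forms together with elliptic regularity (Theorems \ref{ellitptic regularity}, \ref{Sobolev embedding}) then force $\omega\in\mathcal{H}^k_\lambda$. Hence it remains only to exhibit a finite-dimensional $W^k_{l,\lambda}$, $L^2$-orthogonal to the other three summands, with $\Omega^k_{l,\lambda}=d\Omega^{k-1}_{l+1,\lambda+1}\oplus d^*\Omega^{k+1}_{l+1,\lambda+1}\oplus\mathcal{H}^k_\lambda\oplus W^k_{l,\lambda}$ and $W^k_{l,\lambda}\cong\mathcal{H}^k_{-8-\lambda}/\mathcal{H}^k_\lambda$.

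For the exhaustion I would argue through the dual rate. Since $\lambda+1$ is non-critical, so is $-8-\lambda$: by the remark after Lemma \ref{partial integration} the formal adjoint of $(d+d^*)_{l+1,\mu}$ with respect to the pairings of Proposition \ref{weighted-L2-dual} is again $d+d^*$, now at domain rate $-7-\mu$, so $\mathcal{D}(d+d^*)$ is invariant under $\mu\mapsto-7-\mu$. Given $\omega\in\Omega^k_{l,\lambda}$, note $\omega\in\Omega^k_{l,-8-\lambda}$ as well, and apply the $\lambda>-4$ decomposition, Proposition \ref{Hodge-decomposition}, at the rate $-8-\lambda$ to write $\omega=d\alpha+d^*\beta+h$ with $h\in\mathcal{H}^k_{-8-(-8-\lambda)}=\mathcal{H}^k_\lambda$ and $\alpha,\beta$ of rate $-7-\lambda$. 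Because $\omega$ in fact decays at rate $\lambda$, the form $(d+d^*)\omega$ decays strictly faster than the rate on $\Omega^\bullet_{l,-8-\lambda}$ predicts, so Theorem \ref{Lockhart-McOwen-key}, applied iteratively over the finitely many critical rates in $(\lambda+1,-8-\lambda)$, peels off from $\omega$ a finite-dimensional space of homogeneous solutions of $(d+d^*)\cdot=0$ on the cone (free of $\log r$ by Lemma \ref{no-log}), leaving a remainder which, decomposed again by Proposition \ref{Hodge-decomposition}, is exact-plus-coexact of rate $\lambda+1$. The span of the shed homogeneous models, projected onto the $L^2$-orthocomplement of $d\Omega^{k-1}_{l+1,\lambda+1}\oplus d^*\Omega^{k+1}_{l+1,\lambda+1}\oplus\mathcal{H}^k_\lambda$, is $W^k_{l,\lambda}$; assigning to each of its elements the harmonic homogeneous form on the cone to which it is asymptotic identifies $W^k_{l,\lambda}$ with $\mathcal{H}^k_{-8-\lambda}/\mathcal{H}^k_\lambda$. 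The degree-by-degree form of the statement then follows from Lemma \ref{individual degree closed and coclosed}, valid since $\lambda\le-3$.

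The step I expect to be the main obstacle is exactly this last exhaustion argument, which is \cite[Proposition 4.31, Corollary 4.32]{KL} transported to dimension $8$. The difficulty is that $d\Omega^{k-1}_{l+1,\lambda+1}+d^*\Omega^{k+1}_{l+1,\lambda+1}$ need not be closed in $\Omega^k_{l,\lambda}$, so no abstract Hilbert-space complement is available; one genuinely has to run the Fredholm and asymptotics machinery (Theorems \ref{Fredholm}, \ref{Lockhart-McOwen-key}, \ref{index change for elliptic operators}) for $d+d^*$ at the non-critical rate $\lambda+1$, track the finite-dimensional cokernel across every critical rate up to the dual rate $-8-\lambda$, and match it with $\mathcal{H}^k_{-8-\lambda}$ through the weighted duality of Proposition \ref{weighted-L2-dual}. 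All of this depends only on $\dim M=8$ and not on the Spin(7)-structure, so the transport from \cite{KL} is routine once the bookkeeping is carried out carefully.
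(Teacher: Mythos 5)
The paper does not prove this proposition at all: it is quoted from \cite[Proposition 4.31, Corollary 4.32]{KL}, so your argument must stand on its own. Its first half does: since $\lambda<-4$ the relevant rates sum to at most $-7$, Lemma \ref{partial integration} applies, and both your mutual orthogonality relations and your characterisation of the forms $L^2$-orthogonal to $d\Omega^{k-1}_{l+1,\lambda+1}$ and $d^*\Omega^{k+1}_{l+1,\lambda+1}$ as exactly $\mathcal{H}^k_\lambda$ are correct. But that very characterisation contradicts the goal you then set yourself. If $W^k_{l,\lambda}$ were $L^2$-orthogonal to all three other summands, any $w\in W^k_{l,\lambda}$ would in particular be orthogonal to compactly supported exact and coexact forms, hence closed and coclosed, hence in $\mathcal{H}^k_\lambda$; being also orthogonal to $\mathcal{H}^k_\lambda$, and the unweighted pairing being positive definite on $\Omega^k_{l,\lambda}$, it would vanish. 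So the space you build at the end --- ``the span of the shed homogeneous models, projected onto the $L^2$-orthocomplement of the other three summands'' --- is necessarily $\{0\}$, which is wrong whenever $\mathcal{H}^k_{-8-\lambda}\neq\mathcal{H}^k_\lambda$. The statement can only be read (and in \cite{KL} is meant) as: the first three summands are mutually $L^2$-orthogonal and $W^k_{l,\lambda}$ is merely a finite-dimensional direct-sum complement isomorphic to $\mathcal{H}^k_{-8-\lambda}/\mathcal{H}^k_\lambda$; a correct proof has to notice this and produce $W^k_{l,\lambda}$ by a mechanism other than orthogonal projection.

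The exhaustion step also fails as written. Invoking Proposition \ref{Hodge-decomposition} at the rate $-8-\lambda$ requires $(-8-\lambda)+1=-7-\lambda$ to be non-critical, which is the dual of $\lambda$, not of $\lambda+1$; the hypothesis only gives that $-8-\lambda$ is non-critical (fixable by perturbing the rate, but unjustified as stated). More seriously, Theorem \ref{Lockhart-McOwen-key} is applied to the wrong object: $\omega$ already lies at the fast rate $\lambda$, and for a general $\omega$ the form $(d+d^*)\omega$ decays no better than $\lambda-1$ relative to $\omega$'s own rate, so there is nothing to peel from $\omega$; what needs improving are the potentials $\alpha,\beta$, which your dual-rate decomposition delivers only at rate roughly $-7-\lambda$, and your sketch never brings them down to $\lambda+1$ nor shows that the peeled cone solutions yield a complement of the right dimension. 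Finally, the isomorphism $W^k_{l,\lambda}\cong\mathcal{H}^k_{-8-\lambda}/\mathcal{H}^k_\lambda$ cannot be obtained by ``assigning the harmonic homogeneous cone form to which an element is asymptotic'': elements of $W^k_{l,\lambda}$ decay at rate $\lambda<-4$ and are not asymptotic to anything homogeneous of rate $-8-\lambda$. The identification comes from duality: $(d+d^*)_{l+1,\lambda+1}$ on mixed-degree forms is Fredholm with closed image because $\lambda+1$ is non-critical, Theorem \ref{Fredholm}(i) identifies a complement of the image with $\ker(d+d^*)_{-8-\lambda}$ (part (ii) is unavailable since $\lambda+1<-3$), in degree $k$ the $L^2$-annihilator of $d\Omega^{k-1}_{l+1,\lambda+1}+d^*\Omega^{k+1}_{l+1,\lambda+1}$ among forms of rate $-8-\lambda$ is exactly $\mathcal{H}^k_{-8-\lambda}$, and $\mathcal{H}^k_\lambda$ meets the image trivially by the same integration by parts; $W^k_{l,\lambda}$ then arises from a dimension count against this pairing. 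That Fredholm--duality bookkeeping is the actual content of \cite[Proposition 4.31, Corollary 4.32]{KL}, and it is precisely the part your proposal leaves unproved.
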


In the next definition we define the main differential operator involved in studying the moduli space of AC Spin(7)-manifolds.

\begin{definition}
\label{main operator}
We denote the exterior derivative restricted to sections of $\Lambda^4_{35}=\Lambda^4_{\mathrm{ASD}}$ by 
\begin{align*}
d_{\mathrm{ASD}}\colon
\Omega^4_{35}(M)
\rightarrow
d\Omega^4(M),
\quad
\gamma \mapsto d\gamma.
\end{align*}
By $(d_{\mathrm{ASD}})_{l,\nu}$ we denote its continuous extension
\begin{align*}
(d_{\mathrm{ASD}})_{l,\nu}\colon L^2_{l,\nu}(\Lambda^4_{35})\rightarrow d(\Omega^4_{l,\nu}).
\end{align*}
\end{definition}

\begin{lemma}
\label{surjectivity of main operator}
Suppose that $\nu+1$ is a non-critical rate of $d+d^*$.
If $\nu > -4$, the operator $(d_{\mathrm{ASD}})_{l,\nu}$ is surjective.
\end{lemma}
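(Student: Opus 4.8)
The plan is to reduce, via the weighted Hodge decomposition of Proposition~\ref{Hodge-decomposition}, to the case of a co-exact $4$-form, and then to exploit the compatibility of the Hodge star $*_{\ig M}$ with the splitting $\Lambda^4 = \Lambda^4_+ \oplus \Lambda^4_{35}$.

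First I would fix an arbitrary element $d\gamma$ of the target $d(\Omega^4_{l,\nu})$, with $\gamma \in \Omega^4_{l,\nu}$. Since $\nu > -4$ and $\nu + 1$ is a non-critical rate of $d+d^*$, Proposition~\ref{Hodge-decomposition} applies with $k=4$ and $\lambda = \nu$, so I may write $\gamma = d\alpha + d^*\beta + h$ with $\alpha \in \Omega^3_{l+1,\nu+1}$, $\beta \in \Omega^5_{l+1,\nu+1}$ and $h \in \mathcal{H}^4_{-8-\nu}$. Using $d^2 = 0$ and that $h$ is closed, this collapses to $d\gamma = d(d^*\beta)$. Hence it is enough to find an anti-self-dual preimage of $d\eta$, where $\eta := d^*\beta \in \Omega^4_{l,\nu}$; note $\eta$ is co-closed, as $d^*\eta = (d^*)^2\beta = 0$.

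Second, I would split $\eta = \eta_+ + \eta_-$ into its self-dual and anti-self-dual parts, $\eta_{\pm} = \tfrac12(\eta \pm *_{\ig M}\eta)$ (using $*_{\ig M}^2 = \Id$ on $4$-forms), so that $\eta_- = \pi_{35}\eta$. Since on $4$-forms in dimension $8$ the codifferential equals $*_{\ig M}\, d\, *_{\ig M}$ up to an overall sign, and $*_{\ig M}$ is invertible, co-closedness of $\eta$ is equivalent to $d(*_{\ig M}\eta) = 0$, i.e.\ to $d\eta_+ = d\eta_-$. Therefore $d\gamma = d\eta = d\eta_+ + d\eta_- = 2\,d\eta_-$, so $2\eta_-$ is the required preimage, once we know it lies in the domain $L^2_{l,\nu}(\Lambda^4_{35})$. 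This last point is where I would be slightly careful: because $g$ is parallel for its Levi-Civita connection, $*_{\ig M}$ is a parallel pointwise isometry of $\Lambda^4$, hence an isometry of each weighted space $L^2_{l,\nu}(\Lambda^4)$; consequently the bundle projection $\pi_{35} = \tfrac12(\Id - *_{\ig M})$ maps $\Omega^4_{l,\nu}$ boundedly into $L^2_{l,\nu}(\Lambda^4_{35})$ (it preserves smoothness and support of approximating sections). Thus $2\eta_- \in L^2_{l,\nu}(\Lambda^4_{35})$ and $(d_{\mathrm{ASD}})_{l,\nu}(2\eta_-) = d\gamma$, which proves surjectivity.

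I do not expect a genuine obstacle here; the argument is short, and the only things needing care are bookkeeping the weights and regularities of the three summands coming out of Proposition~\ref{Hodge-decomposition}, and checking that passing to the anti-self-dual component does not leave the weighted Sobolev space --- the latter being immediate from $\nabla *_{\ig M} = 0$. It is worth stressing that no integration by parts is used: surjectivity follows purely from the Hodge decomposition available when $\nu > -4$, together with the algebraic identity $d\eta = 2\,d\eta_-$ for co-closed $4$-forms $\eta$. This is also exactly where the hypothesis $\nu > -4$ enters, since below that rate one must instead use Proposition~\ref{Hodge-decomposition-L2}, whose extra summand $W^4_{l,\nu}$ is not handled by this argument.
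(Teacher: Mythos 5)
Your argument is correct and is essentially the paper's proof: the paper also invokes Proposition \ref{Hodge-decomposition} to replace a given exact form $d\gamma$ by $d\eta$ with $\eta$ co-exact, and then notes that $\eta - *_{\ig M}\eta$ (which is exactly your $2\eta_-$) is anti-self-dual with the same exterior derivative since $d*_{\ig M}\eta=0$. Your extra remarks on boundedness of $\pi_{35}$ on the weighted spaces are fine but routine, so there is no substantive difference.
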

\begin{proof}
We can prove this by using the Hodge decomposition from Proposition \ref{Hodge-decomposition}. Let $\alpha\in d(\Omega^4_{l,\nu})$ be exact. Then we can write $\alpha = d\eta$ for some co-exact $\eta \in d^*(\Omega^5_{l+1,\nu+1})$.
But then $\alpha=d(\eta-*\eta)$ is the exterior derivative of an anti-self-dual form and hence $(d_{\mathrm{ASD}})_{l,\nu}$ is surjective.
\end{proof}

\begin{remark}
\label{explanation-for-obstructions}
The main reason why we have to restrict to rates $\nu > -4$ in  Theorem \ref{main-thm} is that in the $L^2$-setting we cannot even expect that 
\begin{align*}
d\colon L^2_{l,\nu}(\Lambda^4_{1}\oplus \Lambda^4_{7} \oplus \Lambda^4_{35}) \rightarrow d(\Omega^4_{l,\nu})
\end{align*}
is surjective.
The reason is that in the Hodge decomposition from Proposition \ref{Hodge-decomposition-L2} forms in the space $W^4_{l,\nu}$ are not necessarily closed. If we denote by $(W_c)^4_{l,\nu}$ the subspace of closed forms in $W^4_{l,\nu}$ and by $(W_{\perp})^4_{l,\nu}$ its $L^2$-orthogonal complement in $W^4_{l,\nu}$, then every form in $d(\Omega^4_{l,\nu})$ can be written as 
\begin{align*}
d(\eta+\omega)
\end{align*}
for unique $\eta\in d^*(\Omega^5_{l+1,\nu+1})$ and $\omega\in(W_{\perp})^4_{l,\nu}$. If $\omega \neq 0$, then we cannot use the same trick as in the proof of Lemma \ref{surjectivity of main operator}.
\end{remark}

\begin{lemma}
\label{image-Banach}
Suppose $\nu$ is a non-critical rate of the operator $d+d^*$.
Then 
$d\Omega^4_{l,\nu}(M)$ is a closed subspace of $\Omega^5_{l-1,\nu-1}$,
and therefore a Banach space.
\end{lemma}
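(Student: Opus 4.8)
The plan is to realise $d\Omega^4_{l,\nu}(M)$ as the image of a Fredholm operator, since the image of a Fredholm map between Banach spaces is closed. The natural candidate is the operator $(d+d^*)_{l,\nu}$ acting on $4$-forms, or more precisely its restriction to an appropriate closed complement of its kernel. First I would note that $\nu$ being a non-critical rate for $d+d^*$ means, by Theorem \ref{Fredholm}, that
\begin{align*}
(d+d^*)_{l,\nu}\colon \Omega^4_{l,\nu} \rightarrow \Omega^3_{l-1,\nu-1}\oplus\Omega^5_{l-1,\nu-1}
\end{align*}
is Fredholm, hence has closed image. The key observation is then that $d\Omega^4_{l,\nu}$ is precisely the projection of this image onto the $\Omega^5$-factor, combined with the fact that for a $4$-form $\gamma$ one has $(d+d^*)\gamma = d\gamma + d^*\gamma$ with $d\gamma$ and $d^*\gamma$ living in complementary degrees, so the two summands do not interfere.

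The cleanest route is as follows. I would use the Hodge decomposition of Proposition \ref{Hodge-decomposition} (for $\nu > -4$) or Proposition \ref{Hodge-decomposition-L2} (for $\nu < -4$) to write $\Omega^4_{l,\nu}$ as a sum of $d(\Omega^3_{l+1,\nu+1})$, $d^*(\Omega^5_{l+1,\nu+1})$, a finite-dimensional space of closed and co-closed forms, and, in the $L^2$-case, the extra space $W^4_{l,\nu}$. Since $d$ annihilates the closed-and-co-closed summand and the exact summand, we get
\begin{align*}
d\Omega^4_{l,\nu}
=
d\big(d^*(\Omega^5_{l+1,\nu+1})\big) + d\big(W^4_{l,\nu}\big),
\end{align*}
where the second term is absent when $\nu > -4$. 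So it suffices to show each of these two pieces is closed and their sum is closed. For the term $d\,d^*(\Omega^5_{l+1,\nu+1})$, I would identify it with the image of the Laplacian $\Delta_{l+1,\nu+1}$ restricted to the co-exact $5$-forms (using $dd^* = \Delta$ on the co-exact part, since $d^*d^* = 0$), and $\Delta_{l+1,\nu+1}\colon \Omega^5_{l+1,\nu+1}\to\Omega^5_{l-1,\nu-1}$ is Fredholm because $\nu+1$ — hence also the relevant Laplace rate — is non-critical; restricted to the closed subspace of co-exact forms it is still Fredholm, so its image is closed. The finite-dimensional piece $d(W^4_{l,\nu})$ adds only finitely many dimensions, and adding a finite-dimensional subspace to a closed subspace of a Banach space yields a closed subspace.

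Alternatively, and perhaps more economically, I would argue directly: let $\alpha_n = d\gamma_n \to \beta$ in $\Omega^5_{l-1,\nu-1}$ with $\gamma_n\in\Omega^4_{l,\nu}$. By the Hodge decomposition I may replace $\gamma_n$ by its co-exact part $\eta_n\in d^*(\Omega^5_{l+1,\nu+1})$ without changing $d\gamma_n$. On the co-exact forms, the Poincaré-type inequality of Corollary \ref{poincare inequality} applied to $d+d^*$ (whose kernel is $L^2$-orthogonal to the co-exact forms of the relevant rate when $\nu$ is non-critical) gives a uniform bound $\|\eta_n\|_{L^2_{l,\nu}} \le C\|d\eta_n\|_{L^2_{l-1,\nu-1}}$ — here I use that $d^*\eta_n = 0$ so $(d+d^*)\eta_n = d\eta_n$. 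Hence $(\eta_n)$ is Cauchy, converges to some $\eta\in\Omega^4_{l,\nu}$, and by continuity of $d$ we get $d\eta = \beta$, so $\beta\in d\Omega^4_{l,\nu}$.

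The main obstacle I anticipate is the bookkeeping in the $L^2$-regime $\nu < -4$: there the Hodge decomposition contains the additional space $W^4_{l,\nu}$, which need not consist of closed forms, so the identification $d\Omega^4_{l,\nu} = d\,d^*(\Omega^5_{l+1,\nu+1}) + d(W^4_{l,\nu})$ must be handled with care, and one must check that adding the finite-dimensional image $d(W^4_{l,\nu})$ does not destroy closedness — which it does not, being a finite-dimensional perturbation. A secondary subtlety is verifying that the kernel of $d+d^*$ at rate $\nu$ is genuinely $L^2$-orthogonal (in the $L^2_{l,\nu}$ sense) to the co-exact subspace, so that Corollary \ref{poincare inequality} applies to the restricted operator; this follows from the orthogonality built into Propositions \ref{Hodge-decomposition} and \ref{Hodge-decomposition-L2}. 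Since the paper only needs the Banach-space structure, either argument suffices, and I would present the direct one as the main line with the Hodge-theoretic decomposition invoked for the a priori estimate.
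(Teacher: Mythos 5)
Your main (direct) argument is essentially sound and is structurally parallel to the paper's proof in the regime $\nu>-4$: both reduce a Cauchy sequence $d\gamma_n$ to representatives on which $d+d^*$ is controlled by $d$ alone and then invoke Corollary \ref{poincare inequality} at the non-critical rate $\nu$. The difference is the choice of representatives: you pass to the co-exact part via Proposition \ref{Hodge-decomposition} (so $d^*\eta_n=0$ and $(d+d^*)\eta_n=d\eta_n$), whereas the paper uses Lemma \ref{surjectivity of main operator} to replace $\gamma_n$ by anti-self-dual forms (so $d^*\gamma_n=*d\gamma_n$). Both variants quietly need $\nu+1$ non-critical, which is not literally in the statement of the lemma but is assumed wherever it is applied. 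For $\nu\le -4$ the paper is slicker than your proposal: it applies Proposition \ref{Hodge-decomposition-L2} to $\Omega^5_{l-1,\nu-1}$ (which only requires $\nu$ to be non-critical), where $d(\Omega^4_{l,\nu})$ appears verbatim as a summand of an $L^2$-orthogonal decomposition and is therefore closed; your route through the decomposition of the domain plus the finite-dimensional perturbation $d(W^4_{l,\nu})$ does work, but costs the extra hypothesis on $\nu+1$ and extra bookkeeping.

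Two specific claims in your write-up are wrong as stated, though neither sinks the main line. First, in the Fredholm route, ``$dd^*=\Delta$ on the co-exact part'' is false: on a co-exact form $\eta$ one has $d^*\eta=0$, hence $\Delta\eta=d^*d\eta$, which is co-exact, while $d\,d^*(\Omega^5_{l+1,\nu+1})$ consists of exact forms; the correct comparison would be with $\Delta$ acting on exact $5$-forms, which in turn needs a Hodge decomposition at rate $\nu+1$ and hence $\nu+2$ non-critical, so this alternative route needs genuine repair. Second, the co-exact subspace is not known to be orthogonal to $\ker(d+d^*)_{l,\nu}$ in the $L^2_{l,\nu}$ inner product: the orthogonality in Proposition \ref{Hodge-decomposition-L2} is with respect to the unweighted $L^2$ pairing, and Proposition \ref{Hodge-decomposition} asserts no orthogonality at all, so Corollary \ref{poincare inequality} does not apply to the $\eta_n$ as they stand. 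The fix is standard and is what the paper implicitly does: subtract from $\eta_n$ its $L^2_{l,\nu}$-orthogonal projection onto the finite-dimensional kernel (closed and co-closed $4$-forms of rate $\nu$); this leaves $(d+d^*)\eta_n=d\eta_n$ unchanged, the corollary then gives convergence of the corrected sequence to some $\eta\in\Omega^4_{l,\nu}$, and comparing degrees in $(d+d^*)\eta=\beta$ yields $d\eta=\beta$.
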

\begin{proof}
If $\nu \leq -4$, this is true because all components in the $L^2$-orthogonal decomposition from Proposition \ref{Hodge-decomposition-L2} are closed.

Next we consider the case $\nu > -4$.
Let $\{ d\gamma_j \}_{j\in\mathbb{N}}$ be a Cauchy sequence in $d\Omega^4_{l,\nu}(M)$. By Lemma \ref{surjectivity of main operator} we can assume that $\gamma_j\in L^2_{l,\nu}(\Lambda^4_{35})$ for all $j\in\mathbb{N}$. 
Because $\gamma_j$ is anti-self dual we have $d^*\gamma_j=*d\gamma_j$ and, therefore, $\{(d+d^*)\gamma_j\}_{j\in\mathbb{N}}$ is a Cauchy sequence in $\Omega^{\bullet}_{l-1,\nu-1}$. By Proposition \ref{poincare inequality}
there exists $\gamma\in\Omega^4_{l,\nu}$ such that
$(d+d^*)\gamma = \lim_{j\rightarrow \infty}(d+d^*)\gamma_j$.
This finishes the proof.
\end{proof}

\subsection{Cohomology of AC Spin(7)-manifolds}

\label{section-topology}

Suppose $(M,\psi)$ is an AC Spin(7)-manifold.
The compactly supported cohomology groups $H^k_{\mathrm{cs}}(M,\mathbb{R})$ of $M$ are the cohomology groups associated to the chain complex of compactly supported forms on $M$. Any representative of a class in $H^k_{\mathrm{cs}}(M,\mathbb{R})$ is a closed $k$-form and therefore induces a class in $H^k(M,\mathbb{R})$. This is well-defined at the cohomology level and induces a map
\begin{align*}
\mathcal{I}^k:H^k_{\mathrm{cs}}(M,\mathbb{R})\rightarrow H^k(M,\mathbb{R}).
\end{align*} 
It follows straight from Definition \ref{def-AC-Spin(7)} 
that if $r > R$, there is an embedding $\iota_r:\Sigma \rightarrow M$ given by $\iota_r = F(r, \cdot )$.
This induces a restriction map $\iota_r^*: H^k(M,\mathbb{R})\rightarrow H^k(\Sigma,\mathbb{R})$. Because the embeddings are homotopic for different values of $r$, the map $\iota_r^*$ does not depend on $r$. Henceforth we will denote it by 
\begin{align}
\label{restriction map}
\Upsilon^k:H^k(M,\mathbb{R})\rightarrow H^k(\Sigma,\mathbb{R}).
\end{align}
The maps $\mathcal{I}^k$ and $\Upsilon^k$ are part of a long exact sequence given by
\begin{align}
\label{long-exact-sequence}
\cdots
\rightarrow
H^k_{\mathrm{cs}}(M,\mathbb{R})
\xrightarrow{\mathcal{I}^k}
H^k(M,\mathbb{R})
\xrightarrow{\Upsilon^k}
H^k(\Sigma,\mathbb{R})
\xrightarrow{\partial^k}
H^{k+1}_{\mathrm{cs}}(M,\mathbb{R})
\rightarrow
\cdots
\end{align}
The boundary map $\partial^k$ can be described as follows. If $[\alpha]\in H^k(\Sigma,\mathbb{R})$ set $\partial^k[\alpha] := [d(\chi \alpha)]$, where $\chi$ is the cut-off function from Definition \ref{def-radial-function}. This is well-defined. Note that even though the form $d(\chi \alpha)$ is exact, the map $\partial^k$ is non-trivial because $d(\chi \alpha)$ in general cannot be written as the exterior derivative of a compactly supported form. 

For us it is important to have a good description of preimages of cohomology classes in $\im \Upsilon^k$. We say that a $k$-form $\gamma$ on $M$ is \textit{translation invariant}
if there exists $R' \geq R$ such that for all $r \geq R'$
we have
\begin{gather*}
\iota_r^*(\gamma) = \iota_{R'}^*(\gamma),
\\
\iota_r^*(\partial_r \lrcorner \gamma)
=
\iota_{R'}^*(\partial_r \lrcorner \gamma).
\end{gather*}
Equivalently there exist $R'\geq R$, $\alpha\in\Omega^{k-1}(\Sigma)$ and $\beta\in\Omega^k(\Sigma)$ such that on 
$F((R',\infty)\times\Sigma)$ we have $\gamma = dr\wedge\alpha+\beta$. If in addition $\alpha = 0$, we say that $\gamma$ is a \textit{lift}.

\begin{lemma}
\cite[Corollary 5.9]{Marshall}
\label{lift}
Let $[\beta]\in \im \Upsilon^k$ where $\beta$ is any representative. Then
a preimage of $[\beta]$ under $\Upsilon^k$ can be represented by a lift, i.e.  there exists $\zeta\in\Omega^k_{\mathrm{cs}}(M)$ such that $\xi = \chi \beta + \zeta $ is closed and $\Upsilon^k[\xi]=[\beta]$.
\end{lemma}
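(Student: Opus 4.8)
The plan is to take an arbitrary closed $k$-form on $M$ representing some preimage of $[\beta]$ under $\Upsilon^k$ and correct it, by subtracting an exact form, so that it agrees with the transplanted form $\chi\beta$ outside a compact set. Since $[\beta]\in\im\Upsilon^k$, I would first fix a closed $k$-form $\xi_0\in\Omega^k(M)$ with $\Upsilon^k[\xi_0]=[\beta]$; unwinding the definition of $\Upsilon^k$ in \eqref{restriction map}, this says precisely that for large $r$ the restriction $\iota_r^*\xi_0$ differs from $\beta$ by an exact form on $\Sigma$.

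Next I would localise on a cylindrical piece of the end. Fix $R'\geq R+1$, put $N:=F((R',\infty)\times\Sigma)\subset M$, and let $\pi\colon N\to\Sigma$ be the projection coming from $F$. As $N$ deformation retracts onto the cross-section $\iota_{R'}(\Sigma)$, homotopy invariance of de Rham cohomology gives an isomorphism $H^k(N)\cong H^k(\Sigma)$; since the closed form $\xi_0|_N-\pi^*\beta$ restricts on that cross-section to the exact form $\iota_{R'}^*\xi_0-\beta$, it is itself exact on $N$, so I can choose $\sigma\in\Omega^{k-1}(N)$ with $d\sigma=\xi_0|_N-\pi^*\beta$. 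Introducing an auxiliary cut-off $\widetilde\chi\colon M\to[0,1]$ that vanishes on $\{\rho\leq R'\}$ and equals $1$ on $\{\rho\geq R'+1\}$, the product $\widetilde\chi\,\sigma$ extends by zero to a global smooth $(k-1)$-form on $M$, and I set $\xi:=\xi_0-d(\widetilde\chi\,\sigma)$. This $\xi$ is closed and cohomologous to $\xi_0$, hence $\Upsilon^k[\xi]=[\beta]$; and on $\{\rho\geq R'+1\}$, where $\widetilde\chi\equiv 1$, it equals $\xi_0-d\sigma=\pi^*\beta$.

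To conclude, I would observe that the cut-off $\chi$ from Definition \ref{def-radial-function} is identically $1$ on $F((R+1,\infty)\times\Sigma)$, so, since $R'\geq R+1$, the transplanted form $\chi\beta$ coincides with $\pi^*\beta$ on $\{\rho\geq R'+1\}$. Hence $\zeta:=\xi-\chi\beta$ vanishes outside a compact set, i.e.\ $\zeta\in\Omega^k_{\mathrm{cs}}(M)$, and $\xi=\chi\beta+\zeta$ is a closed lift with $\Upsilon^k[\xi]=[\beta]$, as required. I expect the only real subtlety to be the bookkeeping with the nested cut-off regions --- choosing $R'$ large enough that $\chi\equiv 1$ there, keeping the support of $\widetilde\chi$ inside the domain of $\sigma$, and checking that $\widetilde\chi\,\sigma$ extends smoothly by zero. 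There is no analytic difficulty here: everything happens at the level of smooth forms, and the one geometric input --- that a cylindrical end has the de Rham cohomology of its link --- is elementary.
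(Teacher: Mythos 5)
Your argument is correct: taking any closed preimage $\xi_0$ of $[\beta]$, using the homotopy equivalence of the end $F((R',\infty)\times\Sigma)$ with $\Sigma$ to write $\xi_0-\pi^*\beta$ as $d\sigma$ there, and cutting off $\sigma$ to correct $\xi_0$ into a form equal to $\chi\beta$ far out is exactly the standard argument behind this statement, which the paper itself does not prove but cites from Marshall. The only points to tidy are the ones you already flag: the cross-section onto which $N$ retracts should be $\iota_{r_0}(\Sigma)$ for some $r_0>R'$ (the slice at $R'$ lies on the boundary of the open set $N$), and $\widetilde\chi$ should vanish on a neighbourhood of $\{\rho\leq R'\}$ so that $\widetilde\chi\,\sigma$ genuinely extends smoothly by zero; both are trivial adjustments.
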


\begin{lemma}
\label{orthogonality of image}
$\im \Upsilon^3$ and $\im \Upsilon^4$ annihilate each other under the Poincar\'{e} pairing, i.e. with respect to harmonic representatives we have
\begin{align*}
*\im \Upsilon^4 \perp_{L^2} \im\Upsilon^3
\quad
\text{and}
\quad
*\im \Upsilon^3\perp_{L^2} \im \Upsilon^4.
\end{align*}
\end{lemma}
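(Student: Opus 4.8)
The plan is to exhibit explicit representatives of classes in $\im\Upsilon^3$ and $\im\Upsilon^4$ that localize near infinity in a compatible way, and then compute the Poincar\'e pairing as an integral over the cone which we show vanishes using the $\mathrm{G}_2$-structure of the link. By Lemma \ref{lift}, any class in $\im\Upsilon^4$ has a representative of the form $\xi_4 = \chi\beta_4 + \zeta_4$ with $\zeta_4$ compactly supported and $\beta_4 \in \Omega^4(\Sigma)$ the pullback of a closed form on $\Sigma$; similarly a class in $\im\Upsilon^3$ has a representative $\xi_3 = \chi\beta_3 + \zeta_3$ with $\beta_3 \in \Omega^3(\Sigma)$ closed on $\Sigma$. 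The Poincar\'e pairing of $\xi_3$ with $\xi_4$ can be computed with these representatives since it only depends on cohomology classes and both forms are closed. Splitting $M$ into the compact piece and the end $F((R',\infty)\times\Sigma)$, the contribution from the compact piece involving the compactly supported corrections vanishes after one checks (using Stokes) that it equals a boundary term at $r = R'$, and the essential part is $\int_{(R',\infty)\times\Sigma} \beta_3 \wedge \beta_4$.

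Now the key computation: on the cone, $\beta_3$ and $\beta_4$ are pulled back from $\Sigma$ and carry no $dr$ component, so $\beta_3 \wedge \beta_4$ is a $7$-form on a product with an $(0,\infty)$ factor and no $dr$; hence it integrates to zero over $(R',\infty)\times\Sigma$. Actually this is the crucial point and makes the argument clean: a wedge of a pure-$\Sigma$ $3$-form with a pure-$\Sigma$ $4$-form is a degree-$7$ form on $\Sigma$ pulled back to the product, which has no $dr\wedge(\cdot)$ part, so its integral over the $8$-manifold end is zero. The only subtlety is handling the cutoff region and the compactly supported corrections: there one uses that $d\xi_3 = 0$ and $d\xi_4 = 0$ globally, writes $\xi_i = \chi\beta_i + \zeta_i$, and expands $\xi_3\wedge\xi_4$; the cross terms and the $\zeta$-terms are all exact or compactly supported, and integrating over $M$ and applying Stokes' theorem (valid since everything is either compactly supported or the forms are closed and the wedge decays fast enough near infinity — both $\beta_i$ are bounded, so the product is bounded, but after subtracting the pure-cone piece one gets genuine decay) shows these contributions vanish.

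To pass to harmonic representatives as the statement requires, I would invoke the Hodge-theoretic description of $\im\Upsilon^k$: on an AC manifold, classes in $\im\Upsilon^k$ (for $k$ in the relevant range, here $k=3,4$ on an $8$-manifold) are represented by bounded harmonic forms that are translation-invariant at infinity, asymptotic to the pullback of a harmonic form on $\Sigma$ (which exists since $\Sigma$ is compact); this is part of the standard AC Hodge theory package, consistent with Propositions \ref{Hodge-decomposition} and \ref{Hodge-decomposition-L2} and Corollary \ref{forms on cone at rate -4}. For such harmonic representatives $h_3 \in \mathcal{H}^3$ and $h_4 \in \mathcal{H}^4$, asymptotic at rate $0$ to pullbacks $\beta_3, \beta_4$ from $\Sigma$, the $L^2$-pairing $\langle *h_3, h_4\rangle_{L^2} = \int_M h_3 \wedge h_4$ converges (the leading term $\beta_3\wedge\beta_4$ vanishes pointwise as a top-degree form without $dr$, and the correction terms decay) and equals the topological Poincar\'e pairing of the underlying classes by the argument above, hence is zero. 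The symmetric statement $*\im\Upsilon^3 \perp_{L^2} \im\Upsilon^4$ follows identically, or simply by symmetry of the wedge up to sign.

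The main obstacle I anticipate is \emph{justifying the integration by parts / Stokes arguments near infinity}: the wedge $\xi_3\wedge\xi_4$ is only bounded, not decaying, so one cannot naively apply Stokes on $M$ to conclude the pairing equals a boundary term. The right way around this is to work with the difference from the model cone contribution — subtract off $\chi\beta_3\wedge\chi\beta_4$, which is the only non-decaying piece and which we already observed integrates to zero over the end because it has no $dr$-component — and then all remaining terms either are compactly supported or decay like $r^{\nu}$ (coming from the difference between the harmonic/closed representatives on $M$ and their asymptotic models on the cone), so Stokes applies legitimately on a large compact exhaustion $\{\rho \le R_i\}$ with boundary terms at $\rho = R_i$ that vanish in the limit. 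One must also check that the map $\Upsilon^k$ and the pairing are compatible in the sense that the "boundary at infinity" term from Stokes reproduces exactly the cone integral of $\beta_3\wedge\beta_4$ — but this is precisely the degree-$7$-form-without-$dr$ observation again, so it is zero and the bookkeeping closes up.
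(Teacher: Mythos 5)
There is a genuine gap, and it stems from a misreading of what the pairing in the lemma is. The classes in $\im \Upsilon^3$ and $\im \Upsilon^4$ live in $H^3(\Sigma,\mathbb{R})$ and $H^4(\Sigma,\mathbb{R})$, and the Poincar\'e pairing that has to vanish is the pairing on the $7$-dimensional link, $\int_\Sigma \alpha\wedge\beta$, equivalently $\langle *_{\Sigma}\alpha,\beta\rangle_{L^2(\Sigma)}$ for the harmonic representatives on the compact manifold $\Sigma$; the Hodge star in the statement is the one on $\Sigma$. Your computation instead tries to pair the lifted forms on the $8$-dimensional manifold $M$ (or its conical end): but $\xi_3\wedge\xi_4$ and $\beta_3\wedge\beta_4$ are $7$-forms, so expressions such as $\int_{(R',\infty)\times\Sigma}\beta_3\wedge\beta_4$ or $\langle *h_3,h_4\rangle_{L^2(M)}=\int_M h_3\wedge h_4$ are degree-inconsistent ($*_M h_3$ is a $5$-form, and a $7$-form cannot be integrated over an $8$-manifold), and the observation your argument hinges on --- that a pure-$\Sigma$ $7$-form has no $dr$-component and hence ``integrates to zero over the end'' --- is vacuous: it says nothing about the quantity that actually must vanish, namely the slice integral $\int_{\{r\}\times\Sigma}\alpha\wedge\beta=\int_\Sigma\alpha\wedge\beta$. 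A further warning sign is that your argument never uses that the classes lie in the images of $\Upsilon^3$ and $\Upsilon^4$; if it were correct it would show that the Poincar\'e pairing of \emph{arbitrary} classes in $H^3(\Sigma)$ and $H^4(\Sigma)$ vanishes, contradicting the nondegeneracy of Poincar\'e duality on the compact link.

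The hypothesis that both classes extend to closed forms on $M$ is exactly what makes the statement true, and the paper exploits it through Stokes' theorem on the compact region bounded by a slice: taking lifts $\gamma=\chi\alpha+\gamma_-$ and $\eta=\chi\beta+\eta_-$ as in Lemma \ref{lift}, the $7$-form $\gamma\wedge\eta$ is closed on $M$, so for every large $r$ Stokes' theorem on $\{\rho\le r\}$ gives $\int_{\{r\}\times\Sigma}\gamma\wedge\eta=0$; once $r$ is beyond the supports of $\gamma_-$ and $\eta_-$ this boundary integral is precisely $\int_\Sigma\alpha\wedge\beta$, which therefore vanishes, and this equals $\langle *_\Sigma\alpha,\beta\rangle_{L^2(\Sigma)}$ up to sign. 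In other words, the relevant ``boundary at infinity'' term is an integral over a $7$-dimensional slice, not over the end, and it is killed because the link bounds in $M$, not because of the absence of a $dr$-factor. Your proposal would be repaired by replacing the end-integral computation with this slice/Stokes argument.
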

\begin{proof}
Let $\alpha$ be a harmonic representative of a class
$[\alpha]\in \im\Upsilon^3$
and $\beta$ a harmonic representative of a class
$[\beta]\in\im \Upsilon^4$.
Then by Lemma \ref{lift} there exist a closed 3-form $\gamma$, a closed 4-form $\eta$ and compactly supported forms $\gamma_{-}$ and $\eta_{-}$ such that $\gamma = \chi \alpha + \gamma_{-}$
and $\eta=\chi \beta+\eta_{-}$.
Stokes' theorem gives
\begin{align*}
0 = \int_M d(\gamma\wedge\eta)
=
\lim_{r\rightarrow\infty}
\int_{\{r\}\times \Sigma}
(\gamma|_{\{r\}\times \Sigma})
\wedge
(\eta|_{\{r\}\times \Sigma})
=
\lim_{r\rightarrow\infty}
r^{7} \int_{\Sigma} \alpha\wedge\beta
\\
+
\lim_{r\rightarrow\infty}
\int_{\{r\}\times \Sigma}
\Big(
\alpha\wedge(\eta_{-}|_{\{r\}\times \Sigma})
+
(\gamma_{-}|_{\{r\}\times \Sigma})\wedge\beta
+
(\gamma_{-}|_{\{r\}\times \Sigma})
\wedge
(\eta_{-}|_{\{r\}\times \Sigma})
\Big)
\end{align*}
Because the integrand in the second limit is compactly supported this limit is zero. Therefore, we get
\begin{align*}
\langle *\alpha,\beta\rangle_{L^2}
=
\langle \alpha,*\beta\rangle_{L^2}
=
\int_{\Sigma} \alpha\wedge\beta = 0.
\end{align*}
\end{proof}

The reason that topology is relevant for us is that we need to understand closed and co-closed forms decaying with the $L^2$-rate $-4$. The following Proposition relates them to the cohomology groups of $M$. The result is due to Lockhart \cite{Lockhart-Hodge}. We will use a version adapted to the AC setting \cite[Theorem 6.5.2]{Jason-PhD}. 
\begin{proposition}
\label{Hodge Theorem}
We have
\begin{align*}
\mathcal{H}^k_{L^2}
=
\mathcal{H}^k_{-4}
\cong
\begin{cases}
H^k(M,\mathbb{R})\quad\quad & k > 4,
\\
\mathcal{I}^4(H^4_{\mathrm{cs}}(M,\mathbb{R})) 
\quad\quad & k = 4,
\\
H^k_{\mathrm{cs}}(M,\mathbb{R})\quad\quad & k < 4. 
\end{cases}
\end{align*}
\end{proposition}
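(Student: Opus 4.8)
The plan is to prove the three isomorphisms $\mathcal{H}^k_{L^2}=\mathcal{H}^k_{-4}$ first, then identify $\mathcal{H}^k_{-4}$ with the appropriate (co)homology group in each range of degrees. The first equality follows immediately from Remark \ref{relation-between-weighted-spaces}(i), which identifies $L^2_{0,-4}(\Lambda^k)$ with $L^2(\Lambda^k)$, together with elliptic regularity (Theorem \ref{ellitptic regularity}) to see that an $L^2$ form satisfying $(d+d^*)\gamma=0$ weakly lies in $L^2_{2,-4}$ and hence is smooth; by Lemma \ref{closed and coclosed}(i), since $-4\leq -3$, such a form is automatically closed and co-closed, so the two descriptions of $\mathcal{H}^k$ agree. (If one prefers to work directly with $\mathcal{H}^k_{-4}$ defined by $d\gamma=0$, $d^*\gamma=0$, this step is vacuous.)

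For the identification with cohomology, the main tool is the Hodge-type decomposition of Proposition \ref{Hodge-decomposition-L2} applied at the rate $\lambda=-4$ — but $-4$ is exactly the $L^2$ threshold, so I would instead approach the rate $-4$ as a limit of nearby non-critical rates $\lambda<-4$ (where the $L^2$ decomposition of Proposition \ref{Hodge-decomposition-L2} holds) and $\lambda>-4$ (where Proposition \ref{Hodge-decomposition} holds), using Theorem \ref{constant away from crit rates} and Theorem \ref{Lockhart-McOwen-key} to control how $\mathcal{H}^k_\lambda$ jumps across the critical rate $-4$. Concretely: a closed form $\gamma\in\mathcal{H}^k_{-4}$ defines a class in $H^k(M,\mathbb{R})$ via $\gamma\mapsto[\gamma]$, giving a natural map $\mathcal{H}^k_{-4}\to H^k(M,\mathbb{R})$; injectivity amounts to showing that if $\gamma=d\beta$ with $\gamma$ closed, co-closed and decaying at rate $-4$, then $\gamma=0$, which one gets by choosing $\beta$ co-exact (Hodge decomposition at a rate just above $-4$) and integrating by parts, the boundary term vanishing because $-4$ is the $L^2$ rate. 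Surjectivity onto the correct subspace is the part that distinguishes the three cases: given a cohomology class one produces a closed representative decaying like $r^{-4}$ using Lemma \ref{lift} together with the fact (from the formulas \eqref{formulas-cone} on the cone and Corollary \ref{forms on cone at rate -4}) that the relevant critical forms on the link live in $H^3(\Sigma)$ and $H^4(\Sigma)$ only, so only degrees $k=3,4,5$ of forms on $M$ are affected at rate $-4$; the long exact sequence \eqref{long-exact-sequence} and Poincaré duality on $M$ then pin down the image as $H^k_{\mathrm{cs}}$ for $k<4$, $\mathcal{I}^4(H^4_{\mathrm{cs}})$ for $k=4$, and $H^k(M)$ for $k>4$. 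The case $k=4$ is the delicate one: a priori a closed co-closed $L^2$ $4$-form gives a class in $\im\mathcal{I}^4$ by the injectivity argument combined with Poincaré duality, and conversely every class in $\im\mathcal{I}^4$ has a compactly supported — hence $L^2$ and rapidly decaying — closed representative whose harmonic projection stays in $L^2$; one must check the harmonic projection does not leave $L^2$, which is where $W^4_{l,\lambda}\cong\mathcal{H}^4_{-8-\lambda}/\mathcal{H}^4_\lambda$ from Proposition \ref{Hodge-decomposition-L2} and the self-duality considerations of Remark \ref{explanation-for-obstructions} enter.

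I would organize the degree cases by symmetry: the Hodge star $*_{\ig M}$ is an isometry sending $\mathcal{H}^k_{-4}$ to $\mathcal{H}^{8-k}_{-4}$ (note $*_{\ig M}$ preserves the rate $-4$ since $8/2=4$), so it suffices to treat $k\leq 4$ and deduce $k>4$ by duality, matching $H^k(M)\cong H^{8-k}_{\mathrm{cs}}(M)$. For $k<4$ the claim is $\mathcal{H}^k_{-4}\cong H^k_{\mathrm{cs}}(M,\mathbb{R})$: the map goes the other way, $H^k_{\mathrm{cs}}\to\mathcal{H}^k_{-4}$ by harmonic projection of a compactly supported closed representative, and one shows it is well-defined (the projection lands in $L^2$ because in low degree there are no critical forms at $-4$, by Corollary \ref{forms on cone at rate -4}, so $W^k=0$), injective (a compactly supported exact-in-$L^2$ form is $d^*$ of something decaying, integrate by parts), and surjective (run the argument of the previous paragraph the other direction, using that $\Upsilon^k$ has image forced to be zero in the relevant low degrees on the link).

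The hard part will be the middle degree $k=4$: one has to rule out that the harmonic $L^2$ representative of a class in $\mathcal{I}^4(H^4_{\mathrm{cs}})$ picks up a component in $W^4_{l,\lambda}$ that fails to decay at rate $-4$, and dually that every $L^2$-harmonic $4$-form is genuinely compactly-cohomologous rather than merely closed. This is precisely the subtlety flagged in Remark \ref{explanation-for-obstructions} — the space $W^4$ need not consist of closed forms — so the argument must use that a \emph{closed and co-closed} $L^2$ $4$-form is in particular self-dual-plus-anti-self-dual with both pieces closed, combined with the characterization of closed homogeneous ASD $4$-forms at rate $-4$ in Remark \ref{remark-char-asd} (which forces harmonicity of the link form, landing in $H^4(\Sigma)$), to see that no such obstruction component survives. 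Everything else is a bookkeeping exercise with the long exact sequence \eqref{long-exact-sequence}, Lemma \ref{lift}, and the Lockhart–McOwen jump formula.
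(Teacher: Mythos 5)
First, a point of comparison: the paper does not prove this proposition at all — it is quoted from Lockhart \cite{Lockhart-Hodge}, in the form adapted to the AC setting in \cite[Theorem 6.5.2]{Jason-PhD}. So you are in effect re-proving an external result, and your text has to be judged on its own; as written it is a roadmap in which the genuinely hard steps are deferred ("bookkeeping", "one must check") rather than carried out, and the one step you do spell out contains a real gap.

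That gap is the injectivity argument. You claim that if $\gamma\in\mathcal{H}^k_{-4}$ is exact on $M$ one can "choose $\beta$ co-exact (Hodge decomposition at a rate just above $-4$)" and integrate by parts, "the boundary term vanishing because $-4$ is the $L^2$ rate". Two problems. (i) De Rham exactness of $\gamma$ does not produce a primitive in the weighted space $\Omega^{k-1}_{l+1,\lambda+1}$: Proposition \ref{Hodge-decomposition} writes $\gamma=d\alpha+d^*\beta+h$ with $h\in\mathcal{H}^k_{-8-\lambda}$, and deciding when the topological triviality of $[\gamma]$ forces the harmonic piece to vanish is essentially the content of the theorem you are proving, so the step is circular as stated. (ii) Even granting a primitive decaying at rate just above $-3$, its pairing against a rate $-4$ form is exactly borderline: Lemma \ref{partial integration} requires the rates to sum to at most $-7$, and here they sum to just above $-7$, so integration by parts is not available "because $-4$ is the $L^2$ rate" — on the contrary, the threshold rate is precisely where it fails to be automatic, which is the same borderline phenomenon the paper must handle by hand (via the orthogonality Lemma \ref{orthogonality of image}) in the alternative proof of Proposition \ref{exceptional index change for 4 forms}. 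Note also that the natural map $\mathcal{H}^k_{-4}\rightarrow H^k(M,\mathbb{R})$ is not injective in general for $k<4$ (its kernel is controlled by $\ker\mathcal{I}^k$); you later switch to the correct map into $H^k_{\mathrm{cs}}(M,\mathbb{R})$, but the blanket injectivity claim cannot stand. Finally, for the middle degree $k=4$, which you correctly single out as the delicate case (the analogue of Remark \ref{description of problem}), your resolution is only the assertion that "no such obstruction component survives"; ruling out the extra term at the threshold is exactly the nontrivial analysis (for instance a duality between even forms at rate $-4$ and odd forms at rate $-3$ in the spirit of Lemmas \ref{4-form change = even form change}--\ref{kernel change of odd forms}, or a justified borderline integration by parts) that a complete proof has to supply. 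So the proposal assembles the right ingredients but does not close the argument.
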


Because we study deformations of Spin(7)-structures our main interest are 4-forms. By Proposition \ref{Hodge Theorem} harmonic 4-forms which lie in $L^2$ can be understood to be  purely topological. More specifically we have 
\begin{align*}
\mathcal{H}^4_{L^2} 
\cong 
\mathcal{I}^4(H^4_{\mathrm{cs}}(M,\mathbb{R}))
\subset
H^4(M,\mathbb{R}).
\end{align*}
Splitting up into self dual and anti-self-dual 4-forms  induces the decomposition
\begin{align}
\mathcal{I}^4(H^4_{\mathrm{cs}}(M,\mathbb{R}))
=
(\mathcal{H}^4_{+})_{L^2}
\oplus
(\mathcal{H}^4_{-})_{L^2}. 
\label{decomposition-L2 cohomology}
\end{align}
This splitting can also be understood in a topological way.
Let $[\xi],[\eta]\in \mathcal{I}^4(H^4_{\mathrm{cs}}(M,\mathbb{R}))$,
where $\xi$ and $\eta$ are compactly supported representatives of the corresponding cohomology classes. Then
\begin{align*}
\int_M \xi\wedge \eta
\end{align*}
is finite and defines a symmetric bilinear form on $\mathcal{I}^4(H^4_{\mathrm{cs}}(M,\mathbb{R}))$. 
To see that this is well-defined suppose that  $\xi'$ is another compactly supported representative of the cohomology class of $\xi$.
Writing $[\xi]_{\mathrm{cs}}, [\xi']_{\mathrm{cs}}$ for the corresponding classes in $H^4_{\mathrm{cs}}(M,\mathbb{R})$, we get
\begin{align*}
\mathcal{I}^4([\xi]_{\mathrm{cs}}-[\xi']_{\mathrm{cs}})=0.
\end{align*}
By the exactness of \eqref{long-exact-sequence} and the description of the boundary map $\partial^3$, there exist $[\alpha]\in H^3(\Sigma)$ and $\gamma\in \Omega^3_{\mathrm{cs}}(M)$ such that 
\begin{align*}
\xi' = \xi + d(\chi \alpha + \gamma).
\end{align*}
Setting $\eta'= \eta + d\phi$, we have
\begin{align*}
\int_M d(\chi \alpha + \gamma)\wedge \eta'
=
\int_M d((\chi \alpha+\gamma)\wedge(\eta+d\phi))
=
\lim_{r\rightarrow \infty} 
\int_M
\alpha \wedge d(\phi|_{\{r\}\times \Sigma})
=
0.
\end{align*}
This bilinear form is non-degenerate because
the pairing of $H^4_{\mathrm{cs}}(M,\mathbb{R})$
and $H^4(M,\mathbb{R})$ is non-degenerate.
$(\mathcal{H}^4_{+})_{L^2}$ is a positive definite subspace of $\mathcal{I}^4(H^4_{\mathrm{cs}}(M,\mathbb{R}))$ with respect to this bilinear form and $(\mathcal{H}^4_{-})_{L^2}$ is a negative definite subspace.

The last topological ingredient we need is that sufficiently fast decaying forms are exact on the end.

\begin{lemma}
\label{fast decay -> exact on end}
\cite[Lemma 2.12]{Karigiannis-desing}
Let $\gamma$ be a smooth k-form on the cone $C(\Sigma)$.
If 
\begin{align*}
|\nabla_{\ig C}^j \gamma|_{g_C}
=
\mathcal{O}(r^{\lambda-j})
\
\text{as}\ r\rightarrow\infty
\,
\text{for all}\
j\in\mathbb{N},\
\text{for some}\
\lambda < -k,
\end{align*} 
then there exists a smooth $k-1$ form 
$\xi$ on $(R,\infty)\times\Sigma$ such that $d\xi = \gamma$. 
\end{lemma}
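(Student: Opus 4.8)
The plan is to exploit the product structure of the cone $C(\Sigma) = (0,\infty) \times \Sigma$ and integrate along the radial direction. Write $\gamma = dr \wedge \alpha_r + \beta_r$, where for each fixed $r$ we regard $\alpha_r \in \Omega^{k-1}(\Sigma)$ and $\beta_r \in \Omega^k(\Sigma)$ as $r$-dependent families of forms on the link. The natural candidate primitive is obtained by radial integration: since the decay hypothesis $\lambda < -k$ guarantees convergence of the improper integral at $r = \infty$, I would set
\begin{align*}
\xi = -\int_r^\infty \alpha_s \, ds
\end{align*}
(interpreted as a $(k-1)$-form on $\Sigma$ depending on $r$, i.e.\ a $(k-1)$-form on $(R,\infty) \times \Sigma$ with no $dr$ component). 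One then computes $d\xi = dr \wedge \partial_r \xi + d_\Sigma \xi = dr \wedge \alpha_r - \int_r^\infty d_\Sigma \alpha_s \, ds$, so the claim reduces to showing $\int_r^\infty d_\Sigma \alpha_s \, ds = -\beta_r$.

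The key input is the closedness $d\gamma = 0$. Decomposing $d\gamma = 0$ into its $dr$-containing and $dr$-free parts gives the two relations $d_\Sigma \beta_r = 0$ and $\partial_r \beta_r = -d_\Sigma \alpha_r$ (the Cartan-type formula for the exterior derivative on a product). Integrating the second relation from $r$ to $\infty$ and using that $\beta_s \to 0$ as $s \to \infty$ — which follows from the decay hypothesis, since $|\beta_s|_{g_\Sigma}$ scales like $s^{-k}|\beta_s|_{g_C} = \mathcal{O}(s^{\lambda})$ and $\lambda < 0$ — yields exactly $-\beta_r = \int_r^\infty \partial_s \beta_s\, ds = -\int_r^\infty d_\Sigma \alpha_s\, ds$, which is what was needed. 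Thus $d\xi = \gamma$ on $(R,\infty) \times \Sigma$.

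The points requiring genuine care — and the main obstacle — are the convergence and differentiation-under-the-integral-sign issues: one must check that $\int_r^\infty \alpha_s\, ds$ converges (using $|\alpha_s|_{g_\Sigma} = s^{-(k-1)}|\alpha_s|_{g_C} = \mathcal{O}(s^{\lambda + 1})$ with $\lambda + 1 < -k+1 \le 0$, so in fact $\lambda < -k$ forces $\lambda + 1 < 1-k \le 0$; more precisely $\lambda+1 < -(k-1)$ need not give an integrable power unless $\lambda < -k$, which it is, giving $\lambda + 1 < -k+1$, hence the exponent is $< 0$ but integrability at infinity requires exponent $< -1$, i.e.\ $\lambda < -k$ — which holds), that the resulting $\xi$ is smooth, and that we may commute $\partial_r$ and $d_\Sigma$ with the radial integral. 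All of these are justified by the fact that the decay estimates $|\nabla_{\ig C}^j \gamma|_{g_C} = \mathcal{O}(r^{\lambda - j})$ hold for \emph{all} $j$, so every $r$-derivative and every $\Sigma$-derivative of $\alpha_r$ and $\beta_r$ again satisfies a decay estimate with an exponent small enough to dominate the integrals uniformly on compact subsets of $\Sigma$; dominated convergence then permits all the required interchanges. I would carry out the argument in this order: (1) set up the splitting $\gamma = dr \wedge \alpha_r + \beta_r$ and translate the decay hypothesis into link-wise decay for $\alpha_r, \beta_r$ and their derivatives; (2) extract the two equations from $d\gamma = 0$; (3) define $\xi$ by radial integration and verify convergence and smoothness; (4) compute $d\xi$, using the integrated form of $\partial_r \beta_r = -d_\Sigma \alpha_r$ together with $\beta_s \to 0$, to conclude $d\xi = \gamma$.
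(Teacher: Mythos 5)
Your approach is exactly the intended one: the paper does not prove this lemma but cites Karigiannis, and the proof there is precisely this radial integration from infinity, so the strategy is correct and complete in outline. Three bookkeeping points should be cleaned up, none of which changes the structure of the argument. First, the statement tacitly assumes $d\gamma=0$ (the conclusion $d\xi=\gamma$ forces it); you correctly make closedness the key input, but it should be said explicitly that this is an implicit hypothesis. Second, your conformal weights are inverted: with $\gamma = dr\wedge\alpha_r+\beta_r$, $\alpha_r\in\Omega^{k-1}(\Sigma)$, $\beta_r\in\Omega^{k}(\Sigma)$, one has $|\alpha_r|_{g_{\Sigma}}=r^{k-1}\,|dr\wedge\alpha_r|_{g_{C}}=\mathcal{O}(r^{\lambda+k-1})$ and $|\beta_r|_{g_{\Sigma}}=r^{k}\,|\beta_r|_{g_{C}}=\mathcal{O}(r^{\lambda+k})$, not $\mathcal{O}(r^{\lambda+1})$ and $\mathcal{O}(r^{\lambda})$; the hypothesis $\lambda<-k$ is then exactly the condition $\lambda+k-1<-1$ that makes $\int_r^{\infty}\alpha_s\,ds$ converge in the $g_{\Sigma}$-norm, while $\lambda+k<0$ gives $\beta_s\to 0$, so the rate assumption is sharp and your hand-wringing about the exponent disappears once the weights are written correctly. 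Third, with your conventions the relations extracted from $d\gamma=0$ are $d_{\Sigma}\beta_r=0$ and $\partial_r\beta_r=+\,d_{\Sigma}\alpha_r$, not $\partial_r\beta_r=-\,d_{\Sigma}\alpha_r$; with the correct sign, integrating gives $\int_r^{\infty}d_{\Sigma}\alpha_s\,ds=-\beta_r$, which is exactly what your computation of $d\xi$ requires, so the two sign slips in your write-up compensate and the conclusion $d\xi=\gamma$ stands. The justification of convergence, smoothness and differentiation under the integral via the decay of all covariant derivatives is fine as you describe it.
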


\section{The Moduli Space is an orbifold}

\label{smoothness-mod-space}

In this section we consider the moduli space of torsion-free AC Spin(7)-structures of rate $\nu$ on the manifold $M$.
As explained in the introduction we do not want to consider deformations of the Spin(7)-cone or, equivalently, of the link $\Sigma$ because deformations of nearly parallel $\mathrm{G}_2$-manifolds are not very well understood. Therefore, we fix an asymptotic Spin(7)-cone $C:=(C(\Sigma),\psi_{\ig C})$.
The diffeomorphism group of $M$ acts on the set of AC Spin(7)-structures asymptotic to $C$ at a fixed rate. 
Indeed, if $\Phi$ is a diffeomorphism of $M$
and $\psi$ is asymptotic to $\psi_{\ig C}$ at rate $\nu$ with respect to some identification $F$
of the cone and $M$ outside a compact subset as in Definition \ref{def-AC-Spin(7)}, then $\Phi^*\psi$ is an AC Spin(7)-structure on $M$ asymptotic to $C$ at rate $\nu$ with respect to $F':= \Phi^{-1} \circ F$. Because $\Phi^*\psi$ does not decay to $\psi_{\ig C}$ at rate $\nu$ with respect to $F$
unless $\Phi$ decays sufficiently fast to an automorphism of $C$, we can break the diffeomorphism invariance by fixing $F$ and taking the quotient by suitably decaying diffeomorphisms.
For the sake of simplicity, we only quotient by diffeomorphisms which decay to the identity. Those decaying to some automorphism of $C$ can in principle be divided out later.
The above procedure also normalises a scale.
For $\lambda > 0$ the rescaled Spin(7)-structure $\lambda^4 \psi$
decays to $\psi_{\ig C}$ only after composing $F$ with the diffeomorphism $(r,x)\mapsto (\lambda r,x)$ of the cone.
Our results do not depend on the choice of $F$.

For the description of the moduli space it is convenient to choose a reference point.
Let $(M,\psi,g)$ be an AC Spin(7)-manifold at rate $\nu < 0$ with respect to $F$. Let $\mathcal{A}_{\nu}$ be the space of admissible 4-forms on $M$ which decay with the same rate as $\psi$ in the chosen gauge, i.e.
\begin{align}
\mathcal{A}_{\nu}
:=
\{ \tilde{\psi}\in\mathcal{A}(M)|\
\tilde{\psi}-\psi \in \mathcal{C}^{\infty}_{\nu}(\Lambda^4 T^*M)
\}
\subset \psi + \mathcal{C}^{\infty}_{\nu}(\Lambda^4 T^*M)
.
\label{decaying-admissible-4-forms}
\end{align}
The space of all (up to the choice of $F$) torsion-free AC Spin(7)-structures on $M$ asymptotic to $C$ at rate $\nu$ is denoted by
\begin{align*}
\mathcal{X}_{\nu}
:=
\{\tilde{\psi}\in \mathcal{A}_{\nu}\, |\, d\tilde{\psi}=0 \}.
\end{align*}
Denote by $\mathcal{D}_{\lambda}$ the group of diffeomorphisms generated by vector fields in $\mathcal{C}^{\infty}_{\lambda}(TM)$.
The group $\mathcal{D}_{\nu+1}$ acts on $\mathcal{A}_{\nu}$
and $\mathcal{X}_{\nu}$ by pull-back.
Then
$\mathcal{M}_{\nu}:=\mathcal{X}_{\nu}/\mathcal{D}_{\nu+1}$ is the moduli space of torsion-free AC Spin(7)-structures on $M$ with decay rate $\nu$ asymptotic to $C$. 
We want to use the implicit function theorem for smooth maps between Banach spaces to  prove that $\mathcal{M}_{\nu}$ is an orbifold for particular rates $\nu$. Therefore, we equip
$\mathcal{A}_{\nu}$ and $\mathcal{D}_{\nu+1}$ with the $L^2_{l,\nu}(\Lambda^4T^*M)$ and $L^2_{l+1,\nu+1}(TM)$ topologies rather than the Frechet space topology of smooth forms and vector fields.
We choose some $l \geq 6$, so that by Theorem \ref{Sobolev embedding} (i) we have an embedding $L^2_{l,\nu} \hookrightarrow \mathcal{C}^{1,\alpha}_{\nu}$.
The action of $\mathcal{D}_{\nu+1}$ on $\mathcal{A}_{\nu}$ is continuous, 
$\mathcal{X}_{\nu}$ carries the subspace topology of $\mathcal{A}_{\nu}$, and $\mathcal{M}_{\nu}$ the quotient topology, with respect to which we want to prove the smooth manifold structure. As auxiliary objects we introduce 
$\mathcal{A}_{l,\nu}$ and $\mathcal{D}_{l+1,\nu+1}$, the completions of $\mathcal{A}_{\nu}$ and $\mathcal{D}_{\nu+1}$, respectively.

\subsection{The space of torsion-free Spin(7)-structures and the stabiliser} 

Before we treat the moduli space $\mathcal{M}_{\nu}$, we first 
study the space of torsion-free Spin(7)-structures $\mathcal{X}_{l,\nu}$,
which are $L^2_{l,\nu}$-regular, and the stabiliser
\begin{align*}
\mathcal{I}_{\psi}:= \{\Phi\in\mathcal{D}_{l+1,\nu+1}\, |\, \Phi^*\psi=\psi\}
\end{align*}
of $\psi$ in $\mathcal{D}_{l+1,\nu+1}$. 
Because isometries of smooth Riemannian metrics are smooth by a result of
Myers--Steenrod \cite{smooth-isometries}, $\mathcal{I}_{\psi}$ can alternatively be defined as the stabiliser of $\psi$ in $\mathcal{D}_{\nu+1}$.
Using the implicit function theorem, we show that $\mathcal{X}_{l,\nu}$ is a smooth manifold under a suitable assumption on the rate $\nu$.

\begin{proposition}
\label{tangent-space-torsion-free}
Suppose $\nu > -4$, and that $\nu$ and $\nu+1$ are non-critical rates of the operator $d+d^*$. Then
$\mathcal{X}_{l,\nu}$ is a smooth manifold
and the tangent space $T_{\psi}\mathcal{X}_{l,\nu}$ is given by
the kernel of the linear map
\begin{align}
\label{linearisation}
d\colon T_{\psi}\mathcal{A}_{l,\nu} = L^2_{l,\nu}(\Lambda^4_1 \oplus \Lambda^4_7 \oplus \Lambda^4_{35})
\rightarrow d\Omega^4_{l,\nu}.
\end{align}
\end{proposition}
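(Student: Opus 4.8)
The plan is to apply the implicit function theorem for smooth maps between Banach manifolds to the defining equation $d\tilde\psi = 0$, viewed as a map out of the Banach manifold $\mathcal{A}_{l,\nu}$ of $L^2_{l,\nu}$-regular admissible 4-forms near $\psi$. First I would set up the ambient structure: since $\mathcal{A}(M)$ is (fibrewise) the orbit of $\psi_0$ crossed with the $\Lambda^4_{27}$ normal directions, the maps $\Pi,\Theta$ from the linear-algebra discussion give a local chart identifying a neighbourhood of $\psi$ in $\mathcal{A}_{l,\nu}$ with an open set in the Banach space $L^2_{l,\nu}(\Lambda^4_1\oplus\Lambda^4_7\oplus\Lambda^4_{35})$; this realises $T_\psi\mathcal{A}_{l,\nu}$ as in \eqref{tangent-space-admissible-forms} tensored with the weighted Sobolev regularity. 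Composing $\tilde\psi\mapsto d\tilde\psi$ with this chart gives a smooth map
\begin{align*}
\mathcal{N}\colon U\subset L^2_{l,\nu}(\Lambda^4_1\oplus\Lambda^4_7\oplus\Lambda^4_{35})\longrightarrow \Omega^5_{l-1,\nu-1},
\qquad
\mathcal{N}(0)=0,
\end{align*}
whose differential at $0$ is precisely the linear map \eqref{linearisation} (the variation of $d\psi$ in a direction $\gamma\in T_\psi\mathcal{A}(M)$ is just $d\gamma$, since the torsion-free condition is the vanishing of a linear expression in the first derivative of the 4-form once we have projected onto the orbit). Smoothness of $\mathcal{N}$ as a map of Banach spaces follows because $\Pi,\Theta$ are smooth fibrewise maps and composition with a smooth fibrewise map preserves $L^2_{l,\nu}$-regularity for $l$ large enough (here $l\geq 6$, so $L^2_{l,\nu}\hookrightarrow\mathcal{C}^{1,\alpha}_\nu$ is a Banach algebra-type estimate available).

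Next I would identify the target of $\mathcal{N}$ correctly so that the differential becomes surjective. A priori $d\tilde\psi$ lands in $\Omega^5_{l-1,\nu-1}$, but since $\tilde\psi$ is closed along the way only through exact forms, and more importantly since we only need the image to be a Banach space on which $D\mathcal{N}|_0$ is onto, I would take the target to be $d\Omega^4_{l,\nu}$. By Lemma \ref{image-Banach} this is a closed subspace of $\Omega^5_{l-1,\nu-1}$ (using that $\nu$ is non-critical for $d+d^*$), hence itself a Banach space. The key surjectivity input is Lemma \ref{surjectivity of main operator}: for $\nu>-4$ and $\nu+1$ non-critical, $(d_{\mathrm{ASD}})_{l,\nu}\colon L^2_{l,\nu}(\Lambda^4_{35})\to d\Omega^4_{l,\nu}$ is surjective, and a fortiori the full map \eqref{linearisation} on $\Lambda^4_1\oplus\Lambda^4_7\oplus\Lambda^4_{35}$ is surjective onto $d\Omega^4_{l,\nu}$. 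So $D\mathcal{N}|_0$ is a surjective bounded linear map between Banach spaces.

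To apply the implicit function theorem I also need $\ker D\mathcal{N}|_0$ to be complemented in $T_\psi\mathcal{A}_{l,\nu}$. This is where I would invoke the Fredholm/Hodge machinery: $\ker D\mathcal{N}|_0 = \ker\big(d\colon L^2_{l,\nu}(\Lambda^4_{1\oplus7\oplus35})\to\Omega^5_{l-1,\nu-1}\big)$, and by the Hodge decomposition of Proposition \ref{Hodge-decomposition} (valid since $\nu+1$ is non-critical and $\nu>-4$) combined with the surjectivity of $d_{\mathrm{ASD}}$, one can write an explicit closed complement — concretely, restrict $d$ to $L^2_{l,\nu}(\Lambda^4_{35})$ where, using $d^*\gamma=*d\gamma$ for ASD forms, the operator $\gamma\mapsto(d+d^*)\gamma$ is (essentially injective on the orthogonal complement of its kernel and) has closed range by Corollary \ref{poincare inequality}; the $L^2_{l,\nu}$-orthogonal complement of $\ker$ inside $\Lambda^4_{35}$ together with all of $L^2_{l,\nu}(\Lambda^4_{1\oplus7})$ — or rather a subtler choice using the decomposition into exact plus harmonic — gives the complement. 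Granting the complemented kernel, the implicit function theorem yields that $\mathcal{N}^{-1}(0)$ is, near $\psi$, a smooth Banach submanifold of $\mathcal{A}_{l,\nu}$ with tangent space $\ker D\mathcal{N}|_0$; unwinding the chart identifies $\mathcal{N}^{-1}(0)$ with $\mathcal{X}_{l,\nu}$ near $\psi$ and $\ker D\mathcal{N}|_0$ with the kernel of \eqref{linearisation}. Since $\psi$ was arbitrary in $\mathcal{X}_{l,\nu}$, this gives the smooth manifold structure globally.

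I expect the main obstacle to be the bookkeeping around the target space and the complemented kernel rather than the surjectivity (which is handed to us by Lemma \ref{surjectivity of main operator}). Specifically: one must be careful that $d\Omega^4_{l,\nu}$ is genuinely the right codomain — a closed subspace, independent of how one presents it — and that the splitting of $T_\psi\mathcal{A}_{l,\nu}$ into $\ker D\mathcal{N}|_0$ plus a closed complement is constructed from genuinely bounded projections; this is precisely where the non-critical hypothesis on $\nu$ (for the Fredholm theory and Hodge decomposition) and $\nu>-4$ (so that $W^4$-type obstruction terms are absent, cf. Remark \ref{explanation-for-obstructions}) are both essential. Once those two points are secured, the rest is a direct application of the implicit function theorem together with the smoothness of the fibrewise maps $\Pi,\Theta$ and the Sobolev embedding $l\geq 6$.
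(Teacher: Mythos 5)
Your proposal is correct and follows essentially the same route as the paper: view $\mathcal{X}_{l,\nu}$ as the zero level set of $d\colon \mathcal{A}_{l,\nu}\rightarrow d\Omega^4_{l,\nu}$, use Lemma \ref{image-Banach} to make the target a Banach space, get surjectivity of the linearisation from Lemma \ref{surjectivity of main operator}, and apply the implicit function theorem. The only place you overcomplicate matters is the complemented-kernel step: the weighted Sobolev spaces are Hilbert spaces, so the closed kernel of the bounded linear map \eqref{linearisation} is automatically complemented by its orthogonal complement, and no appeal to the Hodge decomposition or Corollary \ref{poincare inequality} is needed there.
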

\begin{proof}
$\mathcal{X}_{l,\nu}$ is the zero level set of the exterior derivative
\begin{align}
\label{zero-level-set-map}
d\colon \mathcal{A}_{l,\nu} \rightarrow d\Omega^4_{l,\nu}.
\end{align}
By Lemma \ref{image-Banach} this is a smooth map between Banach manifolds because $\nu$ is not a critical rate for $d+d^*$.
By \eqref{tangent-space-admissible-forms} we know that 
$T_{\psi}\mathcal{A}_{l,\nu}=L^2_{l,\nu}(\Lambda^4_{1} \oplus \Lambda^4_7 \oplus \Lambda^4_{35})$.
The linearisation of \eqref{zero-level-set-map} at $\psi$
is 
the map \eqref{linearisation}.
Under our assumptions this map is surjective by Lemma \ref{surjectivity of main operator}. 
The statement follows from the implicit function theorem for smooth maps between Banach spaces.
\end{proof}

\begin{remark}
By Remark \ref{explanation-for-obstructions} in the $L^2$-setting the linearisation of \eqref{zero-level-set-map} cannot be expected to be surjective if $(W_{\perp})^4_{l,\nu}$ is non-trivial. Therefore, the space $\mathcal{X}_{l,\nu}$ is in general not smooth for $L^2$-rates $\nu < -4$.
\end{remark}

Next we show that $\mathcal{I}_{\psi}$ is finite. To prove this, it is enough to show that $\psi$ cannot have any continuous symmetries in $\mathcal{D}_{\nu+1}$, i.e. that Killing vector fields of rate $\nu+1$ vanish. This implies that the stabiliser $\mathcal{I}_{\psi}$ is discrete, and thus finite because it is also compact.

\begin{proposition}
\label{no-Killing}
Suppose $\nu<0$. Let $\xi\in L^2_{l,\nu+1}(TM)$ be a Killing vector field for $g$, i.e. $\mathcal{L}_{\xi}g=0$. Then $\xi$ vanishes.
Furthermore, the exterior derivative is injective
on $L^2_{l,\nu}(\Lambda^3_8)$.
\end{proposition}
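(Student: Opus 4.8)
The plan is to prove the two statements in tandem, since the vanishing of the exterior derivative on $L^2_{l,\nu}(\Lambda^3_8)$ and the vanishing of Killing fields of rate $\nu+1$ are closely linked via the spin bundle identifications \eqref{spin-bundle-iso}. First I would translate the Killing equation into the language of forms: a vector field $\xi$ corresponds under $TM \cong \Lambda^3_8$ to a $3$-form $\xi \lrcorner \psi$, and it is standard that $\mathcal{L}_\xi g = 0$ is equivalent to $\xi^\flat$ being a Killing $1$-form, which in a Ricci-flat setting forces $\xi^\flat$ to be harmonic. Thus a Killing field of rate $\nu+1$ gives a harmonic $1$-form in $\mathcal{C}^\infty_{\nu+1}(T^*M)$.

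Next, since $\nu < 0$ implies $\nu+1 < 1 \leq 0$ only when $\nu \leq -1$, I need to be a little careful: for $\nu \in (-1,0)$ the rate $\nu+1$ lies in $(0,1)$. However, Lemma \ref{vanishing-harmonic-forms} already states that a harmonic $1$-form in $\mathcal{C}^\infty_\lambda(T^*M)$ vanishes for all $\lambda \leq 0$, and in its proof it is extended past $\lambda = -3$ precisely by invoking that there are no critical rates of the Laplacian on $1$-forms in $[-6,0]$ (Lemma \ref{critical rates for Laplace on 1-forms}). So for $\nu < 0$, hence $\nu + 1 < 1$, I should argue: if $\nu + 1 \leq 0$, apply Lemma \ref{vanishing-harmonic-forms} directly; if $\nu + 1 \in (0,1)$, I first observe that a Killing field of a Ricci-flat metric satisfies $\nabla^*\nabla \xi = 0$ (the Bochner formula with vanishing Ricci), so $\xi$ is in particular a $\nabla^*\nabla$-harmonic section of $\mathbf{S}_-$, and then I want to rule out genuine deformations in $(0,1)$. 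Here the key input is Lemma \ref{critical rates for Laplace on 1-forms}: the only harmonic homogeneous $1$-forms of rate $\lambda \in (0,1)$ on the cone come from eigenfunctions of $\Delta_\Sigma$ with eigenvalue $(\lambda+1)(\lambda+7)$, and the corresponding form is $d$ of a function. But a Killing field cannot be the metric dual of an exact $1$-form unless it vanishes (an exact Killing $1$-form $df$ gives a parallel Hessian, forcing $f$ affine, impossible on an AC manifold with this decay); alternatively, and more cleanly, I would note that $\xi \in \ker (\slashed{D}_-)_{\nu+1}$ would have to pair with something — so instead I think the cleanest route is: reduce to $\nu + 1 \leq 0$ is false, so I must handle $(0,1)$ honestly.

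Let me restructure. The honest plan: (1) Show $d$ is injective on $L^2_{l,\nu}(\Lambda^3_8)$. A $3$-form $\gamma \in \Lambda^3_8$ with $d\gamma = 0$ has, via \eqref{spin-bundle-iso} and \eqref{neg-Dirac}, $\slashed{D}_-\gamma = \pi_{1\oplus 7}(d\gamma) = 0$, so $\gamma \in \ker(\slashed{D}_-)_\nu$; but the identification $\Lambda^3_8 \cong T^*M$ then makes $\gamma$ correspond to a harmonic $1$-form of rate $\nu$ (using $\slashed{D}^2 = \Delta$ and that $\ker \slashed{D}_- \subset \ker \slashed{D}_+\slashed{D}_-$, with regularity from Theorem \ref{ellitptic regularity}). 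By Lemma \ref{vanishing-harmonic-forms}, since $\nu < 0$, this harmonic $1$-form vanishes, hence $\gamma = 0$. Wait — this requires $\gamma$ to actually be closed *and* co-closed to identify it with a genuine harmonic $1$-form; but $d\gamma = 0$ only gives half. The resolution: $\slashed{D}_-\gamma = 0$ directly says the image of $\gamma$ under $\slashed{D}_-$ vanishes, and $\slashed{D}_+\slashed{D}_- = \Delta$ on $\mathbf{S}_- \cong \Lambda^1$, so $\gamma$ corresponds to a $1$-form in $\ker \Delta$, i.e. harmonic, with no integration-by-parts needed since we are composing operators on the cone model / using $\slashed D^2 = \Delta$ as an operator identity. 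Then Lemma \ref{vanishing-harmonic-forms} finishes it. (2) For the Killing field: $\mathcal{L}_\xi g = 0$ plus Ricci-flatness gives $\nabla^*\nabla \xi^\flat = 0$ and $d d^* \xi^\flat = 0$ hence $\xi^\flat$ harmonic, in $\mathcal{C}^\infty_{\nu+1}(T^*M)$; again Lemma \ref{vanishing-harmonic-forms} applies as $\nu + 1 < 1$ — but Lemma \ref{vanishing-harmonic-forms} only covers $\lambda \leq 0$. So for $\nu \in (-1,0)$ I genuinely need the extra argument that a harmonic $1$-form of rate in $(0,1)$ that is *also Killing* vanishes. By Lemma \ref{critical rates for Laplace on 1-forms}, such a form is $df$ for $f$ an eigenfunction of $\Delta_\Sigma$; but then $\xi^\flat = df$ means $\xi = \mathrm{grad} f$, and $\mathcal{L}_\xi g = 2\,\mathrm{Hess}\, f = 0$, so $f$ has vanishing Hessian — on a connected AC manifold the only such functions are constants (a non-constant one would give a parallel vector field $\mathrm{grad} f$, which together with Ricci-flatness and the AC structure is impossible, or simply: $\mathrm{grad} f$ parallel of rate $\nu < 0$ has constant norm hence is zero). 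Hence $\xi = 0$.

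The main obstacle I anticipate is handling the window $\nu \in (-1,0)$, where $\nu+1 \in (0,1)$ falls outside the immediate reach of Lemma \ref{vanishing-harmonic-forms}; the resolution is the Killing-specific argument above (an exact Killing $1$-form has vanishing Hessian, hence is zero on a connected non-compact manifold), which crucially uses Lemma \ref{critical rates for Laplace on 1-forms} to pin down the shape of the would-be deformation, together with Lemma \ref{Killing-fields-cone} if one prefers to rule out the corresponding homogeneous Killing field on the cone directly. Everything else is a routine unwinding of the spin-bundle dictionary and the Bochner/Weitzenböck identities for Ricci-flat metrics.
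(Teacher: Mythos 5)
Your argument for the injectivity of $d$ on $L^2_{l,\nu}(\Lambda^3_8)$ is fine and close in spirit to the paper's: a closed $\gamma\in L^2_{l,\nu}(\Lambda^3_8)$ lies in $\ker\slashed{D}_-$, hence $\slashed{D}_+\slashed{D}_-\gamma=0$ identifies it (as an operator identity, no integration by parts) with a harmonic $1$-form of rate $\nu<0$, which vanishes by Lemma \ref{vanishing-harmonic-forms}; the paper instead notes that $d(\xi\lrcorner\psi)=\mathcal{L}_\xi\psi=0$ makes $\xi$ Killing and then uses the same vanishing lemma. Likewise your treatment of Killing fields when $\nu+1\le 0$ matches the paper.

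The genuine gap is in the window $\nu\in(-1,0)$. You invoke Lemma \ref{critical rates for Laplace on 1-forms} to conclude that the harmonic $1$-form $\xi^\flat$ ``is $df$ for $f$ an eigenfunction of $\Delta_\Sigma$'' and then argue $\mathrm{Hess}\,f=0$. But that lemma classifies \emph{homogeneous} harmonic $1$-forms \emph{on the cone} $C(\Sigma)$; the form $\xi^\flat$ lives on $M$ and is not homogeneous, so it is not globally of the form $df$. All one can say, via Theorem \ref{Lockhart-McOwen-key}, is that $\xi=\chi Z+\mathcal{O}(r^{\lambda+1-\varepsilon})$ where $Z$ is dual to a homogeneous cone-harmonic $1$-form at some critical rate $\lambda+1<\nu+1$. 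The substantive step you are missing --- and the heart of the paper's proof here --- is transferring the Killing condition from $M$ to the cone: one must show $\mathcal{L}_Z g_{\ig C}=0$, which the paper does by estimating $\mathcal{L}_\xi g_{\ig C}-\mathcal{L}_\xi g$ using $|g-g_{\ig C}|=\mathcal{O}(r^{\nu})$, $|\nabla-\nabla^{\ig C}|=\mathcal{O}(r^{\nu-1})$, and then using that $\mathcal{L}_Z g_{\ig C}$ is homogeneous of rate $\lambda$ while the error decays strictly faster, so it must vanish identically. Only then does Lemma \ref{Killing-fields-cone} kill $Z$, and one iterates over the critical rates in $(0,1)$ to improve the decay of $\xi$ to a non-positive rate, where Lemma \ref{vanishing-harmonic-forms} applies. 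Your gesture ``together with Lemma \ref{Killing-fields-cone} if one prefers'' skips exactly this transfer. Your fallback (``$\mathrm{grad}\,f$ parallel of rate $\nu<0$ has constant norm hence is zero'') also misstates the rate: the relevant rate is $\nu+1\in(0,1)$, and a nonzero constant norm is compatible with decay rate $\nu+1>0$ (indeed with membership in $L^2_{l,\nu+1}$ for $\nu>-1$), so that shortcut does not close the gap either.
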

\begin{proof}
Because $(M,\psi,g)$ is Ricci-flat, any Killing vector field is harmonic. 
If $\nu \leq -1$, then $\xi^{\flat}$ is a harmonic 1-form of non-positive decay rate and hence has to vanish by Lemma \ref{vanishing-harmonic-forms}.
If $\xi\lrcorner\psi\in L^2_{l,\nu}(\Lambda^3_8)$ is closed,
then $\xi$ is a Killing vector field. The statement for $\nu \leq -1$ follows.

It is left to consider the case $\nu\in(-1,0)$. 
By Theorem \ref{Lockhart-McOwen-key} there exists a critical rate $\lambda+1 < \nu+1$ for the Laplace operator such that 
\begin{align}
\label{str-Killing-field}
\xi = \chi Z + \mathcal{O}(r^{\lambda+1-\varepsilon}),
\end{align}
where 
\begin{align*}
Z=r^{\lambda+1} f \partial_r + r^{\lambda} X.
\end{align*}
Here $f$ is a function on $\Sigma$ and $X$ a vector field on $\Sigma$,
and $Z$
is $g_{\ig C}$-dual to a harmonic 1-form on $C(\Sigma)$ homogeneous of order $\lambda+1$. 

Our goal is to show that $Z$ is a Killing vector field for $g_{\ig C}$.
Then by Lemma \ref{Killing-fields-cone} $Z$ must vanish.
Repeating the argument for critical rates $\lambda+1\in (0,1)$
shows that $\xi$ is a Killing vector field for $g$ with non-positive decay rate, and therefore vanishes as in the case $\nu \leq -1$.
For general vector fields $X, Y, V$ on the cone we have
\begin{align*}
\mathcal{L}_V g(X,Y) &= g(\nabla_X V,Y) + g(X, \nabla_Y V),
\\
\mathcal{L}_V g_{\ig C}(X,Y) &= g_{\ig C}(\nabla^{\ig C}_X V,Y) + g_{\ig C}(X,\nabla^{\ig C}_Y V),
\end{align*}
and therefore
\begin{align*}
(\mathcal{L}_V g-\mathcal{L}_V g_{\ig C})(X,Y)
=\,
&(g-g_{\ig C})(\nabla_X V, Y) + (g-g_{\ig C})(X, \nabla_Y V)
\\
&+ g_{\ig C}((\nabla_X-\nabla^{\ig C}_X)V,Y)
+
g_{\ig C}(X,(\nabla_Y-\nabla^{\ig C}_Y)V)
\end{align*}
We have $|g-g_{\ig C}| = \mathcal{O}(r^{\nu})$, $|\nabla-\nabla^{\ig C}|=\mathcal{O}(r^{\nu-1})$ as $g$ is AC with rate $\nu$, and
$|\xi|=\mathcal{O}(r^{\lambda+1})$, $|\nabla \xi|=\mathcal{O}(r^{\lambda})$
by \eqref{str-Killing-field} and elliptic regularity.
Therefore, we get
\begin{align*}
\mathcal{L}_{\xi} g_{\ig C}=
\mathcal{L}_{\xi}g_{\ig C}-\mathcal{L}_{\xi}g = \mathcal{O}(r^{\nu+\lambda})
=
\mathcal{O}(r^{\lambda-\varepsilon})
\end{align*}
for some $\varepsilon > 0$.
On the other hand we have
\begin{align*}
\mathcal{L}_{\xi} g_{\ig C}
=
\mathcal{L}_Z g_{\ig C} + \mathcal{O}(r^{\lambda-\varepsilon})
\end{align*}
and $\mathcal{L}_Z g_{\ig C}$ is homogeneous of rate $\lambda$. Therefore, $\mathcal{L}_Z g_{\ig C} = 0$.
\end{proof}

\subsection{Slice construction for moduli space}

Now we are ready to study the moduli space $\mathcal{M}_{\nu} = \mathcal{X}_{\nu}/\mathcal{D}_{\nu+1}$. We want to break the action of $\mathcal{D}_{\nu+1}$ on $\mathcal{A}_{\nu}$ and in each orbit close to $\psi$ choose in a smooth fashion a representative which is unique up the the action of the stabiliser
$\mathcal{I}_{\psi}$. 
Ebin \cite{ebin1970manifold} showed how to find a slice for the diffeomorphism action on the space of Riemannian metrics. 
Using Proposition \ref{tangent-space-torsion-free} and simplifications of Ebin's approach in our setting, which were explained by Nordstr\"om \cite{Nordstrom-PhD}, 
we find that a good slice $\mathcal{S}_{\psi}$ around $\psi$ needs to satisfy three properties:

\begin{theorem}
\label{good-slices}
\cite[Section 3.1.3]{Nordstrom-PhD}
Let $K$ be a closed subspace which is a complement of $T_{\psi}(\mathcal{D}_{l+1,\nu+1}\cdot \psi)$ in $T_{\psi}\mathcal{A}_{l,\nu}$.
Let $\mathcal{S}_{\psi}$ be a smooth submanifold of $\mathcal{A}_{l,\nu}$ which contains $\psi$ and satisfies 
\begin{compactenum}[(S.1)]
\addtolength{\itemindent}{.5cm}
\item $T_{\psi}\mathcal{S}_{\psi} = K$,
\item $\mathcal{S}_{\psi}$ is $\mathcal{I}_{\psi}$-invariant,
\item all $\tilde{\psi}\in\mathcal{R}_{\psi}:= \mathcal{S}_{\psi} \cap \mathcal{X}_{l,\nu}$ are smooth 4-forms on $M$.
\end{compactenum}
Then we have:
\begin{compactenum}[(i)]
\item $\mathcal{R}_{\psi}$ is a smooth manifold,
\item the map 
\begin{align*}
\mathcal{R}_{\psi} \subset \mathcal{X}_{\nu} \rightarrow \mathcal{M}_{\nu}=\mathcal{X}_{\nu}/\mathcal{D}_{\nu+1},
\quad
\tilde{\psi} \mapsto \tilde{\psi} \mathcal{D}_{\nu+1}
\end{align*}
is open,
\item the induced map $\mathcal{R}_{\psi}/\mathcal{I}_{\psi} \rightarrow \mathcal{M}_{\nu}$ is a homeomorphism onto its image,
\item 
the map
\begin{align}
\label{submersion}
\mathcal{D}_{l+1,\nu+1}\times \mathcal{R}_{\psi} \rightarrow \mathcal{X}_{l,\nu}
\end{align}  
is a smooth submersion onto a neighbourhood of $\psi$.
\end{compactenum}
\end{theorem}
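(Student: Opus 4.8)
The plan is to realise $\mathcal{R}_\psi$ as a transverse intersection of two Banach submanifolds of $\mathcal{A}_{l,\nu}$ and then to analyse the action map \eqref{submersion} directly; once that map is shown to be a local diffeomorphism at $(\mathrm{id},\psi)$, the slice-theorem machinery of Ebin \cite{ebin1970manifold}, in the streamlined form used by Nordström \cite[Section 3.1.3]{Nordstrom-PhD}, delivers the rest. The key linear-algebra point is that the orbit directions are \emph{exact}: since $\psi$ is torsion-free, $\mathcal{L}_X\psi = d(X\lrcorner\psi) + X\lrcorner d\psi = d(X\lrcorner\psi)$ for $X\in L^2_{l+1,\nu+1}(TM)$, with $X\lrcorner\psi$ a section of $\Lambda^3_8$ (and $X\mapsto X\lrcorner\psi$ an isomorphism). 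Hence $T_\psi(\mathcal{D}_{l+1,\nu+1}\cdot\psi)\subseteq\ker d = T_\psi\mathcal{X}_{l,\nu}$ (Proposition \ref{tangent-space-torsion-free}), and since $K$ is a closed complement of $T_\psi(\mathcal{D}_{l+1,\nu+1}\cdot\psi)$ in $T_\psi\mathcal{A}_{l,\nu}$, intersecting with $\ker d$ gives the topological direct sum
\begin{align}
\label{slice-decomposition}
\ker d = T_\psi(\mathcal{D}_{l+1,\nu+1}\cdot\psi)\oplus\bigl(K\cap\ker d\bigr).
\end{align}
This already gives (i): $\mathcal{X}_{l,\nu}$ is a smooth Banach submanifold of $\mathcal{A}_{l,\nu}$, cut out as a regular level set of $d\colon\mathcal{A}_{l,\nu}\to d\Omega^4_{l,\nu}$ (Proposition \ref{tangent-space-torsion-free}, using Lemmas \ref{image-Banach} and \ref{surjectivity of main operator}); by \eqref{slice-decomposition}, $T_\psi\mathcal{S}_\psi + T_\psi\mathcal{X}_{l,\nu} = K + \ker d = T_\psi\mathcal{A}_{l,\nu}$, so $\mathcal{S}_\psi$ meets $\mathcal{X}_{l,\nu}$ transversally at $\psi$, hence near $\psi$ after shrinking, and $\mathcal{R}_\psi = \mathcal{S}_\psi\cap\mathcal{X}_{l,\nu}$ is a smooth submanifold with $T_\psi\mathcal{R}_\psi = K\cap\ker d$, whose points are smooth $4$-forms by (S.3).

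Next I would prove (iv) by examining \eqref{submersion}, $(\Phi,\tilde\psi)\mapsto\Phi^*\tilde\psi$. Smoothness of this map --- as opposed to mere continuity --- is precisely what condition (S.3) is for: pull-back by an $L^2_{l+1,\nu+1}$-diffeomorphism loses a derivative on a generic $L^2_{l,\nu}$-form, but is smooth once the form ranges over the submanifold $\mathcal{R}_\psi$ of genuinely smooth forms, as in \cite{ebin1970manifold,Nordstrom-PhD}; one also checks the image lies in $\mathcal{X}_{l,\nu}$ and that the rate $\nu$ is preserved. The differential at $(\mathrm{id},\psi)$ is $(X,\kappa)\mapsto \mathcal{L}_X\psi + \kappa = d(X\lrcorner\psi)+\kappa$, with $X\in L^2_{l+1,\nu+1}(TM)$ and $\kappa\in K\cap\ker d$. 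This is injective: if $d(X\lrcorner\psi)=-\kappa$, then both sides vanish by $T_\psi\mathcal{A}_{l,\nu}=T_\psi(\mathcal{D}\cdot\psi)\oplus K$, so $X\lrcorner\psi$ is a closed section of $\Lambda^3_8$, i.e.\ $X$ is a Killing field of rate $\nu+1$, hence $X=0$ by Proposition \ref{no-Killing}; and it is surjective onto $\ker d$ by \eqref{slice-decomposition}. Being a continuous bijection of Banach spaces, it is a topological isomorphism by the open mapping theorem, so the inverse function theorem makes \eqref{submersion} a diffeomorphism from a neighbourhood of $(\mathrm{id},\psi)$ onto a neighbourhood of $\psi$ in $\mathcal{X}_{l,\nu}$; in particular a submersion there, which is (iv).

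Items (ii) and (iii) are then extracted in the standard way. By (iv), every torsion-free structure $L^2_{l,\nu}$-close to $\psi$ can be written $\Phi^*\hat\psi$ with $\Phi$ near $\mathrm{id}$ and $\hat\psi\in\mathcal{R}_\psi$, the ambiguity being by $(\Phi,\hat\psi)\mapsto(\theta^{-1}\circ\Phi,\theta^*\hat\psi)$ for $\theta\in\mathcal{I}_\psi$ --- here (S.2) is what guarantees $\theta^*\hat\psi\in\mathcal{R}_\psi$. If moreover $\tilde\psi=\Phi^*\hat\psi$ is a smooth form, then, $\hat\psi$ being smooth, $\Phi$ is an isometry between the smooth metrics induced by $\tilde\psi$ and $\hat\psi$, hence smooth by Myers--Steenrod \cite{smooth-isometries} and, after checking its decay, an element of $\mathcal{D}_{\nu+1}$; this is the bridge between $\mathcal{X}_{l,\nu}$ and $\mathcal{X}_\nu$. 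Running this argument also at nearby points of $\mathcal{R}_\psi$ --- the transversality of the first step and the isomorphism of the second being open conditions --- shows that $\mathcal{R}_\psi\to\mathcal{M}_\nu$ is open and that it descends to a homeomorphism $\mathcal{R}_\psi/\mathcal{I}_\psi\to\mathcal{M}_\nu$ onto its image, as in \cite{ebin1970manifold,Nordstrom-PhD}.

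I expect the main obstacle to be not the linear algebra or the inverse function theorem, which are clean, but the bookkeeping in this last step: aligning the functional-analytic picture ($\mathcal{A}_{l,\nu}$, $\mathcal{D}_{l+1,\nu+1}$, $\mathcal{X}_{l,\nu}$) with the smooth geometric one ($\mathcal{X}_\nu$, $\mathcal{D}_{\nu+1}$, $\mathcal{M}_\nu$). Concretely this means establishing smoothness rather than mere continuity of the action map --- which is exactly why (S.3) is forced into the hypotheses --- invoking Myers--Steenrod to descend from $\mathcal{D}_{l+1,\nu+1}$-diffeomorphisms to honest smooth ones, promoting the local statements near $\psi$ to genuine openness of the orbit map and the homeomorphism claim for the quotient, and verifying at every stage that the decay rate $\nu+1$, and hence membership in the correct diffeomorphism group, is preserved. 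For all of this I would follow Ebin \cite{ebin1970manifold} and Nordström \cite[Section 3.1.3]{Nordstrom-PhD}.
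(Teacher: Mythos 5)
The paper does not actually prove this theorem — it imports it wholesale from Nordstr\"om's adaptation of Ebin's slice construction \cite[Section 3.1.3]{Nordstrom-PhD}, \cite{ebin1970manifold} — and your sketch is a faithful reconstruction of exactly that route: the observation that orbit directions $d(X\lrcorner\psi)$ are exact and hence lie in $\ker d$, the induced splitting of $\ker d$ giving transversality of $\mathcal{S}_{\psi}$ with $\mathcal{X}_{l,\nu}$, Proposition \ref{no-Killing} making the differential of the action map injective so that the inverse function theorem yields (iv), and the deferral of the genuinely delicate analytic points (smoothness of pull-back restricted to the smooth forms in $\mathcal{R}_{\psi}$, which is what (S.3) is for, the Myers--Steenrod descent, and the slice property underlying (ii)--(iii)) to the same sources the paper cites. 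So the proposal is correct in approach and essentially coincides with the paper's (cited) argument.
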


Once we find $\mathcal{S}_{\psi}$, we can use Theorem \ref{good-slices} (i)-(iv) to conclude that  under the assumptions on $\nu$ in Proposition \ref{tangent-space-torsion-free} $\mathcal{M}_{\nu}$ is an orbifold: around $\psi$ we can use $\mathcal{R}_{\psi}/\mathcal{I}_{\psi}$ as a chart, and the transition to another such chart $\mathcal{R}_{\tilde{\psi}}/\mathcal{\tilde{\psi}}$ centred at $\tilde{\psi}\in\mathcal{X}_{\nu}$ can be described via a section of the submersion \eqref{submersion}. 
If $\mathcal{I}_{\psi}$ is trivial or acts trivially on $\mathcal{R}_{\psi}$, we can strengthen our conclusion and find that $\mathcal{M}_{\nu}$ is smooth in a neighbourhood of $\psi$. One particular way to check if the action of $\mathcal{I}_{\psi}$ on $\mathcal{R}_{\psi}$ is trivial is to look at the projection $\mathcal{R}_{\psi}\rightarrow H^4(M,\mathbb{R})$ to the cohomology group. This is well-defined because elements in $\mathcal{R}_{\psi}$ are smooth, closed 4-forms. Because elements in $\mathcal{I}_{\psi}$ are isotopic to the identity, the projection is $\mathcal{I}_{\psi}$-invariant. If $\mathcal{R}_{\psi}\rightarrow H^4(M,\mathbb{R})$ is an embedding, all forms in $\mathcal{R}_{\psi}$ therefore represent different points in the moduli space and we can use $\mathcal{R}_{\psi}$ as a smooth chart.

To wrap up our discussion of the moduli space $\mathcal{M}_{\nu}$, we are left to find a good slice $\mathcal{S}_{\psi}$ as in Theorem \ref{good-slices}.
To do so, we first determine a complement of $T_{\psi}(\mathcal{D}_{l+1,\nu+1}\cdot\psi)$ in $T_{\psi}\mathcal{A}_{l,\nu}=L^2_{l,\nu}(\Lambda^4_{1} \oplus \Lambda^4_7 \oplus \Lambda^4_{35})$. 
To compute $T_{\psi}(\mathcal{D}_{l+1,\nu+1}\cdot \psi)$, let  $F_t$ be the 1-parameter subgroup of diffeomorphisms generated by some $X\in\mathcal{C}^{\infty}_{\nu+1}(TM)$. Then
\begin{align*}
\left.
\frac{d}{dt}
\right|_{t=0}
F_t^*\psi
=
\mathcal{L}_X \psi
=
X\lrcorner d\psi
+
d(X\lrcorner\psi)
=
d(X\lrcorner\psi).
\end{align*}
By \eqref{3-forms-of-type-8} we get $T_{\psi}(\mathcal{D}_{\nu+1}\cdot \psi)=d(\mathcal{C}^{\infty}_{\nu+1}(\Lambda^3_8))$. 
Analogously $T_{\psi}(\mathcal{D}_{l+1,\nu+1}\cdot \psi)=d (L^2_{l+1,\nu+1}(\Lambda^3_8))$. In particular, $T_{\psi}(\mathcal{D}_{l+1,\nu+1}\cdot \psi)$ is closely related to the image of the negative Dirac operator \eqref{neg-Dirac}, which we will exploit to determine a complement.
In the following let $K_{l,\nu}$ be a complement of $d \ker (\slashed{D}_{-})_{\nu+1}$ in $L^2_{l,\nu}(\Lambda^4_{35})$. By Lemma \ref{Injectivity of negative Dirac-operator} we have
\begin{align}
\label{tangent-space-of-slice}
K_{l,\nu}\cong
\begin{cases}
L^2_{l,\nu}(\Lambda^4_{35}) 
&\text{if}\, \nu\in(-4,-1],
\\
L^2_{l,\nu}(\Lambda^4_{35})/d \ker (\slashed{D}_{-})_{\nu+1}
&\text{if}\, \nu\in(-1,0).
\end{cases}
\end{align} 

\begin{proposition}
\label{prop-decomp-admissable-forms}
We have the decomposition
\begin{align*}
L^2_{l,\nu}(\Lambda^4_{1} \oplus \Lambda^4_7 \oplus \Lambda^4_{35})
=
T_{\psi}(\mathcal{D}_{l+1,\nu+1} \cdot \psi)
\oplus 
K_{l,\nu}
\end{align*}
\end{proposition}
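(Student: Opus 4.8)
The plan is to read off the decomposition from the structure of the exterior derivative on forms of type $\Lambda^3_8$. Recall from the computation just above that $T_{\psi}(\mathcal{D}_{l+1,\nu+1}\cdot\psi)=d\bigl(L^2_{l+1,\nu+1}(\Lambda^3_8)\bigr)$, and that for $\gamma=X\lrcorner\psi$ the $4$-form $d\gamma=\mathcal{L}_X\psi$ has no $\Lambda^4_{27}$-component by \eqref{tangent-space-admissible-forms}; hence $d\gamma=\slashed{D}_{-}\gamma+\pi_{35}(d\gamma)$ with $\slashed{D}_{-}\gamma=\pi_{1\oplus 7}(d\gamma)\in L^2_{l,\nu}(\Lambda^4_{1\oplus 7})$. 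I would use two inputs: by Lemma \ref{Injectivity of negative Dirac-operator} the operator $(\slashed{D}_{-})_{l+1,\nu+1}$ is surjective onto $L^2_{l,\nu}(\Lambda^4_{1\oplus 7})$ (which holds as $\nu>-4$), and, $\nu+1$ being a non-critical rate, $\ker(\slashed{D}_{-})_{\nu+1}$ is finite-dimensional by Theorem \ref{Fredholm}, so that $d\ker(\slashed{D}_{-})_{\nu+1}\subset L^2_{l,\nu}(\Lambda^4_{35})$ is a finite-dimensional subspace and $K_{l,\nu}$ is a \emph{closed} complement of it in $L^2_{l,\nu}(\Lambda^4_{35})$.

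Granting this, the algebraic splitting is quick. For directness: if $\omega\in T_{\psi}(\mathcal{D}_{l+1,\nu+1}\cdot\psi)\cap K_{l,\nu}$, write $\omega=d\gamma$; since $\omega\in K_{l,\nu}\subset L^2_{l,\nu}(\Lambda^4_{35})$ its $\Lambda^4_{1\oplus 7}$-component vanishes, i.e.\ $\slashed{D}_{-}\gamma=0$, so $\omega=d\gamma\in d\ker(\slashed{D}_{-})_{\nu+1}$, which meets $K_{l,\nu}$ only in $0$ by construction. For surjectivity of the sum: given $\beta=\beta'+\beta''$ with $\beta'\in L^2_{l,\nu}(\Lambda^4_{1\oplus 7})$ and $\beta''\in L^2_{l,\nu}(\Lambda^4_{35})$, pick $\gamma_1$ with $\slashed{D}_{-}\gamma_1=\beta'$ by surjectivity of $(\slashed{D}_{-})_{l+1,\nu+1}$; then $\beta-d\gamma_1\in L^2_{l,\nu}(\Lambda^4_{35})$, and splitting $\beta-d\gamma_1=d\gamma_2+\kappa$ along $L^2_{l,\nu}(\Lambda^4_{35})=d\ker(\slashed{D}_{-})_{\nu+1}\oplus K_{l,\nu}$ gives $\beta=d(\gamma_1+\gamma_2)+\kappa$.

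The step I expect to require the most care — the main obstacle — is showing the splitting is \emph{topological}, i.e.\ that $T_{\psi}(\mathcal{D}_{l+1,\nu+1}\cdot\psi)=d\bigl(L^2_{l+1,\nu+1}(\Lambda^3_8)\bigr)$ is closed in $L^2_{l,\nu}(\Lambda^4)$ (closedness of $K_{l,\nu}$ being part of its choice). I would fix a bounded right inverse $R$ of the surjective Fredholm operator $(\slashed{D}_{-})_{l+1,\nu+1}$; since $\pi_{1\oplus 7}\circ d\circ R=\mathrm{id}$ on $L^2_{l,\nu}(\Lambda^4_{1\oplus 7})$, the map $d\circ R$ is bounded below and $dR\bigl(L^2_{l,\nu}(\Lambda^4_{1\oplus 7})\bigr)$ is a closed subspace. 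Writing any $\gamma\in L^2_{l+1,\nu+1}(\Lambda^3_8)$ as $R(\slashed{D}_{-}\gamma)$ plus an element of $\ker(\slashed{D}_{-})_{\nu+1}$ then gives
\[
T_{\psi}(\mathcal{D}_{l+1,\nu+1}\cdot\psi)=dR\bigl(L^2_{l,\nu}(\Lambda^4_{1\oplus 7})\bigr)\oplus d\ker(\slashed{D}_{-})_{\nu+1},
\]
a sum of a closed subspace and a finite-dimensional one, hence closed; that this latter sum is direct is clear because the second summand lies in $L^2_{l,\nu}(\Lambda^4_{35})$ while a nonzero element of the first has nonzero $\Lambda^4_{1\oplus 7}$-component. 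With closedness established, the algebraic decomposition found above is the asserted Banach-space direct sum.
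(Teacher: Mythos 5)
Your argument is correct and takes essentially the same route as the paper: surjectivity of $(\slashed{D}_{-})_{l+1,\nu+1}$ for $\nu>-4$ lets you absorb the $\Lambda^4_{1\oplus 7}$-part of any form into $T_{\psi}(\mathcal{D}_{l+1,\nu+1}\cdot\psi)=d\bigl(L^2_{l+1,\nu+1}(\Lambda^3_8)\bigr)$, and the intersection $T_{\psi}(\mathcal{D}_{l+1,\nu+1}\cdot\psi)\cap L^2_{l,\nu}(\Lambda^4_{35})$ is identified with $d\ker(\slashed{D}_{-})_{\nu+1}$, which $K_{l,\nu}$ complements by construction. Your additional verification that the splitting is topological (closedness of the orbit tangent space via a bounded right inverse, plus finite-dimensionality of $d\ker(\slashed{D}_{-})_{\nu+1}$) goes beyond what the paper proves here, but it is correct and is in fact the property needed when Theorem \ref{good-slices} asks for a closed complement.
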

\begin{proof}
Let $\alpha \in L^2_{l,\nu}(\Lambda^4_{1} \oplus \Lambda^4_7 \oplus \Lambda^4_{35})$.
If $\nu\in(-4,0)$, the negative Dirac-operator 
\begin{align*}
(\slashed{D}_{-})_{l+1,\nu+1}\colon
L^2_{l+1,\nu+1}(\Lambda^3_{8})
\rightarrow
L^2_{l,\nu}(\Lambda^4_{1} \oplus \Lambda^4_7)
\end{align*}
is surjective by Corollary \ref{Injectivity of negative Dirac-operator}.
Therefore, we can write 
\begin{align*}
\pi_{1+7}\alpha
=
\slashed{D}_{-}(X\lrcorner \psi) 
=
\pi_{1+7}d(X \lrcorner \psi) 
\end{align*}
for some $ X\lrcorner \psi \in L^2_{l+1,\nu+1}(\Lambda^3_8)$.
Then \begin{align*}
\alpha - d(X\lrcorner \psi) \in L^2_{l,\nu}(\Lambda^4_{35}).
\end{align*}
This proves
\begin{align*}
L^2_{l,\nu}(\Lambda^4_{1} \oplus \Lambda^4_7 \oplus \Lambda^4_{35})
=
T_{\psi}(\mathcal{D}_{l+1,\nu+1} \cdot \psi)
+
L^2_{l,\nu}(\Lambda^4_{35})
\end{align*}
for $\nu\in(-4,0)$. Assume that 
$d(X\lrcorner \psi) \in
T_{\psi}(\mathcal{D}_{l+1,\nu+1} \cdot \psi)
\cap
L^2_{l,\nu}(\Lambda^4_{35})$
for some
$X\lrcorner \psi \in L^2_{l+1,\nu+1}(\Lambda^3_8)$.
Then $\slashed{D}_{-}(X\lrcorner \psi)
=\pi_{1+7}d(X\lrcorner\psi)=0$.
This proves the statement.
\end{proof}

By Proposition \ref{prop-decomp-admissable-forms} we want to determine a slice $\mathcal{S}_{\psi}$ around $\psi$ which  satisfies the properties (S.1)-(S.3) from Theorem \ref{good-slices} and at $\psi$ has the tangent space $K_{l,\nu}$.
A candidate for such a slice is the graph of the map $\Theta$, which we have defined in \eqref{Pi-Theta}, over a neighbourhood of $\psi$ in the affine space $\psi+K_{l,\nu}$. We set
\begin{align*}
\mathcal{S}_{\psi}:= \{\Pi(\eta) = \psi + \eta -\Theta(\eta)\, |\, \eta \in U \subset K_{l,\nu} \},
\end{align*}
where $U$ is a sufficiently small neighbourhood of the origin in $K_{l,\nu}$. 
Properties (S.1) and (S.2) follow from the properties (i)-(iii) of the maps $\Pi$ and $\Theta$. Next we proof property (S.3):

\begin{proposition}
All elements in a neighbourhood of $\psi$ in $\mathcal{R}_{\psi}$
are smooth.
\end{proposition}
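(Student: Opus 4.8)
The plan is to use the slice gauge: an element $\tilde\psi\in\mathcal{R}_\psi=\mathcal{S}_\psi\cap\mathcal{X}_{l,\nu}$ satisfies both $d\tilde\psi=0$ and the normalisation built into $\mathcal{S}_\psi$, and together these turn the single (underdetermined) torsion-free equation into a quasilinear \emph{elliptic} equation, to which the weighted elliptic regularity of Theorems \ref{general-ell-reg} and \ref{ellitptic regularity} applies.

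Concretely, write $\tilde\psi=\Pi(\eta)=\psi+\eta-\Theta(\eta)$ with $\eta\in U\subset K_{l,\nu}$, so that $\eta\in L^2_{l,\nu}(\Lambda^4_{35})$; since $l\geq 6$, Theorem \ref{Sobolev embedding}(i) gives $\eta\in\mathcal{C}^{1,\alpha}_\nu$, and after shrinking $U$ we may assume $\|\eta\|_{\mathcal{C}^{1,\alpha}_\nu}$ is as small as we like. Because $d\psi=0$, the condition $d\tilde\psi=0$ is equivalent to
\begin{align*}
\Phi(\eta):=d\eta-d(\Theta(\eta))=0 .
\end{align*}
Here $\Phi$ is a first-order quasilinear operator on sections of $\Lambda^4_{35}$: since $\Theta$ from \eqref{Pi-Theta} is a smooth bundle map with $\Theta(0)=0$ and $D\Theta|_0=0$, its coefficients depend smoothly and only pointwise on $\eta$, they converge to their conical model (as $(M,\psi)$ is AC and $\eta$ decays), and the linearisation at $\eta=0$ is $d\colon\Gamma(\Lambda^4_{35})\to\Gamma(\Lambda^5)$. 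The key algebraic point is that $d|_{\Lambda^4_{35}}=d|_{\Lambda^4_-}$ is \emph{overdetermined elliptic}: its symbol at $0\neq\xi\in T^*_pM$ is $\zeta\mapsto\xi\wedge\zeta$, and if $\xi\wedge\zeta=0$ with $\zeta$ anti-self-dual, then writing $\zeta=\xi\wedge\beta$ and applying $\iota_{\xi^\sharp}$ to the identity $*_\psi(\xi\wedge\beta)=-\xi\wedge\beta$ forces $\zeta=0$. By continuity of the symbol, $D\Phi|_\eta$, and hence its frozen principal part $L_\eta$, remains overdetermined elliptic for $\eta$ in a small enough neighbourhood. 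Composing $\Phi(\eta)=0$ with the formal adjoint $L_\eta^*$ then produces a genuinely determined second-order equation of the schematic shape
\begin{align*}
A(\eta)\,\nabla^2\eta+B(\eta,\nabla\eta)=0 ,
\end{align*}
where $A(\eta)=L_\eta^*L_\eta$ depends only pointwise on $\eta$ and has invertible principal symbol for $\eta$ small, and $B$ depends pointwise on $\eta$ and is at most quadratic in $\nabla\eta$; in particular no second derivatives of $\eta$ enter the coefficients. This operator is uniformly elliptic on $M$ and asymptotic to a conical elliptic operator.

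Finally I would bootstrap. A priori $\nabla\eta\in L^2_{l-1,\nu-1}$ and $\nabla^2\eta\in L^2_{l-2,\nu-2}$, so the weighted Sobolev multiplication estimates (applicable since $l-1\geq 5$, so spaces of this order multiply) put $B(\eta,\nabla\eta)$ into a space one order of differentiability better than the a priori space of $\nabla^2\eta$; solving $A(\eta)\nabla^2\eta=-B(\eta,\nabla\eta)$ for $\nabla^2\eta$ and invoking Theorem \ref{ellitptic regularity} (or its $\mathcal{C}^{k,\alpha}$ counterpart, Theorem \ref{general-ell-reg}) then upgrades $\eta$ to $L^2_{l+1,\nu}$, and iterating gives $\eta\in\bigcap_k L^2_{k,\nu}=\mathcal{C}^\infty_\nu$. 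Then $\tilde\psi=\psi+\eta-\Theta(\eta)$ is smooth because $\Theta$ is a smooth bundle map. I expect the main obstacle to be the middle step: verifying the overdetermined ellipticity on anti-self-dual $4$-forms and, above all, checking that graphing $\Theta$ over $\psi+K_{l,\nu}$, which lies in the anti-self-dual directions $L^2_{l,\nu}(\Lambda^4_{35})$ rather than in the full tangent space $T_\psi\mathcal{A}_{l,\nu}=L^2_{l,\nu}(\Lambda^4_1\oplus\Lambda^4_7\oplus\Lambda^4_{35})$, is precisely what promotes the underdetermined equation $d\tilde\psi=0$ to a well-posed quasilinear elliptic system, with the nonlinearity $\Theta$ contributing only to the lower-order terms of the coefficients; the weighted regularity and multiplication estimates that complete the bootstrap are then routine.
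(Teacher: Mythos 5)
Your reduction is the same as the paper's: write $\tilde\psi=\Pi(\eta)=\psi+\eta-\Theta(\eta)$ with $\eta\in L^2_{l,\nu}(\Lambda^4_{35})$ small, note that $d\tilde\psi=0$ becomes a quasilinear first-order equation for $\eta$ whose coefficients depend only pointwise on $\eta$ through the bundle map $\Theta$ from \eqref{Pi-Theta}, and exploit that $d$ has injective symbol on anti-self-dual $4$-forms. Your symbol computation is correct, and it is exactly the algebraic fact the paper encodes differently: since on anti-self-dual $4$-forms $d^*\eta$ equals $*d\eta$ up to sign, the equation can be rewritten as $(d+d^*)\eta+Q(\eta,\nabla\eta)=R(\eta)$, a $\mathcal{C}^0$-small perturbation of the overdetermined elliptic \emph{first-order} operator $d+d^*$ on $4$-forms, and Theorem \ref{general-ell-reg} then applies directly, since the a priori regularity $\eta\in\mathcal{C}^{1,\alpha}_{\nu}$ supplied by Theorem \ref{Sobolev embedding} is all a first-order operator requires; this gives the bootstrap $\mathcal{C}^{q,\alpha}_{\nu}\Rightarrow\mathcal{C}^{q+1,\alpha}_{\nu}$ and hence smoothness of $\eta$ and of $\Pi(\eta)$.

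The genuine gap in your write-up is the regularity step after composing with $L_\eta^*$. The resulting second-order operator $A(\eta)\nabla^2+\cdots$ has variable coefficients depending on $\eta$, so Theorem \ref{ellitptic regularity} does not apply to it: that theorem is stated only for the fixed smooth-coefficient operators \eqref{d+d*}--\eqref{D-}. Its rough-coefficient counterpart, Theorem \ref{general-ell-reg}, does tolerate $\mathcal{C}^{l,\alpha}$ coefficients, but for an operator of order $k=2$ it requires the a priori regularity $u\in\mathcal{C}^{2,\alpha}_{\nu}$, and $l\geq 6$ only yields $L^2_{l,\nu}\hookrightarrow\mathcal{C}^{1,\alpha}_{\nu}$ (in dimension $8$, six $L^2$-derivatives do not give $\mathcal{C}^2$), so your bootstrap cannot start as written; moreover the weighted Sobolev multiplication and composition estimates you lean on are nowhere established in the paper. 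All of this is repairable by importing standard weighted elliptic theory for asymptotically conical operators with variable coefficients, but the detour through the normal operator $L_\eta^*L_\eta$ buys you nothing: keeping the first-order overdetermined elliptic system and applying Theorem \ref{general-ell-reg} to it directly, as the paper does, closes the argument with exactly the tools already quoted.
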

\begin{proof}
By the definition of $\mathcal{S}_{\psi}$ each $\hat{\psi}\in\mathcal{R}_{\psi}$ sufficiently close to $\psi$ can be written as
\begin{align*}
\Pi(\xi)
=
\psi+\xi-\Theta(\xi)
\end{align*}
for some $\xi\in L^2_{l,\nu}(\Lambda^4_{35})$. 
Using the Hodge star operator with respect to $\psi$, the condition $d\Pi(\xi)=0$ leads to the equation
\begin{align*}
(d+d^*) \xi - d\Theta(\xi)-*d\Theta(\xi)=0, 
\end{align*}
which can be re-written as
\begin{align*}
(d+d^*)\xi + Q(\xi, \nabla\xi) = R(\xi),
\end{align*}
where $Q(x,y)$ and $R(x)$ are smooth maps which depend on $\psi$ and its derivatives, and 
$Q(x,y)$ is linear in $y$ and $Q(0,y)=0$ for all $y$.
Because the $\mathcal{C}^0_{\nu}$-norm of $\xi$ is controlled by its $L^2_{l,\nu}$-norm via the Sobolev embedding $L^2_{l,\nu}\hookrightarrow \mathcal{C}^{1,\alpha}_{\nu}$, the linear operator 
$L=d+d^*+Q(\xi, \nabla \cdot)$ is $\mathcal{C}^0$-close to $d+d^*$
if the slice $\mathcal{S}_{\psi}$ is chosen sufficiently small and therefore elliptic. Again by Sobolev embedding we can assume the induction hypothesis $\xi\in\mathcal{C}^{q,\alpha}_{\nu}$ for $q\geq 1$. 
Theorem \ref{general-ell-reg} implies $\xi\in\mathcal{C}^{q+1,\alpha}_{\nu}$. By induction we see that $\xi$ is smooth.
We conclude that $\Pi(\xi)$ is smooth because $\Pi$ and $\xi$  are smooth.
\end{proof}

We have now proved that $\mathcal{M}_{\nu}$ is an orbifold under suitable assumptions on the rate $\nu$. To compute its dimension, we determine the tangent space of the pre-moduli space $\mathcal{R}_{\psi}$ at $\psi$. 

\begin{lemma}
\label{lemma-tangent-space-R-psi}
The tangent space of the pre-moduli space $\mathcal{R}_{\psi}$ at $\psi$ is given by
\begin{align*}
T_{\psi}\mathcal{R}_{\psi}
\cong
\begin{cases}
(\mathcal{H}^4_{35})_{\nu}
&\text{if}\, \nu\in(-4,-1],
\\
(\mathcal{H}^4_{35})_{\nu}/d \ker (\slashed{D}_{-})_{\nu+1}
&\text{if}\, \nu\in(-1,0).
\end{cases}
\end{align*}
\end{lemma}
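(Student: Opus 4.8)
The plan is to realise $T_\psi\mathcal{R}_\psi$ as the intersection of the tangent space of the slice $\mathcal{S}_\psi$ with the tangent space of the torsion-free locus $\mathcal{X}_{l,\nu}$, identify closed anti-self-dual $4$-forms in the weighted Sobolev space with $(\mathcal{H}^4_{35})_\nu$, and then split into the two ranges of $\nu$ using Lemma \ref{Injectivity of negative Dirac-operator}. By construction of $\mathcal{S}_\psi$ we have $T_\psi\mathcal{S}_\psi = K_{l,\nu}$, and by Proposition \ref{tangent-space-torsion-free} the space $T_\psi\mathcal{X}_{l,\nu}$ is the kernel of $d\colon L^2_{l,\nu}(\Lambda^4_1\oplus\Lambda^4_7\oplus\Lambda^4_{35})\to d\Omega^4_{l,\nu}$. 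First I would check that $T_\psi\mathcal{R}_\psi = K_{l,\nu}\cap\ker d$: differentiating the submersion \eqref{submersion} at $(\mathrm{id},\psi)$ shows $d\big(L^2_{l+1,\nu+1}(\Lambda^3_8)\big) + T_\psi\mathcal{R}_\psi = T_\psi\mathcal{X}_{l,\nu} = \ker d$, and since $T_\psi\mathcal{R}_\psi\subseteq K_{l,\nu}$ while $d\big(L^2_{l+1,\nu+1}(\Lambda^3_8)\big) = T_\psi(\mathcal{D}_{l+1,\nu+1}\cdot\psi)$ is a topological complement of $K_{l,\nu}$ in $T_\psi\mathcal{A}_{l,\nu}$ by Proposition \ref{prop-decomp-admissable-forms}, comparing this with the decomposition $\ker d = d\big(L^2_{l+1,\nu+1}(\Lambda^3_8)\big)\oplus\big(\ker d\cap K_{l,\nu}\big)$ forces $T_\psi\mathcal{R}_\psi = \ker d\cap K_{l,\nu}$.

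Next I would identify the closed forms in $L^2_{l,\nu}(\Lambda^4_{35})$ with $(\mathcal{H}^4_{35})_\nu$. For $\gamma\in L^2_{l,\nu}(\Lambda^4_{35})$ one has $*\gamma = -\gamma$, hence $d^*\gamma = -*d*\gamma = *d\gamma$, so $d\gamma = 0$ already implies $d^*\gamma = 0$ and $\gamma$ lies in the kernel of $d+d^*$; bootstrapping with Theorem \ref{ellitptic regularity} and the Sobolev embeddings of Theorem \ref{Sobolev embedding} then places $\gamma$ in $\mathcal{C}^{\infty}_{\nu}(\Lambda^4_{35})$, i.e. $\gamma\in(\mathcal{H}^4_{35})_\nu$. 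Conversely any $\gamma\in(\mathcal{H}^4_{35})_\nu$ lies in $L^2_{l,\nu'}(\Lambda^4_{35})$ for every $\nu'>\nu$, and since $\nu$ is a non-critical rate of $d+d^*$, Theorem \ref{constant away from crit rates} lets us slide the weight below $\nu$ without changing the kernel, so in fact $\gamma\in L^2_{l,\nu}(\Lambda^4_{35})$. Thus $\ker d\cap L^2_{l,\nu}(\Lambda^4_{35}) = (\mathcal{H}^4_{35})_\nu$, and combining with the previous paragraph, $T_\psi\mathcal{R}_\psi = (\mathcal{H}^4_{35})_\nu\cap K_{l,\nu}$ (all of these being finite-dimensional, as kernels of Fredholm operators at non-critical rates).

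It remains to evaluate this intersection. For $\nu\in(-4,-1]$ Lemma \ref{Injectivity of negative Dirac-operator} gives $\ker(\slashed{D}_-)_{\nu+1}=0$, hence $K_{l,\nu}=L^2_{l,\nu}(\Lambda^4_{35})$ and $T_\psi\mathcal{R}_\psi=(\mathcal{H}^4_{35})_\nu$. For $\nu\in(-1,0)$ I would first check that $d\ker(\slashed{D}_-)_{\nu+1}\subseteq(\mathcal{H}^4_{35})_\nu$: for $X\lrcorner\psi\in\ker(\slashed{D}_-)_{\nu+1}$ the form $d(X\lrcorner\psi)=\mathcal{L}_X\psi$ is tangent to the orbit of admissible $4$-forms and so has no $\Lambda^4_{27}$-component, while $\pi_{1\oplus 7}\big(d(X\lrcorner\psi)\big)=\slashed{D}_-(X\lrcorner\psi)=0$ kills the $\Lambda^4_{1\oplus 7}$-component; hence $d(X\lrcorner\psi)$ is a closed, anti-self-dual (therefore also coclosed), smooth $4$-form of rate $\nu$. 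Since $d\ker(\slashed{D}_-)_{\nu+1}$ is thus a subspace of $(\mathcal{H}^4_{35})_\nu$ inside the direct sum $L^2_{l,\nu}(\Lambda^4_{35})=d\ker(\slashed{D}_-)_{\nu+1}\oplus K_{l,\nu}$, the modular identity for subspaces gives $(\mathcal{H}^4_{35})_\nu=d\ker(\slashed{D}_-)_{\nu+1}\oplus\big((\mathcal{H}^4_{35})_\nu\cap K_{l,\nu}\big)$, so $T_\psi\mathcal{R}_\psi=(\mathcal{H}^4_{35})_\nu\cap K_{l,\nu}\cong(\mathcal{H}^4_{35})_\nu/d\ker(\slashed{D}_-)_{\nu+1}$, as claimed.

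The step I expect to be the main obstacle is the first one: showing that the tangent space of the intersection $\mathcal{R}_\psi=\mathcal{S}_\psi\cap\mathcal{X}_{l,\nu}$ equals the intersection $K_{l,\nu}\cap\ker d$ of the two tangent spaces. In the Banach-manifold setting this clean-intersection property is not automatic and has to be extracted from the submersion statement of Theorem \ref{good-slices}(iv) together with the transversal splitting of Proposition \ref{prop-decomp-admissable-forms}; the remaining ingredients — automatic coclosedness of closed anti-self-dual forms, the rate-shift identification with $(\mathcal{H}^4_{35})_\nu$, and the two-case bookkeeping — are then routine.
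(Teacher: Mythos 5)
Your proposal is correct and follows essentially the same route as the paper: the paper's proof is just the terse observation that $T_{\psi}\mathcal{R}_{\psi}=T_{\psi}\mathcal{X}_{l,\nu}\cap T_{\psi}\mathcal{S}_{\psi}$, i.e.\ the kernel of $d$ restricted to $K_{l,\nu}$, combined with formula \eqref{tangent-space-of-slice}. What you add — the transversality argument via the submersion \eqref{submersion} and Proposition \ref{prop-decomp-admissable-forms}, the identification of closed $L^2_{l,\nu}$ anti-self-dual forms with $(\mathcal{H}^4_{35})_{\nu}$, and the modular-law step showing $d\ker(\slashed{D}_{-})_{\nu+1}\subset(\mathcal{H}^4_{35})_{\nu}$ so that the intersection with $K_{l,\nu}$ is the stated quotient — are exactly the details the paper leaves implicit, and they are all sound.
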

\begin{proof}
By construction we have
\begin{align*}
T_{\psi}\mathcal{R}_{\psi}
=
T_{\psi}\mathcal{X}_{l,\nu} \cap T_{\psi}\mathcal{S}_{\psi}.
\end{align*}
Therefore, by Lemma \ref{tangent-space-torsion-free}
$T_{\psi}\mathcal{R}_{\psi}$ is the kernel of the linear map
\begin{align*}
d\colon
T_{\psi}\mathcal{S}_{l,\nu}
\rightarrow 
d\Omega^4_{l,\nu}.
\end{align*}
The statement follows with formula \eqref{tangent-space-of-slice}.
\end{proof}

\begin{remark}
\label{remark-scaling}
As explained in the beginning of this section the gauge fixing normalises the scale of the AC Spin(7)-structures. However, scaling is still seen by the moduli space $\mathcal{M}_{\nu}$.
In the following we explain that an AC Spin(7)-manifold with decay rate $\nu$ always induces a canonical Spin(7)-deformation via scaling.
Therefore, if there is a torsion-free AC Spin(7)-structure on $M$ which decays to the cone $C$ precisely at rate $\nu\in (-4,0)$,
by Lemma \ref{lemma-tangent-space-R-psi}
the dimension of the space $(\mathcal{H}^4_{35})_{\lambda}$ must increase as $\lambda$ crosses $\nu$. 
This gives a criterion to exclude the existence of torsion-free AC Spin(7)-structures at certain rates.  
 
The vector field $V = r \partial_r$ generates the flow
\begin{align*}
\Phi_{\lambda}(r,x)=(e^{\lambda}r,x)
\end{align*}
on the cone $C(\Sigma)$. The action of $\Phi_{\lambda}$ scales the conical Spin(7)-structure:
\begin{align*}
\Phi_{\log \lambda}^*\psi_{\ig C} = \lambda^4 \psi_{\ig C}.
\end{align*}
We can transplant the vector field to $M$ by setting $\hat{V}=\chi V$. Denote its flow by $\hat{\Phi}_{\lambda}$.
As noted above the rescaled AC Spin(7)-structures $\lambda^4 \psi$
are not in $\mathcal{A}_{\lambda}$ and therefore do not contribute to $\mathcal{M}_{\nu}$.
However, up to asymptotic decay we can reverse scaling by $\lambda^4$ by the action of $\hat{\Phi}_{1/\log \lambda}$.
Set
\begin{align*}
\psi_{\lambda}
:=
\lambda^4 \hat{\Phi}_{(1/\log \lambda)}^*\psi.
\end{align*}
To see that $\psi_{\lambda}$ decays to $\psi$ with rate $\nu$ write
\begin{align*}
\psi(r,x) = dr\wedge \varphi(r,x) + *\varphi(r,x).
\end{align*}
Then we have
\begin{align*}
\psi_{\lambda}(r,x)
=
\lambda^3 dr\wedge \varphi(r/\lambda,x) + \lambda^4 *\varphi(r/\lambda,x). 
\end{align*}
With respect to the norm given by $g_{\ig C}$ we have 
\begin{align*}
&|\psi_{\lambda}(r,x) - \psi_{\ig C}(r,x)|^2
=
|\psi_{\lambda}(r,x) - \lambda^4 \psi_{\ig C}(r/\lambda,x)|^2
\\
=
&
\lambda^3 
|\varphi(r/\lambda,x)-(r/\lambda)^3 \varphi_{\ig \Sigma}(x)|^2
+
\lambda^4 
|*_{\ig \varphi}\varphi(r/\lambda,x)-(r/\lambda)^4 *_{\ig \Sigma}\varphi_{\ig \Sigma}(x)|^2
\\
=
&
\mathcal{O}(r^{\nu})
\end{align*}
because $\psi$ decays to $\psi_{\ig C}$ with rate $\nu$.

The family $\psi_{\lambda}$
induces the infinitesimal deformation
\begin{align*}
\left.
\frac{d}{d\lambda}
\right|_{\lambda=0} \psi_{\lambda}=
4\psi-\mathcal{L}_{\hat{V}}\psi.
\end{align*}
\end{remark}

The results in this section prove

\begin{proposition}
\label{prop-smoothness-of-moduli}
Suppose $\nu \in (-4,0)$ and that $\nu$ and $\nu+1$ are non-critical rates of the operator $d+d^*$. Then the moduli space 
$\mathcal{M}_{\nu}$ is an orbifold
and the orbifold chart $\mathcal{R}_{\psi}$ at $\psi$ has the tangent space 
\begin{align}
\label{tangent-space}
T_{\psi}\mathcal{R}_{\psi}
\cong
\begin{cases}
(\mathcal{H}^4_{35})_{\nu}
&\text{if}\, \nu\in(-4,-1],
\\
(\mathcal{H}^4_{35})_{\nu}/d \ker (\slashed{D}_{-})_{\nu+1}
&\text{if}\, \nu\in(-1,0).
\end{cases}
\end{align}
Furthermore, if the stabiliser $\mathcal{I}_{\psi}$ of $\psi$ is trivial or acts trivially on the orbifold chart $\mathcal{R}_{\psi}$, the moduli space $\mathcal{M}_{\nu}$ is smooth in a neighbourhood of $\psi$. In particular, this is true (after possibly shrinking $\mathcal{R}_{\psi}$) if the projection $T_{\psi}\mathcal{R}_{\psi}\rightarrow H^4(M,\mathbb{R})$ is injective.
\end{proposition}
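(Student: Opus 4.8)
The plan is to assemble the results proved in this section. First I would note that the hypotheses here are precisely those of Proposition \ref{tangent-space-torsion-free}, which gives that $\mathcal{X}_{l,\nu}$ is a smooth Banach manifold with $T_\psi\mathcal{X}_{l,\nu}$ equal to the kernel of the linear map \eqref{linearisation}. Next, Proposition \ref{no-Killing} shows that there are no non-zero Killing fields for $g$ lying in $L^2_{l,\nu+1}(TM)$, so $\mathcal{I}_\psi$ is discrete; being in addition a closed subgroup of the compact isometry group of $(M,g)$, it is finite.

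The main step is to exhibit a slice satisfying the hypotheses of Theorem \ref{good-slices}. I would take $K = K_{l,\nu}$, chosen $\mathcal{I}_\psi$-invariantly (possible since $\mathcal{I}_\psi$ is finite); by Proposition \ref{prop-decomp-admissable-forms} this is a closed complement of $T_\psi(\mathcal{D}_{l+1,\nu+1}\cdot\psi)$ in $T_\psi\mathcal{A}_{l,\nu}$. The slice is the graph $\mathcal{S}_\psi = \{\Pi(\eta) = \psi+\eta-\Theta(\eta) : \eta\in U\subset K_{l,\nu}\}$: property (S.1) is immediate from $D\Pi|_0 = \mathrm{Id}$ and $D\Theta|_0 = 0$; property (S.2) follows from the Spin(7)-equivariance of $\Pi$ and $\Theta$ together with the fact that elements of $\mathcal{I}_\psi$ act fibrewise through the structure group Spin(7); and property (S.3), the smoothness of every $\tilde\psi\in\mathcal{R}_\psi = \mathcal{S}_\psi\cap\mathcal{X}_{l,\nu}$, is exactly the Proposition proved immediately above. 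Theorem \ref{good-slices}(i)--(iii) then shows that $\mathcal{R}_\psi$ is a smooth manifold, of finite dimension by the Fredholm theory of Theorem \ref{Fredholm}, and that $\mathcal{R}_\psi/\mathcal{I}_\psi\to\mathcal{M}_\nu$ is a homeomorphism onto an open neighbourhood of $[\psi]$; the transition maps between two such charts are obtained from local smooth sections of the submersion \eqref{submersion}, so $\{\mathcal{R}_\psi/\mathcal{I}_\psi\}$ forms an orbifold atlas for $\mathcal{M}_\nu$. Finally, \eqref{tangent-space} is Lemma \ref{lemma-tangent-space-R-psi}, since $T_\psi\mathcal{R}_\psi = T_\psi\mathcal{X}_{l,\nu}\cap T_\psi\mathcal{S}_\psi$ is the kernel of $d$ restricted to $K_{l,\nu}$ and a closed anti-self-dual $4$-form is automatically co-closed.

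For the smoothness criterion, if $\mathcal{I}_\psi$ is trivial or acts trivially on $\mathcal{R}_\psi$ then $\mathcal{R}_\psi/\mathcal{I}_\psi = \mathcal{R}_\psi$ is a genuine manifold chart and $\mathcal{M}_\nu$ is smooth near $[\psi]$. To obtain the stated sufficient condition I would consider the period map $p\colon\mathcal{R}_\psi\to H^4(M,\mathbb{R})$, $\tilde\psi\mapsto[\tilde\psi]$, which is well defined since elements of $\mathcal{R}_\psi$ are smooth closed $4$-forms; its derivative at $\psi$ is exactly the projection $T_\psi\mathcal{R}_\psi\to H^4(M,\mathbb{R})$ appearing in the statement. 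If that projection is injective then $p$ is an immersion at $\psi$, hence injective on a smaller neighbourhood of $\psi$, which I take to be the new $\mathcal{R}_\psi$ (this is the ``after possibly shrinking''). Since elements of $\mathcal{I}_\psi$ are isotopic to the identity they act as the identity on $H^4(M,\mathbb{R})$; together with $\Phi^*\mathcal{R}_\psi = \mathcal{R}_\psi$ (from (S.2) and the $\mathcal{D}_{l+1,\nu+1}$-invariance of $\mathcal{X}_{l,\nu}$) this yields $p(\Phi^*\tilde\psi) = p(\tilde\psi)$ for all $\Phi\in\mathcal{I}_\psi$, $\tilde\psi\in\mathcal{R}_\psi$, and injectivity of $p$ forces $\Phi^*\tilde\psi = \tilde\psi$. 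Hence $\mathcal{I}_\psi$ acts trivially on $\mathcal{R}_\psi$, and $\mathcal{M}_\nu$ is smooth near $[\psi]$.

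The only genuinely delicate ingredient in this assembly is property (S.3), the smoothness of the elements of $\mathcal{R}_\psi$, which rests on the nonlinear elliptic bootstrap (Theorem \ref{general-ell-reg}) already carried out; everything else is bookkeeping with the slice theorem and the weighted Hodge theory developed earlier.
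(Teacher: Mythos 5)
Your proposal is correct and follows essentially the same route as the paper: it assembles Proposition \ref{tangent-space-torsion-free}, Proposition \ref{no-Killing}, the slice $\mathcal{S}_{\psi}$ built from $\Pi$ and $\Theta$ over $K_{l,\nu}$ (using Proposition \ref{prop-decomp-admissable-forms} and the regularity proposition for (S.3)), Theorem \ref{good-slices}, and Lemma \ref{lemma-tangent-space-R-psi}, and it proves the cohomological criterion exactly as the paper intends, via the $\mathcal{I}_{\psi}$-invariant period map $\mathcal{R}_{\psi}\rightarrow H^4(M,\mathbb{R})$ whose derivative at $\psi$ is the stated projection. The only additions are harmless refinements (averaging to make $K_{l,\nu}$ and the shrunken chart $\mathcal{I}_{\psi}$-invariant, and the immersion-implies-local-injectivity step), which the paper leaves implicit.
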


\section{Computation of infinitesimal deformations}

\label{comp-inf-defo}

The aim of this section is to give a more precise description of the infinitesimal deformations \eqref{tangent-space}. With the terminology introduced in Definition \ref{def-K} and section \ref{section-topology} we will show

\begin{proposition}
\label{first computation of tangent space}
Let $(M,\psi,g)$ be an AC Spin(7)-manifold.
If $\nu > -4$, we have
\begin{align*}
(\mathcal{H}^4_{35})_{\nu}
\cong
(\mathcal{H}^4_{-})_{L^2}
\oplus
\im\Upsilon^4
\oplus
\bigoplus_{\lambda \in \mathcal{D}(d_{\mathrm{ASD}})
\cap (-4,\nu)}
\mathcal{K}_{ \mathrm{ASD}}(\lambda)
.
\end{align*}
\end{proposition}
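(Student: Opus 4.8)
\emph{Reduction to a kernel.} The plan is to compute $(\mathcal{H}^4_{35})_{\nu}$ by following how the kernel of the anti-self-dual exterior derivative $d_{\mathrm{ASD}}$ grows as the decay rate increases from the $L^2$-regime up to $\nu$. First I would observe that for anti-self-dual $4$-forms co-closedness is equivalent to closedness (since $\ast\gamma=-\gamma$ forces $d^{\ast}\gamma=\pm\ast d\gamma$), so for every non-critical rate $\lambda$ one has $(\mathcal{H}^4_{35})_{\lambda}=\ker(d_{\mathrm{ASD}})_{\lambda}$ and every such form is harmonic. By Theorem \ref{constant away from crit rates} this kernel is constant on intervals containing no critical rate, and for $\lambda<-4$ close to $-4$ (with no critical rate in $(\lambda,-4)$) it equals the space of $L^2$ harmonic anti-self-dual $4$-forms, that is $(\mathcal{H}^4_{-})_{L^2}$, by Proposition \ref{Hodge Theorem} and the splitting \eqref{decomposition-L2 cohomology}. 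So it remains to account for the contribution of each critical rate in the half-open interval $[-4,\nu)$.

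\emph{Crossing a rate $\lambda_0\in(-4,\nu)$.} Here Theorem \ref{Lockhart-McOwen-key}, applied with zero right-hand side, shows that $\dim\ker(d_{\mathrm{ASD}})_{\lambda}$ can jump across $\lambda_0$ by at most $\dim\mathcal{K}_{\mathrm{ASD}}(\lambda_0)$, via the map sending a solution to its homogeneous asymptotic term. To realise all of $\mathcal{K}_{\mathrm{ASD}}(\lambda_0)$ I would use Remark \ref{remark-char-asd}: as $\lambda_0\ne-4$, every $\eta\in\mathcal{K}_{\mathrm{ASD}}(\lambda_0)$ is exact on the cone, $\eta=d\bigl(\tfrac{1}{\lambda_0+4}r^{\lambda_0+4}\alpha\bigr)$. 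Transplanting by the cut-off $\chi$ and using that $d_{\mathrm{ASD}}$ is onto the exact forms at every rate $>-4$ (Lemma \ref{surjectivity of main operator}), one corrects $\chi\eta$ by a strictly faster-decaying anti-self-dual form to a genuine closed --- hence harmonic --- anti-self-dual $4$-form with leading term $\eta$, its rate being pinned down by Theorem \ref{Lockhart-McOwen-key} and elliptic regularity. Together with the upper bound this shows the jump is precisely $\mathcal{K}_{\mathrm{ASD}}(\lambda_0)$, contributing the summand $\bigoplus_{\lambda\in\mathcal{D}(d_{\mathrm{ASD}})\cap(-4,\nu)}\mathcal{K}_{\mathrm{ASD}}(\lambda)$.

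\emph{Crossing $-4$.} This is the step I expect to be the main obstacle: here only the subspace $\im\Upsilon^4$ of $\mathcal{K}_{\mathrm{ASD}}(-4)$ is produced, and the analytic and topological descriptions have to be matched. By Remark \ref{remark-char-asd} and Hodge theory on $\Sigma$, $\mathcal{K}_{\mathrm{ASD}}(-4)=\{\,r^{-1}dr\wedge\alpha-\ast_{\ig\Sigma}\alpha\ :\ \alpha\ \text{harmonic on}\ \Sigma\,\}$, and its degree-$4$ component $\beta=-\ast_{\ig\Sigma}\alpha$ gives an isomorphism onto $H^4(\Sigma,\mathbb{R})$. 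Restricting a closed anti-self-dual $4$-form $\gamma=\chi\eta+(\text{faster})$ to a link $\iota_r(\Sigma)$ identifies the class of its asymptotic model with $\Upsilon^4[\gamma]$, so the jump map at $-4$ takes values in $\im\Upsilon^4$; its kernel is exactly $(\mathcal{H}^4_{-})_{L^2}$, since $\Upsilon^4[\gamma]=0$ forces the harmonic model $\beta$, and hence $\eta$, to vanish, so that $\gamma\in L^2$, while conversely every $L^2$-class lies in $\ker\Upsilon^4$ by Proposition \ref{Hodge Theorem}. The real work is the surjectivity onto $\im\Upsilon^4$: given $[\beta]\in\im\Upsilon^4$ with $\beta$ harmonic on $\Sigma$, Lemma \ref{lift} provides a closed lift $\xi=\chi\beta+\zeta$ on $M$; one then projects to $\pi_{35}\xi$, corrects its exact defect $d(\pi_{35}\xi)=-d(\pi_{+}\xi)$ using Lemma \ref{surjectivity of main operator}, controls the rate by Theorem \ref{Lockhart-McOwen-key}, and checks that the restriction class is preserved with the help of the annihilation property in Lemma \ref{orthogonality of image}. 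Assembling the $L^2$-piece, the contribution $\im\Upsilon^4$ at $-4$, and the contributions $\mathcal{K}_{\mathrm{ASD}}(\lambda)$ at higher critical rates yields the stated decomposition; since every intermediate map is linear, the isomorphism is canonical up to the chosen complements.
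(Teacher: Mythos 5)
Your overall strategy is the paper's: track the jump of $\ker(d_{\mathrm{ASD}})_{\lambda}$ across critical rates, with the baseline $(\mathcal{H}^4_-)_{L^2}$ below $-4$, contributions $\mathcal{K}_{\mathrm{ASD}}(\lambda)$ for $\lambda\in(-4,\nu)$ (realised exactly as in Proposition \ref{ASD kernel change at rates > -4}, via exactness on the cone and Lemma \ref{surjectivity of main operator}), and $\im\Upsilon^4$ at $-4$. The genuine gap is precisely at the rate $-4$, which you correctly flag as the main obstacle but do not actually close. Your surjectivity argument --- lift $[\beta]$ by Lemma \ref{lift}, take $\pi_{35}\xi$, and correct its exact defect using Lemma \ref{surjectivity of main operator} --- is the naive argument that Remark \ref{description of problem} explains does not work: the correction must be taken at a rate $>-4$ (surjectivity of $d_{\mathrm{ASD}}$ fails in the $L^2$-regime, Remark \ref{explanation-for-obstructions}), so it decays no faster than the leading term and may itself carry an uncontrolled element of $\mathcal{K}_{\mathrm{ASD}}(-4)$; the resulting closed ASD form has leading class $[\beta]$ shifted by an unknown amount, and ``checking that the restriction class is preserved with the help of Lemma \ref{orthogonality of image}'' is not a proof --- a priori the class is \emph{not} preserved, and nothing in your sketch rules out that the composite map $[\beta]\mapsto T^{\mathrm{ASD}}_{-4}(\gamma)$ fails to be onto. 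The paper closes this gap by an entirely different mechanism: it passes to the elliptic operator $d+d^*$ on even forms (Lemma \ref{4-form change = even form change}), dualises to the cokernel/kernel change of $d+d^*$ on odd forms at rate $-3$ (Lemmas \ref{crossing -3 for deg 3 and 5}, \ref{extended partial integration}, Corollary \ref{kernel change of odd forms}), and then uses the index-change formula (Theorem \ref{index change for elliptic operators}) to match dimensions in Proposition \ref{exceptional index change for 4 forms}; the alternative proof instead runs a Fredholm-alternative argument in which Lemma \ref{orthogonality of image} kills the boundary term in a delicate integration by parts. Some argument of this kind (or an equivalent one) is indispensable, and your proposal contains none of it.

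A secondary, more technical gap: you invoke Theorem \ref{Lockhart-McOwen-key} for $d_{\mathrm{ASD}}$, but that theorem is stated only for the elliptic operators \eqref{d+d*}--\eqref{D-}, and $d_{\mathrm{ASD}}$ is not elliptic. Your observation that closed ASD forms are co-closed lets you apply it to $d+d^*$, but then the homogeneous leading term a priori lies in $\mathcal{K}_{d+d^*}(\lambda_0)$ --- mixed degree, and anti-self-dual only with respect to $*_{\ig M}$ asymptotically --- and one still has to show it is a pure degree-$4$ form, anti-self-dual for the \emph{cone} structure, hence in $\mathcal{K}_{\mathrm{ASD}}(\lambda_0)$. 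This is the content of Propositions \ref{faster than expected decay for k-forms} and \ref{faster than expected decay ASD forms}, which use Lemma \ref{additional-decay} and the hypothesis $\lambda_0+\nu<\beta_2$; your sketch skips this step, though it is routine compared with the problem at rate $-4$.
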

Proposition \ref{first computation of tangent space}
will follow from Proposition \ref{ASD kernel change at rates > -4} and 
Corollary \ref{ASD change at -4}. 
We need to study how $(\mathcal{H}^4_{35})_{\nu}$ changes
as $\nu$ passes a critical rate. Because $(\mathcal{H}^4_{35})_{\nu}=\ker (d_{\mathrm{ASD}})_{l,\nu}$ and $(d_{\mathrm{ASD}})_{l,\nu}$ is surjective for generic rates $\nu > -4$ by Lemma \ref{surjectivity of main operator}, this corresponds to the change in $\ind\ (d_{\mathrm{ASD}})_{l,\nu}$ as $\nu$ crosses a critical rate. In the introduction we have explained the index change at critical rates for uniformly elliptic operators. However, $d_{\mathrm{ASD}}$ is not elliptic. Therefore, we need to adapt Theorem \ref{index change for elliptic operators} to our non-elliptic setting. To simplify the presentation we will first explain how it can be adjusted to the non-elliptic operator $d+d^*|_{\Omega^k}$. The main ingredient in the proof of Theorem \ref{index change for elliptic operators} is Theorem \ref{Lockhart-McOwen-key}. We will first adapt this to our situation. Compare with \cite[Lemma 4.28]{KL}.

\begin{proposition}\label{faster than expected decay for k-forms}
Let $(M,\psi)$ be an AC Spin(7)-manifold of rate $\nu$.
Let $\lambda_0$ be a critical rate for $d+d^*$
and let $\beta_2 < \beta_1$ be two non-critical rates for 
$d+d^*$ such that $\lambda_0$ is the unique critical rate for the operator $d+d^*$ in the interval $[\beta_2,\beta_1]$
and $\lambda_0 + \nu < \beta_2$. 

If $\gamma \in \Omega^k_{l+1,\beta_1}$
with $(d+d^*_{\ig M}) \gamma \in 
\Omega^{\bullet}_{l,\beta_2-1}$, then there exist
unique
$\eta \in \mathcal{K}_{\Lambda^k}(\lambda_0)$
and $\tilde{\gamma} \in \Omega^k_{l+1,\beta_2}$
with
\begin{align}
\gamma = \chi \eta + \tilde{\gamma}.
\end{align}
Moreover, $\eta$ and $\tilde{\gamma}$ depend 
linearly on $\gamma$. Here $\chi$ is the cut-off function from Definition \ref{def-radial-function}.
\end{proposition}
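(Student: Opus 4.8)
The plan is to reduce this to Theorem~\ref{Lockhart-McOwen-key} applied to the \emph{elliptic} operator $d+d^*$ acting on all forms, and then extract the degree-$k$ component. First I would observe that $\gamma\in\Omega^k_{l+1,\beta_1}$ can be regarded as an element of $\Omega^{\bullet}_{l+1,\beta_1}$ (the full exterior algebra), and by hypothesis $(d+d^*_{\ig M})\gamma\in\Omega^{\bullet}_{l,\beta_2-1}$, i.e. $(d+d^*_{\ig M})\gamma$ decays one order faster than the naive expectation $\Omega^{\bullet}_{l,\beta_1-1}$. Here is where the AC condition enters: I want to replace $d+d^*_{\ig M}$ by the conical operator $d+d^*_{\ig C}$. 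By Lemma~\ref{additional-decay} the difference $(d+d^*_{\ig M})\gamma-(d+d^*_{\ig C})\gamma$ lies in $\Omega^{\bullet}_{l,\beta_1+\nu-1}$. Since $\lambda_0+\nu<\beta_2$, we have $\beta_1+\nu-1$ could still be larger than $\beta_2-1$; so I would instead argue iteratively, or more cleanly: the hypothesis $\lambda_0+\nu<\beta_2$ is exactly what guarantees that after subtracting the leading conical behavior (which sits at rate $\lambda_0$), the error terms coming from the $*_{\ig M}$ versus $*_{\ig C}$ discrepancy are pushed below $\beta_2$. Concretely, apply Theorem~\ref{Lockhart-McOwen-key} to $P=d+d^*$ (elliptic) with the rates $\beta_2<\beta_1$ and the unique interior critical rate $\lambda_0$: there exist $\eta'\in\mathcal{K}_{d+d^*}(\lambda_0)$ and $\tilde\gamma'\in\Omega^{\bullet}_{l+1,\lambda_0+\nu}$, depending linearly on $\gamma$, with $\gamma-\eta'-\tilde\gamma'\in\Omega^{\bullet}_{l+1,\beta_2}$ outside a compact set.

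Next I would upgrade this to a statement with a clean cut-off decomposition and pure degree~$k$. Since $\lambda_0+\nu<\beta_2$, the term $\tilde\gamma'\in\Omega^{\bullet}_{l+1,\lambda_0+\nu}$ is itself in $\Omega^{\bullet}_{l+1,\beta_2}$, so it can be absorbed: we get $\gamma-\eta'\in\Omega^{\bullet}_{l+1,\beta_2}$ outside a compact set, hence (multiplying $\eta'$ by the cut-off $\chi$ and adjusting by a compactly supported smooth form, which lies in every weighted space) $\gamma=\chi\eta'+\tilde\gamma$ with $\tilde\gamma\in\Omega^{\bullet}_{l+1,\beta_2}$. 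For the degree bookkeeping: by Lemma~\ref{no-log} any element of $\mathcal{K}_{d+d^*}(\lambda_0)$ has no logarithmic terms, i.e. is genuinely homogeneous. Taking the degree-$k$ component of the identity $\gamma=\chi\eta'+\tilde\gamma$, and using that $\gamma$ has pure degree $k$ while $\chi$, $d+d^*$, and the weighted spaces all respect the degree decomposition, I would conclude $\eta:=(\eta')_k\in\mathcal{K}_{\Lambda^k}(\lambda_0)$ and $\tilde\gamma\in\Omega^k_{l+1,\beta_2}$, giving $\gamma=\chi\eta+\tilde\gamma$. Here one must check that the degree-$k$ component of an element of $\mathcal{K}_{d+d^*}(\lambda_0)$ again solves $(d+d^*_{\ig C})=0$; this follows because on the cone $d+d^*_{\ig C}$ maps $\Omega^k$ to $\Omega^{k-1}\oplus\Omega^{k+1}$, so the system $(d+d^*_{\ig C})\eta'=0$ decouples appropriately — more precisely, if $\eta'=\sum_j\eta'_j$ then $d\eta'_{k}+d^*\eta'_{k+2}=0$ for each parity class, so the \emph{odd} (or \emph{even}) part containing $\eta$ is already in the kernel, and one extracts pure degree using the explicit conical formulas \eqref{formulas-cone}; alternatively invoke Lemmas~\ref{even forms at -4}--\ref{odd-degree at rate -3}-style arguments that the $\Delta_{\ig C}$-eigenvalue structure forces degree purity at a given rate, though that is rate-specific. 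The cleanest route is: $\mathcal{K}_{d+d^*}(\lambda_0)$ decomposes as a direct sum over $k$ of $\mathcal{K}_{\Lambda^k}(\lambda_0)\cap(\text{coclosed})$-type pieces only if the homogeneous harmonic form classification of \cite{FHN1} applies; but since we only need \emph{the} degree-$k$ piece and it is obtained by projection, and projection commutes with $d+d^*_{\ig C}$ up to the neighboring degrees which must separately vanish by matching homogeneity rates, the extraction goes through.

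For uniqueness and linearity: linearity of $\eta$ and $\tilde\gamma$ in $\gamma$ is inherited from the linearity in Theorem~\ref{Lockhart-McOwen-key} together with the linearity of the degree-$k$ projection and of the cut-off multiplication. Uniqueness: if $\chi\eta+\tilde\gamma=\chi\eta''+\tilde\gamma''$ with $\eta,\eta''\in\mathcal{K}_{\Lambda^k}(\lambda_0)$ and $\tilde\gamma,\tilde\gamma''\in\Omega^k_{l+1,\beta_2}$, then $\chi(\eta-\eta'')\in\Omega^k_{l+1,\beta_2}$; but $\eta-\eta''$ is homogeneous of rate exactly $\lambda_0$ (unless it vanishes), and a nonzero homogeneous form of rate $\lambda_0$ times $\chi$ cannot decay like $r^{\beta_2}$ with $\beta_2<\lambda_0$ — here I use $\beta_2<\lambda_0<\beta_1$, which holds since $\lambda_0$ is the interior critical rate. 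Hence $\eta=\eta''$ and then $\tilde\gamma=\tilde\gamma''$.

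The main obstacle I expect is the degree-purity step: ensuring that the degree-$k$ component of the (a priori mixed-degree) kernel element $\eta'\in\mathcal{K}_{d+d^*}(\lambda_0)$ produced by the elliptic theory is itself a valid element of $\mathcal{K}_{\Lambda^k}(\lambda_0)$, i.e. that it satisfies $(d+d^*_{\ig C})\eta=0$ on its own and not merely in combination with the adjacent degrees. Resolving this cleanly requires either citing the homogeneous-form classification of Foscolo--Haskins--Nordstr\"om used already in Lemma~\ref{critical rates for Laplace on 1-forms}, which gives that $d+d^*_{\ig C}$-harmonic homogeneous forms on a cone decompose into pure-degree pieces, or arguing directly from the block structure of \eqref{formulas-cone} that the equations $(\lambda+k)\beta-d_{\ig\Sigma}\alpha=0$ etc. relating consecutive degrees force each degree component to separately satisfy the coupled $(\alpha,\beta)$-system. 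Everything else is a routine transcription of Theorem~\ref{Lockhart-McOwen-key} together with Lemmas~\ref{additional-decay} and~\ref{no-log}.
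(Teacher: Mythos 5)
Your overall strategy is exactly the paper's: embed $\Omega^k\subset\Omega^{\bullet}$, apply Theorem \ref{Lockhart-McOwen-key} to the elliptic operator $d+d^*$ on mixed-degree forms, absorb the $\Omega^{\bullet}_{l+1,\lambda_0+\nu}$ term using $\lambda_0+\nu<\beta_2$, and get uniqueness by comparing the rate $\lambda_0$ against $\beta_2<\lambda_0$. Those parts, together with linearity, are fine. The problem is the step you yourself single out as the main obstacle, and the way you propose to resolve it. You frame the issue as: ``show that the degree-$k$ component of a mixed-degree element of $\mathcal{K}_{d+d^*}(\lambda_0)$ again lies in the kernel of $d+d^*_{C}$.'' That statement is false in general, so neither the Foscolo--Haskins--Nordstr\"om classification (which concerns pure-degree homogeneous forms) nor a block-structure analysis of \eqref{formulas-cone} can deliver it: for instance, on the cone over the round sphere the homogeneous mixed-degree form $x_1 - x_2\, dx_1\wedge dx_2$ is annihilated by $d+d^*$, yet its degree-$0$ part is not closed. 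So, as written, your extraction of $\eta=(\eta')_k$ from $\eta'$ does not go through by projection, and the proof is incomplete at its crux.

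The fix is simpler and does not involve projecting a kernel element at all; it is the observation the paper makes (and which your phrase ``neighboring degrees which must separately vanish by matching homogeneity rates'' brushes against before you disavow it in your final paragraph). Take the degree-$l$ component, $l\neq k$, of the identity $\gamma=\chi\eta'+\tilde\gamma$. Since $\gamma$ is purely of degree $k$, this reads $\chi\,\eta'_l=-\tilde\gamma_l$, which decays with rate $\beta_2$. But $\eta'_l$ is a sum of terms $(\log r)^j$ times homogeneous forms of rate exactly $\lambda_0>\beta_2$, so $\eta'_l=0$. Hence $\eta'$ is already a pure degree-$k$ form, and since $(d+d^*_{C})\eta'=0$ it lies in $\mathcal{K}_{\Lambda^k}(\lambda_0)$; the remainder $\tilde\gamma=\gamma-\chi\eta'$ is then automatically of pure degree $k$ as well. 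With this replacement for your degree-purity step (and dropping the unnecessary initial detour through Lemma \ref{additional-decay}, which Theorem \ref{Lockhart-McOwen-key} already handles internally), your argument coincides with the paper's proof.
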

\begin{proof}
We cannot apply Theorem \ref{Lockhart-McOwen-key} to $d+d^*_{\ig M}|_{\Omega^k}$, but we can embed $\Omega^k \subset \Omega^{\bullet}$ and then apply Theorem \ref{Lockhart-McOwen-key} to
$d+d^*_{\ig M}: \Omega^{\bullet}\rightarrow \Omega^{\bullet}.$
More specifically,  
there exist
$\omega \in \mathcal{K}_{d+d^*}(\lambda_0)$
and $\tilde{\gamma}\in \Omega_{l+1,\beta_2}^{\bullet}$
such that 
\begin{align*}
\gamma = \omega + \tilde{\gamma}
\end{align*}
on the end.
The price we have to pay for using Theorem \ref{Lockhart-McOwen-key} is that  a priori $\omega$ can be a mixed degree form. 
Therefore, we need to show that all except the degree $k$ part of $\omega$ vanish. 
Because $\gamma$ is a $k$-form, for $ l \neq k$
the $l$-form component of $\omega$
has to decay with rate $\beta_2$ to cancel with the degree $l$ component of $\tilde{\gamma}$. However, each non-zero degree component of 
$\omega$ is homogeneous of rate $\lambda_0 > \beta_2$. Therefore, $\omega$ is a pure degree $k$-form.
Finally, it is straightforward that $\tilde{\gamma}$ is purely of degree $k$ as well.
\end{proof}

\begin{proposition}\label{faster than expected decay ASD forms}
Let $(M,\psi)$ be an AC Spin(7)-manifold of rate $\nu$.
Let $\lambda_0$ be a critical rate for $d_{\mathrm{ASD}}$
and let $\beta_2 < \beta_1$ be two non-critical rates for 
$d+d^*$ such that $\lambda_0$ is the unique critical rate for the operator $d+d^*$ in the interval $[\beta_2,\beta_1]$
and $\lambda_0 + \nu < \beta_2$. 

If $\gamma \in L^2_{l+1,\beta_1}(\Lambda^4_{35})$
with $d \gamma \in 
\Omega^5_{l,\beta_2-1}$, then 
there exist unique $\omega\in\mathcal{K}_{\mathrm{ASD}}(\lambda_0)$
and $\hat{\gamma}\in L^2_{l+1,\beta_2}(\Lambda^4)$ such that
\begin{align*}
\gamma = \chi\omega + \hat{\gamma}.
\end{align*}
This decomposition depends linearly on $\gamma$.
\end{proposition}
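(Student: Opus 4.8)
The plan is to reduce the statement to its analogue for ordinary $k$-forms, Proposition \ref{faster than expected decay for k-forms}, and then to upgrade the resulting closed and coclosed homogeneous $4$-form on the cone to an anti-self-dual one by exploiting that $\gamma$ itself is anti-self-dual.

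First I would note that since $\gamma$ is anti-self-dual, $d^*_{\ig M}\gamma = \pm *_{\ig M}d\gamma$, and as $*_{\ig M}$ is a bounded operator on every weighted Sobolev space the hypothesis $d\gamma\in\Omega^5_{l,\beta_2-1}$ gives $d^*_{\ig M}\gamma\in\Omega^3_{l,\beta_2-1}$; hence $(d+d^*_{\ig M})\gamma\in\Omega^{\bullet}_{l,\beta_2-1}$. Since $\beta_2<\beta_1$ bracket the unique critical rate $\lambda_0$ of $d+d^*$ and $\lambda_0+\nu<\beta_2$, Proposition \ref{faster than expected decay for k-forms} applies with $k=4$ and produces unique $\eta\in\mathcal{K}_{\Lambda^4}(\lambda_0)$ and $\tilde\gamma\in\Omega^4_{l+1,\beta_2}$, depending linearly on $\gamma$, with $\gamma=\chi\eta+\tilde\gamma$ on the end. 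By Lemma \ref{no-log} the form $\eta$ carries no $\log r$ terms, so it is genuinely homogeneous of rate $\lambda_0$ and is closed and coclosed on the cone.

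Next I would split $\eta=\eta_++\eta_-$ into its self-dual and anti-self-dual parts with respect to $*_{\ig C}$; since $\log$-free homogeneity is preserved by $*_{\ig C}$, both summands are again homogeneous of rate $\lambda_0$. A short computation using $d^*_{\ig C}=-*_{\ig C}d\,*_{\ig C}$ on $4$-forms and $*_{\ig C}\eta_\pm=\pm\eta_\pm$ shows that $d\eta=d^*_{\ig C}\eta=0$ forces $d\eta_+=d\eta_-=0$, so $\eta_\pm$ are each closed and coclosed; in particular $\eta_-\in\mathcal{K}_{\mathrm{ASD}}(\lambda_0)$. The main point is then to show $\eta_+=0$. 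Applying the self-dual projection $\tfrac12(1+*_{\ig M})$ to $\gamma=\chi\eta+\tilde\gamma$ and using $\gamma_+=0$ gives, on the end,
\begin{align*}
0 \;=\; \chi\eta_+ \;+\; \tfrac12\,\chi(*_{\ig M}-*_{\ig C})\eta \;+\; \tfrac12\bigl(\tilde\gamma+*_{\ig M}\tilde\gamma\bigr).
\end{align*}
By Lemma \ref{additional-decay} the middle term lies in $\mathcal{C}^\infty_{\lambda_0+\nu}$, hence in $L^2_{l,\beta_2}$ because $\lambda_0+\nu<\beta_2$, and the last term lies in $\Omega^4_{l,\beta_2}$; therefore $\chi\eta_+\in L^2_{l,\beta_2}$. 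But a nonzero form homogeneous of rate $\lambda_0>\beta_2$ cannot lie in $L^2_{0,\beta_2}$, since the weighted integral over the end diverges (logarithmically already when $\lambda_0=\beta_2$). Hence $\eta_+=0$, i.e. $\eta=\eta_-\in\mathcal{K}_{\mathrm{ASD}}(\lambda_0)$.

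Setting $\omega:=\eta$ and $\hat\gamma:=\tilde\gamma\in L^2_{l+1,\beta_2}(\Lambda^4)$ yields the asserted decomposition; uniqueness and linear dependence on $\gamma$ are inherited from Proposition \ref{faster than expected decay for k-forms} (and can also be seen directly: the difference of two such decompositions would be a homogeneous rate-$\lambda_0$ form in $L^2_{l+1,\beta_2}$, which must vanish). The hard part is the step $\eta_+=0$: one has to be careful that $(*_{\ig M}-*_{\ig C})\eta$ genuinely gains the decay rate $\nu$ and that the self-dual/anti-self-dual splitting of $\eta$ respects closedness and coclosedness — together these force the cone limit of an anti-self-dual form to be anti-self-dual, which is exactly what the proposition asserts.
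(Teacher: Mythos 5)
Your proposal is correct and follows essentially the same route as the paper: use anti-self-duality to control $d^*_{\ig M}\gamma$, invoke Proposition \ref{faster than expected decay for k-forms} for the embedding $\Lambda^4_{35}\subset\Lambda^4$, then project onto the $*_{\ig M}$-self-dual part and use Lemma \ref{additional-decay} together with the rate comparison $\beta_2<\lambda_0$ to force the self-dual part of the cone limit to vanish. Your explicit splitting $\eta=\eta_++\eta_-$ and the check that each part is closed and coclosed is a harmless elaboration of the paper's direct argument that $\omega+*_{\ig C}\omega=0$.
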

\begin{proof}
Because $\gamma$ is anti-self-dual, we have $d^*_{\ig M} \gamma = -*_{\ig M} d *_{\ig M} \gamma = *_{\ig M} d \gamma$.
Because the Hodge-star is an isometry and 
$d \gamma \in 
\Omega^5_{l,\beta_2-1}$, we know that
$d^*_{\ig M} \gamma \in 
\Omega^3_{l,\beta_2-1}$.
By embedding $\Lambda^4_{35} \subset \Lambda^4$ we can use
Proposition \ref{faster than expected decay for k-forms}
to get $\omega\in \mathcal{K}_{\Lambda^4}(\lambda_0)$ and $\hat{\gamma}\in \Omega^4_{l+1,\beta_2}$ such that
\begin{align*}
\gamma = \chi \omega + \hat{\gamma}.
\end{align*}
Projecting on the self-dual part gives 
\begin{align}
0 
&=
\chi(\omega + *_{\ig C} \omega)
+
\chi( *_{\ig M} - *_{\ig C}) \omega
+ \hat{\gamma}
+ *_{\ig M} \hat{\gamma}.
\label{asd-projection1}
\end{align}
By Lemma \ref{additional-decay} the middle term in \eqref{asd-projection1} decays like $\lambda_0+\nu$. Because $\lambda_0+\nu < \beta_2$, all
terms on the right-hand side of \eqref{asd-projection1} except $(\omega + *_{\ig C} \omega)$ decay with rate $\beta_2 < \lambda_0$ while $(\omega + *_{\ig C} \omega)$ decays with rate $\lambda_0$. Therefore, $(\omega + *_C \omega)$ has to vanish, i.e. $\omega$
is of type 35 with respect to the Spin(7)-structure on the cone, and in particular
$\omega \in \mathcal{K}_{\mathrm{ASD}}(\lambda_0)$.

\end{proof}

\begin{proposition}
\label{existence of crit rate cross map}
Let $\lambda_0$ be a critical rate for the operator $d+d^*_{\ig M}|_{\Omega^k}$, and choose $\varepsilon > 0$ small enough such that $\lambda_0$ is the unique critical rate for the operator $d+d^*_{\ig M}|_{\Omega^k}$ in the interval $(\lambda_0-\varepsilon,\lambda_0+\varepsilon)$ and $\lambda_0 + \nu < \lambda_0 -\varepsilon$. Then there exists an injective linear map 
\begin{align*}
T^k_{\lambda_0}\colon 
\mathcal{H}^k_{\lambda_0+\varepsilon}
/
\mathcal{H}^k_{\lambda_0-\varepsilon}
\rightarrow
\mathcal{K}_{\Lambda^k}(\lambda_0).
\end{align*} 
In the same setting for the operator $d_{\mathrm{ASD}}$ there exists
an injective linear map
\begin{align*}
T^{\mathrm{ASD}}_{\lambda_0}\colon 
(\mathcal{H}^4_{35})_{\lambda_0+\varepsilon}
/
(\mathcal{H}^4_{35})_{\lambda_0-\varepsilon}
\rightarrow
\mathcal{K}_{\mathrm{ASD}}(\lambda_0).
\end{align*} 
\end{proposition}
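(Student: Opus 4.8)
The plan is to manufacture both maps directly from the two ``faster-than-expected-decay'' statements, Propositions~\ref{faster than expected decay for k-forms} and~\ref{faster than expected decay ASD forms}, and to read off injectivity from their uniqueness clauses. A preliminary step is to shrink $\varepsilon$ so that, in addition to the stated hypotheses, $\lambda_0\pm\varepsilon$ are non-critical for $d+d^*$ and $\lambda_0$ is the \emph{only} critical rate of the full operator $d+d^*$ on $\Omega^\bullet$ in $[\lambda_0-\varepsilon,\lambda_0+\varepsilon]$; this is possible because critical rates are discrete, and it only relaxes the condition $\lambda_0+\nu<\lambda_0-\varepsilon$. The shrinking does not alter the quotient spaces in the statement: by Theorem~\ref{constant away from crit rates} the kernel of $d+d^*$ on $\Omega^\bullet$ changes only at critical rates of $d+d^*$, and the degree-matching argument used in the proof of Proposition~\ref{faster than expected decay for k-forms} (respectively the type-$35$ argument in the proof of Proposition~\ref{faster than expected decay ASD forms}) shows that no pure-degree-$k$, respectively anti-self-dual, closed and co-closed form is gained or lost at a critical rate of $d+d^*$ that is not one of $d+d^*|_{\Omega^k}$, respectively of $d_{\mathrm{ASD}}$; hence $\mathcal{H}^k_{\lambda_0\pm\varepsilon}$ and $(\mathcal{H}^4_{35})_{\lambda_0\pm\varepsilon}$ are unchanged.

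For the first map, take $\gamma\in\mathcal{H}^k_{\lambda_0+\varepsilon}$. Being closed and co-closed it is harmonic, hence smooth and polynomially bounded by elliptic regularity and the Sobolev embedding, so $\gamma\in\Omega^k_{l+1,\beta_1}$ for a non-critical rate $\beta_1$ slightly above $\lambda_0+\varepsilon$ with $[\lambda_0-\varepsilon,\beta_1]$ still free of critical rates of $d+d^*$ other than $\lambda_0$, while $(d+d^*)\gamma=0\in\Omega^\bullet_{l,\lambda_0-\varepsilon-1}$. Proposition~\ref{faster than expected decay for k-forms}, applied with $\beta_2=\lambda_0-\varepsilon$ (its hypotheses hold by the first paragraph together with $\lambda_0+\nu<\lambda_0-\varepsilon$), supplies unique $\eta\in\mathcal{K}_{\Lambda^k}(\lambda_0)$ and $\tilde\gamma\in\Omega^k_{l+1,\lambda_0-\varepsilon}$, both linear in $\gamma$, with $\gamma=\chi\eta+\tilde\gamma$; I set $T^k_{\lambda_0}(\gamma):=\eta$. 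If $\gamma\in\mathcal{H}^k_{\lambda_0-\varepsilon}$ then $\gamma=\chi\cdot 0+\gamma$ is a decomposition of the required form, so uniqueness forces $\eta=0$; hence $T^k_{\lambda_0}$ vanishes on $\mathcal{H}^k_{\lambda_0-\varepsilon}$ and descends to a linear map on $\mathcal{H}^k_{\lambda_0+\varepsilon}/\mathcal{H}^k_{\lambda_0-\varepsilon}$. Conversely, if $T^k_{\lambda_0}(\gamma)=0$ then $\gamma=\tilde\gamma\in\Omega^k_{l+1,\lambda_0-\varepsilon}$, and since $\gamma$ remains closed and co-closed this gives $\gamma\in\mathcal{H}^k_{\lambda_0-\varepsilon}$, i.e.\ $[\gamma]=0$; thus $T^k_{\lambda_0}$ is injective.

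The map $T^{\mathrm{ASD}}_{\lambda_0}$ is obtained in exactly the same way, with Proposition~\ref{faster than expected decay ASD forms} in place of Proposition~\ref{faster than expected decay for k-forms}: a form $\gamma\in(\mathcal{H}^4_{35})_{\lambda_0+\varepsilon}$ is anti-self-dual and closed, hence co-closed, with $d\gamma=0\in\Omega^5_{l,\lambda_0-\varepsilon-1}$, so Proposition~\ref{faster than expected decay ASD forms} produces unique $\omega\in\mathcal{K}_{\mathrm{ASD}}(\lambda_0)$ and $\hat\gamma\in L^2_{l+1,\lambda_0-\varepsilon}(\Lambda^4)$, linear in $\gamma$, with $\gamma=\chi\omega+\hat\gamma$; put $T^{\mathrm{ASD}}_{\lambda_0}(\gamma):=\omega$. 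As before it kills $(\mathcal{H}^4_{35})_{\lambda_0-\varepsilon}$ by uniqueness and so descends to the quotient, and if $\omega=0$ then $\gamma=\hat\gamma\in L^2_{l+1,\lambda_0-\varepsilon}$, which, being closed and anti-self-dual, is smooth of rate $\lambda_0-\varepsilon$ by elliptic regularity, whence $\gamma\in(\mathcal{H}^4_{35})_{\lambda_0-\varepsilon}$ and $[\gamma]=0$. I expect the only genuinely delicate point to be the bookkeeping of the first paragraph: the hypothesis isolates $\lambda_0$ only among the critical rates of $d+d^*|_{\Omega^k}$ (respectively of $d_{\mathrm{ASD}}$), whereas Propositions~\ref{faster than expected decay for k-forms} and~\ref{faster than expected decay ASD forms} are applied to the full operator $d+d^*$ on $\Omega^\bullet$; once the insensitivity of the quotient spaces to shrinking $\varepsilon$ is secured, everything else is a formal unwinding of the uniqueness and linearity already built into those two propositions.
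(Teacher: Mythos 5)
Your proposal is correct and follows essentially the same route as the paper: both define $T^k_{\lambda_0}$ (and $T^{\mathrm{ASD}}_{\lambda_0}$) by extracting the leading homogeneous term via Propositions \ref{faster than expected decay for k-forms} and \ref{faster than expected decay ASD forms}, and deduce well-definedness, linearity and injectivity from the uniqueness and linearity clauses there. Your preliminary shrinking of $\varepsilon$ to reconcile ``unique critical rate of $d+d^*|_{\Omega^k}$'' with the hypothesis ``unique critical rate of the full $d+d^*$'' required by those propositions is a careful touch that the paper's proof leaves implicit, but it does not change the argument.
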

\begin{proof}
Let $\gamma\in\mathcal{H}^k_{\lambda_0+\varepsilon}$. 
Because $(d+d^*_{\ig M})\gamma = 0$, we are in the situation of 
Proposition \ref{faster than expected decay for k-forms}.
Hence there is a unique $\eta\in\mathcal{K}_{\Lambda^k}(\lambda_0)$ 
such that 
\begin{align*}
\gamma = \chi \eta + \mathcal{O}(r^{\lambda_0-\varepsilon}).
\end{align*}
We set $T^k_{\lambda_0}(\gamma) = \eta$.
It is clear that $\eta = 0$ if $\gamma\in\mathcal{H}^k_{\lambda_0-\varepsilon}$. Hence $T^k_{\lambda_0}$ is well-defined. Because $\eta$ depends linearly on $\gamma$, the map is linear. If $T^k_{\lambda_0}(\gamma) = 0$, then $\gamma = \mathcal{O}(r^{\lambda_0-\varepsilon})$. Therefore, $T^k_{\lambda_0}$ is injective. 
The statement for $d_{\mathrm{ASD}}$ follows analogously by using Proposition \ref{faster than expected decay ASD forms}.
\end{proof}

\begin{proposition}
\label{ASD kernel change at rates > -4}
If $\lambda_0 > -4$, 
the map $T^{\mathrm{ASD}}_{\lambda_0}$ is an isomorphism.
\end{proposition}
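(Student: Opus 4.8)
The map $T^{\mathrm{ASD}}_{\lambda_0}$ is already injective by Proposition \ref{existence of crit rate cross map}, so the plan is to establish surjectivity. Fix $\omega \in \mathcal{K}_{\mathrm{ASD}}(\lambda_0)$. By Lemma \ref{no-log} it carries no logarithmic terms, and since $\lambda_0 \neq -4$, Remark \ref{remark-char-asd} lets me write $\omega = d\sigma$ on the cone with $\sigma$ a homogeneous $3$-form of rate $\lambda_0+1$. I will construct a smooth, closed, co-closed, anti-self-dual $4$-form $\gamma$ on $M$ of rate $\lambda_0$ with $\gamma = \chi\omega + \mathcal{O}(r^{\lambda_0-\varepsilon})$; by the definition of $T^{\mathrm{ASD}}_{\lambda_0}$ this yields $T^{\mathrm{ASD}}_{\lambda_0}(\gamma) = \omega$, and since $T^{\mathrm{ASD}}_{\lambda_0}$ vanishes on $(\mathcal{H}^4_{35})_{\lambda_0-\varepsilon}$ the class $[\gamma]$ is the desired preimage.

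First I would build the approximate solution. Let $\omega' := \pi_{35}(\chi\omega)$ be the $\psi$-anti-self-dual part of the transplanted form $\chi\omega$. Since $\omega$ is anti-self-dual for $g_{\ig C}$ we get $\omega' = \chi\omega - e$, where $e := \tfrac12(*_{\ig M} - *_{\ig C})(\chi\omega) \in \mathcal{C}^\infty_{\lambda_0+\nu}(\Lambda^4)$ by Lemma \ref{additional-decay}; in particular $\omega' \in \mathcal{C}^\infty_{\lambda_0}$. For its differential, $d\omega = 0$ gives $d(\chi\omega) = d\chi\wedge\omega$, and on the support of $d\chi$ we have $\omega = d\sigma$, so $d\chi\wedge\omega = -d(d\chi\wedge\sigma)$ with $d\chi\wedge\sigma$ compactly supported. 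Hence
\[
d\omega' = d(\chi\omega) - de = -d\bigl(d\chi\wedge\sigma + e\bigr),
\]
and $d\chi\wedge\sigma + e \in \mathcal{C}^\infty_{\lambda_0+\nu}(\Lambda^4)$. Thus $d\omega'$ is exact and lies in $d\bigl(\Omega^4_{l,\mu}\bigr)$ for every rate $\mu > \lambda_0+\nu$.

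The correction step is where the hypothesis $\lambda_0 > -4$ is used. Shrinking $\varepsilon$ if necessary — which by Theorem \ref{constant away from crit rates} alters neither $(\mathcal{H}^4_{35})_{\lambda_0\pm\varepsilon}$ nor $T^{\mathrm{ASD}}_{\lambda_0}$ — I may assume $\lambda_0-\varepsilon > -4$. Together with $\lambda_0+\nu < \lambda_0-\varepsilon$ this makes the interval $\bigl(\max(-4,\lambda_0+\nu),\,\lambda_0-\varepsilon\,\bigr]$ non-empty, and since the critical rates of $d+d^*$ are discrete I can choose $\mu$ in it with $\mu$ and $\mu+1$ both non-critical. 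Then $d\omega' \in d\bigl(\Omega^4_{l,\mu}\bigr)$, so Lemma \ref{surjectivity of main operator} provides $\gamma_1 \in L^2_{l,\mu}(\Lambda^4_{35})$ with $d\gamma_1 = d\omega'$. Put $\gamma := \omega' - \gamma_1$. Then $d\gamma = 0$; $\gamma$ is anti-self-dual for $\psi$, hence $d^*_{\ig M}\gamma = *_{\ig M} d\gamma = 0$; elliptic regularity for $d+d^*$ (Theorem \ref{ellitptic regularity}) makes $\gamma$ smooth with $\gamma_1$ of rate $\mu$; and $\gamma - \chi\omega = -e - \gamma_1$ decays with rate $\max(\lambda_0+\nu,\mu) \leq \lambda_0-\varepsilon$. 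Therefore $\gamma \in (\mathcal{H}^4_{35})_{\lambda_0} \subset (\mathcal{H}^4_{35})_{\lambda_0+\varepsilon}$ and $T^{\mathrm{ASD}}_{\lambda_0}(\gamma) = \omega$, proving surjectivity; together with injectivity this gives the isomorphism.

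The main obstacle is precisely the choice of the correction rate $\mu$: it must lie below $\lambda_0$ (so the leading term $\chi\omega$ is undisturbed) yet strictly above $-4$ (so that $d_{\mathrm{ASD}}$ is still surjective by Lemma \ref{surjectivity of main operator}). This is possible exactly because $\lambda_0 > -4$. For $\lambda_0 < -4$ the obstruction space $(W_\perp)^4$ of Remark \ref{explanation-for-obstructions} can prevent solving $d\gamma_1 = d\omega'$ within anti-self-dual forms, and the conclusion genuinely fails; and for $\lambda_0 = -4$ even the approximate solution has to be constructed differently, since $\omega$ is then only harmonic rather than exact on the cone (cf. Remark \ref{remark-char-asd}), which is why that rate is handled separately in Corollary \ref{ASD change at -4}.
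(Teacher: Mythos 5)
Your proof is correct and takes essentially the same route as the paper: use Remark \ref{remark-char-asd} to write $\omega$ as exact on the cone, so that the differential of the transplanted form has a suitably decaying primitive, and then invoke surjectivity of $(d_{\mathrm{ASD}})_{l,\mu}$ from Lemma \ref{surjectivity of main operator} at a non-critical rate $\mu$ with $-4<\mu\leq\lambda_0-\varepsilon$ to correct it to a closed anti-self-dual form whose leading term is $\chi\omega$. The only difference is a refinement, not a new idea: you first project $\chi\omega$ onto the $\psi$-anti-self-dual forms and control the $(*_{\ig M}-*_{\ig C})$ error via Lemma \ref{additional-decay}, a point the paper's own proof leaves implicit.
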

\begin{proof}
We need to prove the surjectivity of $T^{\mathrm{ASD}}_{\lambda_0}$. Suppose $\eta\in\mathcal{K}_{\mathrm{ASD}}(\lambda_0)$. As $\lambda_0 \neq -4$,   
$\eta$ is exact on the cone by Remark \ref{remark-char-asd}, i.e. there exists a 3-form $\xi$ on the cone such that $\eta=d\xi$. But then $d(\chi \eta) = d(\chi \eta - d(\chi \xi))\in d \Omega^4_{\mathrm{cs}}$. $(d_{\mathrm{ASD}})_{l,\lambda-\epsilon}$ is surjective 
by Lemma \ref{surjectivity of main operator}
as $\varepsilon$ can be chosen small enough such that $\lambda-\varepsilon > -4$. Therefore, there exists $\hat{\gamma}\in L^2_{l,\lambda-\varepsilon}(\Lambda^4_{35})$ such that $\gamma = \chi \eta + \hat{\gamma}$ is closed.
\end{proof}

\subsection{The exceptional rate $-4$}

To finish the proof of Proposition \ref{first computation of tangent space}
it remains to compute the index change for 
the operator $(d_{\mathrm{ASD}})_{l,\lambda}$
at the exceptional critical rate $\lambda = -4$.
We are going to prove a more general statement by considering the operator $d+d^*_{\ig M}$ restricted to 4-forms at the critical rate $\lambda=-4$. The main result of this section is

\begin{proposition}
\label{exceptional index change for 4 forms}
The map $T^4_{-4}$ takes values in $r^{-1} dr\wedge(\im \Upsilon^3)^{\perp}+ \im \Upsilon^4$,
and 
\begin{align*}
T^4_{-4}
\colon
\mathcal{H}^{\mathrm{4}}_{-4+\varepsilon}
/
\mathcal{H}^{\mathrm{4}}_{-4-\varepsilon}
\rightarrow
r^{-1} dr\wedge(\im \Upsilon^3)^{\perp}+ \im \Upsilon^4
\end{align*}
is an isomorphism.
\end{proposition}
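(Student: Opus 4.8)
I would organise the proof into three parts: identifying the image of $T^{4}_{-4}$, injectivity, and surjectivity; the first two are clean and the last is the real work.

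\textbf{The image.} Given $\gamma\in\mathcal{H}^{4}_{-4+\varepsilon}$, Proposition~\ref{faster than expected decay for k-forms} and Corollary~\ref{forms on cone at rate -4} let me write $\gamma=\chi\eta+\tilde\gamma$ with $\eta=r^{-1}dr\wedge\alpha_{3}+\beta_{4}\in\mathcal{K}_{\Lambda^{4}}(-4)$, where $\alpha_{3},\beta_{4}$ are the harmonic representatives of classes in $H^{3}(\Sigma,\mathbb{R})$, $H^{4}(\Sigma,\mathbb{R})$, and $\tilde\gamma$ decays faster than $r^{-4}$. First I would show $[\beta_{4}]\in\im\Upsilon^{4}$: since $\alpha_{3},\beta_{4}$ are harmonic, $\eta$ is closed and coclosed on the cone, so $\tilde\gamma$ is closed on the end and, being of rate $<-4$, exact there by Lemma~\ref{fast decay -> exact on end}; thus $\iota_{r}^{*}\tilde\gamma$ is exact on each link while $\iota_{r}^{*}(\chi\eta)=\beta_{4}$, giving $\Upsilon^{4}[\gamma]=[\beta_{4}]$. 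Next, coclosedness of $\gamma$ makes $*_{\ig M}\gamma$ closed; combining the cone formula $*_{\ig C}\eta=*_{\ig\Sigma}\alpha_{3}+r^{-1}dr\wedge *_{\ig\Sigma}\beta_{4}$ with Lemma~\ref{additional-decay} for the error $(*_{\ig M}-*_{\ig C})(\chi\eta)$, the same reasoning yields $\Upsilon^{4}[*_{\ig M}\gamma]=[*_{\ig\Sigma}\alpha_{3}]$, so $[*_{\ig\Sigma}\alpha_{3}]\in\im\Upsilon^{4}$. Finally, Lemma~\ref{orthogonality of image} together with Poincaré duality on $\Sigma$ and exactness of \eqref{long-exact-sequence} shows $\im\Upsilon^{4}$ is precisely the annihilator of $\im\Upsilon^{3}$ in $H^{4}(\Sigma,\mathbb{R})$, so $[*_{\ig\Sigma}\alpha_{3}]\in\im\Upsilon^{4}$ is equivalent to $\alpha_{3}\perp_{L^{2}(\Sigma)}\im\Upsilon^{3}$, i.e.\ $[\alpha_{3}]\in(\im\Upsilon^{3})^{\perp}$. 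This gives $\im T^{4}_{-4}\subseteq r^{-1}dr\wedge(\im\Upsilon^{3})^{\perp}+\im\Upsilon^{4}$ and, as a byproduct, the clean reformulation of the target that I will use below: an element of $\mathcal{K}_{\Lambda^{4}}(-4)$ lies in the target iff both $[\beta_{4}]$ and $[*_{\ig\Sigma}\alpha_{3}]$ lie in $\im\Upsilon^{4}$.

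\textbf{Injectivity and the set-up for surjectivity.} Injectivity of $T^{4}_{-4}$ is already Proposition~\ref{existence of crit rate cross map}. For surjectivity, given $\eta=r^{-1}dr\wedge\alpha_{3}+\beta_{4}$ in the target with $\alpha_{3},\beta_{4}$ harmonic, the plan is to correct $\chi\eta$ to a closed and coclosed form on $M$. As $\eta$ is closed and coclosed on the cone, $\zeta:=(d+d^{*}_{\ig M})(\chi\eta)$ is compactly supported plus a tail of rate $\nu-5<-5$ by Lemma~\ref{additional-decay}, so for $\varepsilon$ small $\zeta\in\Omega^{3}_{l,-5-\varepsilon}\oplus\Omega^{5}_{l,-5-\varepsilon}$. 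I would then solve $(d+d^{*}_{\ig M})\sigma=\zeta$ with $\sigma\in\Omega^{4}_{l+1,-4-\varepsilon}$: since $\zeta$ has pure degrees $3$ and $5$, $\sigma$ is forced to be of pure degree $4$, so $\gamma:=\chi\eta-\sigma$ satisfies $(d+d^{*}_{\ig M})\gamma=0$ and hence is both closed and coclosed, decays faster than $\chi\eta$ away from $\eta$ so that $T^{4}_{-4}(\gamma)=\eta$, and is smooth by Theorem~\ref{ellitptic regularity}. Choosing $-4-\varepsilon$ non-critical for $d+d^{*}$, Theorem~\ref{Fredholm} says the only obstruction to solving the equation is that $\zeta$ annihilate the finite-dimensional cokernel, which is identified with the space of closed and coclosed mixed $(3{\oplus}5)$-forms $\omega=\omega_{3}+\omega_{5}$ on $M$ of the dual rate $-3+\varepsilon$.

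\textbf{The main obstacle.} The crux is the evaluation of $\langle\zeta,\omega\rangle_{L^{2}}$, and it is delicate precisely because at the borderline rate $-4$ the integration-by-parts Lemma~\ref{partial integration} is not directly applicable (the product of $\chi\eta$, of rate $-4$, with $\omega$, of rate $-3+\varepsilon$, is only borderline integrable against $d+d^{*}$). The plan is to write $\langle\zeta,\omega\rangle_{L^{2}}=\lim_{r\to\infty}\int_{B_{r}}\langle(d+d^{*}_{\ig M})(\chi\eta),\omega\rangle\,\Vol$ and integrate by parts on the truncation $B_{r}$: the bulk term vanishes because $\omega$ is closed and coclosed (so $d\omega_{3}=-d^{*}_{\ig M}\omega_{5}$), leaving $\langle\zeta,\omega\rangle_{L^{2}}=\lim_{r\to\infty}\int_{\{r\}\times\Sigma}\bigl(\chi\eta\wedge *_{\ig M}\omega_{5}-\omega_{3}\wedge *_{\ig M}(\chi\eta)\bigr)$. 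I would then feed in the leading asymptotics of $\omega$ (via Theorem~\ref{Lockhart-McOwen-key}, Lemma~\ref{no-log}, and the explicit description of $\mathcal{K}_{d+d^{*}}(-3)$ obtained from the cone formulas) and of $*_{\ig M}(\chi\eta)$, $*_{\ig M}\omega_{5}$ (via Lemma~\ref{additional-decay} and the cone Hodge-star formulas): the potentially divergent contributions cancel, and the surviving boundary integral collapses to a finite sum of pairings on $\Sigma$ between $\beta_{4}$, $*_{\ig\Sigma}\alpha_{3}$ and de Rham classes on $\Sigma$ that lie in $\im\Upsilon^{3}$. All such pairings vanish by the hypotheses $[\beta_{4}]\in\im\Upsilon^{4}$, $[\alpha_{3}]\in(\im\Upsilon^{3})^{\perp}$ and Lemma~\ref{orthogonality of image} (equivalently, the Poincaré orthogonality $\im\Upsilon^{3}\perp\im\Upsilon^{4}$ on $\Sigma$), so $\langle\zeta,\omega\rangle_{L^{2}}=0$ and the correction $\sigma$ exists. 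I expect this boundary-term analysis — organising the several terms in the limit, isolating the rate $-3$ contributions and verifying the cancellation of the divergent ones — to be the main difficulty; a partial shortcut is available for the $\beta_{4}$-component (every class in $H^{4}(M,\mathbb{R})$ has a closed and coclosed representative in $\mathcal{H}^{4}_{-4+\varepsilon}$ by Proposition~\ref{Hodge-decomposition}, and one controls the $L^{2}$ part with Proposition~\ref{Hodge Theorem}), but the $r^{-1}dr\wedge\alpha_{3}$-component is a flux not seen by de Rham cohomology and still forces the same kind of boundary computation.
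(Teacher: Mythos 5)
Your route is correct in spirit, but it is worth saying precisely how it relates to the paper: the image step and injectivity coincide with Lemma \ref{image of T contained in image of Upsilon} and Proposition \ref{existence of crit rate cross map}, while your surjectivity argument is essentially the \emph{alternative} proof the paper sketches at the end of the subsection (Fredholm alternative for $d+d^*\colon\Omega^{\mathrm{even}}_{-4-\varepsilon}\to\Omega^{\mathrm{odd}}_{-5-\varepsilon}$, cokernel $\mathcal{H}^{\mathrm{odd}}_{-3+\varepsilon}$, truncated integration by parts at the borderline rate, boundary terms killed by Lemma \ref{orthogonality of image}). The paper's \emph{main} proof is different: it passes from $4$-forms to even forms (Lemma \ref{4-form change = even form change}), converts the kernel jump of $(d+d^*)^{\mathrm{even}}$ at $-4$ into the cokernel jump of $(d+d^*)^{\mathrm{odd}}$ at $-3$, and computes that via Theorem \ref{index change for elliptic operators} together with Corollary \ref{kernel change of odd forms}; the injection of Lemma \ref{image of T contained in image of Upsilon} is then onto by a dimension count. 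The index count buys you freedom from any delicate boundary analysis at the critical rate; your direct argument is more constructive and exhibits the vanishing of the obstruction, but must confront exactly the borderline integration by parts that Lemma \ref{partial integration} does not cover — which you correctly identify as the crux, and which is also why you correctly avoid the trap of Remark \ref{description of problem} by solving for a correction at rate $-4-\varepsilon$ rather than invoking Proposition \ref{Hodge-decomposition} at rate $-4+\varepsilon$.

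Two steps in your sketch need repair or substantial filling. First, ``$\sigma$ is forced to be of pure degree $4$'' is not literally true: solving $(d+d^*_{\ig M})\sigma=\zeta$ in the elliptic even-form complex may produce components in degrees $0,2,6,8$ coupled to $\sigma_4$ through the degree $1,3,5,7$ equations. This is harmless: set $\gamma:=\chi\eta-\sigma\in\mathcal{H}^{\mathrm{even}}_{-4+\varepsilon}$, apply Lemma \ref{individual degree closed and coclosed} (valid since $-4+\varepsilon\leq-3$) and keep the degree-$4$ component, whose leading term is still $\eta$ because $\sigma_4$ has rate $-4-\varepsilon$. Second, and more seriously, the vanishing of $\langle\zeta,\omega\rangle_{L^2}$ does \emph{not} follow from the hypotheses on $\alpha_3,\beta_4$ together with Lemma \ref{orthogonality of image} alone. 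After the cancellations, the surviving boundary terms are $\langle\beta_4,\sigma_4\rangle_{L^2(\Sigma)}$ and $\langle\alpha_3,\sigma_3\rangle_{L^2(\Sigma)}$, where $\chi(\sigma_3+r\,dr\wedge\sigma_4)$ is the leading cone term of the cokernel element $\omega$ furnished by Theorem \ref{Lockhart-McOwen-key} and Lemma \ref{odd-degree at rate -3}; these pairings have no reason to vanish unless you first prove $[\sigma_3]\in\im\Upsilon^3$ and $[\sigma_4]\in(\im\Upsilon^4)^{\perp}$. Establishing that requires (a) that the individual degree components of $\omega\in\mathcal{H}^{\mathrm{odd}}_{-3+\varepsilon}$ are separately closed and coclosed — the rate $-3+\varepsilon$ lies outside the range of Lemma \ref{individual degree closed and coclosed}, and the paper needs the refined argument of Lemma \ref{extended partial integration}, which exploits that the leading term in $\mathcal{K}_{\mathrm{odd}}(-3)$ is componentwise closed and coclosed — and then (b) the degree-$3$/$5$ analogue of your image step, i.e.\ Lemma \ref{crossing -3 for deg 3 and 5}. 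Both are provable by the techniques you already use for the image of $T^4_{-4}$, so this is a fillable omission rather than a failure of the approach, but as written the orthogonality you invoke has nothing to act on.
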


The corresponding statement for the operator $d_{\mathrm{ASD}}$ is a simple consequence of Proposition \ref{exceptional index change for 4 forms}.

\begin{corollary}
\label{ASD change at -4}
The map $T^{\mathrm{ASD}}_{-4}$ takes values in the space
\begin{align*}
\{r^{-1} dr\wedge(-*_{\ig\Sigma}\beta)+\beta\ |\ \beta\in \im\Upsilon^4 \} \cong \im \Upsilon^4,
\end{align*}
and
\begin{align*}
T^{\mathrm{ASD}}_{-4}\colon 
(\mathcal{H}^4_{35})_{-4+\varepsilon}
/
(\mathcal{H}^4_{35})_{-4-\varepsilon}
\rightarrow
\mathrm{im} \Upsilon^4
\end{align*}
is an isomorphism.
\end{corollary}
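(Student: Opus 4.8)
The plan is to deduce the Corollary directly from Proposition \ref{exceptional index change for 4 forms}, by recognising $T^{\mathrm{ASD}}_{-4}$ as the restriction of the already-understood map $T^4_{-4}$ to the anti-self-dual part of $\mathcal{H}^4_{-4+\varepsilon}/\mathcal{H}^4_{-4-\varepsilon}$. A first preparatory observation is that on closed and coclosed $4$-forms the self-dual/anti-self-dual splitting with respect to $\psi$ is preserved: if $\gamma\in\mathcal{H}^4_{\lambda}$ and $\gamma=\gamma_++\gamma_-$, then $d\gamma=0$ together with $d^*_{\ig M}\gamma=-*_{\ig M}d*_{\ig M}\gamma=0$ forces $d\gamma_{\pm}=0$, and hence also $d^*_{\ig M}\gamma_{\pm}=0$; since $*_{\ig M}$ preserves all weighted spaces, $\mathcal{H}^4_{\lambda}=(\mathcal{H}^4_+)_{\lambda}\oplus(\mathcal{H}^4_{35})_{\lambda}$ for every rate $\lambda$.

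The second preparatory point is that $T^4_{-4}$ respects this splitting. Given $\gamma\in(\mathcal{H}^4_{35})_{-4+\varepsilon}$, write $\gamma=\chi\,T^4_{-4}(\gamma)+\tilde{\gamma}$ with $\tilde{\gamma}\in\Omega^4_{l+1,-4-\varepsilon}$ as in Proposition \ref{faster than expected decay for k-forms}. Applying $*_{\ig M}$ and using $*_{\ig M}\gamma=-\gamma$ together with the decay gain $*_{\ig M}-*_{\ig C}=\mathcal{O}(r^{\nu})$ from Lemma \ref{additional-decay}, one finds that $*_{\ig C}\,T^4_{-4}(\gamma)$ and $-T^4_{-4}(\gamma)$ differ by a term decaying strictly faster than $r^{-4}$; since both are homogeneous of rate $-4$, they must be equal, so $T^4_{-4}(\gamma)\in\mathcal{K}_{\mathrm{ASD}}(-4)$. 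The identical computation for self-dual $\gamma$ shows $T^4_{-4}$ maps $(\mathcal{H}^4_+)_{-4+\varepsilon}/(\mathcal{H}^4_+)_{-4-\varepsilon}$ into the self-dual homogeneous forms. Uniqueness of the asymptotic expansion then identifies $T^{\mathrm{ASD}}_{-4}$ with the restriction of $T^4_{-4}$ to $(\mathcal{H}^4_{35})_{-4+\varepsilon}/(\mathcal{H}^4_{35})_{-4-\varepsilon}$.

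Now Proposition \ref{exceptional index change for 4 forms} says $T^4_{-4}$ is an isomorphism onto $r^{-1}dr\wedge(\im\Upsilon^3)^{\perp}+\im\Upsilon^4$, and by the two preparatory points it carries the source splitting to a splitting of this image into self-dual and anti-self-dual homogeneous forms; since $T^4_{-4}$ is also injective (Proposition \ref{existence of crit rate cross map}), its restriction to the anti-self-dual summand is an isomorphism onto $\big(r^{-1}dr\wedge(\im\Upsilon^3)^{\perp}+\im\Upsilon^4\big)\cap\mathcal{K}_{\mathrm{ASD}}(-4)$. To evaluate this intersection I would use Remark \ref{remark-char-asd} and Corollary \ref{forms on cone at rate -4}: a general element of $\mathcal{K}_{\mathrm{ASD}}(-4)$ is $r^{-1}dr\wedge(-*_{\ig\Sigma}\beta)+\beta$ with $\beta$ a harmonic $4$-form on $\Sigma$. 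Membership in $r^{-1}dr\wedge(\im\Upsilon^3)^{\perp}+\im\Upsilon^4$ requires $\beta\in\im\Upsilon^4$ and $-*_{\ig\Sigma}\beta\in(\im\Upsilon^3)^{\perp}$, but the second condition follows from the first by Lemma \ref{orthogonality of image}. Hence the intersection equals $\{r^{-1}dr\wedge(-*_{\ig\Sigma}\beta)+\beta\mid\beta\in\im\Upsilon^4\}$, which is the first assertion, and $\beta\mapsto r^{-1}dr\wedge(-*_{\ig\Sigma}\beta)+\beta$ is the asserted linear isomorphism with $\im\Upsilon^4$. Injectivity of $T^{\mathrm{ASD}}_{-4}$ itself is immediate as in Proposition \ref{existence of crit rate cross map}: a class in the kernel has a representative decaying with rate $-4-\varepsilon$.

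The main obstacle is the second preparatory point: propagating the anti-self-duality on $M$ of a closed and coclosed $4$-form to anti-self-duality on the cone of the leading homogeneous term of its asymptotics. This is the only place where the asymptotic geometry genuinely enters, and it rests on the Hodge-star comparison of Lemma \ref{additional-decay} pushing the error strictly below the critical rate $-4$, i.e. on the standing choice $\varepsilon<-\nu$; the remainder of the argument is formal, combining the known isomorphism $T^4_{-4}$ with the linear algebra of $\mathcal{K}_{\Lambda^4}(-4)$ and Lemma \ref{orthogonality of image}.
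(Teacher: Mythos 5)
Your proposal is correct and follows essentially the same route as the paper: the paper's own proof simply observes that $T^{\mathrm{ASD}}_{-4}$ is $T^4_{-4}$ restricted to anti-self-dual forms and then invokes Proposition \ref{exceptional index change for 4 forms} together with the description of $\mathcal{K}_{\mathrm{ASD}}(-4)$ from Remark \ref{remark-char-asd}. Your two preparatory points (the splitting $\mathcal{H}^4_{\lambda}=(\mathcal{H}^4_+)_{\lambda}\oplus(\mathcal{H}^4_{35})_{\lambda}$, the compatibility of $T^4_{-4}$ with this splitting via Lemma \ref{additional-decay}, and the identification of the intersection with $\im\Upsilon^4$ via Lemma \ref{orthogonality of image}) are exactly the details the paper leaves implicit, and they are carried out correctly.
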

\begin{proof}
Because 
\begin{align*}
(\mathcal{H}^4_{35})_{-4+\varepsilon}
/
(\mathcal{H}^4_{35})_{-4-\varepsilon}
\subset
\mathcal{H}^{\mathrm{4}}_{-4+\varepsilon}
/
\mathcal{H}^{\mathrm{4}}_{-4-\varepsilon},
\end{align*}
the map $T^{\mathrm{ASD}}_{-4}$ is the map $T^4_{-4}$ restricted to anti-self-dual forms. The statement follows from
Theorem \ref{exceptional index change for 4 forms} 
and the description of $\mathcal{K}_{\mathrm{ASD}}(-4)$ in Remark \ref{remark-char-asd}.
\end{proof}

In the remainder of this subsection we will prove Proposition \ref{exceptional index change for 4 forms} in several steps.

\begin{lemma}
\label{image of T contained in image of Upsilon}
The map $T^{4}_{-4}$ takes values in 
$r^{-1} dr\wedge(\im \Upsilon^3)^{\perp}+\im \Upsilon^4$,
i.e. there is an injective linear map
\begin{align*}
T^{4}_{-4}
\colon
\mathcal{H}^4_{-4+\varepsilon}
/
\mathcal{H}^4_{-4-\varepsilon}
\rightarrow
r^{-1} dr\wedge (\im \Upsilon^3)^{\perp}+\im \Upsilon^4
.
\end{align*}
\end{lemma}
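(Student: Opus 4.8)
The plan is to take a closed and coclosed 4-form $\gamma \in \mathcal{H}^4_{-4+\varepsilon}$, apply Proposition \ref{faster than expected decay for k-forms} to extract the homogeneous leading term $\eta = T^4_{-4}(\gamma) \in \mathcal{K}_{\Lambda^4}(-4)$, and then use the explicit description $\mathcal{K}_{\Lambda^4}(-4) = r^{-1} dr \wedge H^3(\Sigma,\mathbb{R}) + H^4(\Sigma,\mathbb{R})$ from Corollary \ref{forms on cone at rate -4}. Write $\eta = r^{-1} dr\wedge a + b$ with $a \in H^3(\Sigma)$ harmonic and $b \in H^4(\Sigma)$ harmonic. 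The task is to show $[b] \in \im \Upsilon^4$ and that the harmonic representative $a$ lies in $(\im\Upsilon^3)^\perp$ (orthogonal complement inside harmonic 4-forms on $\Sigma$, under the $*_\Sigma$-pairing, i.e. $a$ is $L^2$-orthogonal to $*_\Sigma(\im\Upsilon^3)$).

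First I would show $[b] \in \im \Upsilon^4$. Since $\gamma$ is closed on $M$, and $\gamma = \chi\eta + \tilde\gamma$ with $\tilde\gamma \in \Omega^4_{l+1,-4+\varepsilon'}$ for $\varepsilon' < \varepsilon$ decaying strictly faster, restricting $\gamma$ to a large sphere $\iota_r \colon \Sigma \hookrightarrow M$ and taking $r\to\infty$: the restriction of $\gamma$ is cohomologous to $\iota_r^*\gamma$, which by the scaling in \eqref{formulas-cone} (the degree-4 part of a rate $-4$ form on the cone restricts to a fixed form on $\Sigma$, the faster-decaying piece goes to zero) converges to $b$. Hence $\Upsilon^4[\gamma] = [b]$, so $[b] \in \im\Upsilon^4$. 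Next, for the $dr$-component: I would use that $\gamma$ is also \emph{coclosed} on $M$. Using Lemma \ref{additional-decay}, $d^*_{\ig M}\gamma - d^*_{\ig C}\gamma$ decays faster, so $d^*_{\ig C}(\chi\eta)$ decays faster than its expected rate. A direct computation with the co-differential formula in \eqref{formulas-cone} applied to $\eta = r^{-1}dr\wedge a + b$ (with $\lambda = -4$, $k=4$) shows $d^*_{\ig C}\eta$ involves $d^*_\Sigma a$, $d^*_\Sigma b$ and the algebraic term $-(\lambda + 8 - k) a = 0$; since $a,b$ are harmonic on $\Sigma$ this is consistent, so coclosedness alone does not kill $a$. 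The constraint on $a$ must instead come from a pairing/Stokes argument: pair $\gamma$ against $\chi\alpha$ for $[\alpha]\in\im\Upsilon^3$ lifted as in Lemma \ref{lift}, mimicking the proof of Lemma \ref{orthogonality of image} — the boundary term at infinity picks out $\int_\Sigma a \wedge \alpha'$ where $\alpha'$ is the harmonic representative on $\Sigma$ of the class $\Upsilon^3$ hits, and since $\gamma$ is exact-at-infinity-compatible / closed the total integral vanishes, forcing $a \perp \im\Upsilon^3$. This gives the containment, i.e. the restricted map $T^4_{-4}$ exists; injectivity is already in Proposition \ref{existence of crit rate cross map}.

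For surjectivity (needed for Proposition \ref{exceptional index change for 4 forms}, though the Lemma as stated only asserts the containment), I would, given $b \in \im\Upsilon^4$ and $a \in (\im\Upsilon^3)^\perp$ harmonic, build a closed 4-form on $M$ with the prescribed leading term: use Lemma \ref{lift} to get a closed lift $\xi$ of $[b]$, add the model term $r^{-1}dr\wedge a$ cut off, and then correct by a faster-decaying exact form using the surjectivity of $d+d^*$ at the non-critical rate $-4-\varepsilon$ (Proposition \ref{Hodge-decomposition-L2} / the improved estimates), checking that the obstruction to this correction is exactly the condition $a\perp\im\Upsilon^3$ via a Fredholm-alternative/cokernel computation.

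The main obstacle I expect is the orthogonality condition on $a$: showing precisely that the image of $T^4_{-4}$ in the $dr$-direction is cut out by $(\im\Upsilon^3)^\perp$ and not something larger or smaller. This requires carefully matching the boundary term in Stokes' theorem against the Poincaré-pairing description of $\im\Upsilon^3$ (Lemma \ref{orthogonality of image}), and for surjectivity it requires identifying the cokernel of the relevant operator at rate $-4$ with (a quotient involving) $*_\Sigma\im\Upsilon^3$ — essentially a weighted Hodge-theoretic bookkeeping exercise that is delicate precisely because $-4$ is the $L^2$-borderline rate where the decomposition of Proposition \ref{Hodge-decomposition-L2} acquires the extra $W^k$ term.
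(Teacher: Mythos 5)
Your treatment of the degree-4 component and the overall structure (extract the leading term via Proposition \ref{faster than expected decay for k-forms}, identify it inside $\mathcal{K}_{\Lambda^4}(-4)$ via Corollary \ref{forms on cone at rate -4}, get injectivity from Proposition \ref{existence of crit rate cross map}) agrees with the paper; for the claim $[\beta]\in\im\Upsilon^4$ the paper makes your ``the remainder does not contribute'' step precise by invoking Lemma \ref{fast decay -> exact on end} (the $\mathcal{O}(r^{-4-\varepsilon})$ piece is exact on the end, so $\Upsilon^4[\gamma]=[\beta]$); your version needs some such argument, since pointwise smallness of $\iota_r^*\tilde\gamma$ alone does not immediately kill its (r-independent) cohomology class, though this is easily repaired.

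The genuine gap is in your argument for the $dr$-component. The Stokes pairing you propose --- integrating $d(\xi\wedge\gamma)$ where $\xi=\chi\sigma+\zeta$ is a closed lift of $[\sigma]\in\im\Upsilon^3$ --- does not ``pick out $\int_\Sigma a\wedge\sigma$'' at all: the boundary integrals are over the cross-sections $\{r\}\times\Sigma$, and the term $r^{-1}dr\wedge a$ pulls back to zero there, so the limit of $\int_{\{r\}\times\Sigma}\xi\wedge\gamma$ is $\int_\Sigma\sigma\wedge\beta$. That identity is already a consequence of $[\beta]\in\im\Upsilon^4$ and Lemma \ref{orthogonality of image}, and it imposes no constraint whatsoever on $a$. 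To see $a$ you must bring in coclosedness of $\gamma$ through the Hodge star: either pair $\xi$ with the closed form $*_{\ig M}\gamma$ (whose leading term is $*_{\ig\Sigma}\alpha + r^{-1}dr\wedge *_{\ig\Sigma}\beta$ up to the faster-decaying correction from Lemma \ref{additional-decay}), so the boundary term becomes $\int_\Sigma\sigma\wedge *_{\ig\Sigma}\alpha=\langle\sigma,\alpha\rangle_{L^2(\Sigma)}$, or --- as the paper does, more economically --- simply apply the already-proved degree-4 statement to $*\gamma\in\mathcal{H}^4_{-4+\varepsilon}$ to conclude $[*\alpha]\in\im\Upsilon^4$, and then quote Lemma \ref{orthogonality of image} to get $\alpha\in(\im\Upsilon^3)^{\perp}$. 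With that replacement your proof of the lemma is complete; the surjectivity discussion you append is not needed for this statement (it is the content of Proposition \ref{exceptional index change for 4 forms}, where the paper proceeds by an index/cokernel computation for $d+d^*$ on odd forms at rate $-3$ rather than by the direct correction argument you sketch).
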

\begin{proof}
Let $\gamma\in\mathcal{H}^4_{-4+\varepsilon}$. Then by 
Proposition \ref{existence of crit rate cross map}
and Corollary \ref{forms on cone at rate -4}
there exist harmonic forms $\alpha\in\Omega^3(\Sigma)$ and $\beta\in\Omega^4(\Sigma)$ such that
\begin{align*}
\gamma
=
\chi (r^{-1} dr\wedge \alpha + \beta) 
+
\mathcal{O}(r^{-4-\varepsilon})
\end{align*}
and $T^4_{-4}(\gamma) = r^{-1} dr\wedge \alpha + \beta$.
By Lemma \ref{fast decay -> exact on end}
the part of $\gamma$ which decays like $-4-\varepsilon$ is exact on the end. Therefore, $\Upsilon^4([\gamma]) = [\beta]$ and $[\beta]\in \im \Upsilon^4$.
The same argument for $*\gamma$ gives $[*\alpha]\in \mathrm{im}\Upsilon^4$ and hence $[\alpha]\in (\mathrm{im}\Upsilon^3)^{\perp}$ by Lemma \ref{orthogonality of image}.
\end{proof}

The main difficulty in proving Proposition \ref{exceptional index change for 4 forms} is to show surjectivity.
The next Lemma is a first step towards this goal. However, because of the structure of $\mathcal{K}_{\Lambda^4}(-4)$ more work will be needed later on.

\begin{lemma}
\label{half sufficiency}
\begin{compactenum}[(i)]
\item
Let $\beta\in\im \Upsilon^4$. Then there exist $\alpha\in (\im \Upsilon^3)^{\perp}$ and $\gamma\in\mathcal{H}^4_{-4+\varepsilon}$ such that 
$T^4_{-4}(\gamma) = r^{-1} dr\wedge\alpha+\beta$.
\item
Let $\alpha\in(\im \Upsilon^3)^{\perp}$. Then there exist $\beta\in \im \Upsilon^4$ and $\gamma\in\mathcal{H}^4_{-4+\varepsilon}$ such that 
$T^4_{-4}(\gamma) = r^{-1} dr\wedge\alpha+\beta$.
\end{compactenum}
\end{lemma}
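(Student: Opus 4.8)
The plan is to prove both statements simultaneously by a ``topological pairing'' argument, exploiting that $\mathcal{H}^4_{-4+\varepsilon}$ is large enough to see all of $\mathcal{I}^4(H^4_{\mathrm{cs}}(M,\mathbb{R}))$ but possibly also ``faster'' data, and then using the non-degeneracy of the pairing on $\mathcal{I}^4(H^4_{\mathrm{cs}})$ from Section \ref{section-topology} together with Lemma \ref{orthogonality of image}. First I would recall the set-up: by Proposition \ref{Hodge Theorem} (and Lemma \ref{closed and coclosed}) every class in $\mathcal{I}^4(H^4_{\mathrm{cs}}(M,\mathbb{R}))=\mathcal{H}^4_{L^2}=\mathcal{H}^4_{-4}$ has a closed and co-closed $L^2$-representative, which in particular lies in $\mathcal{H}^4_{-4+\varepsilon}$. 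Now for (i): start from $\beta\in\im\Upsilon^4$. By Lemma \ref{lift} there is a closed $4$-form $\xi=\chi\beta+\zeta$ with $\zeta\in\Omega^4_{\mathrm{cs}}(M)$ and $\Upsilon^4[\xi]=[\beta]$. I would like a closed \emph{and co-closed} representative; projecting $\xi$ onto its harmonic part via the Hodge decomposition of Proposition \ref{Hodge-decomposition} at a suitable non-critical rate slightly above $-4$ (using $\varepsilon$ small) produces $\gamma\in\mathcal{H}^4_{-4+\varepsilon}$ cohomologous to $\xi$ on the end, so $\Upsilon^4[\gamma]=[\beta]$. Then $T^4_{-4}(\gamma)=r^{-1}dr\wedge\alpha+\beta'$ for some harmonic $\alpha\in(\im\Upsilon^3)^{\perp}$ and harmonic $\beta'\in\im\Upsilon^4$ by Lemma \ref{image of T contained in image of Upsilon}; and by the argument there (Lemma \ref{fast decay -> exact on end}, since the $\mathcal{O}(r^{-4-\varepsilon})$ tail is exact on the end) the cohomology class $[\beta']$ equals $\Upsilon^4[\gamma]=[\beta]$. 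Since harmonic representatives of a class in $\im\Upsilon^4$ are unique (Proposition \ref{Hodge Theorem} identifies $\mathcal{H}^4_{-4}$ with the cohomology), $\beta'=\beta$, proving (i). Statement (ii) follows by the same argument applied to $\ast\gamma$: given $\alpha\in(\im\Upsilon^3)^{\perp}$, Lemma \ref{orthogonality of image} gives $\ast\alpha\in\im\Upsilon^4$ (more precisely $(\im\Upsilon^3)^{\perp}=\ast\,\im\Upsilon^4$ at the level of harmonic forms), so apply part (i) to $\ast\alpha$ to obtain $\gamma'\in\mathcal{H}^4_{-4+\varepsilon}$ with $T^4_{-4}(\gamma')=r^{-1}dr\wedge\alpha'+\ast\alpha$; then $\gamma=\ast\gamma'$ has $T^4_{-4}(\gamma)=r^{-1}dr\wedge\alpha+(\mp)\ast\alpha'$, and tracking the Hodge-star on the cone formulas in Remark \ref{remark-char-asd} / the formulas \eqref{formulas-cone} shows this is of the required form $r^{-1}dr\wedge\alpha+\beta$ with $\beta\in\im\Upsilon^4$.

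The key steps, in order, are: (1) produce a closed \emph{and} co-closed decaying representative of a given boundary class via Lemma \ref{lift} followed by the Hodge decomposition Proposition \ref{Hodge-decomposition}; (2) identify the ``$\beta$-component'' of $T^4_{-4}(\gamma)$ with the restriction class $\Upsilon^4[\gamma]$ using that the fast-decaying tail is exact on the end (Lemma \ref{fast decay -> exact on end}); (3) use uniqueness of harmonic representatives (Proposition \ref{Hodge Theorem}) to pin down the $\beta$-component exactly; (4) dualise via the Hodge star and Lemma \ref{orthogonality of image} to get the symmetric statement (ii).

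The main obstacle I expect is step (2)–(3): controlling precisely \emph{which} class the leading term $r^{-1}dr\wedge\alpha+\beta$ of the asymptotic expansion represents, and ruling out that the $\mathcal{O}(r^{-4-\varepsilon})$ remainder contributes to the relevant cohomology class. This is where one needs $\varepsilon$ chosen so that $-4-\varepsilon<-4$ lies below the forms' natural rate and Lemma \ref{fast decay -> exact on end} applies to the tail (its hypothesis $\lambda<-k=-4$ is exactly satisfied), so that $[\gamma]|_{\Sigma}$ is computed purely from the $\beta$-term. A subtlety to be careful about: $T^4_{-4}$ is only defined on the quotient $\mathcal{H}^4_{-4+\varepsilon}/\mathcal{H}^4_{-4-\varepsilon}$, so the $\gamma$ produced is only well-defined modulo faster-decaying harmonic forms, but this is harmless since the statement only asserts existence of \emph{some} $\gamma$. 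Note also that this lemma alone does not yet give surjectivity of $T^4_{-4}$ onto $r^{-1}dr\wedge(\im\Upsilon^3)^{\perp}+\im\Upsilon^4$, because it realises $\alpha$ and $\beta$ \emph{separately} but not an arbitrary \emph{pair} $(\alpha,\beta)$; closing that gap (showing the $\alpha$ and $\beta$ data can be prescribed independently, i.e. that the image is the full direct sum rather than a diagonal-type subspace) is what the subsequent lemmas in the paper must address, and I would flag here that the essential remaining input is the \emph{independence} of the two halves.
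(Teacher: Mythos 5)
Your proposal is correct and follows essentially the same route as the paper: lift $\beta$ via Lemma \ref{lift}, pass to a cohomologous closed and co-closed form using the Hodge decomposition of Proposition \ref{Hodge-decomposition} at rate $-4+\varepsilon$, identify the leading term through Lemma \ref{image of T contained in image of Upsilon} (with the exact-on-the-end tail fixing the class), and obtain (ii) by applying the Hodge star together with Lemma \ref{orthogonality of image}. The only quibble is that the uniqueness pinning down $\beta'=\beta$ is compact Hodge theory on the link $\Sigma$, not Proposition \ref{Hodge Theorem} (which concerns $M$); your closing remark that the lemma does not yet yield surjectivity of $T^4_{-4}$ matches Remark \ref{description of problem}.
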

\begin{proof}
(i)
By Lemma \ref{lift} there exists $\zeta \in \Omega^4_{\mathrm{cs}}(M)$
such that $\xi = \chi \beta + \zeta$ is closed. Because $\xi \in \Omega^4_{l, -4+\varepsilon}$ we can apply the Hodge decomposition from Proposition \ref{Hodge-decomposition}
to deduce that there is a $\gamma\in \mathcal{H}^4_{-4+\varepsilon}$ cohomologous to $\xi$. 
In particular $\Upsilon^4([\gamma])=\Upsilon^4([\xi]) = \beta$.
By Lemma \ref{image of T contained in image of Upsilon} there is some $\alpha\in \im (\Upsilon^3)^{\perp}$ 
with $\gamma = \chi (r^{-1} dr\wedge\alpha+\beta)+\mathcal{O}(r^{-4-\varepsilon})$ and $T^4_{-4}(\gamma)=r^{-1} dr\wedge\alpha+\beta$.
\\
(ii) 
By Lemma \ref{orthogonality of image}  $*\alpha\in \im \Upsilon^4$. The statement follows from (i).
\end{proof}
\begin{remark}
\label{description of problem}
Note that in Lemma \ref{half sufficiency} (i) we cannot choose $\alpha$. It just says that there exists some. 
This in particular means that we cannot yet prove that 
$T^{4}_{-4}
\colon
\mathcal{H}^4_{-4+\varepsilon}
/
\mathcal{H}^4_{-4-\varepsilon}
\rightarrow
r^{-1} dr\wedge (\im \Upsilon^3)^{\perp}+\im \Upsilon^4$
is surjective, and therefore an isomorphism.
The reason is that in the proof of Lemma \ref{half sufficiency}
we pass from a closed form $\xi = \chi \beta + \mathcal{O}(r^{-4-\varepsilon})$ to a cohomologous closed and co-closed form $\gamma = \xi + d\zeta$ for some $\zeta\in \Omega^3_{l+1,-3+\varepsilon}$. Note that $d(\chi \log r \alpha)= \chi (r^{-1} dr\wedge\alpha) + \mathcal{O}(r^{-4-\varepsilon})$
and $\chi \log r \alpha\in \Omega^3_{l+1,-3+\varepsilon}$.
Hence by transitioning from $\xi$ to $\gamma$ a priori there could be introduced a 3-form $\alpha$ such that $\gamma = \chi(r^{-1} dr\wedge\alpha + \beta)+\mathcal{O}(r^{-4-\varepsilon})$. It will take significantly more effort to rule this out. 
This difficulty is unique to change in harmonic middle dimensional degree forms at the $L^2$-rate of even dimensional AC manifolds. On odd dimensional cones such as in the $\mathrm{G}_2$-setting this difficulty does not appear.
\end{remark}

Our idea to overcome this difficulty is to interpret $\mathcal{H}^4_{-4+\varepsilon}/\mathcal{H}^4_{-4-\varepsilon}$ as the kernel change of the elliptic operator 
\begin{align*}
d+d^*\colon \Omega^{\mathrm{even}}_{l,-4 \pm \varepsilon}
\rightarrow
\Omega^{\mathrm{odd}}_{l-1,-5 \pm \varepsilon}.
\end{align*}
This will allow us to instead compute the change in kernel at the critical rate $\lambda = -3$ of the adjoint operator
\begin{align*}
d+d^*\colon \Omega^{\mathrm{odd}}_{l,-3 \mp \varepsilon}
\rightarrow
\Omega^{\mathrm{even}}_{l-1,-4 \mp \varepsilon},
\end{align*}
which is easier. In the following we make this idea precise.

\begin{lemma}
\label{4-form change = even form change}
We have
\begin{align*}
\mathcal{H}^4_{-4+\varepsilon}
/
\mathcal{H}^4_{-4-\varepsilon}
\cong
\mathcal{H}^{\mathrm{even}}_{-4+\varepsilon}
/
\mathcal{H}^{\mathrm{even}}_{-4-\varepsilon}.
\end{align*}
\end{lemma}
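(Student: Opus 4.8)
The plan is to exhibit mutually inverse maps between the two quotient spaces, with the key point being that a closed and coclosed even-degree form decaying at rate $-4\pm\varepsilon$ is ``mostly'' concentrated in degree $4$. First I would observe that $\mathcal{H}^4_{-4\pm\varepsilon}\hookrightarrow\mathcal{H}^{\mathrm{even}}_{-4\pm\varepsilon}$ by simply including a closed and coclosed $4$-form as a pure-degree element of $\Omega^{\mathrm{even}}$; this is compatible with the inclusion at rate $-4-\varepsilon$, so it descends to a map on quotients
\begin{align*}
\mathcal{H}^4_{-4+\varepsilon}/\mathcal{H}^4_{-4-\varepsilon}
\longrightarrow
\mathcal{H}^{\mathrm{even}}_{-4+\varepsilon}/\mathcal{H}^{\mathrm{even}}_{-4-\varepsilon}.
\end{align*}
By Lemma \ref{individual degree closed and coclosed}, at any rate $\lambda\le -3$ (in particular $\lambda=-4-\varepsilon$) every closed and coclosed even-degree form splits into closed and coclosed pure-degree pieces, so $\mathcal{H}^{\mathrm{even}}_{-4-\varepsilon}=\bigoplus_{k\ \mathrm{even}}\mathcal{H}^k_{-4-\varepsilon}$; this will be used to identify the denominator.

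The crux is to show the inclusion above is an isomorphism, which amounts to proving that for $\gamma\in\mathcal{H}^{\mathrm{even}}_{-4+\varepsilon}$ all degree components other than degree $4$ already lie in $\mathcal{H}^{\bullet}_{-4-\varepsilon}$, i.e. decay strictly faster than $r^{-4}$. Here I would apply Proposition \ref{faster than expected decay for k-forms} (or directly Theorem \ref{Lockhart-McOwen-key} applied to $d+d^*$ on $\Omega^{\mathrm{even}}$) to write, on the end,
\begin{align*}
\gamma = \chi\,\omega + \tilde\gamma,
\qquad
\omega\in\mathcal{K}_{\mathrm{even}}(-4),
\quad
\tilde\gamma\in\Omega^{\mathrm{even}}_{l+1,-4-\varepsilon},
\end{align*}
and then invoke Corollary \ref{forms on cone at rate -4}, which says $\mathcal{K}_{\mathrm{even}}(-4)=r^{-1}dr\wedge H^3(\Sigma,\mathbb{R})+H^4(\Sigma,\mathbb{R})$ is concentrated in degree $4$. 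Hence the leading (rate $-4$) asymptotics of $\gamma$ lie entirely in degree $4$; each lower- or higher-degree component of $\gamma$ therefore decays at rate $-4-\varepsilon$, so by Lemma \ref{individual degree closed and coclosed} (applicable since $-4-\varepsilon\le -3$) these components are individually closed and coclosed, i.e. they lie in $\mathcal{H}^k_{-4-\varepsilon}$ for $k\ne 4$. Thus modulo $\mathcal{H}^{\mathrm{even}}_{-4-\varepsilon}$ we may replace $\gamma$ by its degree-$4$ component $\gamma_4$, which is closed and coclosed at rate $-4+\varepsilon$ and represents the same class in the quotient. This shows the map is surjective; injectivity is immediate since if $\gamma_4\in\mathcal{H}^4_{-4+\varepsilon}$ maps into $\mathcal{H}^{\mathrm{even}}_{-4-\varepsilon}$ then, being pure degree $4$, it lies in $\mathcal{H}^4_{-4-\varepsilon}$.

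The main obstacle I anticipate is purely bookkeeping: making sure that the decomposition from Proposition \ref{faster than expected decay for k-forms} is applied on $\Omega^{\mathrm{even}}$ rather than on a single $\Omega^k$ (so that critical rates of $d+d^*$ on even forms, not on $4$-forms alone, govern the splitting), and checking that the critical rate $-4$ is isolated for this operator so the hypotheses of the proposition and of Lemma \ref{individual degree closed and coclosed} are met — but this is assumed available and requires no new input. A minor point to handle carefully is that $\chi\omega$ is only defined on the end; one subtracts it off and argues on $M\setminus K$, which is standard and already packaged in the cited propositions.
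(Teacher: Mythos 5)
Your proposal is correct and takes essentially the same route as the paper: both arguments rest on Lemma \ref{individual degree closed and coclosed} together with the description of $\mathcal{K}_{\mathrm{even}}(-4)$ in Corollary \ref{forms on cone at rate -4} and the Lockhart--McOwen rate-crossing machinery, the paper merely packaging the last step degreewise via the injective maps $T^k_{-4}$ of Proposition \ref{existence of crit rate cross map} and the vanishing of $\mathcal{K}_{\Lambda^k}(-4)$ for $k=0,2,6,8$, which is exactly the componentwise form of your argument. One small imprecision: Lemma \ref{individual degree closed and coclosed} should be invoked for $\gamma$ itself at rate $-4+\varepsilon\leq -3$ (its hypothesis concerns the mixed-degree closed and coclosed form, not its components), after which your conclusion goes through unchanged.
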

\begin{proof}
By Lemma \ref{individual degree closed and coclosed}
every degree component of a form in $\mathcal{H}^{\mathrm{even}}_{-4+\varepsilon}$ is closed and co-closed
and therefore
\begin{align*}
\mathcal{H}^{\mathrm{even}}_{-4+\varepsilon}
/
\mathcal{H}^{\mathrm{even}}_{-4-\varepsilon}
\cong
\bigoplus_{k=0}^4 
\mathcal{H}^{2k}_{-4+\varepsilon}
/
\mathcal{H}^{2k}_{-4-\varepsilon}.
\end{align*}
By Corollary \ref{forms on cone at rate -4} we have $\mathcal{K}_{\Lambda^k}(-4) = 0$ for $ k = 0, 2, 6, 8$. Therefore by Proposition \ref{existence of crit rate cross map} we have
$\mathcal{H}^k_{-4+\varepsilon}
/
\mathcal{H}^k_{-4-\varepsilon} = 0$
for $k = 0, 2, 6, 8$. The statement follows. 
\end{proof}

If we consider forms of degrees 3 and 5 at rate $\lambda = -3$, we don't have the problem described in Remark \ref{description of problem}.

\begin{lemma}\label{crossing -3 for deg 3 and 5}
\begin{compactenum}[(i)]
\item
The map $T^3_{-3}$ takes values in $\im \Upsilon^3$
and 
\begin{align*}
T^3_{-3}\colon
\mathcal{H}^3_{-3+\varepsilon}
/
\mathcal{H}^3_{-3-\varepsilon}
\rightarrow
\im \Upsilon^3
\end{align*}
is an isomorphism.
\item
The map $T^5_{-3}$ takes values in $r dr\wedge(\im \Upsilon^4)^{\perp}$
and 
\begin{align*}
T^5_{-3}
\colon
\mathcal{H}^5_{-3+\varepsilon}/
\mathcal{H}^5_{-3-\varepsilon}
\rightarrow
r dr\wedge(\im \Upsilon^4)^{\perp}
\end{align*}
is an isomorphism.
\end{compactenum}
\end{lemma}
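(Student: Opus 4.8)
The plan is to prove \textbf{(i)} by the method of Lemmas~\ref{image of T contained in image of Upsilon} and~\ref{half sufficiency}, using that at the rate $-3$ the difficulty described in Remark~\ref{description of problem} does not arise: by Corollary~\ref{forms on cone at rate -4} we have $\mathcal{K}_{\Lambda^3}(-3)=H^3(\Sigma,\mathbb{R})$, which consists only of pullbacks of harmonic forms on $\Sigma$ and carries no $dr\wedge(\cdot)$ component. Then \textbf{(ii)} is deduced from \textbf{(i)} by applying the Hodge star $*_{\ig M}$.

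For \textbf{(i)}, first I would take $\gamma\in\mathcal{H}^3_{-3+\varepsilon}$ and use Proposition~\ref{existence of crit rate cross map} together with $\mathcal{K}_{\Lambda^3}(-3)=H^3(\Sigma,\mathbb{R})$ to write $\gamma=\chi\beta+\mathcal{O}(r^{-3-\varepsilon})$ with $\beta\in\Omega^3(\Sigma)$ harmonic and $T^3_{-3}(\gamma)=\beta$. Since $-3-\varepsilon<-3$, Lemma~\ref{fast decay -> exact on end} makes the tail exact on the end, so $\Upsilon^3([\gamma])=[\beta]$; hence $T^3_{-3}$ takes values in $\im\Upsilon^3$, and it is injective by Proposition~\ref{existence of crit rate cross map}. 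For surjectivity, given $[\beta]\in\im\Upsilon^3$ with harmonic representative $\beta$, Lemma~\ref{lift} produces a closed $\xi=\chi\beta+\zeta\in\Omega^3_{l,-3+\varepsilon}$ with $\Upsilon^3([\xi])=[\beta]$; choosing $\varepsilon$ small enough that $-2+\varepsilon$ is non-critical for $d+d^*$ (which does not change the quotient spaces), the Hodge decomposition Proposition~\ref{Hodge-decomposition} at rate $-3+\varepsilon>-4$ gives a $\gamma\in\mathcal{H}^3_{-3+\varepsilon}$ cohomologous to $\xi$, and then $T^3_{-3}(\gamma)$ is the harmonic representative of $\Upsilon^3([\gamma])=[\beta]$, i.e. $\beta$.

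For \textbf{(ii)}, I would note that $*_{\ig M}$ exchanges closed and co-closed $5$-forms with closed and co-closed $3$-forms and preserves every weighted space, so it descends to an isomorphism $\Phi\colon\mathcal{H}^5_{-3+\varepsilon}/\mathcal{H}^5_{-3-\varepsilon}\to\mathcal{H}^3_{-3+\varepsilon}/\mathcal{H}^3_{-3-\varepsilon}$, while the cone Hodge star restricts to a linear isomorphism $\psi\colon\mathcal{K}_{\Lambda^5}(-3)=r\,dr\wedge H^4(\Sigma,\mathbb{R})\to\mathcal{K}_{\Lambda^3}(-3)=H^3(\Sigma,\mathbb{R})$, $r\,dr\wedge\alpha\mapsto *_{\ig\Sigma}\alpha$. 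The central point is the commutativity $T^3_{-3}\circ\Phi=\psi\circ T^5_{-3}$: if $\gamma=\chi(r\,dr\wedge\alpha)+\mathcal{O}(r^{-3-\varepsilon})$ then $*_{\ig M}\gamma=\chi *_{\ig C}(r\,dr\wedge\alpha)+\mathcal{O}(r^{-3-\varepsilon})=\chi *_{\ig\Sigma}\alpha+\mathcal{O}(r^{-3-\varepsilon})$, where Lemma~\ref{additional-decay} and the inequality $-3+\nu<-3-\varepsilon$ (guaranteed by the choice of $\varepsilon$ in Proposition~\ref{existence of crit rate cross map}) absorb $(*_{\ig M}-*_{\ig C})(r\,dr\wedge\alpha)$ into the error term. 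Combined with \textbf{(i)}, this shows $T^5_{-3}=\psi^{-1}\circ T^3_{-3}\circ\Phi$ is an isomorphism onto $\psi^{-1}(\im\Upsilon^3)=r\,dr\wedge(*_{\ig\Sigma}\im\Upsilon^3)$. By Lemma~\ref{orthogonality of image} we have $*_{\ig\Sigma}\im\Upsilon^3\subseteq(\im\Upsilon^4)^{\perp}$, and equality holds because $\dim\im\Upsilon^3+\dim\im\Upsilon^4=b^3(\Sigma)=b^4(\Sigma)$, which follows from the long exact sequence~\eqref{long-exact-sequence} together with Poincaré duality on $M$ and on $\Sigma$. The hardest part is this last step: one must carry the leading-order asymptotics faithfully through $*_{\ig M}$ to obtain the commutative square, and then the dimension count is what upgrades the inclusion to the statement that the image is \emph{all} of $r\,dr\wedge(\im\Upsilon^4)^{\perp}$ rather than a proper subspace.
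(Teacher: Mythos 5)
Your proposal is correct and takes essentially the same route as the paper: part (i) is the rate $-3$ analogue of Lemmas \ref{image of T contained in image of Upsilon} and \ref{half sufficiency} (which works precisely because $\mathcal{K}_{\Lambda^3}(-3)=H^3(\Sigma,\mathbb{R})$ has no $dr$-component, so the leading term is determined by its cohomology class), and part (ii) is deduced by applying the Hodge star, exactly as in the paper's one-line proof. The only difference is that you spell out the identification $*_{\Sigma}\im\Upsilon^3=(\im\Upsilon^4)^{\perp}$ via Lemma \ref{orthogonality of image} together with the duality count $\dim\im\Upsilon^3+\dim\im\Upsilon^4=\dim H^3(\Sigma,\mathbb{R})$ from the long exact sequence \eqref{long-exact-sequence} and Poincar\'e duality, a standard fact the paper leaves implicit.
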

\begin{proof}
(i)
By Corollary \ref{forms on cone at rate -4} we have
$\mathcal{K}_{\Lambda^3}(-3) = H^3(\Sigma,\mathbb{R})$. The proof that the image of $T^3_{-3}$ is contained in $\im \Upsilon^3$ is analogous to the proof of Lemma \ref{image of T contained in image of Upsilon}.
The proof that it is surjective onto $\im \Upsilon^3$ is analogous to the proof of Lemma \ref{half sufficiency}.
\\
(ii)
The statement follows from (i) by applying the Hodge-$*$ operator.
\end{proof}

We have
\begin{align*}
\mathcal{H}^3_{-3+\varepsilon}
/
\mathcal{H}^3_{-3-\varepsilon}
\oplus
\mathcal{H}^5_{-3+\varepsilon}
/
\mathcal{H}^5_{-3-\varepsilon}
\subset
\mathcal{H}^{\mathrm{odd}}_{-3+\varepsilon}
/
\mathcal{H}^{\mathrm{odd}}_{-3-\varepsilon}.
\end{align*}
To conclude equality we would need that for any element in $\mathcal{H}^{\mathrm{odd}}_{-3+\varepsilon}$ all individual degree components are closed and co-closed. In general this statement starts to fail at rate $-3$ (see Lemma \ref{individual degree closed and coclosed}). However, because each individual degree component of the leading order term in $\mathcal{K}_{\mathrm{odd}}(-3)$
is closed and co-closed, we can improve  Lemma \ref{individual degree closed and coclosed} to rate $-3+\varepsilon$ for odd degree forms.

\begin{lemma}\label{extended partial integration}
For all elements
in $\mathcal{H}^{\mathrm{odd}}_{-3+\varepsilon}$
each individual degree component is closed and co-closed.
\end{lemma}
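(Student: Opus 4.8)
The plan is to adapt the integration-by-parts trick of Lemma \ref{individual degree closed and coclosed} to the borderline rate $-3+\varepsilon$, using the fact that the obstruction to making it work -- a leading-order asymptotic term at the critical rate $-3$ -- is itself harmless because by Lemma \ref{odd-degree at rate -3} the elements of $\mathcal{K}_{\mathrm{odd}}(-3)$ have closed and co-closed individual degree components.

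\begin{proof}
Let $\omega = \sum_{k} \omega_{2k+1} \in \mathcal{H}^{\mathrm{odd}}_{-3+\varepsilon}$, where $\omega_{2k+1}\in\Omega^{2k+1}_{l,-3+\varepsilon}$. Since $(d+d^*)\omega = 0$ we have $d\omega_{k} = -d^*\omega_{k+2}$ for each odd $k$. We want to show each $d\omega_k$ vanishes. By Proposition \ref{faster than expected decay for k-forms}, applied to the mixed-degree form $\omega$ at the critical rate $-3$ (valid since $-3+\nu < -3-\varepsilon$ for $\varepsilon$ small, as $\nu<0$), there is a unique $\eta \in \mathcal{K}_{\mathrm{odd}}(-3)$ with $\omega = \chi\eta + \tilde\omega$ on the end, where $\tilde\omega \in \Omega^{\mathrm{odd}}_{l,-3-\varepsilon}$. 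By Lemma \ref{odd-degree at rate -3}, $\eta = \beta_3 + r\,dr\wedge\alpha_4$ with $\alpha_4,\beta_3$ harmonic on $\Sigma$; in particular each degree component of $\eta$ is individually closed and co-closed, so $d(\chi\eta)$ and $d^*(\chi\eta)$ have all components decaying at rate $-4-\varepsilon$ (the error coming only from $d\chi$, which is compactly supported). Thus $d\omega_k = d(\chi\eta)_k + d\tilde\omega_k$ lies in $\Omega_{l-1,-4-\varepsilon}^{\bullet}$, and similarly $d^*\omega_{k+2}\in\Omega_{l-1,-4-\varepsilon}^{\bullet}$.

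Now $\omega_k\in\Omega^k_{l,-3+\varepsilon}$ and $d^*d^*\omega_{k+2} = 0$, but more importantly $d^*\omega_{k+2}\in\Omega^{k+1}_{l-1,-4-\varepsilon}$; hence the pairing $\langle d\omega_k, d^*\omega_{k+2}\rangle_{L^2}$ is, by the refined decay, a pairing between a form in $\Omega^{k+1}_{l-1,-4-\varepsilon}$ and a form in $\Omega^{k+1}_{l-1,-4-\varepsilon}$ -- wait, more carefully: write $\|d\omega_k\|_{L^2}^2 = \langle d\omega_k, d\omega_k\rangle_{L^2} = -\langle d\omega_k, d^*\omega_{k+2}\rangle_{L^2}$. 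To integrate by parts we move $d$ off the left factor: $-\langle d\omega_k, d^*\omega_{k+2}\rangle_{L^2} = -\langle \omega_k, d^*d^*\omega_{k+2}\rangle_{L^2}$, provided the hypotheses of Lemma \ref{partial integration} hold for the operator $P = d^*d^*$ of order $2$ acting between the relevant bundles. Here $\omega_k\in L^2_{2,-3+\varepsilon}$ and $d^*\omega_{k+2}$, by the refinement above, actually lies in $L^2_{1,-4-\varepsilon}$; so with $P = d^*$ (order $1$), $\eta := d^*\omega_{k+2}$ of rate $-4-\varepsilon$ and $\omega := d\omega_k$ of rate $-4-\varepsilon$, the condition $\lambda+\mu = -8-2\varepsilon \leq -8+1$ holds, giving $\langle d\omega_k, d^*\omega_{k+2}\rangle_{L^2} = \langle d^*d\omega_k, \omega_{k+2}\rangle_{L^2}$...

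Let me instead argue directly as in Lemma \ref{individual degree closed and coclosed}: we have $\|d\omega_k\|^2 = -\langle d\omega_k, d^*\omega_{k+2}\rangle_{L^2}$; apply Lemma \ref{partial integration} to $P = d$ acting on the left factor $d^*\omega_{k+2}$, i.e. $\langle d\omega_k, d^*\omega_{k+2}\rangle_{L^2} = \langle \omega_k, d^*(d^*\omega_{k+2})\rangle_{L^2} = 0$ since $d^*d^* = 0$. The validity of this integration by parts requires $\omega_k \in L^2_{1,\lambda}$, $d^*\omega_{k+2}\in L^2_{1,\mu}$ with $\lambda+\mu\le -8+1 = -7$. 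We have $\lambda = -3+\varepsilon$ and, crucially, by the decay refinement $d^*\omega_{k+2}\in L^2_{1,-4-\varepsilon}$, so $\mu = -4-\varepsilon$ and $\lambda+\mu = -7 \le -7$. Hence $\|d\omega_k\|_{L^2} = 0$, so $d\omega_k = 0$ and $d^*\omega_{k+2} = -d\omega_k = 0$ for every $k$. This shows every individual degree component of $\omega$ is closed and co-closed.
\end{proof}

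\noindent\textbf{The main obstacle} is the borderline nature of the weight arithmetic: at rate exactly $-3+\varepsilon$ the naive integration by parts of Lemma \ref{individual degree closed and coclosed} fails (one needs $\lambda \le -3$), so the whole point is to first extract the leading asymptotic term $\eta\in\mathcal{K}_{\mathrm{odd}}(-3)$ via Proposition \ref{faster than expected decay for k-forms}, observe via Lemma \ref{odd-degree at rate -3} that $\eta$ contributes nothing to $d\omega_k$ or $d^*\omega_{k+2}$ (its components are separately closed and co-closed), and thereby upgrade the decay of $d^*\omega_{k+2}$ from the expected $-4+\varepsilon$ to $-4-\varepsilon$, which is exactly what makes $\lambda+\mu \le -7$ and licenses the integration by parts. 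Verifying that the $d\chi$-terms are compactly supported and hence do not spoil this refinement is routine but must be done carefully.
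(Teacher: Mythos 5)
Your proposal is correct and takes essentially the same route as the paper: extract the leading term $\eta\in\mathcal{K}_{\mathrm{odd}}(-3)$ (the paper invokes Theorem \ref{Lockhart-McOwen-key} directly, which is the appropriate reference for a mixed odd-degree form, rather than Proposition \ref{faster than expected decay for k-forms}, which is stated for pure-degree forms), use Lemma \ref{odd-degree at rate -3} to see that its degree components are individually closed and co-closed, deduce the improved decay of $d\omega_k$ and $d^*\omega_{k+2}$, and then integrate by parts via Lemma \ref{partial integration}. The one point you gloss over is that the improved decay of $d^*_{\ig M}(\chi\eta)$ is not accounted for by $d\chi$ alone: $\eta$ is co-closed only for the conical codifferential, so you also need Lemma \ref{additional-decay} (and $\varepsilon\leq-\nu$) to control $(d^*_{\ig M}-d^*_{\ig C})(\chi\eta)$, which decays at rate $-4+\nu$; this is exactly how the paper closes that step.
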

\begin{proof}
Let $\gamma=\gamma_1 + \gamma_3 + \gamma_5 + \gamma_7\in\mathcal{H}^{\mathrm{odd}}_{-3+\varepsilon}$.
By Theorem \ref{Lockhart-McOwen-key} there exist $\eta\in\mathcal{K}_{\mathrm{odd}}(-3)$ 
and $\hat{\gamma}\in \Omega^{\mathrm{odd}}_{l,-3-\varepsilon}$
such that 
\begin{align*}
\gamma = \chi \eta + \hat{\gamma}.
\end{align*}
Because each degree component of $\eta$ is closed and co-closed on the cone by Lemma \ref{odd-degree at rate -3}, by Lemma \ref{additional-decay}
$d\gamma_k\in\Omega^{k+1}_{l-1,\-4-\varepsilon}$ and $d_{\ig M}^*\gamma_k\in \Omega^{k-1}_{l-1,-4-\varepsilon}$ for $k = 1,3,5,7$. Therefore, we can apply Proposition \ref{partial integration} to integrate by parts:
\begin{align*}
\| d \gamma_k \|^2_{L^2} + \| d^*\gamma_k \|^2_{L^2}
=
\langle d\gamma_k, d\gamma_k \rangle_{L^2}
+
\langle d_M^*\gamma_k , d_M^*\gamma_k \rangle_{L^2}
=
\langle \Delta \gamma_k, \gamma_k \rangle_{L^2}
=
0.
\end{align*}
\end{proof}

\begin{corollary}
\label{kernel change of odd forms}
We have
\begin{align*}
\mathcal{H}^{\mathrm{odd}}_{-3+\varepsilon}
/
\mathcal{H}^{\mathrm{odd}}_{-3-\varepsilon}
=
\mathcal{H}^3_{-3+\varepsilon}
/
\mathcal{H}^3_{-3-\varepsilon}
\oplus
\mathcal{H}^5_{-3+\varepsilon}
/
\mathcal{H}^5_{-3-\varepsilon}
=
\im \Upsilon^3 + r dr\wedge (\im \Upsilon^4)^{\perp}.
\end{align*}
\end{corollary}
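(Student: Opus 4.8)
The plan is to assemble Corollary \ref{kernel change of odd forms} from the pieces already in place. The starting observation is the inclusion
\begin{align*}
\mathcal{H}^3_{-3+\varepsilon}/\mathcal{H}^3_{-3-\varepsilon}
\oplus
\mathcal{H}^5_{-3+\varepsilon}/\mathcal{H}^5_{-3-\varepsilon}
\subset
\mathcal{H}^{\mathrm{odd}}_{-3+\varepsilon}/\mathcal{H}^{\mathrm{odd}}_{-3-\varepsilon},
\end{align*}
which is immediate since a closed and co-closed form of pure degree is in particular a closed and co-closed form of odd degree, and the quotient by $\mathcal{H}^{\mathrm{odd}}_{-3-\varepsilon}$ respects this. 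So the first real step is to upgrade this inclusion to an equality. For that I would take $\gamma = \gamma_1+\gamma_3+\gamma_5+\gamma_7 \in \mathcal{H}^{\mathrm{odd}}_{-3+\varepsilon}$ and invoke Lemma \ref{extended partial integration}, which says that each individual degree component $\gamma_k$ is itself closed and co-closed. Hence $\gamma_k \in \mathcal{H}^k_{-3+\varepsilon}$ for each $k$, and modulo $\mathcal{H}^{\mathrm{odd}}_{-3-\varepsilon}$ we can decompose $[\gamma] = \sum_k [\gamma_k]$ into its degree components, giving the reverse inclusion $\mathcal{H}^{\mathrm{odd}}_{-3+\varepsilon}/\mathcal{H}^{\mathrm{odd}}_{-3-\varepsilon} \subset \bigoplus_k \mathcal{H}^k_{-3+\varepsilon}/\mathcal{H}^k_{-3-\varepsilon}$.

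Next I would eliminate the unwanted degrees. By Corollary \ref{forms on cone at rate -4} the relevant model kernels on the cone satisfy $\mathcal{K}_{\Lambda^1}(-3)=0$ and $\mathcal{K}_{\Lambda^7}(-3)=0$ (only degrees $3$ and $5$ appear in $\mathcal{K}_{\mathrm{odd}}(-3)$), so Proposition \ref{existence of crit rate cross map} applied to $d+d^*$ on $\Omega^1$ and $\Omega^7$ gives injective maps $T^1_{-3}$ and $T^7_{-3}$ into zero, forcing $\mathcal{H}^1_{-3+\varepsilon}/\mathcal{H}^1_{-3-\varepsilon}=0$ and $\mathcal{H}^7_{-3+\varepsilon}/\mathcal{H}^7_{-3-\varepsilon}=0$. (Alternatively, a harmonic $1$-form of rate $-3+\varepsilon < 0$ vanishes by Lemma \ref{vanishing-harmonic-forms}, and a harmonic $7$-form is Hodge-dual to a harmonic $1$-form of rate $-5-\varepsilon$, which vanishes as well.) This leaves only $k=3$ and $k=5$, so
\begin{align*}
\mathcal{H}^{\mathrm{odd}}_{-3+\varepsilon}/\mathcal{H}^{\mathrm{odd}}_{-3-\varepsilon}
=
\mathcal{H}^3_{-3+\varepsilon}/\mathcal{H}^3_{-3-\varepsilon}
\oplus
\mathcal{H}^5_{-3+\varepsilon}/\mathcal{H}^5_{-3-\varepsilon}.
\end{align*}

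Finally I would identify the two surviving summands using Lemma \ref{crossing -3 for deg 3 and 5}: part (i) gives $\mathcal{H}^3_{-3+\varepsilon}/\mathcal{H}^3_{-3-\varepsilon} \cong \im\Upsilon^3$ and part (ii) gives $\mathcal{H}^5_{-3+\varepsilon}/\mathcal{H}^5_{-3-\varepsilon} \cong r\,dr\wedge(\im\Upsilon^4)^{\perp}$, where the identifications are realized concretely by the crossing maps $T^3_{-3}$ and $T^5_{-3}$. Splicing these in yields the claimed equality
\begin{align*}
\mathcal{H}^{\mathrm{odd}}_{-3+\varepsilon}/\mathcal{H}^{\mathrm{odd}}_{-3-\varepsilon}
=
\im\Upsilon^3 + r\,dr\wedge(\im\Upsilon^4)^{\perp}.
\end{align*}
The only subtle point here is that the degree-component decomposition is genuinely available at rate $-3+\varepsilon$ (it fails at rate exactly $-3$), and this is precisely what Lemma \ref{extended partial integration} — which itself relies on the fact that each degree component of the leading-order term in $\mathcal{K}_{\mathrm{odd}}(-3)$ is separately closed and co-closed, plus the extra decay from Lemma \ref{additional-decay} needed to integrate by parts via Proposition \ref{partial integration} — is designed to supply. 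So the main obstacle is already absorbed into the preceding lemma, and the corollary is essentially bookkeeping over it.
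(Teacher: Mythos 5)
Your proof is correct and takes essentially the same route as the paper, which deduces the corollary immediately from Lemma \ref{extended partial integration} (each degree component of an element of $\mathcal{H}^{\mathrm{odd}}_{-3+\varepsilon}$ is separately closed and co-closed) together with Lemma \ref{crossing -3 for deg 3 and 5}, the vanishing of the degree $1$ and $7$ contributions being implicit there exactly as you spell out via $\mathcal{K}_{\Lambda^1}(-3)=\mathcal{K}_{\Lambda^7}(-3)=0$. One trivial slip in your parenthetical alternative: the Hodge dual of a harmonic $7$-form of rate $-3+\varepsilon$ is a harmonic $1$-form of the same rate $-3+\varepsilon$ (the Hodge star preserves pointwise norms, hence decay rates), not $-5-\varepsilon$; since $-3+\varepsilon\leq 0$ it still vanishes by Lemma \ref{vanishing-harmonic-forms}, so the conclusion is unaffected.
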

\begin{proof}
This follows immediately from Lemmas \ref{crossing -3 for deg 3 and 5} and \ref{extended partial integration}.
\end{proof}

\begin{proof}[Proof of Theorem \ref{exceptional index change for 4 forms}]
By Lemma \ref{4-form change = even form change} we can compute the change in kernel of the operator $(d+d^*_M)^{\mathrm{even}}_{\lambda}$ at rate $\lambda = -4$ instead of the operator $d+d^*_M$ restricted to 4-forms. The kernel change of $(d+d^*)^{\mathrm{even}}_{\lambda}$ corresponds to the cokernel change of the operator $(d+d^*_M)^{\mathrm{odd}}_{\lambda}$ at rate $\lambda = -3$. Using Theorem \ref{index change for elliptic operators} and Corollary \ref{kernel change of odd forms} we have a complete understanding of that. In formulas:
\begin{gather*}
\dim \mathcal{H}^4_{-4+\varepsilon}
-
\dim \mathcal{H}^4_{-4-\varepsilon}
=
\dim \mathcal{H}^{\mathrm{even}}_{-4+\varepsilon}
-
\dim \mathcal{H}^{\mathrm{even}}_{-4-\varepsilon}
\\
=
\dim \ker (d+d^*_{\ig M})^{\mathrm{even}}_{-4+\varepsilon}
-
\dim \ker (d+d^*_{\ig M})^{\mathrm{even}}_{-4-\varepsilon}
\\
=
\dim \coker(d+d^*_{\ig M})^{\mathrm{odd}}_{-3-\varepsilon}
-
\dim \coker (d+d^*_{\ig M})^{\mathrm{odd}}_{-3+\varepsilon}
\\
=
(
\ind (d+d^*_{\ig M})^{\mathrm{odd}}_{-3+\varepsilon}
- \ind (d+d^*_{\ig M})^{\mathrm{odd}}_{-3-\varepsilon}
)
-
(
\dim \ker (d+d^*_{\ig M})^{\mathrm{odd}}_{-3+\varepsilon}
-
\dim \ker (d+d^*_{\ig M})^{\mathrm{odd}}_{-3-\varepsilon}
)
\\
=
(
\dim H^3(\Sigma,\mathbb{R})+\dim H^4(\Sigma,\mathbb{R})
)
-
(
\dim \im \Upsilon^3 + \dim (\Upsilon^4)^{\perp}
)
\\
=
\dim (\im \Upsilon^3)^{\perp}
+
\dim \im \Upsilon^4.
\end{gather*}
Therefore the injection 
\begin{align*}
T^{4}_{-4}
\colon
\mathcal{H}^4_{-4+\varepsilon}
/
\mathcal{H}^4_{-4-\varepsilon}
\rightarrow
r^{-1} dr\wedge (\im \Upsilon^3)^{\perp}+\im \Upsilon^4
.
\end{align*}
from Lemma \ref{image of T contained in image of Upsilon}
is surjective.
\end{proof}

In the following we describe an alternative proof of the surjectivity in Proposition \ref{exceptional index change for 4 forms}.
Let $\beta$ be the harmonic representative of any class in $\im \Upsilon^4$.
To prove Theorem \ref{exceptional index change for 4 forms} it is enough to find  $\xi \in \mathcal{C}^{\infty}_{-4-\varepsilon}(\Lambda^4)$ such that 
\begin{align}
\label{2nd proof-eqn 1}
(d+d^*)(\chi \beta) = (d+d^*)\xi.
\end{align} 
Then $\gamma = \chi \beta-\xi \in \mathcal{H}^4_{-4+\varepsilon}$ and $\Upsilon^4[\gamma] = [\beta]$. By using linearity and the Hodge-$*$ operator we then can solve equation 
(\ref{2nd proof-eqn 1}) if we replace $\beta$ by any element 
in $r^{-1} dr\wedge(\im \Upsilon^3)^{\perp}+\im \Upsilon^4$.

The main ingredient in the alternative proof is
Lemma \ref{orthogonality of image}. It allows to show that the obstructions to solve (\ref{2nd proof-eqn 1}) vanish. Let $m$ be an arbitrary 
integer at least 1.
Even though $\chi\beta$ lies just in $\Omega^4_{m,-4+\varepsilon}$,
we get an improved decay rate for $(d+d^*)(\chi\beta)$.
Because $\beta$ is closed and co-closed on the cone by Lemma \ref{additional-decay} we have $(d+d^*)(\chi \beta)\in \Omega^{\mathrm{odd}}_{m-1,-5-\varepsilon}$ if we choose $\varepsilon$  small enough
such that $2\varepsilon < - \nu$.
To sum up we consider the operator
\begin{align*}
d+d^* \colon
\Omega^{\mathrm{even}}_{m,-4-\varepsilon}\rightarrow
\Omega^{\mathrm{odd}}_{m-1,-5-\varepsilon}
\end{align*}
and want to determine if $(d+d^*)(\chi \beta)$
is in the image.
The adjoint operator is given by
\begin{align*}
d+d^* \colon
\Omega^{\mathrm{odd}}_{l,-3+\varepsilon}\rightarrow
\Omega^{\mathrm{even}}_{l-1,-4+\varepsilon}.
\end{align*}
By the Fredholm alternative we need to show
\begin{align*}
\langle
(d+d^*)(\chi \beta), \sigma
\rangle_{L^2}
=0
\end{align*}
for any $\sigma\in \mathcal{H}^{\mathrm{odd}}_{-3+\varepsilon}$. 
By Proposition \ref{existence of crit rate cross map} and Corollary \ref{forms on cone at rate -4}
we can write $\sigma = \sigma_+ + \sigma_-$,
where 
\begin{align*}
\sigma_+ =
\chi (\zeta + r dr\wedge\eta)
\end{align*}
with $\zeta\in \im \Upsilon^3$, $\eta\in (\im \Upsilon^4)^{\perp}$,
and $\sigma_{-}\in\Omega^{\mathrm{odd}}_{l,-3-\varepsilon}$
.
Standard integration by parts from Lemma \ref{partial integration}
gives 
\begin{align*}
\langle
(d+d^*)(\chi \beta), \sigma
\rangle
=
\langle
(d+d^*)(\chi \beta), \sigma_+
\rangle
+
\langle
(d+d^*)(\chi \beta), \sigma_-
\rangle
\\
=
\langle
(d+d^*)(\chi \beta), \sigma_+
\rangle
+
\langle
\chi \beta, (d+d^*)\sigma_-
\rangle
\\
=
\langle
(d+d^*)(\chi \beta), \sigma_+
\rangle
-
\langle
\chi \beta, (d+d^*)\sigma_+
\rangle
.
\end{align*}
Therefore it is enough to prove the integration by parts
\begin{align*}
\langle
(d+d^*)(\chi \beta), \sigma_+
\rangle
=
\langle
\chi \beta, (d+d^*)\sigma_+
\rangle.
\end{align*}
Note that this does not merely follow 
from Lemma \ref{partial integration}
because the smallest possible $\lambda$
such that $\chi \beta \in L^2_{k,\lambda}(\Lambda^{\mathrm{even}})$
is strictly greater than $-4$
and the smallest possible $\mu$ such that 
$\sigma_+ \in L^2_{k,\mu}(\Lambda^{\mathrm{odd}})$
is strictly greater than $-3$, and hence the sum is 
strictly greater than $-7$.
Therefore, we are in a situation in which Lemma
\ref{partial integration} fails. 
The idea is to use Lemma \ref{orthogonality of image}
to adapt the proof of integration by parts to this situation.
First note that we can write
$*_{\ig M}(\chi r dr\wedge\eta)=\chi *_{\ig \Sigma}\eta +\omega$
with some $\omega\in\mathcal{C}^{\infty}_{-3+\nu}(\Lambda^3)$.
We have
\begin{align*}
&
\langle
d(\chi \beta), \sigma_+
\rangle
=
\langle
d(\chi \beta), \chi r dr\wedge\eta
\rangle
=
\int_M
(d(\chi\beta))
\wedge
*
(\chi r dr\wedge\eta)
\\
=&
\int_M
d
\big(
(\chi \beta)
\wedge
*
(\chi r dr\wedge\eta)
\big)
-
\int_M
(\chi \beta)
\wedge
d*
(\chi r dr\wedge\eta)
\\
=&
\int_M
d
\big(
(\chi \beta)
\wedge
(\chi *\eta+\omega)
\big)
+
\int_M
(\chi \beta)
\wedge
*d^*
(\chi r dr\wedge\eta)
\\
=&
\lim_{r\rightarrow \infty}
\int_{\{r\}\times \Sigma}
\beta\wedge *\eta
+
\lim_{r\rightarrow \infty}
\int_{\{r\}\times \Sigma}
\beta\wedge 
\omega|_{\{r\}\times \Sigma}
+
\langle
\chi \beta, d^*\sigma_+
\rangle
\\
=&
\lim_{r\rightarrow \infty}
r^7 
\langle
\beta,\eta
\rangle_{L^2(\Sigma)}
+
\lim_{r\rightarrow \infty}
\int_{\{r\}\times \Sigma}
\beta\wedge 
\omega|_{\{r\}\times \Sigma}
+
\langle
\chi \beta, d^*\sigma_+
\rangle
\\
=&
\langle
\chi \beta, d^*\sigma_+
\rangle.
\end{align*}
In the second to last line
the first limit vanishes because 
$\beta \perp_{L^2} \eta$
and the second limit vanishes because
the integrand decays with rate $-4-3 + \nu < -7$.

$\langle
d^*(\chi \beta), \sigma_+
\rangle
=
\langle
\chi\beta, d\sigma_+
\rangle
$
follows similarly.

\subsection{Summary}

We now summarise our results and give a precise formulation of the main theorem stated in the introduction. In Proposition \ref{prop-smoothness-of-moduli}
we have seen that for generic rates in the non-$L^2$ regime the moduli space $\mathcal{M}_{\nu}$ is an orbifold and that infinitesimal deformations of a torsion-free AC Spin(7)-structure $\psi$ are related to closed anti-self-dual 4-forms on $(M,\psi)$. So far we have proven Proposition \ref{first computation of tangent space} in this section, showing that the jump of $(\mathcal{H}^4_{35})_{\nu}$ at a critical rate $\lambda$ is given by $\mathcal{K}_{\mathrm{ASD}}(\lambda)$. Before we formulate our main theorem, we relate forms in  $\mathcal{K}_{\mathrm{ASD}}(\lambda)$ to solutions of a differential equation purely on the link $(\Sigma,\varphi_{\ig\Sigma})$ of the asymptotic cone.

\begin{proposition}
\label{prop-reformulation}
Define
\begin{align*}
\mathcal{E}(\Sigma,\varphi_{\ig\Sigma},\lambda)
:=
\{
\zeta\in\Omega^3_{27}(\Sigma)
|\
d\zeta
=
-(\lambda+4) *\zeta
\}.
\end{align*}
Then we have
\begin{align*}
\mathcal{K}_{\mathrm{ASD}}(\lambda)
\cong
\begin{cases}
\mathcal{E}(\Sigma,\varphi_{\ig\Sigma},\lambda)
&
\textrm{if}\, \lambda\in (-4,-1],
\\
\mathcal{E}(\Sigma,\varphi_{\ig\Sigma},\lambda)\oplus \mathcal{K}_{\Lambda^1}(\lambda+1)
&
\textrm{if}\,
\lambda\in (-1,0).
\end{cases}
\end{align*}
\end{proposition}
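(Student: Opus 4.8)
The plan is to translate the statement about the cone into a first-order equation on the link, isolate the contribution of the reducible $\mathrm{G}_2$-types, and identify that contribution with harmonic $1$-forms on the cone. \emph{Reduction to an equation on $\Sigma$.} By Lemma~\ref{no-log} no $\log r$ terms occur, so an element of $\mathcal{K}_{\mathrm{ASD}}(\lambda)$ is a genuinely homogeneous anti-self-dual $4$-form $\gamma$ of rate $\lambda$ with $(d+d^*_{\ig C})\gamma=0$. Since $*_{\ig C}\gamma=-\gamma$ and $d^*_{\ig C}=-*_{\ig C}d_{\ig C}*_{\ig C}$ on forms on the $8$-dimensional cone, we get $d^*_{\ig C}\gamma=*_{\ig C}d_{\ig C}\gamma$, so closedness and co-closedness of $\gamma$ are equivalent and $(d+d^*_{\ig C})\gamma=0$ reduces to $d_{\ig C}\gamma=0$. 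Writing $\gamma=r^{\lambda+3}\,dr\wedge\alpha+r^{\lambda+4}\beta$ with $\alpha\in\Omega^3(\Sigma)$ and $\beta\in\Omega^4(\Sigma)$, Remark~\ref{remark-char-asd} identifies anti-self-duality with $\beta=-*_{\ig\Sigma}\alpha$ and, since $\lambda\neq-4$, closedness with $d_{\ig\Sigma}\alpha=-(\lambda+4)*_{\ig\Sigma}\alpha$. Hence
\begin{align*}
\mathcal{K}_{\mathrm{ASD}}(\lambda)\ \cong\ V_{\lambda}:=\{\,\alpha\in\Omega^3(\Sigma)\ |\ d_{\ig\Sigma}\alpha=-(\lambda+4)*_{\ig\Sigma}\alpha\,\},
\end{align*}
and under this identification the forms $\alpha$ of pure type $27$ in $V_{\lambda}$ are exactly $\mathcal{E}(\Sigma,\varphi_{\ig\Sigma},\lambda)$, so $\mathcal{E}(\Sigma,\varphi_{\ig\Sigma},\lambda)$ is always a subspace of $\mathcal{K}_{\mathrm{ASD}}(\lambda)$; it remains to determine the contribution of types $1$ and $7$.

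\emph{The reducible part.} Split $\alpha=\alpha_1+\alpha_7+\alpha_{27}$ according to $\Omega^3=\Omega^3_1\oplus\Omega^3_7\oplus\Omega^3_{27}$; the reducible summand $\Omega^3_1\oplus\Omega^3_7\cong\Omega^0(\Sigma)\oplus\Omega^1(\Sigma)$ is, on a slice of the cone, precisely the bundle underlying $\mathbf{S}_-\cong\Lambda^3_8\cong T^*C$. Using the nearly parallel structure equations $d\varphi_{\ig\Sigma}=4*_{\ig\Sigma}\varphi_{\ig\Sigma}$, $d*_{\ig\Sigma}\varphi_{\ig\Sigma}=0$ and the standard $\mathrm{G}_2$-representation-theoretic formulas for the exterior derivative on each irreducible summand (compare Alexandrov--Semmelmann~\cite{alexandrov2012}), the projections of $d_{\ig\Sigma}\alpha=-(\lambda+4)*_{\ig\Sigma}\alpha$ onto types $1$ and $7$ become a coupled first-order system for $(\alpha_1,\alpha_7)$; this system is equivalent to the statement that the associated type-$8$ three-form on the cone, homogeneous of rate $\lambda+1$, lies in the kernel of the negative Dirac operator $\slashed{D}_-$. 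By Lemma~\ref{0-1-Dirac} together with Lemma~\ref{critical rates for Laplace on 1-forms} (note $\lambda+1\in(-3,1)$) and $\Delta=\slashed{D}^2$, this kernel is $\{0\}$ when $\lambda\in(-4,-1]$ and is isomorphic to $\mathcal{K}_{\Lambda^1}(\lambda+1)$ when $\lambda\in(-1,0)$; in the latter case each solution comes from a unique homogeneous vector field $X$ on the cone with $X^{\flat}\in\mathcal{K}_{\Lambda^1}(\lambda+1)$, and the corresponding element of $\mathcal{K}_{\mathrm{ASD}}(\lambda)$ is $d(X\lrcorner\psi_{\ig C})=\mathcal{L}_X\psi_{\ig C}$, which is anti-self-dual because $\pi_{1\oplus7}(d(X\lrcorner\psi_{\ig C}))=\slashed{D}_-(X\lrcorner\psi_{\ig C})=0$.

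\emph{Assembling and the main obstacle.} For $\lambda\in(-4,-1]$ the previous step forces $\alpha_1=\alpha_7=0$ for every $\alpha\in V_{\lambda}$, so $V_{\lambda}=\mathcal{E}(\Sigma,\varphi_{\ig\Sigma},\lambda)$, as claimed. For $\lambda\in(-1,0)$ one subtracts from a given $\gamma\in\mathcal{K}_{\mathrm{ASD}}(\lambda)$ the unique gauge element $d(X\lrcorner\psi_{\ig C})$ having the same reducible part; the difference again lies in $\mathcal{K}_{\mathrm{ASD}}(\lambda)$ and, by the type-$1$ and type-$7$ parts of the equation now with vanishing reducible part (using $-(\lambda+4)\neq4$ to kill the type-$1$ component), has associated form of pure type $27$, hence lies in $\mathcal{E}(\Sigma,\varphi_{\ig\Sigma},\lambda)$; the sum is direct by an injectivity argument for $X^{\flat}\mapsto d(X\lrcorner\psi_{\ig C})$ of the kind used in Lemma~\ref{Killing-fields-cone}, since $X^{\flat}\in\mathcal{K}_{\Lambda^1}(\lambda+1)$ is exact of positive rate by Lemma~\ref{critical rates for Laplace on 1-forms}. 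The main obstacle is the representation-theoretic computation on the nearly parallel link in the second step: one must control precisely how $d_{\ig\Sigma}$ mixes the three $\mathrm{G}_2$-types and verify that the reducible part of the equation is governed exactly by $\slashed{D}_-$ on the cone. This parallels the $\mathrm{G}_2$-conifold computation of Karigiannis--Lotay~\cite{KL} but is more delicate, the special behaviour of middle-degree forms on an even-dimensional cone already being visible in the exceptional role of the rate $-4$.
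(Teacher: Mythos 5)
Your first step (reduction via Remark~\ref{remark-char-asd} and Lemma~\ref{no-log} to the link equation $d_{\ig\Sigma}\alpha=-(\lambda+4)*_{\ig\Sigma}\alpha$) agrees with the paper, and your intended conclusion about the reducible part is the right one. But the pivotal claim in your second step --- that the projections of the link equation onto types $1$ and $7$ form a closed first-order system in $(\alpha_1,\alpha_7)$ which is ``equivalent'' to $\slashed{D}_-$ annihilating an associated type-$8$ form on the cone --- is exactly the hard point, and as stated it is not correct. On a nearly parallel $\mathrm{G}_2$-manifold the exterior derivative mixes types: while $\pi_1(d\zeta)=0$ for $\zeta\in\Omega^3_{27}$ (since $\zeta\wedge\varphi_{\ig\Sigma}=\zeta\wedge *_{\ig\Sigma}\varphi_{\ig\Sigma}=0$ and $d\varphi_{\ig\Sigma}=4*_{\ig\Sigma}\varphi_{\ig\Sigma}$), the component $\pi_7(d\zeta)$ does not vanish pointwise, so the type-$1\oplus7$ projection of the equation couples $(\alpha_1,\alpha_7)$ to $\alpha_{27}$ and is not governed by a Dirac operator in $(\alpha_1,\alpha_7)$ alone. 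Equivalently, on the cone one has $\slashed{D}_-\beta_8=\pi_{1\oplus7}(d\beta_8)=-\pi_{1\oplus7}(d\beta_{48})$ for the potential $\beta$ with $d\beta=\eta$, and there is no a priori reason for this to vanish. A further unaddressed point is that the ``associated type-$8$ three-form'' is not simply built from the reducible part of $\alpha$: the type-$7$ piece $\alpha_7$ contributes to \emph{both} $\beta_8$ and $\beta_{48}$ (in the paper's computation $\beta_8=Z\lrcorner\psi_{\ig C}$ with $Z=\tfrac{r^{\lambda}}{\lambda+4}(rf\partial_r+\tfrac47 X)$, the factor $\tfrac47$ coming from this splitting). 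You flag this as ``the main obstacle'' but do not resolve it, so the proof has a genuine gap precisely where the work lies; your final assembling step (subtracting a gauge element ``with the same reducible part'' and concluding the remainder is pure type $27$) relies on this unproven equivalence.

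For comparison, the paper's mechanism is second order rather than first order: since $\lambda\neq-4$, write $\eta=d\beta$ with $\beta=\tfrac{1}{\lambda+4}r^{\lambda+4}\alpha$; then $\beta$ is coclosed and $\Delta_{\ig C}\beta=d^*_{\ig C}\eta=0$, and because the Hodge Laplacian on the torsion-free cone preserves the Spin(7)-type decomposition, $\beta_8$ is itself harmonic. After the explicit computation identifying $\beta_8$ with $Z\lrcorner\psi_{\ig C}$ as above, $Z^{\flat}$ is a homogeneous harmonic $1$-form of rate $\lambda+1$, and Lemma~\ref{critical rates for Laplace on 1-forms} forces $Z=0$ for $\lambda\le-1$, while for $\lambda\in(-1,0)$ it shows $Z^{\flat}\in\mathcal{K}_{\Lambda^1}(\lambda+1)$; the reverse count uses that $d\beta_8=\mathcal{L}_Z\psi_{\ig C}$ is a nonzero element of $\mathcal{K}_{\mathrm{ASD}}(\lambda)$ whenever $Z\neq0$ (Lemma~\ref{Killing-fields-cone}), noting that a Lie derivative of $\psi_{\ig C}$ has no $\Lambda^4_{27}$-component --- a point your anti-self-duality argument for $d(X\lrcorner\psi_{\ig C})$ also omits. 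To repair your proof you would either have to carry out the type-mixing computation on the link in full (controlling the $\pi_7(d\alpha_{27})$ terms), or adopt the paper's route through harmonicity of the potential and type preservation by $\Delta_{\ig C}$.
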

\begin{proof}
By Remark \ref{remark-char-asd} and Lemma \ref{no-log}
any $\eta\in\mathcal{K}_{\mathrm{ASD}}(\lambda)$ is of the form
\begin{align*}
\eta = r^{\lambda}
(r^3 dr\wedge\alpha + r^4 (-*\alpha))
\end{align*}
with 
\begin{align}
d\alpha = - (\lambda+4) *\alpha.
\label{equation on link}
\end{align}

The $G_2$-structure $\varphi_{\ig\Sigma}$ induces the decomposition $\Omega^3(\Sigma) = \Omega^3_1(\Sigma) \oplus \Omega^3_7(\Sigma)\oplus \Omega^3_{27}(\Sigma)$ of 3-forms which we can use to write 
\begin{align}
\label{type-decomp-alpha}
\alpha = 
\alpha_1 + \alpha_7 + \alpha_{27}
=
f \varphi_{\ig\Sigma} + X\lrcorner *\varphi_{\ig\Sigma} + \zeta,
\end{align}
where $f$ is a function and $X$ a vector field on $\Sigma$,
 and $\zeta\in \Omega^3_{27}(\Sigma)$. 
Our goal is to show that if $\lambda\in(-4,1)$ then $f$ and $X$ have to vanish. 
The main idea to prove this is to interpret the condition $d\eta = 0$ on the cone as an equation involving the Laplace operator and use the fact that the Laplace operator preserves the type decomposition on the cone with respect to the Spin(7)-structure. More specifically, because $\lambda\neq -4$ we can write
$\eta = d\beta$,
where $\beta = \frac{1}{\lambda+4}r^{\lambda+4}\alpha$.
The fact that $\eta$ is a closed anti-self-dual 4-form on $C(\Sigma)$ homogeneous of rate $\lambda$ implies 
that $\beta$ is a harmonic 3-form on $C(\Sigma)$ homogeneous of rate $\lambda+1$. Indeed 
by \eqref{formulas-cone} $\beta$ is co-closed because $\alpha$ is co-closed,
and hence
\begin{align*}
\Delta\beta
=
dd^* \beta + d^*d\beta 
=
d^*\eta = 0.
\end{align*}
Next we
relate the type decomposition \eqref{type-decomp-alpha} of $\alpha$ with respect to the $G_2$-structure on the link to the type decomposition of $\beta=\beta_8+\beta_{48}$ with respect to the Spin(7)-structure on the cone. Because the decomposition is linear in $\alpha$, we compute the contributions of $\alpha_1, \alpha_7, \alpha_{27}$ separately.
We have
\begin{align}
\frac{r^{\lambda+4}}{\lambda+4} \alpha_1
=
\frac{r^{\lambda+4}}{\lambda+4} f\varphi_{\ig\Sigma}
=
\frac{r^{\lambda+1}}{\lambda+4} (f \partial_r)\lrcorner\psi_{\ig C},
\label{contr-a1}
\end{align}
and $\alpha_1$ only contributes to $\beta_8$.

Write $\pi_8(r^{\lambda+4}\alpha_7)=Y\lrcorner \psi$ for some vector field on $C(\Sigma)$ to be determined.
By the computation \ref{contr-a1} $Y$ does not contain a $\partial_r$ component. 
Therefore, we get
\begin{align*}
\pi_8(r^{\lambda+4}\alpha_7) &= -r^3 dr\wedge(Y\lrcorner \varphi_{\ig\Sigma})+r^4 Y\lrcorner *\varphi_{\ig\Sigma},
\\
\pi_{48}(r^{\lambda+4}\alpha_7) 
&= r^{\lambda+4} X\lrcorner \psi_{\ig C}
-
\pi_8(r^{\lambda+4}\alpha_7)
=
dr\wedge(r^3 Y\lrcorner\varphi_{\ig\Sigma})+(r^{\lambda+4}X-r^4Y)\lrcorner *\varphi_{\ig\Sigma}.
\end{align*}
The characterising equation $\pi_{48}(r^{\lambda+4}\alpha_7)\wedge\psi=0$ for forms of type 48 leads to the equation
\begin{gather*}
dr\wedge*\varphi_{\ig\Sigma}\wedge(r^7 Y \lrcorner \varphi_{\ig\Sigma})
+
dr\wedge\varphi_{\ig\Sigma}\wedge((r^{\lambda+7}X-r^7Y)\lrcorner *\varphi_{\ig\Sigma})
\\
+
*\varphi_{\ig\Sigma}\wedge((r^{\lambda+8}X-r^8Y)\lrcorner *\varphi_{\ig\Sigma})
=
0,
\end{gather*}
which is equivalent to the system
\begin{align*}
*\varphi_{\ig\Sigma}\wedge((r^{\lambda+8}X-r^8Y)\lrcorner *\varphi_{\ig\Sigma})
=
0,
\\
*\varphi_{\ig\Sigma}\wedge(r^7 Y \lrcorner \varphi_{\ig\Sigma})
+
\varphi_{\ig\Sigma}\wedge((r^{\lambda+7}X-r^7Y)\lrcorner *\varphi_{\ig\Sigma})
=
0.
\end{align*}
By \cite[Lemma 2.2.3]{Spiro-PhD} the first equation
is always true while the second equation is equivalent to
\begin{align*}
3*Y^{\flat}-4*(r^{\lambda} X- Y)^{\flat}
=
0,
\end{align*}
which gives
$Y = \frac{4}{7}r^{\lambda}X$.

Because $\zeta\wedge\varphi = 0$ and $\zeta\wedge*\varphi = 0$, we immediately get $\zeta\wedge\psi = 0$. Hence, $\alpha_{27}$ only contributes to $\beta_{48}$.

Adding the individual contributions gives
$\beta_8 = Z\lrcorner\psi$ with
\begin{align*}
Z 
=
\frac{r^{\lambda}}{\lambda+4}
\left(
r f \partial_r + \frac{4}{7} X
\right).
\end{align*}
Because the Laplace operator preserves type decompositions, we get $\Delta\beta_8 = 0$
and that $Z^{\flat} = \frac{r^{\lambda+1}}{\lambda+4}(f dr + \frac{4}{7} r X^{\flat})$ is a homogeneous harmonic 1-form of rate $\lambda+1$. Here $Z^{\flat}$ is dual to $Z$ on the cone and $X^{\flat}$ is dual to $X$ on $\Sigma$.
If $\lambda \leq -1$, then $Z^{\flat}$ must vanish by Lemma \ref{critical rates for Laplace on 1-forms}, and the statement follows in this case.

We are left to finish the proof for $\lambda\in(-1,0)$.
In this case $Z^{\flat}$ is closed and co-closed by Lemma \ref{critical rates for Laplace on 1-forms}.
If $\alpha'=f \varphi + X\lrcorner *\varphi + \zeta'$ is another solution of  
\eqref{equation on link}, then $\zeta'-\zeta\in\mathcal{E}(\Sigma,\lambda)$.
This proves
\begin{align}
\dim (\mathcal{K}_{\mathrm{ASD}}(\lambda)/\mathcal{E}(\Sigma,\varphi_{\ig\Sigma},\lambda))
\leq \dim \mathcal{K}_{\Lambda^1}(\lambda+1).
\label{dim-comp}
\end{align}
By Lemma \ref{0-1-Dirac}
$\beta_8=Z^{\flat}\lrcorner\psi$ is in the kernel of $\slashed{D}_{-}$. Therefore,
$d\beta_8$ lies in $\mathcal{K}_{\mathrm{ASD}}(\lambda)$.
The equation $d\beta_8 = \mathcal{L}_{Z} \psi_{\ig C}$
and Lemma \ref{Killing-fields-cone} show that $d\beta_8$ is non-zero if $Z$ is non-zero. This proves the reverse inequality of \eqref{dim-comp}.
\end{proof}

We can now summarise our results as

\begin{theorem}
\label{Thm-precise-formulation}
Let $C:=(C(\Sigma),\psi_{\ig C})$ be a Spin(7)-cone, which is not isometric to Euclidean space, and $(M,\psi)$ an
AC Spin(7)-manifold asymptotic to $C$ with rate $\nu$.
Suppose $\nu \in (-4,0)$ and that $\nu+1$ and $\nu$ are non-critical rates for the Laplace-operator on $C$.

Then the moduli space $\mathcal{M}_{\nu}=\mathcal{X}_{\nu}/\mathcal{D}_{\nu+1}$ of all torsion-free AC Spin(7)-structures on $M$ asymptotic to $C$ with rate $\nu$ is an orbifold of dimension
\begin{align*}
\dim \mathcal{M}_{\nu}
=
\dim (\mathcal{H}^4_{-})_{L^2}
+
\dim \im\Upsilon^4
+
\sum_{\lambda \in \mathcal{D}(d+d^*)
\cap (-4,\nu)}
\dim \mathcal{E}(\Sigma,\varphi_{\ig\Sigma},\lambda)
,
\end{align*}
where 
\begin{align*}
\mathcal{E}(\Sigma,\varphi_{\ig\Sigma},\lambda)
:=
\{
\zeta\in\Omega^3_{27}(\Sigma)
|\
d\zeta
=
-(\lambda+4) *\zeta
\},
\end{align*}
and $\Upsilon^4$ is the restriction map \eqref{restriction map}.

Furthermore, if the stabiliser of $\psi$ in $\mathcal{D}_{\nu+1}$ is trivial or acts trivially in a neighbourhood of $\psi$ in $\mathcal{X}_{\nu}$, the moduli space $\mathcal{M}_{\nu}$ is smooth in a neighbourhood of $\psi \mathcal{D}_{\nu+1}$. In particular, this is the case if 
with respect to the type decomposition given by $\psi$ the projection map
\begin{align*}
(\mathcal{H}^4_{35})_{\nu} \rightarrow H^4(M,\mathbb{R})
\end{align*}
is injective. 
\end{theorem}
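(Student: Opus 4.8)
The plan is to bolt together the results of Sections \ref{smoothness-mod-space} and \ref{comp-inf-defo}; the only genuinely new work is a dimension count. The statements that $\mathcal{M}_\nu$ is an orbifold, and that it is smooth near $\psi\mathcal{D}_{\nu+1}$ whenever the stabiliser $\mathcal{I}_\psi$ is trivial or acts trivially, are exactly Proposition \ref{prop-smoothness-of-moduli}: the hypotheses here imply that $\nu$ and $\nu+1$ are non-critical for $d+d^*$ (since $\Delta_{\ig C}=(d+d^*)^2$ forces $\mathcal{D}(d+d^*)\subseteq\mathcal{D}(\Delta_{\ig C})$), and the Euclidean cone is excluded in accordance with Remark \ref{Euclidean-space}. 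What then remains is (a) to convert the tangent space \eqref{tangent-space} into the stated numerical formula, and (b) to check that injectivity of $(\mathcal{H}^4_{35})_{\nu}\to H^4(M,\mathbb{R})$ implies the hypothesis of the last clause of Proposition \ref{prop-smoothness-of-moduli}.

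First I would treat (a) for $\nu\in(-4,-1]$, where $\dim\mathcal{M}_\nu=\dim(\mathcal{H}^4_{35})_{\nu}$ by \eqref{tangent-space}. Proposition \ref{first computation of tangent space} writes this as $\dim(\mathcal{H}^4_{-})_{L^2}+\dim\im\Upsilon^4+\sum_{\lambda\in\mathcal{D}(d_{\mathrm{ASD}})\cap(-4,\nu)}\dim\mathcal{K}_{\mathrm{ASD}}(\lambda)$, and Proposition \ref{prop-reformulation} (the case $\lambda\le-1$) replaces each $\mathcal{K}_{\mathrm{ASD}}(\lambda)$ by $\mathcal{E}(\Sigma,\varphi_{\ig\Sigma},\lambda)$. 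Since a closed homogeneous anti-self-dual $4$-form on the cone is automatically co-closed, $\mathcal{K}_{\mathrm{ASD}}(\lambda)\subseteq\mathcal{K}_{\Lambda^4}(\lambda)$ and hence $\mathcal{D}(d_{\mathrm{ASD}})\subseteq\mathcal{D}(d+d^*)$, so the index set may harmlessly be enlarged to $\mathcal{D}(d+d^*)\cap(-4,\nu)$ (the added rates contribute $\mathcal{E}=0$). This is the asserted formula.

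The case $\nu\in(-1,0)$ is the part requiring real work. The expansion of $(\mathcal{H}^4_{35})_{\nu}$ is the same, but Proposition \ref{prop-reformulation} now contributes an additional summand $\mathcal{K}_{\Lambda^1}(\lambda+1)$ for each critical rate $\lambda\in(-1,\nu)$, while \eqref{tangent-space} instructs us to pass to the quotient by $d\ker(\slashed{D}_{-})_{\nu+1}$; the claim is that these exactly cancel in the dimension count. First, $d$ is injective on $\ker(\slashed{D}_{-})_{\nu+1}$, because a closed element $X\lrcorner\psi$ of that kernel makes $X$ a Killing field of rate $\nu+1<1$, which vanishes by Proposition \ref{no-Killing}; hence $\dim d\ker(\slashed{D}_{-})_{\nu+1}=\dim\ker(\slashed{D}_{-})_{\nu+1}$. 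To compute the latter I would use index theory: by Lemma \ref{Injectivity of negative Dirac-operator} the operator $(\slashed{D}_{-})_{\mu}$ is an isomorphism for every $\mu\in[-4,0]$ (so those rates are non-critical and $\ind=0$ there) and has vanishing cokernel at $\mu=\nu+1$, and $\nu+1$ is non-critical for $\slashed{D}_{-}$ because otherwise $\mathcal{K}_{\slashed{D}_{-}}(\nu+1)\cong\mathcal{K}_{\Lambda^1}(\nu+1)\ne0$ by Lemma \ref{0-1-Dirac} would put $\nu+1\in\mathcal{D}(d+d^*)$. Theorem \ref{index change for elliptic operators} then gives
\[
\dim\ker(\slashed{D}_{-})_{\nu+1}=\ind(\slashed{D}_{-})_{\nu+1}=\sum_{\mu_0\in(0,\nu+1)\cap\mathcal{D}(\slashed{D}_{-})}\dim\mathcal{K}_{\slashed{D}_{-}}(\mu_0),
\]
and Lemma \ref{0-1-Dirac} identifies each $\mathcal{K}_{\slashed{D}_{-}}(\mu_0)$ with $\mathcal{K}_{\Lambda^1}(\mu_0)$ for $\mu_0\in(0,1)$. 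Re-indexing by $\lambda=\mu_0-1$ turns this sum into $\sum_{\lambda\in(-1,\nu)\cap\mathcal{D}(d+d^*)}\dim\mathcal{K}_{\Lambda^1}(\lambda+1)$, since for $0\ne Z^{\flat}\in\mathcal{K}_{\Lambda^1}(\lambda+1)$ the form $d(Z\lrcorner\psi_{\ig C})=\mathcal{L}_Z\psi_{\ig C}$ is, as in the proof of Proposition \ref{prop-reformulation}, a non-zero element of $\mathcal{K}_{\Lambda^4}(\lambda)$. Subtracting, the $\mathcal{K}_{\Lambda^1}$-terms cancel and the stated formula emerges.

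For (b): if $(\mathcal{H}^4_{35})_{\nu}\to H^4(M,\mathbb{R})$ is injective then no nonzero exact form lies in $(\mathcal{H}^4_{35})_{\nu}$; in particular $d\ker(\slashed{D}_{-})_{\nu+1}=0$, so by \eqref{tangent-space} the chart satisfies $T_{\psi}\mathcal{R}_{\psi}=(\mathcal{H}^4_{35})_{\nu}$ and the projection $T_{\psi}\mathcal{R}_{\psi}\to H^4(M,\mathbb{R})$ is injective, whence $\mathcal{M}_\nu$ is smooth near $\psi\mathcal{D}_{\nu+1}$ by the last part of Proposition \ref{prop-smoothness-of-moduli} (after possibly shrinking $\mathcal{R}_{\psi}$). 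I expect the $\nu\in(-1,0)$ bookkeeping — pinning down that $d\ker(\slashed{D}_{-})_{\nu+1}$ accounts for precisely the total of the $\mathcal{K}_{\Lambda^1}(\lambda+1)$ contributions across the crossed critical rates — to be the main obstacle; everything else is a direct appeal to the propositions above.
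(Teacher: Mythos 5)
Your proposal is correct and follows essentially the same route as the paper: the orbifold and smoothness statements come from Proposition \ref{prop-smoothness-of-moduli}, the dimension formula from Propositions \ref{first computation of tangent space} and \ref{prop-reformulation}, and for $\nu\in(-1,0)$ the dimension of $d\ker(\slashed{D}_-)_{\nu+1}$ is computed exactly as in the paper via Proposition \ref{no-Killing}, Lemma \ref{Injectivity of negative Dirac-operator}, Theorem \ref{index change for elliptic operators} and Lemma \ref{0-1-Dirac}, so that the $\mathcal{K}_{\Lambda^1}(\lambda+1)$ contributions cancel. Your treatment even makes explicit a few details the paper leaves implicit (enlarging the index set to $\mathcal{D}(d+d^*)$, non-criticality of $\nu+1$ for $\slashed{D}_-$, and the deduction of the final injectivity criterion), all of which are consistent with the paper's argument.
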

\begin{proof}
The statement for $\nu\in(-4,-1]$
follows from Propositions \ref{prop-smoothness-of-moduli}, \ref{first computation of tangent space} and \ref{prop-reformulation}.

It is left to extend the statement to rates $\nu\in(-1,0)$ if $(\Sigma,\varphi_{\ig \Sigma})$ is not the round 7-sphere.
By 
Proposition \ref{no-Killing} the exterior derivative is injective on 
$\ker(\slashed{D}_{-})_{\nu+1}$. With Lemma \ref{Injectivity of negative Dirac-operator}, Theorem \ref{index change for elliptic operators} and Lemma \ref{0-1-Dirac}
we get
\begin{align*}
&\dim d \ker(\slashed{D}_{-})_{\nu+1} 
= \dim \ker(\slashed{D}_{-})_{\nu+1}
=
\sum_{\lambda \in \mathcal{D}(\slashed{D}_{-})
\cap (0,\nu+1)}
\ker (\slashed{D}_{-})_{\lambda+\varepsilon}
-
\ker (\slashed{D}_{-})_{\lambda-\varepsilon}
\\
&=
\sum_{\lambda \in \mathcal{D}(\slashed{D}_{-})
\cap (0,\nu+1)}
\ind (\slashed{D}_{-})_{\lambda+\varepsilon}
-
\ind (\slashed{D}_{-})_{\lambda-\varepsilon}
=
\sum_{\lambda \in \mathcal{D}(d+d^*)
\cap (-1,\nu)}
\dim \mathcal{K}_{\Lambda^1}(\lambda+1).
\end{align*}
With Proposition \ref{prop-reformulation} we get
\begin{align*}
&\dim \mathcal{M}_{\nu}
=
\dim (\mathcal{H}^4_{35})_{\nu}-\dim d \ker (\slashed{D}_{-})_{\nu+1}
\\
&=
\dim (\mathcal{H}^4_{-})_{L^2}
+
\dim \im\Upsilon^4
\\
&+
\sum_{\lambda \in \mathcal{D}(d+d^*)
\cap (-4,-1]}
\dim \mathcal{K}_{\mathrm{ASD}}(\lambda)
+
\sum_{\lambda \in \mathcal{D}(d+d^*)
\cap (-1,\nu)}
\dim \mathcal{K}_{\mathrm{ASD}}(\lambda)
-
\dim \mathcal{K}_{\Lambda^1}(\lambda+1)
\\
&=
\dim (\mathcal{H}^4_{-})_{L^2}
+
\dim \im\Upsilon^4
+
\sum_{\lambda \in \mathcal{D}(d+d^*)
\cap (-4,\nu)}
\dim \mathcal{E}(\Sigma,\varphi_{\ig\Sigma},\lambda).
\end{align*}

\end{proof}

In the compact case the projection of the moduli space to $H^4(M)$ is an immersion \cite[Theorem 10.7.1]{big-joyce}. In the AC case we can only prove this under the restriction that there are no critical rates in the interval $(-4,\nu)$, and in particular all Spin(7)-structures in $\mathcal{M}_{\nu}$ decay with rate $-4$.

\begin{lemma}
\label{lemma-immersion}
Let $\nu\in(-4,-0)$ and suppose there is no critical rate in the interval $(-4,\nu]$. Then $\mathcal{M}_{\nu}$ is a smooth manifold and the map
\begin{gather*}
\pi\colon \mathcal{M}_{\nu} \rightarrow H^4(M),
\\
\tilde{\psi}\mathcal{D}_{\nu+1}
\mapsto
[\tilde{\psi}]
\end{gather*}
is an immersion.
\end{lemma}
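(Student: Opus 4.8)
The plan is to reduce the whole statement to the injectivity of the period map
\[
p\colon (\mathcal{H}^4_{35})_{\nu}\longrightarrow H^4(M,\mathbb{R}),\qquad \gamma\mapsto[\gamma].
\]
First I would observe that, under the hypothesis (and retaining from Theorem \ref{Thm-precise-formulation} the assumption that $\nu+1$ is non-critical), the tangent space to the orbifold chart is $T_{\psi}\mathcal{R}_{\psi}\cong(\mathcal{H}^4_{35})_{\nu}$ in both cases of \eqref{tangent-space}: for $\nu\in(-1,0)$ the correction term $d\ker(\slashed{D}_{-})_{\nu+1}$ vanishes, since by the index computation in the proof of Theorem \ref{Thm-precise-formulation} its dimension equals $\sum_{\lambda\in\mathcal{D}(d+d^*)\cap(-1,\nu)}\dim\mathcal{K}_{\Lambda^1}(\lambda+1)$, and $(-1,\nu)\subset(-4,\nu]$ contains no critical rate. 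By Proposition \ref{first computation of tangent space} and the absence of critical rates in $(-4,\nu)$ this gives $(\mathcal{H}^4_{35})_{\nu}\cong(\mathcal{H}^4_{-})_{L^2}\oplus\im\Upsilon^4$. Once $p$ is shown injective, Proposition \ref{prop-smoothness-of-moduli} yields that $\mathcal{M}_{\nu}$ is smooth near each of its points, hence a manifold; and since for a path of smooth closed Spin(7)-structures $\tilde\psi_t$ in $\mathcal{R}_{\psi}$ with $\tilde\psi_0=\psi$ the de Rham class $[\tilde\psi_t]$ has $t$-derivative $[\partial_t\tilde\psi_t]$ at $t=0$, the differential of $\pi$ at $\psi\mathcal{D}_{\nu+1}$ (under the identification of $T_{\psi\mathcal{D}_{\nu+1}}\mathcal{M}_{\nu}$ with $T_{\psi}\mathcal{R}_{\psi}$) is exactly $p$, so $\pi$ is an immersion. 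Well-definedness of $\pi$ uses only that elements of $\mathcal{D}_{\nu+1}$ are isotopic to the identity.

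To prove that $p$ is injective I would post-compose with the restriction map $\Upsilon^4$ and show $\ker(\Upsilon^4\circ p)=(\mathcal{H}^4_{-})_{L^2}$. The inclusion ``$\supseteq$'' is immediate: $(\mathcal{H}^4_{-})_{L^2}\subset\mathcal{H}^4_{L^2}$, which by Proposition \ref{Hodge Theorem} is carried by $p$ into $\mathcal{I}^4(H^4_{\mathrm{cs}}(M,\mathbb{R}))$, and $\mathcal{I}^4(H^4_{\mathrm{cs}}(M,\mathbb{R}))=\ker\Upsilon^4$ by exactness of \eqref{long-exact-sequence}. For ``$\subseteq$'', note that $\Upsilon^4\circ p$ descends to the quotient $(\mathcal{H}^4_{35})_{\nu}/(\mathcal{H}^4_{-})_{L^2}$, which Proposition \ref{first computation of tangent space} identifies via $T^{\mathrm{ASD}}_{-4}$ with $\im\Upsilon^4$; and by Corollary \ref{ASD change at -4} together with Lemma \ref{fast decay -> exact on end} (exactly as in the proof of Lemma \ref{image of T contained in image of Upsilon}), for $\gamma\in(\mathcal{H}^4_{35})_{\nu}$ the class $\Upsilon^4(p(\gamma))$ is the cohomology class of the non-$dr$ part $\beta$ of the leading term of $\gamma$, so the descended map is the composition of $T^{\mathrm{ASD}}_{-4}$ with $\beta\mapsto[\beta]$, which is injective on harmonic representatives. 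Hence $\ker(\Upsilon^4\circ p)=(\mathcal{H}^4_{-})_{L^2}$, and since $p$ is injective on $(\mathcal{H}^4_{-})_{L^2}$ by Proposition \ref{Hodge Theorem}, $p$ is injective.

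The step I expect to be the main obstacle is the second half of this kernel computation: one has to make precise that the infinitesimal deformations produced by crossing the exceptional rate $-4$ are detected by their restriction to the link, recovering $\im\Upsilon^4$ injectively, whereas the $L^2$-harmonic deformations have classes in $\ker\Upsilon^4$. This is precisely the content of Corollary \ref{ASD change at -4}, and the reason it applies cleanly here is the hypothesis that $(-4,\nu]$ contains no critical rate: then the rate-$(-4)$ crossing is the \emph{only} jump between the $L^2$ range and rate $\nu$, so no deformations from higher critical rates enter $(\mathcal{H}^4_{35})_{\nu}$---for the latter the relation between leading asymptotics and $H^4(M)$ is considerably more delicate (cf.\ Remark \ref{description of problem}), which is exactly why the immersion is not claimed for general $\nu$.
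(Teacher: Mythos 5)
Your proposal is correct and follows essentially the same route as the paper: both reduce to injectivity of the period map on $(\mathcal{H}^4_{35})_{\nu}=(\mathcal{H}^4_{35})_{-4+\varepsilon}$, use Corollary \ref{ASD change at -4} to read off $\Upsilon^4([\gamma])=[\beta]$ from the harmonic leading term at the exceptional rate $-4$, and then invoke Proposition \ref{Hodge Theorem} to kill the remaining $L^2$ piece. Your presentation merely reorganises this as a computation of $\ker(\Upsilon^4\circ p)$ (and makes explicit the vanishing of $d\ker(\slashed{D}_-)_{\nu+1}$ for $\nu\in(-1,0)$, which the paper leaves implicit), but the key lemmas and the logic are the same.
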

\begin{proof}
By the assumption, equation \eqref{tangent-space} and Proposition \ref{ASD kernel change at rates > -4}
we have 
$(\mathcal{H}^4_{35})_{\nu}
=
(\mathcal{H}^4_{35})_{-4+\varepsilon}$
for some arbitrarily small $\varepsilon > 0$.
We need to show that the projection 
\begin{align*}
(\mathcal{H}^4_{35})_{-4+\varepsilon}\rightarrow H^4(M).
\end{align*}
is injective.
Assume that 
$\gamma
\in 
(\mathcal{H}^4_{35})_{-4+\varepsilon}$
is exact. 
By Corollary \ref{ASD change at -4} there exists $\beta\in\im\Upsilon^4$
and $\gamma_{-}\in\mathcal{C}^{\infty}_{-4-\varepsilon}(\Lambda^4 T^* M)$ such that
\begin{align*}
\gamma
=
\chi (r^{-1} dr\wedge(-*_{\ig\Sigma}\beta)+\beta) + \gamma_{-},
\end{align*}
and $\Upsilon^4([\gamma])=[\beta]$. By assumption $[\gamma]=0$ and thus $\beta = 0$ and $\gamma=\gamma_{-}\in \mathcal{H}^4_{L^2}$.
By Proposition \ref{Hodge Theorem} $[\gamma]=0$ implies $\gamma=0$. 
Therefore, the linearisation is injective and $\pi$ is an immersion.\end{proof}

\section{Example: the Bryant--Salamon metric}

\label{Bryant-Salamon section}

The Bryant--Salamon metric on $\mathbf{S}_{+}(S^4)$ is a cohomogeneity one AC Spin(7) holonomy metric asymptotic
with rate $-10/3$ to the cone over the ``squashed'' 7-sphere. 
In this section we will compute the contributions $\mathcal{E}(\Sigma,\varphi_{\ig \Sigma},\lambda)$ to the moduli space following Alexandrov--Semmelmann \cite{alexandrov2012} by using the fact that the squashed 7-sphere can be understood as a standard homogeneous space. 
We will briefly describe their method. Let $G/H$ be a reductive 7-dimensional homogeneous space with reductive decomposition $\mathfrak{g}=\mathfrak{h}\oplus \mathfrak{m}$, where $\mathfrak{g}$ and $\mathfrak{h}$ denote the Lie algebras of $G$ and $H$, respectively. 
Denote by $\bar{\nabla}$ the canonical homogeneous connection on $G/H$.
We say that $G/H$ is a \textit{naturally} reductive homogeneous space if
the torsion tensor $T_{\bar{\nabla}}$ of $\bar{\nabla}$ is an alternating tensor, i.e. a 3-form.
We are interested in the situation where $G/H$ is equipped with a $G$-invariant nearly parallel $\mathrm{G}_2$-structure $\varphi_{\ig\Sigma}$ such that $\varphi_{\ig\Sigma}=\frac{2}{3}T_{\bar{\nabla}}$ (see \cite[Lemma 7.1]{alexandrov2012}).
This allows Alexandrov--Semmelmann to relate the 
 Laplacian
\begin{align*}
\bar{\Delta}
=
\bar{\nabla}^*\bar{\nabla}+q(\bar{R})
\end{align*}
to the  Laplacian with respect to the Levi-Civita connection.
If $\zeta\in\mathcal{E}(\Sigma,\varphi_{\ig \Sigma},\lambda)$, then similarly as in Proposition \ref{Laplace-formulation-ana-contr}  in the eigenproblem \eqref{Laplace-inf-defo}
we get a shift of the eigenvalue
with \cite[Proposition 5.3]{alexandrov2012}:
\begin{align}
\bar{\Delta}\zeta = \Delta \zeta + \frac{2}{3} * d\zeta
=
\underbrace{\big((\lambda+4)^2-\frac{2}{3}(\lambda+4)\big)}_{=: \mu} \zeta.
\label{homogeneous-eigenvalue}
\end{align}
To compute the action of $\bar{\Delta}$ we need to make another restriction:
we require that the Einstein metric induced by $\varphi_{\ig\Sigma}$ is \textit{standard}. 
This means that it is induced by a negative multiple $-c^2 B$ of the Killing form $B$ of $\mathfrak{g}$. The point of standard homogeneous spaces is that their curvature tensor with respect to the canonical homogeneous connection satisfies the same formula as the curvature tensor of symmetric spaces with respect to the Levi-Civita connection. Therefore, eigenproblems for $\bar{\Delta}$ can be solved with methods from representation theory. For a representation $\rho\colon H \rightarrow \mathrm{GL}(E)$
of $H$ denote the associated vector bundle by $E_{\rho}=G\times_{\rho}E$.
The left action of $G$ on $E_{\rho}$ induces a $G$-action $\ell\colon G \rightarrow \mathrm{GL}(\Gamma(E_{\rho}))$ on the space of sections of $E_{\rho}$. Then by \cite[Lemma 5.2]{NK} the action of $\bar{\Delta}$ is given by
\begin{align*}
\bar{\Delta}=-\frac{1}{c^2} \mathrm{Cas}^G_{\ell}.
\end{align*}
The Casimir operator acts on a $G$-representation $\gamma\colon G \rightarrow \mathrm{GL}(V)$ as
\begin{align*}
\mathrm{Cas}^G_{\gamma}
=
\sum_{i} (\gamma_{*} X_i)^2,
\end{align*}
where $\gamma_{*}$ denotes the induced action of the Lie algebra and $X_i$ is an orthonormal basis of $\mathfrak{g}$ with respect to $-B$.
The Peter--Weyl Theorem and the Frobenius reciprocity give an isomorphism
\begin{align}
L^2(E_{\rho})
=
\overline{\bigoplus_{\gamma}} V_{\gamma}\otimes \mathrm{Hom}_{H}(V_{\gamma},E),
\label{Pter-Weyl}
\end{align}
where $\gamma$ runs over all isomorphism classes of irreducible representations $V_{\gamma}$ of $G$.
A section of $E_{\rho}$ is the same as an $H$-invariant map $G\rightarrow E$.
Under this identification an element $v\otimes A \in V_{\gamma}\otimes \mathrm{Hom}_{H}(V_{\gamma},E)$ gives rise to the section $g\mapsto A(\gamma(g^{-1})v)$. On each component $V_{\gamma}\otimes \mathrm{Hom}_{H}(V_{\gamma},E)$ $\mathrm{Cas}^G_{\ell}$ then acts as $\mathrm{Cas}^G_{V_{\gamma}}$. Therefore, the eigenspace of $\bar{\Delta}$ for the eigenvalue $\mu$ is isomorphic to the sum of the spaces $V_{\gamma}\otimes \mathrm{Hom}_{H}(V_{\gamma},E)$ for which 
\begin{align}
\mathrm{Cas}^G_{V_{\gamma}} = -c^2 \mu.
\label{eigenvalue-to-find}
\end{align}

We can now compute $\mathcal{E}(\Sigma,\varphi_{\ig\Sigma},\lambda)$
in two steps. Set $E=\Lambda^3_{27} \mathfrak{m}$ and suppose that
in the orientation chosen by Alexandrov--Semmelmann $\mathcal{E}(\Sigma,\varphi_{\ig\Sigma},\lambda)$ is characterised 
by solutions of the equation $\bar{d}\zeta + \bar{\lambda} *\zeta = 0$,
where $\bar{d}= \mathrm{Alt}\circ \bar{\nabla}$ and $\bar{\lambda}$ is a constant related to $\lambda$, and each $\zeta\in\mathcal{E}(\Sigma,\varphi_{\ig\Sigma},\lambda)$ satisfies $\bar{\Delta}\zeta = \mu \zeta$.
First, using \eqref{eigenvalue-to-find} we determine all isomorphism classes of irreducible representations $V_{\gamma}$ of $G$
such that $\mathrm{Cas}^G_{V_{\gamma}} = -c^2 \mu$.
Secondly, having narrowed down the list of possible $V_{\gamma} \subset \mathcal{E}(\Sigma,\varphi_{\ig\Sigma},\lambda)$, we need to solve the equation $\bar{d}\zeta + \bar{\lambda} *\zeta = 0$.
All $\zeta$ in a subspace isomorphic to $V_{\gamma}$ solving this equation is equivalent to the existence of $A\in \mathrm{Hom}_{H}(V_{\gamma},E)$ such that (see \cite[Equation (7.42)]{alexandrov2012})
\begin{align}
\label{equ-for-hom}
\sum_{1 \leq i_1 < \cdots < i_4 \leq 7}
\sum_{j=1}^4
(-1)^j A(e_{i_j} \cdot v)(e_{i_1}, \dots , \hat{e}_{i_j}, \dots, e_{i_4}) e^{i_1\dots i_4}
+ \bar{\lambda} *A(v)=0
\end{align} 
for all $v\in V_{\gamma}$, where $e_1, \dots, e_7$ is a basis of $\mathfrak{m}$.
With respect to the identification \eqref{Pter-Weyl} $V_{\gamma}$ is then embedded into $ \mathcal{E}(\Sigma,\varphi_{\ig\Sigma},\lambda)$ via
\begin{align*}
V_{\gamma} 
\rightarrow V_{\gamma}\otimes \mathrm{Hom}_{H}(V_{\gamma},E)
\subset L^2(E_{\rho}),
\quad 
v \mapsto v\otimes A.
\end{align*}

Let us now apply this theory to compute the spaces $\mathcal{E}(\Sigma,\varphi_{\ig\Sigma},\lambda)$ for the Bryant--Salamon metric. The ``squashed'' nearly parallel $\mathrm{G}_2$-structure on $S^7$ is not naturally reductive if we write $S^7=\Sp(2)/\Sp(1)$.
However, it is both naturally reductive and standard 
if we write $S^7=\frac{\Sp(2)\times \Sp(1)}{\Sp(1)_u\times \Sp(1)_d}$ \cite[Example 8.2]{alexandrov2012},
where 
\begin{align*}
\Sp(1)_u
=
\bigg\{
\left(
\begin{bmatrix}
a & 0
\\
0 & 1
\end{bmatrix}
,1\right)
:
a\in \Sp(1)
\bigg\}
,
\quad
\Sp(1)_d
=
\bigg\{
\left(
\begin{bmatrix}
1 & 0
\\
0 & a
\end{bmatrix}
,a\right)
:
a\in \Sp(1)
\bigg\}.
\end{align*}
This description leads to a nearly parallel $\mathrm{G}_2$-structure 
satisfying $d\varphi = \tau_0 *\varphi$
with scalar curvature $\frac{21}{8}\tau_0^2$, where $\tau_0 = \frac{12}{\sqrt{5}}$. This means that we have to rescaled our original choice of $\varphi$ by $\kappa^3$ and our original metric $g_{\ig \Sigma}$ by $\kappa^2$, where $\kappa$ is given by the equation $\tau_0=\frac{4}{\kappa}$. The eigenproblem \eqref{homogeneous-eigenvalue} is replaced by
\begin{align*}
\bar{\Delta}\zeta = \kappa^{-2} \mu \zeta.
\end{align*}
By \cite[Lemma 7.1]{alexandrov2012} $c$ and $\tau_0$ are related by $c^2 = \frac{6}{5\tau_0^2}$.
In the light of equation \eqref{eigenvalue-to-find}
we need to determine those irreducible representations $V_{\gamma}$ of $G=\Sp(2)\times \Sp(1)$ for which 
\begin{align}
\mathrm{Cas}^G_{\gamma}=-c^2 \kappa^{-2} \mu = - \frac{6}{5 \tau_0^2} \frac{\tau_0^2}{16}\mu = -\frac{3}{40}\mu.
\label{scaled-eigenvalue}
\end{align}
Irreducible representations of $\Sp(2)$
are indexed by their highest weight $\gamma = (k_1,k_2), k_1 \geq k_2\geq 0$, and irreducible representations of $\Sp(1)$ are indexed by their highest weight $\gamma = l, l\geq 0$. The Casimir operator is explicitly given by (see \cite[p. 737]{alexandrov2012})
\begin{align}
\label{Casimir-explicit}
\mathrm{Cas}^{\Sp(2)\times \Sp(1)}_{V(k_1,k_2)\otimes V(l)}
=
-\frac{1}{12}(4 k_1 + k_1^2 +2 k_2 + k_2^2) -\frac{1}{8}(2l+l^2).
\end{align}
The eigenvalue $\mu$ in equation \eqref{homogeneous-eigenvalue} takes values in $(-\frac{1}{9},\frac{40}{3})$ for $\lambda\in(-4,0)$.
Therefore, by \eqref{scaled-eigenvalue}
we need to determine all $V(k_1,k_2,l):= V(k_1,k_2)\otimes V(l)$ such that 
\begin{align*}
\mathrm{Cas}^{\Sp(2)\times \Sp(1)}_{V(k_1,k_2)\otimes V(l)}\in (-1,0].
\end{align*} 
Using formula \eqref{Casimir-explicit}
we find that there are four possibilities: $(k_1,k_2,l)=(1,1,0)$, $(1,0,0)$, $(0,0,1)$, $(0,0,0)$ with the Casimir operator equal to $-\frac{2}{3}, -\frac{5}{12}, -\frac{3}{8}$, $0$, respectively. This will lead to the eigenvalues $\kappa^{-2}\mu = \frac{576}{25},\frac{72}{5},\frac{324}{25}$,
$0$, respectively.

Next we need to determine the corresponding Hom-spaces.
If we denote the standard representations of $\Sp(1)_u$ and $\Sp(1)_d$ by $U$ and $D$, then all irreducible representations of $\Sp(1)_u \times \Sp(1)_d$ can be written via the symmetric powers as $S^k U S^l W$ (omitting the tensor product sign and complexification sign).
Then 
\begin{gather*}
\Lambda^3_{27} \mathfrak{m}^* 
\cong
S^2 U S^2 D \oplus U S^3 D \oplus U D \oplus S^4 D \oplus \mathbb{C},
\\
V(1,1,0) \cong \Lambda^2_0 (\mathbb{C}^4)^* \cong UD \oplus \mathbb{C},
\quad
V(1,0,0)\cong \mathbb{C}^4 \cong U \oplus D, 
\\
V(0,0,1) \cong D,
\quad
V(0,0,0) \cong \mathbb{C}.
\end{gather*}
$V(1,0,0)$ and $V(0,0,1)$ do not have common subrepresentations with $\Lambda^3_{27} \mathfrak{m}$, and therefore do not lead to any solutions.

$V(0,0,0)$ and $\Lambda^3_{27}\mathfrak{m}^*$ have the trivial representation $\mathbb{C}$ as a common component. We have $\mu=0$, $\lambda=-10/3$,
and $\zeta\in\mathcal{E}(\Sigma,\varphi_{\ig\Sigma},\lambda)$ satisfy
$d\zeta=-\frac{2}{3}*\zeta$. Then we have $\bar{d}\zeta= d\zeta + \frac{2}{3}*\zeta = 0$ by \cite[Lemma 5.2]{alexandrov2012}.
Therefore, we want to solve equation \ref{equ-for-hom} with $\bar{\lambda}=0$. This is trivially satisfied for the trivial representation.
We get $\mathcal{E}(\Sigma,\varphi_{\ig\Sigma},-10/3)\cong \mathbb{R}$.
The Bryant--Salamon metric has decay rate $-10/3$ and this is exactly the deformation given by rescaling as in Remark \ref{remark-scaling}.

We are left to consider $V(1,1,0)$. Then $\mu = \frac{64}{5}$ and $\lambda=-\tfrac{-55+\sqrt{2905}}{15}$. Thus, at the scale of scalar curvature 42
and in our chosen orientation we want to solve $d\zeta = -\tfrac{5+\sqrt{2905}}{15}*\zeta$. Again by \cite[Lemma 5.2]{alexandrov2012} 
this is equivalent to $\bar{d}\zeta = d\zeta + \frac{2}{3}*\zeta = \frac{5-\sqrt{2905}}{15}*\zeta$, and at the scale with scalar curvature $\frac{21}{8}\tau_0^2$ and in the orientation chosen in \cite{alexandrov2012} we want to solve 
\begin{align}
\big(\bar{d}+\tfrac{\sqrt{5}-\sqrt{581}}{5} *\big)\zeta =0.
\label{what-we-want-to-solve}
\end{align}

The common $\Sp(1)_u\times \Sp(1)_d$-subrepresentation of $\Lambda^3_{27}\mathfrak{m}$
and $V(1,1,0)$ are $UD$ and $\mathbb{C}$. Alexandrov--Semmelmann show that $\mathrm{Hom}_H(UD, \Lambda^3_{27}\mathfrak{m})$ is 1-dimensional. Furthermore, they show that   $UD$ can be identified with a submodule of $\mathfrak{sp}(2)$ and that a generator $A$ of $\mathrm{Hom}_H(UD, \Lambda^3_{27}\mathfrak{m})$ satisfies with repsect to an orthonormal frame $e_1, \dots, e_7$ of $\mathfrak{m}$
\begin{align*}
A(e_1 \cdot e_4)
&=
-\tfrac{2}{\sqrt{5}}
\big(
3 e^{467} + e^{137}+e^{126}+e^{234}
\big),
\\
A(e_2 \cdot e_4)
&=
-\tfrac{2}{\sqrt{5}}
\big(
-3 e^{457} + e^{237}-e^{125}-e^{134}
\big),
\\
A(e_3 \cdot e_4)
&=
-\tfrac{2}{\sqrt{5}}
\big(
3e^{456}-e^{236}-e^{135}+e^{124}
\big) 
,
\\
A(e_4 \cdot e_4) 
&= 0,
\\
A(e_4)
&=
-3 e^{567} - e^{235}+e^{136}-e^{127}.
\end{align*}
Therefore, if in \eqref{equ-for-hom} we set $\bar{\lambda}=\tfrac{\sqrt{5}-\sqrt{581}}{5}$  and $v=e_4$, the coefficient of $e^{1234}$ on the left-hand side is $3\frac{2}{\sqrt{5}}-3 \bar{\lambda}=3\frac{\sqrt{5}+\sqrt{581}}{5}\neq 0$.
Therefore, the common subrepresentation $UD$ does not lead to any solutions. For the trivial representation formula \eqref{equ-for-hom}
simplifies to
\begin{align*}
\bar{\lambda}* A(v) = 0
\end{align*}
for all $v\in\mathbb{C}$. This implies $A=0$. Again we get no solution. Therefore, $V(1,1,0)$ does not lead to any infinitesimal Spin(7)-deformations.

The computations in this section allow us
to determine the dimension of the moduli space with Theorem \ref{Thm-precise-formulation}.

\begin{corollary}
The Bryant--Salamon AC Spin(7) holonomy metric on $\mathbf{S}_{+}(S^4)$
is locally rigid, modulo scaling, as a torsion-free AC Spin(7)-structure on $\mathbf{S}_{+}(S^4)$ asymptotic to the cone over the ``squashed'' 7-sphere, up to any rate $\nu < 0$.
\end{corollary}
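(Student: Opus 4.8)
The plan is to obtain the statement directly from Theorem~\ref{Thm-precise-formulation}, so the work is to evaluate the three contributions to $\dim\mathcal{M}_{\nu}$ when $M=\mathbf{S}_{+}(S^4)$ and $\Sigma$ is the squashed $S^7$. For the first, topological, terms: $M$ deformation retracts onto its zero section $S^4$, so $H^{k}(M,\mathbb{R})\cong H^{k}(S^4,\mathbb{R})$, and by Lefschetz duality $H^{k}_{\mathrm{cs}}(M,\mathbb{R})\cong H_{8-k}(S^4,\mathbb{R})$; in particular all these groups vanish in degrees $1,2,3,5,6,7$. Since the link is $S^7$ we have $H^{3}(\Sigma,\mathbb{R})=H^{4}(\Sigma,\mathbb{R})=0$, hence $\im\Upsilon^{3}=\im\Upsilon^{4}=0$. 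For the $L^2$ term, Proposition~\ref{Hodge Theorem} identifies $\mathcal{H}^{4}_{L^2}$ with $\mathcal{I}^{4}(H^{4}_{\mathrm{cs}}(M,\mathbb{R}))$, a one-dimensional space (the image of the Thom class), and under the identifications above $\mathcal{I}^{4}$ is multiplication by the Euler number of $\mathbf{S}_{+}(S^4)$, equivalently the Hopf invariant of the quaternionic Hopf fibration $S^3\hookrightarrow S^7\to S^4$, which is $\pm1$; so $\mathcal{H}^{4}_{L^2}\cong\mathbb{R}$. The intersection form $\int_M\xi\wedge\eta$ on this line equals that Euler number, i.e. the self-intersection of the Cayley zero section $S^4$; with the orientation induced by $\psi$ this is positive, so the $L^2$ harmonic $4$-form is self-dual and $(\mathcal{H}^{4}_{-})_{L^2}=0$.

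For the remaining, analytic, term $\sum_{\lambda\in\mathcal{D}(d+d^{*})\cap(-4,\nu)}\dim\mathcal{E}(\Sigma,\varphi_{\ig\Sigma},\lambda)$ I would invoke the representation-theoretic computation carried out in Section~\ref{Bryant-Salamon section}. Writing the squashed $S^7$ as $\frac{\Sp(2)\times\Sp(1)}{\Sp(1)_u\times\Sp(1)_d}$, for which the nearly parallel $\mathrm{G}_2$-structure is naturally reductive and standard, one solves the eigenvalue/Hom problem: the Casimir constraint \eqref{scaled-eigenvalue} leaves only finitely many candidate irreducible $\Sp(2)\times\Sp(1)$-representations in the range $\lambda\in(-4,0)$, and running each through the branching of $\Lambda^3_{27}\mathfrak{m}^{*}$ together with the linear condition \eqref{equ-for-hom} shows that $\mathcal{E}(\Sigma,\varphi_{\ig\Sigma},\lambda)$ vanishes for every $\lambda\in(-4,0)$ except $\lambda=-10/3$, where it is one-dimensional, spanned by the trivial $\Sp(2)\times\Sp(1)$-summand of $\Lambda^3_{27}\mathfrak{m}^{*}$ (which satisfies $\bar d\zeta=0$). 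Thus only the single critical rate $-10/3$ contributes, with weight $1$.

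Feeding this into Theorem~\ref{Thm-precise-formulation}: for every non-critical $\nu\in(-4,-10/3)$ one gets $\dim\mathcal{M}_{\nu}=0$, i.e. strict local rigidity, while for every non-critical $\nu\in(-10/3,0)$ one gets $\dim\mathcal{M}_{\nu}=1$. By Remark~\ref{remark-scaling}, an AC Spin(7)-structure of rate $-10/3$ always carries the canonical scaling family $\psi_{\lambda}$, whose infinitesimal generator $4\psi-\mathcal{L}_{\hat V}\psi$ is a nonzero tangent vector to $\mathcal{R}_{\psi}$ (its $\Lambda^4_{35}$-part decays precisely at rate $-10/3$ and the underlying metrics are pairwise non-isometric); since the moduli space is then $1$-dimensional, this family fills out a neighbourhood of $\psi$ in $\mathcal{M}_{\nu}$, which is exactly rigidity modulo scaling. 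For the faster rates $\nu\le-4$ a direct argument finishes the job: a closed $\Lambda^4_{35}$-form is automatically coclosed, so any closed anti-self-dual $4$-form of rate $\nu\le-4$ lies in $(\mathcal{H}^4_{35})_{-4}=(\mathcal{H}^4_{-})_{L^2}=0$; with no infinitesimal deformations present, the slice construction of Section~\ref{smoothness-mod-space}, now with an injective linearisation, yields local rigidity there as well.

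The main obstacle is the sign in the $(\mathcal{H}^4_{-})_{L^2}$ computation: one must verify that the necessarily one-dimensional space $\mathcal{H}^4_{L^2}$ consists of self-dual rather than anti-self-dual forms, i.e. that the intersection form on $\mathcal{I}^4(H^4_{\mathrm{cs}}(M,\mathbb{R}))$ — equivalently the Euler number of $\mathbf{S}_{+}(S^4)$ — is positive with respect to the Spin(7)-orientation; this requires carefully matching the orientation conventions for the Hopf bundle against the calibration form $\psi$. The other nontrivial point, though more a bookkeeping exercise, is confirming that no critical rate of $d+d^{*}$ in $(-4,0)$ beyond $-10/3$ contributes: this is precisely where the hypotheses that the link metric be normal homogeneous and standard are indispensable, since they convert the defining PDE of $\mathcal{E}(\Sigma,\varphi_{\ig\Sigma},\lambda)$ into the algebraic Casimir condition \eqref{scaled-eigenvalue} and reduce everything to solving \eqref{equ-for-hom} on a short explicit list of representations.
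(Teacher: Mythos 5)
Your overall strategy is the paper's: feed the Bryant--Salamon data into Theorem \ref{Thm-precise-formulation}, kill $\im\Upsilon^4$ because the link is $S^7$, identify $\mathcal{H}^4_{L^2}\cong\mathcal{I}^4(H^4_{\mathrm{cs}})\cong\mathbb{R}$, take the vanishing of $\mathcal{E}(\Sigma,\varphi_{\ig\Sigma},\lambda)$ for $\lambda\in(-4,0)\setminus\{-10/3\}$ and the one-dimensional contribution at $-10/3$ from the representation-theoretic computation, and recognise the surviving deformation as the scaling of Remark \ref{remark-scaling}. Where you genuinely diverge is the step that decides the duality of the one-dimensional space $\mathcal{H}^4_{L^2}$, and that is where your proposal has a real gap. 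You argue that the intersection form on $\mathcal{I}^4(H^4_{\mathrm{cs}}(M,\mathbb{R}))$ is the Euler number of $\mathbf{S}_+(S^4)$ (the self-intersection of the zero section) and then simply assert that ``with the orientation induced by $\psi$ this is positive'', concluding $(\mathcal{H}^4_-)_{L^2}=0$; in your closing paragraph you concede that verifying this sign against the Spin(7)-orientation is ``the main obstacle'' and leave it unverified. But this sign is precisely the whole content of the step: the Euler number is $\pm1$ depending on orientation conventions for the quaternionic Hopf bundle versus the orientation determined by $\psi$, and nothing you write (e.g.\ the zero section being a Cayley/calibrated cycle) forces positivity. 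If the sign were negative, the $L^2$ class would be anti-self-dual, $(\mathcal{H}^4_-)_{L^2}$ would be one-dimensional, and the moduli space would be two-dimensional for $\nu\in(-10/3,0)$, destroying rigidity modulo scaling. The paper avoids this orientation bookkeeping entirely by citing the explicit square-integrable harmonic 4-form of Cveti\v{c}--L\"u--Pope, whose duality is known to agree with that of $\psi$, which immediately gives $(\mathcal{H}^4_-)_{L^2}=\{0\}$. Your topological route could in principle be completed, but as written the decisive sign computation is missing rather than proved.

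Two smaller points. First, your appeal to the Section \ref{Bryant-Salamon section} computation of $\mathcal{E}(\Sigma,\varphi_{\ig\Sigma},\lambda)$ is legitimate (that computation is exactly what the paper uses), so no complaint there. Second, your paragraph about rates $\nu\le-4$ is outside the scope of both the corollary and the theorem: the paper explicitly excludes the $L^2$-regime because surjectivity of the linearised operator, and hence the orbifold structure, may fail there, and in any case the Bryant--Salamon structure decays only at rate $-10/3$, so it does not define a point of $\mathcal{M}_{\nu}$ for $\nu<-10/3$; this addition is not wrong so much as irrelevant, and the claim of ``local rigidity'' there cannot be run through the slice machinery as stated.
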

\begin{proof}
So far we have shown that the spaces $\mathcal{E}(\Sigma,\varphi_{\ig\Sigma},\lambda)$, $\lambda\in (-4,0)$, vanish unless $\lambda = -10/3$.   $\mathcal{E}(\Sigma,\varphi_{\ig\Sigma},-10/3)$ is 1-dimensional and the corresponding deformation is the scaling described in Remark \ref{remark-scaling}. By Proposition \ref{Hodge Theorem}
and the long exact sequence \eqref{long-exact-sequence} we have 
\begin{align*}
\mathcal{H}^4_{L^2}\cong \mathcal{I}^4(H^4_{\mathrm{cs}}(\mathbf{S}_{+}(S^4),\mathbb{R}))\cong  H^4(\mathbf{S}_{+}(S^4),\mathbb{R}) \cong H^4(S^4,\mathbb{R})\cong \mathbb{R}.
\end{align*}
Cveti\v{c}--L\"u--Pope \cite[Section 5.2]{cvetivc2001brane} have constructed a square-integrable 4-form on $\mathbf{S}_{+}(S^4)$, which is harmonic with respect to the Bryant--Salamon metric and has the same duality as the Spin(7) 4-form (which is anti-self-dual in their convention and self-dual in our convention). Therefore, $(\mathcal{H}^4_{-})_{L^2}=\{0\}$. Finally we have $\im \Upsilon^4 \subset H^4(S^7,\mathbb{R}) = \{0\}$. By Theorem \ref{Thm-precise-formulation}
the moduli space $\mathcal{M}_{\nu}$
is 1-dimensional for any $\nu \in (-10/3,0)$.
\end{proof}

\bibliographystyle{amsalpha}

\bibliography{bibliography}

\end{document}